\tikzset{%
	from end of path/.style={
		insert path={
			\pgfextra{%
				\expandafter\pgfprocesspathextractpoints%
				\csname tikz@intersect@path@name@#1\endcsname%
				\pgfpointlastonpath%
				\pgfgetlastxy\lastx\lasty
			}
			(\lastx,\lasty)
}}}
\tikzset{
	anticlockwise arc centered at/.style={
		to path={
			let \p1=(\tikztostart), \p2=(\tikztotarget), \p3=(#1),
			\n{anglestart}={atan2(\y1-\y3,\x1-\x3)},
			\n{angletarget}={atan2(\y2-\y3,\x2-\x3)},
			\n{angletarget}={\n{angletarget} < \n{anglestart} ? \n{angletarget}+360 : \n{angletarget}},
			\n{radius}={veclen(\x1-\x3,\y1-\y3)}
			in arc(\n{anglestart}:\n{angletarget}:\n{radius})  -- (\tikztotarget)
		},
	},
	clockwise arc centered at/.style={
		to path={
			let \p1=(\tikztostart), \p2=(\tikztotarget), \p3=(#1),
			\n{anglestart}={atan2(\y1-\y3,\x1-\x3)},
			\n{angletarget}={atan2(\y2-\y3,\x2-\x3)},
			\n{angletarget}={\n{angletarget} > \n{anglestart} ? \n{angletarget} - 360 : \n{angletarget}},
			\n{radius}={veclen(\x1-\x3,\y1-\y3)}
			in arc(\n{anglestart}:\n{angletarget}:\n{radius})  -- (\tikztotarget)
		},
	},
}
\newcommand{\A}{{\mathbb A}}
\newcommand{\Z}{{\mathbb Z}}
\let\oldS\S
\renewcommand{\S}{{\mathbb S}}
\newcommand{\R}{{\mathbb R}}
\newcommand{\C}{{\mathbb C}}
\newcommand{\D}{{\mathbb D}}
\newcommand{\N}{{\mathbb N}}
\def\Res{{\,\rm Res}}
\def\Re{{\rm Re}}
\def\Im{{\rm Im}}
\def\Id{{\rm Id}}
\def\ii{{\rm i}}
\def\sl{\mathfrak{sl}}
\def\tr{{\rm trace}}
\def\SL{{\rm SL}}
\def\SU{{\rm SU}}
\def\traceL{\tau}
\renewcommand{\matrix}[1]{\left(\begin{array}{cc} #1\end{array}\right)}
\newcommand{\wt}[1]{\widetilde{#1}}
\newcommand{\wh}[1]{\widehat{#1}}
\newcommand{\cal}[1]{{\mathcal #1}}
\theoremstyle{plain}
\newtheorem{theorem}{Theorem}
\newtheorem{lemma}{Lemma}
\newtheorem{proposition}[lemma]{Proposition}
\newtheorem{remark}[lemma]{Remark}
\newtheorem{corollary}[lemma]{Corollary}
\newtheorem{definition}[lemma]{Definition}
\newtheorem{example}[lemma]{Example}
\DeclareFontFamily{U}{mathx}{\hyphenchar\font45}
\DeclareFontShape{U}{mathx}{m}{n}{
      <5> <6> <7> <8> <9> <10>
      <10.95> <12> <14.4> <17.28> <20.74> <24.88>
      mathx10
      }{}
\DeclareSymbolFont{mathx}{U}{mathx}{m}{n}
\DeclareMathAccent{\widecheck}{0}{mathx}{"71}
\DeclareMathAccent{\widetilde}{0}{mathx}{"72}
\DeclareMathAccent{\widebar}{0}{mathx}{"73}
\DeclareMathAccent{\widevec}{0}{mathx}{"74}
\DeclareMathAccent{\widehat}{0}{mathx}{"70}
\DeclareMathAccent{\widefrown}{0}{mathx}{"75}
\DeclareMathAccent{\chinesehat}{0}{mathx}{"69}
\let\eps\varepsilon
\newcommand{\half}{{\tfrac{1\!}{2}}}
\def\Li{{\rm Li}}
\def\Res{{\,\rm Res}}
\def\Re{{\rm Re}}
\def\Im{{\rm Im}}
\def\Id{{\rm Id}}
\def\ii{{\rm i}}
\def\sl{\mathfrak{sl}}
\def\tr{{\rm trace}}
\def\SL{{\rm SL}}
\def\SU{{\rm SU}}
\def\Res{{\,\rm Res}}
\def\Re{{\rm Re}}
\def\Im{{\rm Im}}
\def\Id{{\rm Id}}
\def\ii{{\rm i}}
\def\sl{\mathfrak{sl}}
\def\tr{{\rm trace}}
\def\SL{{\rm SL}}
\def\SU{{\rm SU}}
\renewcommand{\matrix}[1]{\left(\begin{array}{cc} #1\end{array}\right)}
\theoremstyle{plain}
\DeclareFontFamily{U}{mathx}{\hyphenchar\font45}
\DeclareFontShape{U}{mathx}{m}{n}{
      <5> <6> <7> <8> <9> <10>
      <10.95> <12> <14.4> <17.28> <20.74> <24.88>
      mathx10
      }{}
\DeclareSymbolFont{mathx}{U}{mathx}{m}{n}
\DeclareMathAccent{\widecheck}{0}{mathx}{"71}
\DeclareMathAccent{\widetilde}{0}{mathx}{"72}
\DeclareMathAccent{\widebar}{0}{mathx}{"73}
\DeclareMathAccent{\widevec}{0}{mathx}{"74}
\DeclareMathAccent{\widehat}{0}{mathx}{"70}
\DeclareMathAccent{\widefrown}{0}{mathx}{"75}
\DeclareMathAccent{\chinesehat}{0}{mathx}{"69}
\def\Res{{\,\rm Res}}
\def\Re{{\rm Re}}
\def\Im{{\rm Im}}
\def\Id{{\rm Id}}
\def\ii{{\rm i}}
\def\sl{\mathfrak{sl}}
\def\tr{{\rm trace}}
\def\SL{{\rm SL}}
\def\SU{{\rm SU}}
\renewcommand{\matrix}[1]{\left(\begin{array}{cc} #1\end{array}\right)}
\newcommand{\wc}[1]{\widecheck{#1}}
\newcommand{\cv}[1]{\underline{#1}}
\let\@wraptoccontribs\wraptoccontribs\makeatother
\def\low{\mbox{\scriptsize lower}}
\renewcommand{\matrix}[1]{\left(\begin{array}{cc} #1\end{array}\right)}
\title[Minimal surfaces and alternating MZVs]{Minimal surfaces and alternating multiple zetas}
\author[S. Charlton]{Steven Charlton}
\address{Max Planck Institute for Mathematics, Vivatsgasse 7, Bonn 53111, Germany}
\email{charlton@mpim-bonn.mpg.de 
}
\author[L. Heller]{Lynn Heller}
\address{Beijing Institute of Mathematical Sciences and Applications, Beijing, China}
\email{lynn@bimsa.cn}
\author[S. Heller]{Sebastian Heller}
\address{Beijing Institute of Mathematical Sciences and Applications, Beijing, China}
\email{sheller@bimsa.cn}
\author[M. Traizet]{Martin Traizet}
\address{Institut Denis Poisson, CNRS UMR 7350 
Universit\'e de Tours, France }
\email{martin.traizet@univ-tours.fr }
\begin{document}

\begin{abstract}
In this paper we show for every sufficiently large integer $g$ the existence of a complete family of
closed and embedded constant mean curvature (CMC) surfaces deforming the Lawson surfaces $\xi_{1,g}$ parametrized by their conformal type. When specializing to the minimal case, we discover a pattern resulting in the coefficients of the involved expansions being alternating multiple zeta values (MZVs),  which generalizes the notion of Riemann's zeta values to multiple integer variables. This allows us to extend a new existence proof of the Lawson surfaces $\xi_{1,g}$ to all $g\geq 3$ using complex analytic methods  and to give closed form expressions of their area expansion up to order $7$. For example, the third order coefficient is $\tfrac{9}{4}\zeta(3)$ (the first and second order term were shown to be $\log(2)$ and $0$ respectively in \cite{HHT}). As a corollary, we obtain that the area of  $\xi_{1,g}$ is monotonically increasing in their genus $g$ for all $g\geq 0.$
\end{abstract}
\thanks{\\
LH and SH are supported by the Beijing Natural Science Foundation IS23002 (LH) and IS23003 (SH).\\MT is supported by the French ANR project Min-Max (ANR-19-CE40-0014).\\
SC is grateful to the Max Planck Institute for Mathematics, for support, hospitality and excellent working conditions during the preparation of this work.  During the initial developments on this work at Universit\"at Hamburg, they were supported by DFG Eigene Stelle grant CH 2561/1-1, for Projektnummer 442093436.
}
\maketitle
\begin{figure}[h]
\vspace{-0.8cm}
\centering
\includegraphics[width=0.18\textwidth]{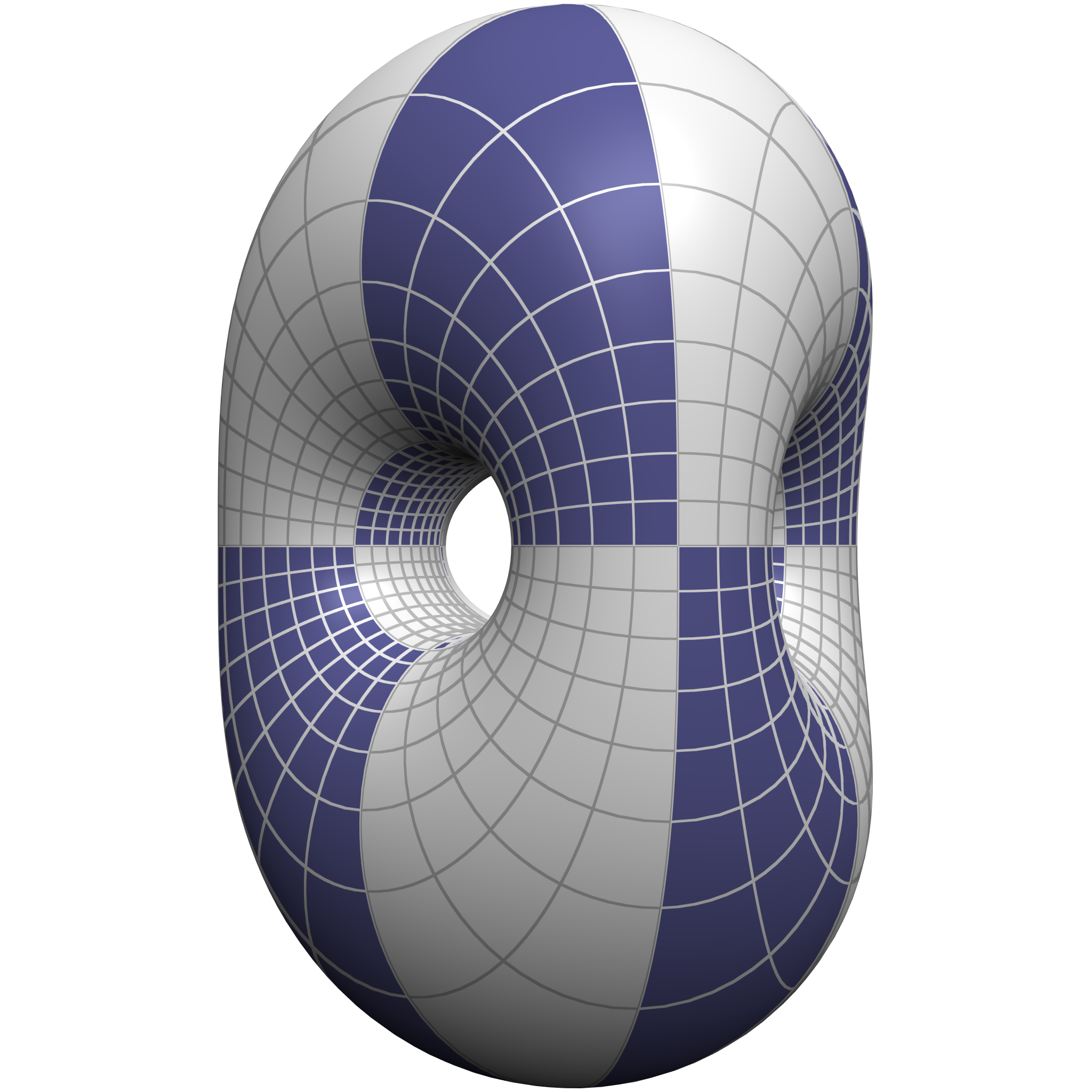}\includegraphics[width=0.18\textwidth]{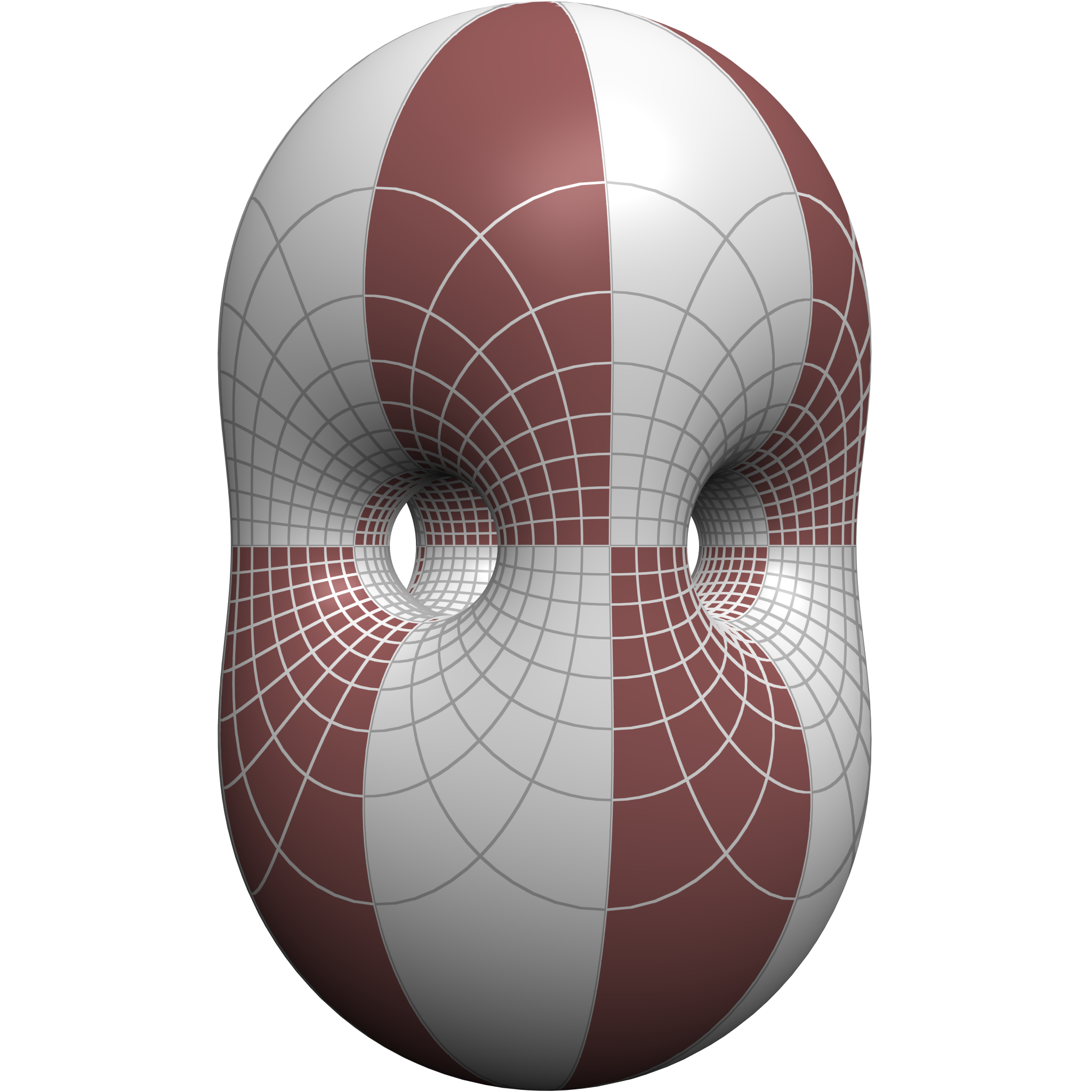}\includegraphics[width=0.18\textwidth]{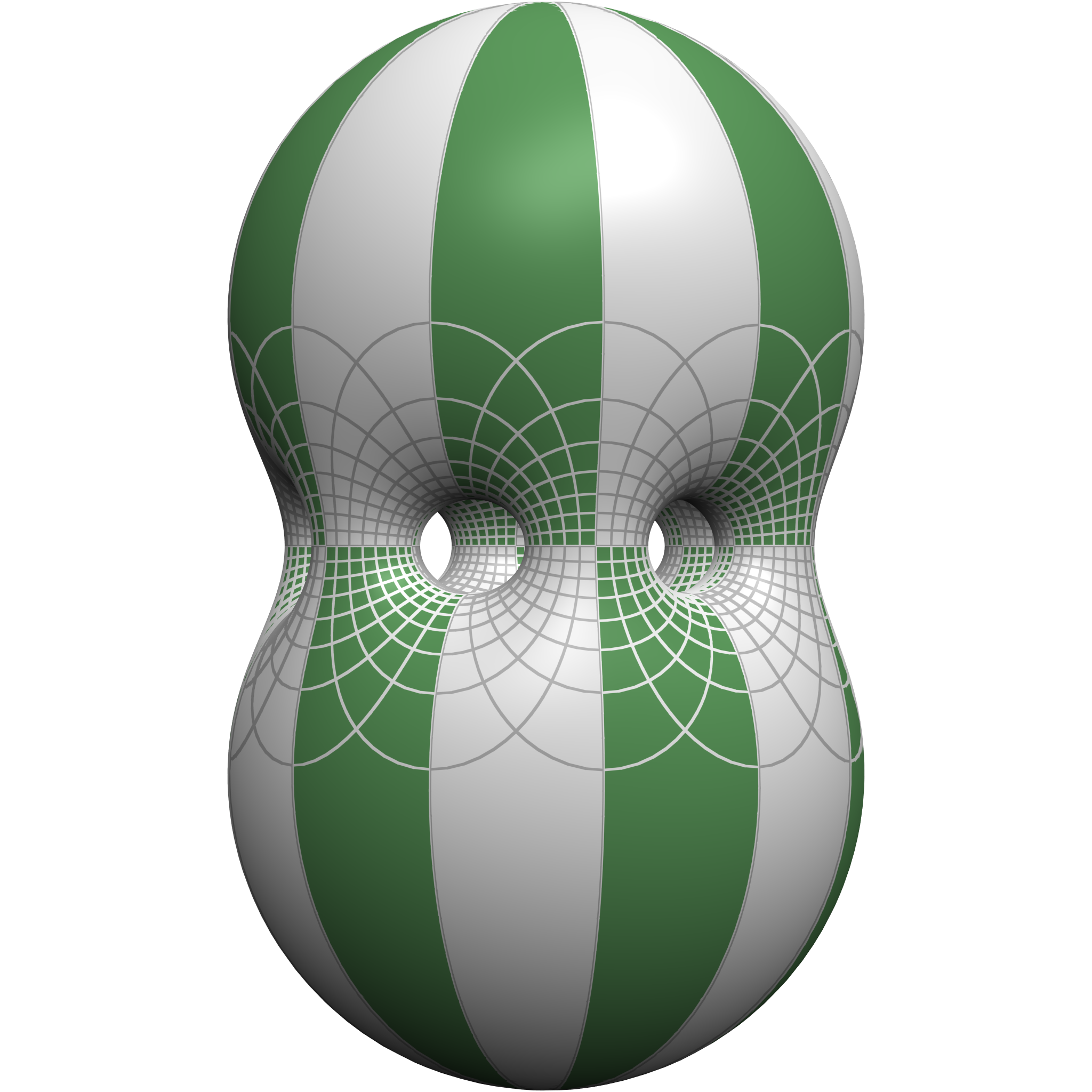}\includegraphics[width=0.18\textwidth]{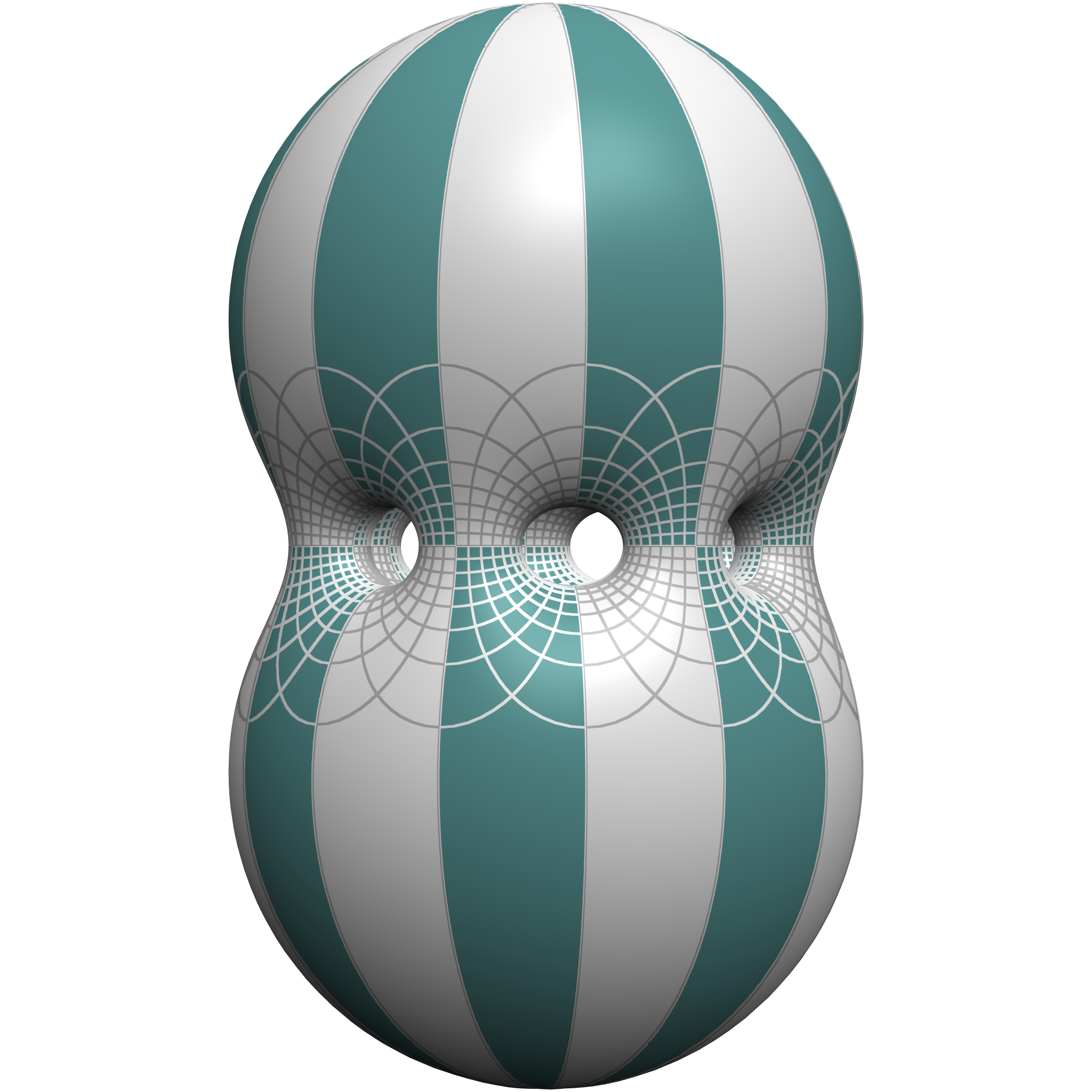}  \includegraphics[width=0.16\textwidth]{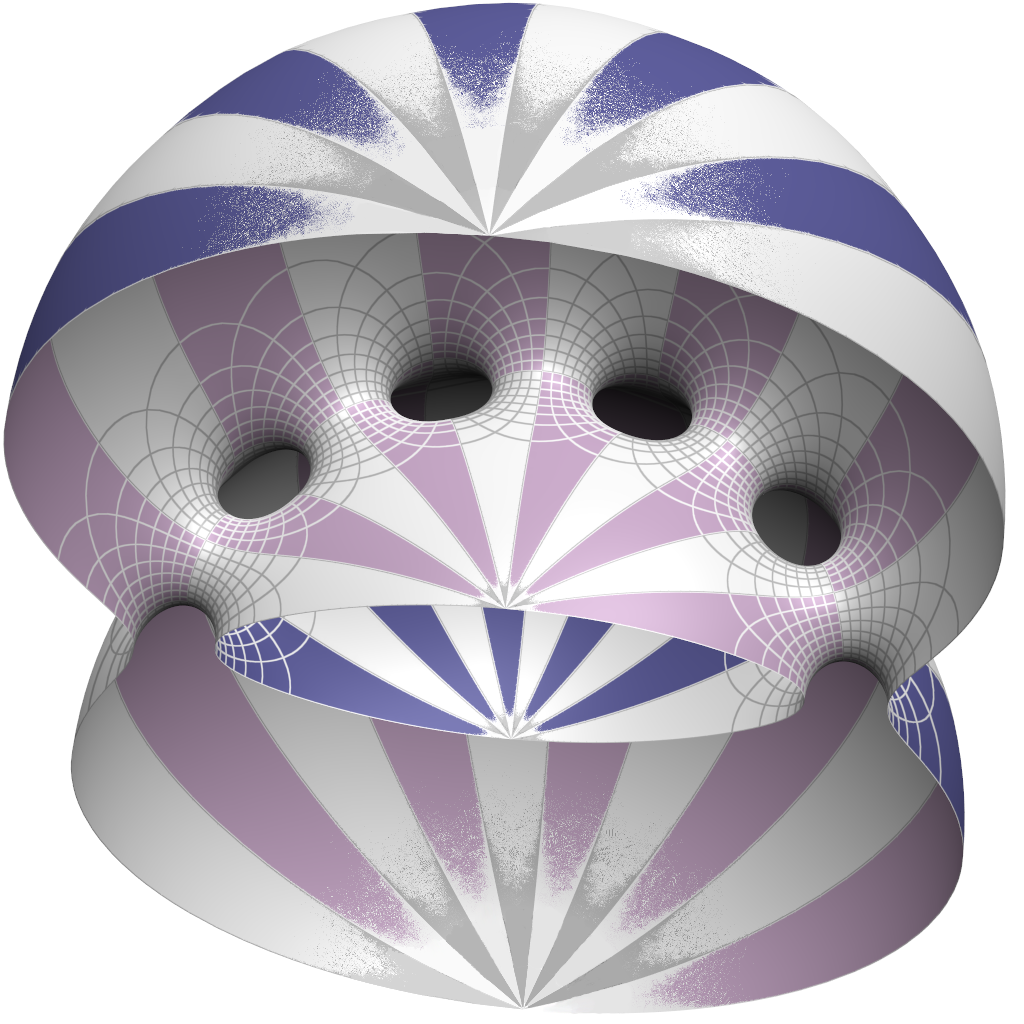}
\renewcommand{\figurename}{\footnotesize Figure}
\caption{
\footnotesize{Lawson's minimal surfaces $\xi_{1,g}$ for $g= 2, 3, 4, 5$ and $g=9$. The last picture in a cutaway view alluding to the convergence to two perpendicularly intersecting 2-spheres for $g \rightarrow \infty.$ Images by Nick Schmitt.
}}
\label{l9}
\end{figure}

\setcounter{tocdepth}{1}
\tableofcontents
\section{Introduction}
Minimal surfaces, and more generally constant mean curvature (CMC) surfaces, in three-dimensional space forms have been the object of intensive study for centuries. Local properties of these surfaces are completely understood via (generalizations of) the Weierstra\ss $\;$ representation, which gives explicit parametrizations of the surface in terms of holomorphic data. Determining global properties, such as the topology, area or embeddedness of minimal or CMC surfaces remains very challenging. \\

Due to the maximum principle, the behaviour of solutions and their moduli space depends  crucially on the (constant) curvature of the ambient space. The most difficult case is the round $3$-sphere, where very few examples have been constructed. The first compact embedded minimal surfaces in the $3$-sphere of all genera were discovered by Lawson \cite{Lawson} using the solution of the Plateau problem with respect to a polygonal boundary curve which are then reflected and rotated along geodesics. Similarly, Karcher-Pinkall-Sterling \cite{KPS} have constructed compact minimal surfaces with platonic symmetries. The other class of closed minimal surfaces in $\S^3$ were constructed by Kapouleas \cite{Kap1} via glueing equatorial 2-spheres using catenoidal handles. Though topology and embeddedness of these examples are known, other geometric properties, for example stability or area are very difficult to determine. While the stability for the Lawson surfaces $\xi_{1,g}$ were proven by Kapouleas and Wiygul \cite{KapWiy}, no area of any minimal surface of genus $g\geq2$ has been explicitly computed. In a very recent paper \cite{KKMS}, new minimal surfaces in the round sphere are constructed by equivariant optimization of the Laplace first eigenvalue. They show that the number of such minimal surfaces grows at least linearly with the genus. Thus showing that the Lawson surfaces are indeed energy minimizing seems to be (even) more subtle than in the genus one case, and new techniques are needed.\\

An alternate  approach to constructing minimal and CMC surfaces in space forms uses the integrable systems structure of harmonic maps. This can be interpreted as a global version of the Weierstra\ss $\;$ representation, which is often referred to as the DPW method \cite{DPW} in this context. For tori, the integrable systems approach was pioneered by Pinkall-Sterling \cite{PS} and Hitchin \cite{Hitchin}  around 1990, and Bobenko \cite{Bobenko} gave explicit parametrizations of all CMC tori in all 3-dimensional space forms.
 \\

Consider a conformally parametrized minimal immersion $f \colon M_g \rightarrow \mathbb S^3$ from a compact genus $g$ Riemann surface into the round $3$-sphere. Then the property of $f$ being harmonic gives rise to a symmetry of the Gauss-Codazzi equations inducing an associated $\S^1$-family of (isometric) minimal surfaces on the universal covering of $M_g$ by rotating the Hopf differential. The gauge theoretic counterpart of this symmetry is manifested in an associated $\C^*$-family 
of flat $\text{SL}(2,\mathbb C)$-connections $\nabla^\lambda$  \cite{Hitchin} on the trivial $\C^2$-bundle over $M_g$ of the form
$$\nabla^\lambda=\nabla + \lambda^{-1}\Psi - \lambda \Psi^*,$$
where $\Psi$ is referred to as Higgs field.  This family of flat connections satisfies 
\begin{enumerate}\label{closingconditions}
\item[(i)]  intrinsic closing: $\nabla^\lambda$ is unitary for all $\lambda\in\S^1$;
\item[(ii)] extrinsic closing: $\nabla^\lambda$ has trivial  monodromy for $\lambda_1=-1$ and $\lambda_2 = 1.$
\item[(iii)] conformality: $\Psi\in \Omega^{(1,0)}(M_g,\mathfrak{sl}(2,\mathbb C)) $ is nilpotent.
\end{enumerate}
The minimal surface can be reconstructed from the associated family of flat connections $\nabla^\lambda$ as the gauge between
$\nabla^{-1}$ and $\nabla^1, $ i.e., $\Psi = \tfrac{1}{2}(f^{-1}df)^{1,0}.$ Constructing minimal surfaces is thus equivalent to writing down appropriate families of such flat connections. Surfaces with constant mean curvature $H$ into space forms can be obtained using an associated family of the form 
$$\nabla^\lambda=\nabla + \lambda^{-1}\wt \Psi - \lambda \wt\Psi^*$$
with the difference that $\nabla$ is no longer the Levi-Civita connection of the CMC surface $f$ and  $2\wt \Psi $ is no longer the $(1,0)$-part of its Maurer-Cartan form, but rather correspond to  the data of an associated minimal surface obtained via Lawson correspondence. More explicitly,
let $\wt \Psi = \tfrac{\lambda_2}{\lambda_2-\lambda_1}(f^{-1}df)^{1,0}$, with $\lambda_1 = -1$ and $\lambda_2 = \tfrac{-iH + 1}{i H +1}$ and $\nabla = d+ \wt \Psi-\wt\Psi^*.$ Then $\nabla^{\lambda_1}=d$ and $\nabla^{\lambda_2} = d+ f^{-1}df$ has trivial  monodromy  and the gauge between the connections $\nabla^{-1}$ and $\nabla^{\lambda_2}$ is the immersion $f$.  \\

The DPW method \cite{DPW} is a way to generate such families of flat connections on $M_g$ from so-called {\em DPW potentials}, denoted by  $\eta = \eta^\lambda,$ 
using loop group factorisation. In fact, $\eta^\lambda$ determines the gauge class of the connections $\nabla^\lambda$ as
$$d+ \eta^\lambda \in [\nabla^\lambda].$$ On simply connected domains $\D$, all DPW potentials give rise to minimal surfaces from $\D$. Whenever the domain has non-trivial topology, finding DPW potentials satisfying  
conditions equivalent to (i)-(iii) for general Sym points $\lambda_1, \lambda_1 \in \mathbb S^1$ becomes difficult.  The problem of finding DPW potentials that fulfill these types of conditions is referred to as {\em Monodromy Problem}.\\

Though successful in the case of tori, the first embedded and closed minimal surfaces of genus $g>1$ using DPW were only recently constructed in \cite{HHT}. This is due to the fact that in contrast to tori the fundamental group of a higher genus surface is non-abelian. A global version of DPW has been developed in \cite{He1, He2} under certain symmetry assumptions. The main challenge to actually construct higher genus minimal and CMC surfaces is to determine infinitely many parameters in the holomorphic ``Weierstra\ss-data'' -- referred to as spectral data. The key idea in our approach is to determine these missing parameters by starting at a well-understood surface, relax some closing conditions, and deform the known spectral data in a direction that changes the genus of the surface such that the Monodromy Problem is solved at rational times.
These ideas were first implemented in \cite{HHS} to deform homogenous and 2-lobed Delaunay tori in direction of higher genus CMC surfaces giving rise to families of closed but branched CMC surfaces in the 3-sphere. A similar philosophy was independently pursued in \cite{nnoids} to find CMC spheres with Delaunay ends  in euclidean space, and more generally, CMC surfaces close to a chain of spheres have been constructed in \cite{nodes}. Combining both approaches embedded minimal surfaces using the DPW approach were constructed in \cite{HHT}. In particular, we gave an alternate existence proof of the Lawson surfaces $\xi_{1,g,}$ by constructing a family $f^t$ of minimal surfaces for $t\sim0$ starting at two orthogonally intersecting geodesic spheres and deform its DPW potential into the direction of a Scherk surface such that $f^t= \xi_{1,g}$ at $t= \tfrac{1}{2(g+1)}$. \\

In this paper we start with constructing {\em Fuchsian} potentials to extend this idea of obtaining closed CMC surfaces by desingularizing two intersecting 2-spheres to Scherk surfaces that intersect at angle $2 \varphi,$ for $\varphi \in (0,\tfrac{\pi}{2})$. For better exposition the $t$-parameter in this paper is no longer $\tfrac{1}{2g+2}=s$ but agrees with $s$ up to order $\geq2$ at $t=s=0$. Our first main theorem is

\begin{theorem}(Existence)\label{MT}
For every $g \in \N$ sufficiently large,  there exists  a smooth family of conformal CMC embeddings $f_{g,\varphi} \colon M_{g, \varphi} \longrightarrow \S^3$ from a compact surface of genus $g$ with 
parameter $\varphi \in (0, \tfrac{\pi}{2})$ 
\begin{equation*}
M_{g,\varphi} \colon y^{g+1} = \frac{(z-p_1)(z-p_4)}{(z - p_2)(z-p_3)},
\end{equation*}
with $p_1 = e^{\ii \varphi}, p_2= -e^{-\ii\varphi}, p_3 = -e^{\ii \varphi}$ and $p_4 = e^{-\ii \varphi}$ satisfying
\begin{itemize}
\item for $\varphi \rightarrow 0, \tfrac{\pi}{2}$ the immersion $f_{g, \varphi}$ smoothly converges to a doubly covered geodesic 2-sphere with $2g+2$ branch points, i.e., the family $f_{g,\varphi}$ cannot be extended in the parameter $\varphi$ in the space of immersions;
\item $f_{g, \varphi} = f_{g, \tfrac{\pi}{2} - \varphi}$ up to  reparametrization and (orientation reversing) isometries of $\S^3$;
\item $f_{g, \tfrac{\pi}{4}}$ is the Lawson surface $\xi_{1,g}$ of genus $g$;
\item the (constant) mean curvature $H_{g, \varphi}$ of $f_{g, \varphi}$ is zero if and only if $\varphi =\tfrac{\pi}{4}$.
\end{itemize}
\end{theorem}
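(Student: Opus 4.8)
The plan is to adapt the DPW deformation scheme of \cite{HHS, HHT}, now run with a $\varphi$-dependent family of \emph{Fuchsian} potentials on the four-punctured sphere together with the cyclic symmetry that produces the curve $M_{g,\varphi}$.

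\textbf{Step 1 (Ansatz).} First I would write down a $\lambda$-dependent meromorphic potential $\eta^\lambda=\eta^\lambda_{g,\varphi}$ on $\mathbb{CP}^1$ with simple poles exactly at $p_1,\dots,p_4$ and residue eigenvalues prescribed (as functions of $g,\varphi,\lambda$) so that the flat connection $d+\eta^\lambda$ pulls back, under the degree-$(g+1)$ cyclic cover $M_{g,\varphi}\to\mathbb{CP}^1$ defined by $y^{g+1}=\tfrac{(z-p_1)(z-p_4)}{(z-p_2)(z-p_3)}$, to a connection with no local monodromy obstruction at the branch points. I would impose on $\eta^\lambda$ the deck symmetry $y\mapsto e^{2\pi\ii/(g+1)}y$ together with the dihedral symmetries of the configuration $\{p_j\}\subset\S^1$; this reduces the Monodromy Problem --- condition (i) unitarizability of $d+\eta^\lambda$ along $\S^1$, (ii) trivial monodromy at the two Sym points $\lambda_1,\lambda_2\in\S^1$, (iii) nilpotency of the Higgs field --- to finitely many equations in the finitely many free parameters of $\eta^\lambda$ (the spectral data). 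As in \cite{HHT}, I would keep in reserve the option of temporarily relaxing (ii), allowing branched surfaces en route.

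\textbf{Step 2 (Singular limit and implicit function theorem).} With $t\sim\tfrac{1}{2(g+1)}$ playing the role of the small deformation parameter, I would show that as $t\to 0$ (equivalently $g\to\infty$) the conformal modulus of $M_{g,\varphi}$ degenerates, the surface splits off two geodesic $2$-spheres meeting at angle $2\varphi$ (the noded Scherk configuration), and $\eta^\lambda_{g,\varphi}$ converges to an explicit limit potential solving the Monodromy Problem. Packaging the Monodromy Problem as the zero locus of a smooth map $F(t,\cdot)=0$ from the parameter space to the obstruction space, with $F(0,\cdot)=0$ at the limit parameters, I would invoke the implicit function theorem to continue the solution to small $t>0$ and all $\varphi\in(0,\tfrac{\pi}{2})$, with smooth dependence on both, the relaxed condition (ii) being recovered at the value of $t$ that agrees with $\tfrac{1}{2(g+1)}$ to higher order, which yields a genuine (unbranched) CMC surface there. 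The main obstacle is precisely the hypothesis of the implicit function theorem: one must show that the partial derivative $D_{\mathrm{param}}F$ at the limit is an isomorphism modulo the unavoidable gauge and reparametrization freedoms. This amounts to computing derivatives of the parallel transport of the explicit limiting connection around the generators of $\pi_1$ and checking that they span the obstruction space, and it is this estimate that forces $g$ to be sufficiently large. The standard DPW reconstruction then produces the family $f_{g,\varphi}$, and embeddedness near the limit follows from a graph/maximum-principle comparison with the two spheres, as in \cite{HHT}.

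\textbf{Step 3 (Symmetries and the four stated properties).} Finally I would extract the bullet points. As $\varphi\to 0$ or $\tfrac{\pi}{2}$ the points $p_j$ collide in pairs, so $M_{g,\varphi}$ and $\eta^\lambda$ degenerate and $f_{g,\varphi}$ converges to a doubly covered geodesic $2$-sphere with $2g+2$ branch points, which shows the family is inextensible in $\varphi$ among immersions. A Möbius transformation of $\mathbb{CP}^1$ carrying the configuration for $\varphi$ to that for $\tfrac{\pi}{2}-\varphi$ intertwines the two potentials and descends to the (orientation-reversing) isometry $f_{g,\varphi}=f_{g,\pi/2-\varphi}$. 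At $\varphi=\tfrac{\pi}{4}$ the four points form a square, producing an extra $\tfrac{\pi}{2}$-rotational symmetry; uniqueness of the CMC surface of this conformal type with this enhanced symmetry, matched against the construction of \cite{HHT}, identifies $f_{g,\pi/4}$ with $\xi_{1,g}$, which is minimal, so $H_{g,\pi/4}=0$. For the converse, the Sym points are $\lambda_1=-1$ and $\lambda_2=\tfrac{-\ii H+1}{\ii H+1}$, and $H_{g,\varphi}$ is pinned by requiring trivial monodromy at $\lambda_2$; tracking $\lambda_2$ along the family (equivalently, noting that the symmetry forcing $\lambda_2=1$ is present only at $\varphi=\tfrac{\pi}{4}$) gives $\lambda_2\neq 1$ and hence $H_{g,\varphi}\neq 0$ for $\varphi\neq\tfrac{\pi}{4}$, so $H_{g,\varphi}=0$ if and only if $\varphi=\tfrac{\pi}{4}$.
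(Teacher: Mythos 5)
Your overall strategy is the same as the paper's (a symmetric Fuchsian DPW potential on the four-punctured sphere, an implicit function theorem at the singular limit $t\sim\tfrac{1}{2g+2}\to0$, closing at rational times, and symmetry arguments for the bullets), but there are two genuine gaps. The first and most serious is uniformity in $\varphi$. The implicit function theorem you invoke in Step 2 is applied, for each fixed $\varphi$, at the degenerate limit $t=0$; it only yields an existence interval $|t|<T(\varphi)$, i.e.\ a genus threshold $g_0(\varphi)$ that a priori blows up as $\varphi\to0$ or $\tfrac{\pi}{2}$, where the poles $p_1,p_4$ (resp.\ $p_1,p_3$) collide and the potential degenerates a second time. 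Without a uniform $T>0$ you cannot conclude the statement ``for every $g$ sufficiently large there exists a family parametrized by \emph{all} $\varphi\in(0,\tfrac{\pi}{2})$'', nor the first bullet, which is a statement about the limit $\varphi\to0,\tfrac{\pi}{2}$ at \emph{fixed} $g$. In the paper this is exactly the content of Section \ref{limitvarphi}: one shows that the monodromy data ($\mathcal P$, hence $\mathcal F_1,\mathcal H_1$) extend as analytic functions of $\varphi$ and $\varphi\log\varphi$ at $\varphi=0$ (via the principal-solution decomposition along $\Gamma_0\cdot\Gamma_{1,\varphi}\cdot\Gamma_{2,\varphi}$ and Theorem \ref{theorem:philogphi} from \cite{nodes}), so that the implicit function theorem applies uniformly on $[0,\tfrac{\pi}{4}]$ and the $\varphi\to0$ limit of $f_{g,\varphi}$ can be identified with the doubly covered great sphere. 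Your Step 3 asserts this limit behaviour from the collision of the $p_j$ alone, which is not a proof; the smooth-in-$\varphi$ dependence you claim does not come for free across this degeneration.

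The second gap is the ``only if'' in the last bullet. Arguing that ``the symmetry forcing $\lambda_2=1$ is present only at $\varphi=\tfrac{\pi}{4}$'' shows at best that nothing \emph{forces} $H=0$ away from $\varphi=\tfrac{\pi}{4}$; it does not show $H_{g,\varphi}\neq0$ there. The paper proves this quantitatively (Proposition \ref{prop:H}): $H=\operatorname{cotan}(\theta(t,\varphi))$ and $\tfrac{\partial\theta}{\partial t}(0,\varphi)=2\sin(2\varphi)\log(\tan\varphi)<0$ on $(0,\tfrac{\pi}{4})$, with a separate argument near $\varphi=0$ (again using $\varphi\log\varphi$-analyticity) and near $\varphi=\tfrac{\pi}{4}$ to get a $t$-interval uniform in $\varphi$; together with the $\varphi\mapsto\tfrac{\pi}{2}-\varphi$ symmetry this gives $H=0$ iff $\varphi=\tfrac{\pi}{4}$. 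Two smaller points: the parameters of the potential are loop-group valued (elements of $\mathcal W_\rho^{\geq0}$), so the reduction is to a Banach-space implicit function theorem, not ``finitely many equations in finitely many parameters''; and embeddedness in the paper is obtained from the Willmore bound $\mathcal W(f_{g,\varphi})<8\pi$ plus Li--Yau (which itself uses $H\neq0$ via the constrained-Willmore monotonicity), rather than a graph/maximum-principle comparison, which is delicate here because the convergence to the two spheres fails to be smooth along the intersection circle.
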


Combining the results by Kusner-Mazzeo-Pollack \cite{KMP} and Kapouleas-Wiygul \cite{KapWiy} the moduli space of genus $g$ CMC surfaces is $1$-dimensional at the Lawson surface $\xi_{1,g}$. Therefore, the families we construct in Theorem \ref{MT} give the first global result on the structure of the moduli space of CMC surfaces of genus $g>1.$
By an estimate of Li-Yau \cite{LiYau} surfaces $f:M_g \longrightarrow \mathbb S^3$ with Willmore energy 
\begin{equation*}
\mathcal W (f) = \int_{M_g} (H^2+1) dA,
\end{equation*}
below $8 \pi$ are automatically embedded. To apply this theorem to the families of CMC surfaces constructed in Theorem \ref{MT}, we estimate their Willmore energy.

\begin{theorem}(Energy expansion)\label{MT2}
Let  $f_{g,\varphi} \colon M_{g, \varphi} \longrightarrow \S^3$ be the  smooth family of conformal embeddings constructed in Theorem \ref{MT}.
Then there exists an iterative algorithm to compute the DPW potential as well as the area and Willmore energy of $f_{g,\varphi}$ in terms multiple polylogarithms (MPLs). In particular, we have
\begin{itemize}
\item the Willmore energy of $f_{g,\varphi}$ is strictly monotonically decreasing in $\varphi$ for $\varphi \in (0, \tfrac{\pi}{4})$ from $8 \pi$ to Area$(\xi_{1,g})= \mathcal W(\xi_{1,g})$  with Taylor expansion  at $g=\infty$ given by  
\begin{equation}\label{eq:Willmore-series}
\mathcal W(f_{g,\varphi})= 8 \pi \left(1-  \sum_{\substack{k=1 \\ k\text{ odd}}}^\infty \mathcal W_k  \tfrac{1}{(2g + 2)^k}\right)
\end{equation}
with $\mathcal W_1=  -2\left[ \cos(\varphi)^2 \log(\cos(\varphi)) + \sin(\varphi)^2\log(\sin(\varphi)) \right].$ 

\item furthermore, for $\varphi = \tfrac{\pi}{4}$ we have 
\begin{equation}
\label{eq:area-series}
\text{Area}(\xi_{1,g})=8 \pi\left(1-\sum_{\substack{k=1 \\ k\text{ odd}}}^{\infty}\frac{\alpha_k}{(2g+2)^k}\right)
\end{equation}

with $\alpha_1 = \log (2)$ and $\alpha_3= \tfrac{9}{4}\zeta (3),$
where $\zeta$ is the Riemann $\zeta$-function.
\end{itemize}
\end{theorem}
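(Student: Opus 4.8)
The plan is to turn the existence result of Theorem~\ref{MT} into an explicit perturbative computation in the deformation parameter $t$ and then read off the area and Willmore energy. The surface $f_{g,\varphi}$ is reconstructed, via the DPW method, from a family of Fuchsian potentials $\eta^\lambda$ on $\mathbb{P}^1\setminus\{p_1,\dots,p_4\}$ whose local data at the $p_i$ are dictated by the cyclic cover $M_{g,\varphi}$ and whose remaining free parameters are pinned down by the Monodromy Problem (conditions (i)--(iii)). I expand the potential, the free parameters, and the $\lambda$-dependent parallel transport $\Phi^\lambda = \mathrm{Id}+t\Phi_1+t^2\Phi_2+\cdots$ as power series in $t$, where $t = s + O(s^2)$ with $s = \tfrac1{2g+2}$. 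At $t=0$ the transport is the explicit monodromy of the two-orthogonal-spheres / Scherk model. At order $t^k$, imposing unitarity on $\S^1$ and triviality of the monodromy at the Sym points $\lambda_1=-1$ and $\lambda_2$ becomes a \emph{linear} system for the order-$k$ parameters whose inhomogeneous term is an iterated $\lambda$-integral of the lower-order data; solving it recursively writes every $\Phi_k$ and every parameter as an explicit combination of multiple polylogarithms in the $p_i$ (and powers of $s$). Together with the convergence proved for Theorem~\ref{MT}, this recursion is the asserted iterative algorithm.

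Next I feed this series into the area/energy formula. The Sym--Bobenko reconstruction expresses the induced metric, hence $\mathrm{Area}(f_{g,\varphi})$ and $\mathcal W(f_{g,\varphi})=\int_{M_g}(H^2+1)\,dA$, through the $\lambda$-expansion of the monodromy at the Sym points; after the loop-group factorization this reduces, in the Fuchsian situation, to residues at the $p_i$ of meromorphic quantities built from $\eta^\lambda$ and the $\Phi_k$. Substituting the series yields $\mathcal W(f_{g,\varphi}) = 8\pi\bigl(1-\sum_{k\ge1}\mathcal W_k s^k\bigr)$ with each $\mathcal W_k$ a fixed MPL function of $\varphi$. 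The absence of even powers of $s$ is a parity property of the monodromy data under $s\leftrightarrow -s$ (equivalently $g+1\leftrightarrow-(g+1)$), the vanishing of $\mathcal W_2$ in \cite{HHT} being its first instance. The order-$t$ term involves only weight-one iterated integrals, namely $\log\cos\varphi$ and $\log\sin\varphi$, and collecting them gives $\mathcal W_1 = -2[\cos^2\varphi\log\cos\varphi + \sin^2\varphi\log\sin\varphi]$; at $\varphi=\tfrac\pi4$ this is $\log 2 = \alpha_1$.

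For $\alpha_3$ I specialize to $\varphi=\tfrac\pi4$ inside the recursion, where the $p_i$ become the primitive eighth roots of unity; the weight-three iterated integrals produced at order $s^3$ then lie in the alternating multiple zeta algebra, and after applying the shuffle and stuffle relations — in particular the classical reductions of $\mathrm{Li}_3(\tfrac12)$, $\zeta(2)\log 2$ and $\log^3 2$ — the weight-three part collapses to a rational multiple of $\zeta(3)$ with all powers of $\log 2$ cancelling; the bookkeeping gives $\alpha_3 = \tfrac94\zeta(3)$. For monotonicity, $\tfrac{d}{d\varphi}\mathcal W_1 = -4\sin\varphi\cos\varphi\log\tan\varphi > 0$ on $(0,\tfrac\pi4)$, so the order-$s$ term of $\mathcal W(f_{g,\varphi})$ is strictly decreasing in $\varphi$ there; for $g$ large the uniform bounds on the $\mathcal W_k$ coming from Theorem~\ref{MT} make this term dominate $\sum_{k\ge3}\mathcal W_k s^k$ and its $\varphi$-derivative, giving strict monotonicity on $(0,\tfrac\pi4)$, and the endpoint values are $8\pi$ (the doubly covered geodesic $2$-sphere, for which $\mathcal W=\mathrm{Area}=8\pi$) as $\varphi\to0$ and $\mathrm{Area}(\xi_{1,g})$ at $\varphi=\tfrac\pi4$, using $H_{g,\pi/4}=0$.

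The main obstacle is controlling the class of functions the recursion produces: one must verify that at each order the inhomogeneous terms are iterated integrals with exactly the singularities that keep them within the multiple polylogarithm algebra in $p_1,\dots,p_4$, and that their specialization at $\varphi=\tfrac\pi4$ genuinely lands in the alternating MZV algebra so that values such as $\tfrac94\zeta(3)$ can be recognized in closed form; carrying this weight/length bookkeeping through the orders needed, and extracting the uniform-in-$g$ remainder estimates required for the global monotonicity claim, is where the substantive work lies.
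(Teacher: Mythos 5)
Your overall architecture (order-by-order expansion of the Fuchsian potential parameters, with each order a linear solve whose inhomogeneous data are iterated integrals; the residue/Sym-point formula reducing $\mathcal W$ to the constant Fourier coefficients of the parameters; parity under $t\mapsto -t$ killing the even coefficients; specialization to $\varphi=\tfrac{\pi}{4}$ and MZV reduction for $\alpha_3$) is essentially the paper's route. But your monotonicity argument has a genuine gap. You argue that $\partial_\varphi\mathcal W_1=-2\sin(2\varphi)\log\tan\varphi>0$ on $(0,\tfrac{\pi}{4})$ and that for $g$ large ``uniform bounds on the $\mathcal W_k$'' make the first-order term dominate the tail and its $\varphi$-derivative. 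This fails near the endpoints: $\partial_\varphi\mathcal W_1$ vanishes as $\varphi\to 0$ (like $\varphi\log\varphi$) and as $\varphi\to\tfrac{\pi}{4}$ (like $\varphi-\tfrac{\pi}{4}$), so for any fixed $g$, a bound of the form $\bigl|\sum_{k\geq 3}\partial_\varphi\mathcal W_k\,s^k\bigr|\leq C s^3$ uniform in $\varphi$ does not give domination on all of $(0,\tfrac{\pi}{4})$; one must show that the higher-order corrections degenerate at the endpoints at least as fast as the leading term. The paper does exactly this kind of endpoint analysis, but for the angle $\theta$ (equivalently $H$), using that the solution extends analytically in $(t,\varphi,\varphi\log\varphi)$ with $\wh{\theta}(t,\tfrac{\pi}{4})=0$ and $\Theta(t,0,0)=0$ for all $t$ (Propositions \ref{prop:uniformT} and \ref{prop:H}); it then deduces monotonicity of $\mathcal W$ by a structural argument you do not have: CMC surfaces are constrained Willmore with Lagrange multiplier $\tfrac12 HQ$, so $\mathcal W$ is monotone along the family as long as $H\neq 0$, and Proposition \ref{prop:H} gives $H_{g,\varphi}>0$ on $(0,\tfrac{\pi}{4})$ (Proposition \ref{prop:willmore}). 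Either you adopt that reduction, or you must supply the $\varphi\log\varphi$-analyticity and the vanishing of the full series at the endpoints to control $\partial_\varphi\mathcal W$ there; as written, your domination claim is unsupported precisely where it is delicate.

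A secondary, non-fatal difference: for $\alpha_3$ you propose expanding directly into multiple polylogarithms at the eighth roots of unity and reducing by shuffle/stuffle. The paper instead first maps each relevant $\Omega$-value to $\ii\pi$ times a \emph{single} alternating MZV with indices in $\{\overline{1},2,\overline{2}\}$ via the Hirose--Sato iterated beta integral invariance (Proposition \ref{prop:omegaeval}), and only then uses elementary level-2 relations; this is the step that makes the closed form $\tfrac94\zeta(3)$ (and the higher $\alpha_k$) tractable, whereas a level-8 reduction is in principle possible but is exactly the ``considerable additional effort'' the paper warns about, and your proposal gives no mechanism guaranteeing the cancellations down to $\zeta(3)$ beyond asserting them.
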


\begin{remark}
The $\alpha_1= \log(2)$ was computed in \cite{HHT}.  
Identifying $\alpha_3$ to be $\tfrac{9}{4}\zeta(3)$ was first suggested by Wolframalpha. The verification of this identity required considerable additional effort compared to $\alpha_1$.
\end{remark}

An immediate corollary of the energy estimates is that the infimum Willmore energy in the conformal class of $f_{g, \varphi}$ is below $8 \pi$ for all $\varphi \in (0, \tfrac{\pi}{2})$ and $g\gg1$. Therefore, by \cite{KuwertSchatzle} the infimum is attained at an embedding.
\begin{corollary}
For $g\gg1$  the constrained Willmore infimum
\begin{equation*}
\textrm{Inf}\;\{\mathcal W(f) \; | \; f \colon M_{g,\varphi} \longrightarrow \S^3, \text{conformal immersion}\}.
\end{equation*}
is attained at a smooth and conformal embedding. 
\end{corollary}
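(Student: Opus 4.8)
The plan is to deduce the corollary directly from the energy expansion of Theorem~\ref{MT2} together with the known compactness/existence theory for the constrained Willmore problem. First I would observe that Theorem~\ref{MT2} asserts that $\varphi \mapsto \mathcal W(f_{g,\varphi})$ is strictly decreasing on $(0,\tfrac{\pi}{4})$ with limiting value $8\pi$ as $\varphi \to 0^+$; hence $\mathcal W(f_{g,\varphi}) < 8\pi$ for every $\varphi \in (0,\tfrac{\pi}{4}]$ and every sufficiently large $g$. Invoking the reflection symmetry $f_{g,\varphi} = f_{g,\tfrac{\pi}{2}-\varphi}$ from Theorem~\ref{MT} extends this strict bound to all $\varphi \in (0,\tfrac{\pi}{2})$. (Equivalently, one may read it off from the sign of the leading coefficient: since $\mathcal W_1 = -2\left[\cos(\varphi)^2\log(\cos(\varphi)) + \sin(\varphi)^2\log(\sin(\varphi))\right] > 0$ on $(0,\tfrac{\pi}{2})$, the subtracted sum in \eqref{eq:Willmore-series} is positive for $g \gg 1$, giving $\mathcal W(f_{g,\varphi}) < 8\pi$ again.)

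Next, since $f_{g,\varphi} \colon M_{g,\varphi} \to \S^3$ is itself a conformal immersion, it is an admissible competitor in the variational problem defining the constrained Willmore infimum in the conformal class of $M_{g,\varphi}$; therefore that infimum is bounded above by $\mathcal W(f_{g,\varphi}) < 8\pi$. At this point I would apply the existence theorem of Kuwert--Schätzle \cite{KuwertSchatzle}: on a closed surface of genus $\geq 1$ (here $g \gg 1$), whenever the infimum of the Willmore energy over a fixed conformal class lies strictly below $8\pi$, it is attained by a smooth conformal immersion. Finally, the Li--Yau inequality \cite{LiYau}, already recalled in the introduction, forces any immersion with Willmore energy below $8\pi$ to be embedded; applying it to the minimizer, whose energy is $< 8\pi$ by the previous step, yields the asserted smooth conformal embedding.

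I do not expect a genuine obstacle here: the substantive input, the strict bound $\mathcal W(f_{g,\varphi}) < 8\pi$, is precisely the content of Theorem~\ref{MT2}, and the passage from ``infimum below $8\pi$'' to ``attained at a smooth embedding'' is the black box supplied by \cite{KuwertSchatzle} and \cite{LiYau}. The only points that require (minor) care are checking that the inequality is strict and holds up to — but not including — the degenerate endpoints $\varphi = 0, \tfrac{\pi}{2}$, where $f_{g,\varphi}$ limits to a branched double cover of a geodesic $2$-sphere of energy exactly $8\pi$; and verifying that the hypotheses of the Kuwert--Schätzle theorem (closed surface, genus $\geq 1$, fixed conformal class, sub-$8\pi$ infimum) are satisfied for the Riemann surfaces $M_{g,\varphi}$ constructed in Theorem~\ref{MT}.
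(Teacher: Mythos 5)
Your argument is essentially identical to the paper's: the paper deduces the corollary from Proposition \ref{prop:willmore} (the uniform bound $\mathcal W(f_{g,\varphi})<8\pi$ for $g\gg1$, with embeddedness via Li--Yau) combined with the Kuwert--Sch\"atzle existence theorem for constrained Willmore minimizers when the conformal-class infimum lies below $8\pi$. Your main route (citing the strict monotonicity from Theorem \ref{MT2} rather than the parenthetical leading-coefficient argument, which would need uniform control of the remainder near $\varphi=0,\tfrac{\pi}{2}$ where $\mathcal W_1\to 0$) is exactly how the paper establishes the sub-$8\pi$ bound, so the proposal is correct and matches the paper's proof.
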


The main idea of proving the two  Theorems \ref{MT} and \ref{MT2} is as follows. Let $\pi\colon M_{g,\varphi} \longrightarrow \C P^1$ be the projection from $M_{g, \varphi},$ see \eqref{riemannsurface}, to $\C P^1$ totally branched over the four points $p_1, ..., p_4$. Showing the existence of a (Fuchsian) DPW potential solving a Monodromy Problem on the 4-punctured sphere $\C P^1\setminus\{p_1, ..., p_4\}$ then gives rise to minimal or CMC  surface patches
$\tilde f_{g, \varphi}$ in the round 3-sphere with boundary. The closed surfaces $f_{g, \varphi} \colon M_{g, \varphi} \longrightarrow \S^3$ are then obtained through analytic continuation of $\tilde f_{g, \varphi}$.  Since the potential used in  \cite{HHT}  withstood the generalization to general wing angles (in a way that an implicit function theorem argument can be applied to obtain compact surfaces), we give in Section \ref{ssec:thepotential} an alternate ansatz using the same philosophy which allows the intersection angle and the wing angle to be $2\varphi$ with $\varphi \in [0, \tfrac{\pi}{2}]$. For $\varphi \in (0, \frac{\pi}{2})$ fixed, we then show the existence of a unique family $\eta_{t, x(t), \varphi}$ of Fuchsian DPW potentials given by the parameter vector $x(t)$ for $t \sim 0$ solving the Monodromy Problem. An advantage of these Fuchsian potentials $\eta_{t, x(t), \varphi}$ is that the limiting behaviour for $\varphi\rightarrow 0$ (and $\varphi \rightarrow \tfrac{\pi}{2}$) can be understood (within the same setup) to obtain a uniform existence interval in $t$ for all $\varphi$ in Section \ref{limitvarphi}. This gives rise to complete families of CMC surfaces for genus $g \gg1$. \\

Through the DPW approach it is possible to compute geometric properties of the surfaces explicitly from the potential $\eta_{t, x(t), \varphi}$. The major improvement of the paper at hand is that we construct {\em Fuchsian} DPW potentials that incorporate all symmetries of the immersion.  In stark contrast to \cite{HHT} this new approach not only allows 
to obtain a complete family of CMC surfaces $f_{g, \varphi}$ deforming the Lawson surface $\xi_{1,g}$ but also allows for an iterative algorithm to compute the Taylor expansions of the DPW potential as well as the area of $f_{g, \varphi}$ explicitly. For a deeper investigation of the Taylor series, we specialize to the case of Lawson's minimal surfaces starting from Section \ref{mincasereduce}. When writing the area expansion for $\varphi = \tfrac{\pi}{4}$ as in \eqref{eq:area-series} we have an iterative algorithm to compute the coefficients $\alpha_k$ in terms of certain iterated ($\Omega$-) integrals. Evaluating these integrals gave
\begin{align*}
&\alpha_1=\log(2)\\
&\alpha_3=\frac{9}{4}\zeta(3)\\
&\alpha_5\simeq 3.6996269944\ldots\\
&\alpha_7\simeq -53.1688000602\ldots\\
&\alpha_9\simeq -459.5656763714\ldots
\end{align*}
while the even order coefficients vanish. Note that the coefficients have changing signs, so it is not obvious whether the area is monotonic in $g.$ Then in Section \ref{section:multizetas} we first prove a structure theorem for the coefficients $\alpha_k$. With the notations of Section \ref{section:multizetas} we have
\begin{theorem}
\label{thm1}
Every $\alpha_k$ can be expressed as a weight $k$ linear combination ( with coefficients in $\mathbb Q$)  of products of alternating multiple zeta values at arguments in $\{\overline{1},2,\overline{2}\}$.
\end{theorem}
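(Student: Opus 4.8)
The plan is to trace the iterative algorithm of the previous sections through and to show that, specialised to $\varphi=\tfrac\pi4$, every object it produces --- the Taylor coefficients of the DPW potential, the auxiliary parallel sections used to measure the monodromy, and ultimately $\alpha_k$ --- is a $\mathbb Q$--linear combination of iterated integrals on $\mathbb P^1$ minus a finite set, whose kernels lie in the level--two alphabet $\{\tfrac{dz}{z},\tfrac{dz}{1-z},\tfrac{dz}{1+z}\}$ and whose words moreover have a restricted shape.

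First I would pin down the base space. The four branch points of $M_{g,\varphi}$ at $\varphi=\tfrac\pi4$ are $p_1,\dots,p_4=e^{\pm \ii\pi/4},-e^{\pm\ii\pi/4}$, and a direct computation shows that their cross--ratio (in the pairing $\{p_1,p_4\}$ versus $\{p_2,p_3\}$ dictated by the defining equation) is the anharmonic value $-1$ (equivalently $2$ or $\tfrac12$). Hence after the M\"obius normalisation of Section~\ref{mincasereduce} the branch locus becomes $\{0,1,-1,\infty\}$, the $g$--dependence of the surface is carried entirely by the factor $\tfrac1{2g+2}$, and the $\Omega$--forms entering the algorithm become the standard multiple--polylogarithm kernels at second roots of unity, i.e.\ logarithmic $1$--forms with poles in $\{0,1,-1,\infty\}$. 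Since the monodromy recursion performs one integration per order in $t$, the weight--$k$ contribution to $\alpha_k$ is a $\mathbb Q$--combination of length--$k$ regularised iterated integrals from the fixed (tangential) base point to the relevant fibre. By the classical dictionary between such integrals and multiple polylogarithms evaluated at roots of unity of order $2$, this already yields that each $\alpha_k$ is a weight--$k$ $\mathbb Q$--combination of products of alternating MZVs; the vanishing of the even $\alpha_k$ is then the usual weight--parity phenomenon, also visible from the $\varphi\leftrightarrow\tfrac\pi2-\varphi$ symmetry underlying the expansion.

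The substantial point is the restriction of the arguments to $\{\overline1,2,\overline2\}$. In word language this says that every word occurring is a concatenation of the three blocks $\tfrac{dz}{1+z}$ (contributing $\overline1$), $\tfrac{dz}{z}\tfrac{dz}{1-z}$ (contributing $2$) and $\tfrac{dz}{z}\tfrac{dz}{1+z}$ (contributing $\overline2$); equivalently: $\tfrac{dz}{z}$ never occurs at the right end of a word or twice in succession, and $\tfrac{dz}{1-z}$ is always immediately preceded by $\tfrac{dz}{z}$. I would prove this by induction on the order $k$, carried together with an explicit description of the shape of each intermediate integrand. Because the potential of Section~\ref{ssec:thepotential} is Fuchsian, its entries contribute only isolated simple--pole $\mathrm{dlog}$--kernels, so a repeated $\tfrac{dz}{z}\tfrac{dz}{z}$ can never be generated directly; one then checks that the diagonal kernel $\tfrac{dz}{z}$ enters the recursion only when glued to the left of a kernel coming from the off--diagonal, nilpotent part of $\Psi$ (condition (iii)), which is of $1\mp z$ type --- this is exactly what forces the blocks $\tfrac{dz}{z}\tfrac{dz}{1\mp z}$ and forbids $\tfrac{dz}{1-z}$ standing alone or $\tfrac{dz}{z}$ ending a word. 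Products of such iterated integrals are reassembled, via the shuffle product and shuffle--regularisation of the logarithmic endpoint divergences, into $\mathbb Q$--combinations of products of alternating MZVs with arguments in the same set, because the regularisation only strips $\tfrac{dz}{1\mp z}$--tails and trades them for lower--weight terms of the same restricted type.

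The hard part is precisely this inductive bookkeeping: one must verify that \emph{no} stage of the construction --- neither the successive variations of parallel transport, nor the push--forward from $M_{g,\varphi}$ to $\mathbb P^1$ (which a priori reintroduces the algebraic function $y$), nor the regularisation at the endpoint --- ever creates a forbidden pattern. This is where the precise Fuchsian ansatz and the nilpotency of $\Psi$ must be used in an essential way; a soft ``weight and level'' argument gives the weight--$k$ and level--$2$ statements but not the sharper constraint on the words. As a consistency check one verifies the claim by hand for $k=1,3,5$ against $\alpha_1=\log2=-\zeta(\overline1)$, $\alpha_3=\tfrac94\zeta(3)$ (rewritten through weight--$3$ alternating MZVs such as $\zeta(\overline2,\overline1)$, $\zeta(2)\zeta(\overline1)$, $\zeta(\overline1)^3$), and the numerical value $\alpha_5\simeq 3.6996\ldots$.
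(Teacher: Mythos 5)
Your proposal breaks down at the step you treat as routine, and the step you flag as the ``hard part'' is in fact the easy one. After the M\"obius normalisation the four branch points do go to $\{0,\pm1,\infty\}$ (your cross-ratio observation is correct), but the endpoints of the iterated integrals are $z=0$ and $z=1$, which under the same M\"obius map become $-\ii$ and $-1+\sqrt2$; one cannot simultaneously normalise the four singularities \emph{and} the two endpoints with a single M\"obius transformation. So the ``classical dictionary'' does not apply: what you get at this stage are iterated integrals of words in $\tfrac{dw}{w},\tfrac{dw}{w\mp1}$ along a path from $-\ii$ to $-1+\sqrt2$, i.e.\ multiple polylogarithms at algebraic, non--root-of-unity arguments, not alternating MZVs. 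This is not a presentational issue: the paper explicitly notes (Remark \ref{remark:numAbsOmega}) that the $\Omega$-values attached to paths ending at vertices other than $e_1$ are in general \emph{not} expressible through MZVs, so no soft ``level 2'' argument can close this gap. The actual content of the paper's proof is Proposition \ref{prop:omegaeval}: each relevant $\Omega$-value equals $\pm\ii\pi$ times a \emph{single} alternating MZV, and this is obtained by recognising the pulled-back forms $\widetilde\omega_j$ as the square-root kernels $F^{1/2,1/2}_{x,y}=dt/\sqrt{(t-x)(t-y)}$ under a rational parametrisation of the curve $u^2=t(t-1)$, $v^2=t(t+1)$ (whose parametrisation sends the awkward endpoints $-\ii$, $-1+\sqrt2$ to $0$ and $-1$), and then invoking the Hirose--Sato invariance of iterated beta integrals under a common shift of the upper parameters, together with the $\alpha\to0^+$ limit (Lemma \ref{lem:beta0}). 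Nothing in your outline produces this identity, and without it the statement ``products of alternating MZVs'' is unproven; the exact $\ii\pi$ prefactor is also what cancels the explicit powers of $\pi$ produced by the algorithm and yields rational coefficients of pure weight $k$.

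Conversely, the restriction of the arguments to $\{\overline1,2,\overline2\}$, which you propose to establish by an induction through the DPW recursion using the Fuchsian ansatz and the nilpotency of $\Psi$, requires no such bookkeeping: once Proposition \ref{prop:omegaeval} is in place, it is an immediate combinatorial consequence of Proposition \ref{prop:pattern} --- the indices of an admissible $\Omega$-value label an edge-path in a triangle graph without loops, so the corresponding MZV word in $\eta_0,\eta_{\pm1}$ never has two equal consecutive letters, which is exactly the condition that every index is $\overline1$, $2$ or $\overline2$. Your sanity checks at $k=1,3$ are consistent with the theorem but do not test the missing step. To repair the proof you would need either the iterated beta integral argument (or an equivalent identity converting the $\int_{-\ii}^{-1+\sqrt2}$ integrals of the specific combinations $\widetilde\omega_j$ into genuine $\int_0^1$ integrals of single dlog forms), rather than the endpoint-regularisation and shuffle manipulations you describe, which only rearrange MPLs at algebraic arguments.
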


For example, for the first coefficient \( \alpha_1 \), it is elementary that
\[
	\zeta(\overline{1}) = \sum_{k=1}^\infty \frac{(-1)^k}{k} = -\log(2) \,,
\]	
and only the multiple remains to be fixed.  The second coefficient \( \alpha_3 \) must be a linear combination of weight 3 alternating multi-zetas, for which \( \{ \zeta(3), \zeta(2)\log(2), \log(2)^3 \} \) is known to be a spanning set.  The vanishing of the \( \zeta(2)\log(2) \) and \( \log(2)^3 \) coefficient is established by direct calculation in Section \ref{section:multizetas}, but not yet understood and hints perhaps at some deeper structures.
The pattern underlying the proof of Theorem \ref{thm1} results in a simplification of the algorithm, which allows us to compute the coefficients $\alpha_k$ up to $k=21$ numerically, and prove close form expressions for those in terms of multiple zeta values up to $k=11$ (up to $k=7$ is given in Appendix \ref{appendix:numalpha}.) Furthermore, this also allows us to give a proof identifying $\alpha_3 = \tfrac{9}{4} \zeta(3)$ without the need of any computer algebra system.  \\

A natural question that arises in Theorem \ref{MT}  is about the interval $t \in [0, T)$ for which the implicit function theorem arguments holds. By complexifying the underlying equations and going into details of the proof of the implicit function theorem using contraction mapping principle we estimate
\begin{theorem}
\label{thm2}
The Taylor series for the Fuchsian DPW potential, and in particular the series $\eqref{eq:area-series}$,  converges for $g\geq 2.65404$. 
\end{theorem}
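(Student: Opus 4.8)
The theorem makes the ``$g$ sufficiently large'' of Theorem \ref{MT} quantitative. The plan is to complexify the Monodromy Problem and re-run the contraction-mapping proof of Theorem \ref{MT} with every constant made explicit and uniform in $\varphi$. Let $\mathcal F=\mathcal F_\varphi\colon V\to \C^{n}$ be the holomorphic map on a polydisc $V\subset \C\times\C^{n}$ around $(0,x_{0})$ whose zero set encodes the (relaxed) closing conditions, with $\mathcal F(0,x_{0})=0$; the family $x(t)$ of Theorem \ref{MT} is the fixed point of
\[
\Phi_{t}(x)\;=\;x-\bigl[D_{x}\mathcal F(0,x_{0})\bigr]^{-1}\,\mathcal F(t,x).
\]
Three quantities control the contraction on a ball $\overline B(x_{0},r)$: the operator norm $M:=\bigl\|[D_{x}\mathcal F(0,x_{0})]^{-1}\bigr\|$ of the inverse linearisation at $t=0$; a bound $A(t)$ on $\|\mathcal F(t,x_{0})\|$, measuring how fast the equation leaves $0$ with $t$; and a Lipschitz constant $L(r)$ for $x\mapsto D_{x}\mathcal F(t,x)-D_{x}\mathcal F(0,x_{0})$ on $\overline B(x_{0},r)$, which by Cauchy's estimates reduces to an $L^{\infty}$-bound on $\mathcal F$ on a slightly larger polydisc. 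The standard fixed-point lemma then produces a solution holomorphic in $\{|t|<T\}$, with $T$ obtained by optimising $r$ subject to $MA(t)+ML(r)\,r\le r$ and $ML(r)<1$; since the DPW potential, the area and the Willmore energy are built from $x(t)$ by the explicit holomorphic operations of the algorithm of Theorem \ref{MT2}, their Taylor series (in particular the one in \eqref{eq:area-series}, after the elementary reparametrisation $t\leftrightarrow\tfrac1{2g+2}$ of the introduction) converge on $\{|t|<T\}$ as well, and $t=\tfrac1{2g+2}<T$ reads $g>\tfrac1{2T}-1$.

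The heart of the argument is to bound $\mathcal F$, $A(t)$ and $L(r)$ on an explicit polydisc. The monodromy of the Fuchsian potential $\eta_{t,x,\varphi}$ along the relevant loops on $\C P^{1}\setminus\{p_{1},\dots,p_{4}\}$ is a path-ordered exponential, so each component of $\mathcal F$ is a convergent series of iterated integrals with integrands built from $\eta_{t,x,\varphi}$; a sup-norm bound on $\eta_{t,x,\varphi}$ along those loops, uniform for $x\in \overline B(x_{0},\rho)$ and for $\varphi\in(0,\tfrac\pi2)$, yields a geometric majorant for these series and hence the required bounds, including those for the first and second $x$-derivatives. Two balancing conditions must be respected: the polydisc in the $x$-variables must be small enough that $\eta_{t,x,\varphi}$ stays pole-free on the contours (the $x$-variables control, among other things, the location of the punctures) yet large enough to contain $x(t)$ for $|t|<T$; and the degenerations $\varphi\to 0,\tfrac\pi2$, treated qualitatively in Section \ref{limitvarphi}, must be made quantitative so that $M$, $A$ and $L$ admit $\varphi$-independent bounds. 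Inserting the resulting explicit $T$ then gives the threshold $2.65404$; in particular $T>\tfrac18$, so all integers $g\ge 3$ are covered.

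I expect the main obstacle to be precisely this second step: bounding the iterated-integral series for the monodromy and its derivatives in a way that is simultaneously fully explicit, uniform in $\varphi$ up to the endpoints of $(0,\tfrac\pi2)$, and sharp enough that the optimised $T$ clears $\tfrac18$ rather than collapsing to some uselessly small value. Concretely this forces a careful choice of integration contours, of the majorising geometric series, and of the polydisc radii, together with a check that the inverse linearisation norm $M$ does not blow up as the four punctures collide in pairs for $\varphi\to 0$ or $\tfrac\pi2$. Once these uniform estimates are secured, the optimisation in $r$ and the passage to a lower bound on $g$ are routine.
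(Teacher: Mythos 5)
There is a genuine gap. Your skeleton (complexify, run the contraction mapping with explicit constants, convert the radius $T$ into a genus bound via $s=\tfrac{1}{2g+2}$) is indeed the paper's general strategy, but the step you yourself flag as the obstacle — making the bounds sharp enough that $T$ clears roughly $\tfrac18$ — is exactly where your proposed mechanism fails, and the specific value $2.65404$ cannot come out of it. Bounding the monodromy path-ordered exponential by a sup-norm geometric majorant along the contours is essentially the crudest possible estimate; in the paper's own table, even the much finer scheme with the expansion of $\mathfrak p$ taken exactly to order $1$ and only the tail estimated yields genus $\approx 94.7$. To reach $2.65404$ the paper does the following, none of which appears in your plan: it expands the half-trace $\mathfrak p$ exactly to order $n=8$ in $t$, with coefficients given by $\Omega$-values (iterated integrals, reduced in the Lawson case to alternating MZVs / multiple polylogarithms and evaluated numerically with certified error bounds), and uses a Gronwall argument only for the remainder $\mathcal R_n=O(t^n)$; it absorbs the first $N=7$ Taylor coefficients $x_{j,k}$ of the solution (computed by the algorithm of Section \ref{section:algorithm}) plus further quadratic correction terms into the ansatz, so the fixed-point map has no low-order contributions; it works with weighted norms on the loop-algebra parameters and numerically optimizes $(T,R,\weight,\rho)$ under the contraction constraints, verified by interval arithmetic; and it controls the reparametrization $s=t\sqrt{\mathcal K(t)}$ quantitatively via Rouch\'e's theorem (Proposition \ref{prop:diffeo}), which is not entirely elementary since one must bound $|\mathcal K-1|$ on the whole box. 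Without these ingredients your optimisation of $MA(t)+ML(r)r\le r$ produces some $T>0$ but nothing close to the claimed threshold, so the statement as given is not proved.

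Two further mismatches with the actual setting: the theorem concerns the fixed angle $\varphi=\tfrac{\pi}{4}$ (the series \eqref{eq:area-series} is the Lawson case), so your insistence on estimates uniform in $\varphi$ up to the degenerate limits $\varphi\to 0,\tfrac{\pi}{2}$ is unnecessary and would only worsen the constants; and the unknown is not a point in $\C^n$ but the triple of loop-algebra elements in $\mathcal W^{\geq 0}_{\rho}$, so the "polydisc plus Cauchy estimates in $\C^n$" framing must be replaced by estimates in the Banach algebra $\mathcal W_\rho$ with the norm $\|\cdot\|_\rho$ (this is fixable, but it changes how the Lipschitz bounds are obtained — the paper differentiates the explicit polynomial-in-$u$ expansions rather than invoking Cauchy estimates).
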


\begin{remark}
This is by far not optimal, as we conjecture convergence of our potential for all $g \geq1.$ In fact, when plugging in $g=1$ and $\varphi = \tfrac{\pi}{4}$ in \eqref{eq:area-series}  truncated at $k = 21,$ we obtain the area of the Clifford torus, which is $2 \pi^2$, up to an error of only $10^{-3}$.  For $g=0$ on the other hand we get a large number, suggesting that the convergence radius should lie between $g=0$ and $g=1.$
\end{remark}

The proof of Theorem \ref{thm2} relies on the evaluation of a large number of iterated integrals and has a number of interesting consequences: it can be used to numerically compute the area of Lawson surfaces of genus $g\geq 3$ with an explicit bound on the error,
see Section \ref{section:area}.
Though it strikes weird at first that the bound on the genus we give here is not an integer, the explicit value, or that the value is significantly below $3$ is essential to prove that the area is an strictly increasing function of the genus $g$ for all $g\geq 3$
(Proposition \ref{prop:monotonicity}). Together with the resolution of the Willmore conjecture \cite{MN} and an upper bound on the area  of Lawson's genus $2$ surface using a coarse triangularization of the fundamental piece in Proposition \ref{prop:genus2} we obtain:

\begin{theorem}
\label{thm3}
The area of the Lawson surfaces  $\xi_{1,g}$  of genus $g$ is strictly monotonically increasing in $g$ for all $g\geq 0$.
\end{theorem}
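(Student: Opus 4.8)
The plan is to separate the ``tail'' $g\geq 3$, where the error-controlled area expansion \eqref{eq:area-series} is available by Theorem \ref{thm2}, from the small comparisons among the genera $g=0,1,2,3$, where the series does not (provably) apply. For the tail one simply invokes Proposition \ref{prop:monotonicity}, which uses precisely this error-controlled expansion to show that $g\mapsto \mathrm{Area}(\xi_{1,g})$ is strictly increasing on $[3,\infty)$; in particular $\mathrm{Area}(\xi_{1,g})<\mathrm{Area}(\xi_{1,g+1})$ for every integer $g\geq 3$. It then remains to establish
\[
	\mathrm{Area}(\xi_{1,0}) < \mathrm{Area}(\xi_{1,1}) < \mathrm{Area}(\xi_{1,2}) < \mathrm{Area}(\xi_{1,3}).
\]

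The first two inequalities are classical. The surface $\xi_{1,0}$ is the totally geodesic $2$-sphere, so $\mathrm{Area}(\xi_{1,0})=4\pi$, while $\xi_{1,1}$ is the Clifford torus with $\mathrm{Area}(\xi_{1,1})=2\pi^2>4\pi$. For the step from genus $1$ to genus $2$ I would use that a minimal surface in $\S^3$ has Willmore energy $\mathcal W$ equal to its area, together with the resolution of the Willmore conjecture \cite{MN}: every closed surface of genus $\geq 1$ in $\S^3$ satisfies $\mathcal W\geq 2\pi^2$, with equality only for Möbius images of the Clifford torus. Since $\xi_{1,2}$ has genus $2$, this forces $\mathrm{Area}(\xi_{1,2})>2\pi^2=\mathrm{Area}(\xi_{1,1})$.

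The remaining inequality $\mathrm{Area}(\xi_{1,2})<\mathrm{Area}(\xi_{1,3})$ is the crux, proved by trapping $\mathrm{Area}(\xi_{1,2})$ from above and $\mathrm{Area}(\xi_{1,3})$ from below by explicit numbers. For the upper bound, a fundamental piece of $\xi_{1,2}$ is a least-area disk spanning a fixed geodesic polygon (Lawson's Plateau construction), so its area is at most that of any comparison disk with the same boundary; a sufficiently fine geodesic triangulation, summed over the congruent fundamental pieces, gives an explicit $A_2^{+}\geq\mathrm{Area}(\xi_{1,2})$, which is the content of Proposition \ref{prop:genus2}. For the lower bound, I would evaluate \eqref{eq:area-series} at $g=3$ (where Theorem \ref{thm2} guarantees convergence) via a truncation together with the rigorous remainder estimate supplied there, producing an explicit $A_3^{-}\leq\mathrm{Area}(\xi_{1,3})$. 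The theorem then follows once $A_2^{+}<A_3^{-}$, a finite numerical check. The principal obstacle is exactly the sharpness of this last comparison: the triangulation bound is intrinsically crude while the true areas differ by less than $1$, so one must simultaneously take the triangulation fine enough and carry enough terms (with a strong enough tail bound) in the genus-$3$ series that $A_2^{+}$ and $A_3^{-}$ genuinely separate, and keeping both sides of this comparison honest — interval arithmetic for the triangulation areas, the explicit remainder bound of Theorem \ref{thm2} for the series — is the delicate point of the argument.
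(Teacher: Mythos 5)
Your proposal is correct and follows essentially the same route as the paper: Proposition \ref{prop:monotonicity} handles $g\geq 3$, the sphere--Clifford torus comparison and the Willmore conjecture \cite{MN} handle $g=0,1,2$, and the genus $2$ versus genus $3$ step is exactly the content of Proposition \ref{prop:genus2}, which compares the triangulation upper bound $\mathrm{Area}(\xi_{1,2})<22.45$ with the series-plus-error lower bound $22.57\leq\mathrm{Area}(\xi_{1,3})$.
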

More generally, it is  natural to conjecture
\vspace{-0.3cm}
\subsection*{Conjecture}
Consider for $(k,l) \in \N^2$ the area $\mathcal A_{k,l}$ of the Lawson surfaces $\xi_{k,l}.$ Then, $\mathcal A_{k,l}$ is monotonically increasing with respect to the partial ordering of $\N^2$ given by $(k,l) \preceq (k',l')$ if and only if $k\leq k'$ and $l \leq l'.$  

\begin{remark}
For low genus, the experimental values of  $\mathcal A_{k,l}$ for $k,l=1, ..., 12$ given in \cite{BoHeSch} suggest that the conjecture holds in these cases. When $l$ is fixed, the area estimates in \cite{HHT} at $k=\infty$ confirms the conjecture for $k$ sufficiently large. 
\end{remark}

The paper is organized as follows. We first give in Section \ref{sec:prelimi} the necessary preliminaries to loop groups, DPW approach and character varieties. The ansatz for the Fuchsian DPW potential is explained in Section \ref{sec:potential} which is then shown to solve a reformulated monodromy problem. When building the surface from the DPW potential, we show that the symmetries of the potential actually carry over to symmetries of the surface. We give a detailed study of these symmetries and of the limiting behaviour of the compact surfaces when $g \rightarrow \infty.$
The $\varphi \rightarrow 0, \tfrac{\pi}{2}$ limit is considered in Section \ref{limitvarphi} giving us an uniform existence interval for all $\varphi.$
\medskip

When solving the monodromy problem, we use so-called half-trace coordinates denoted by $\mathfrak p$ and $\mathfrak q.$ 
The DPW potential and the area (respectively the Willmore energy) of the corresponding surfaces can be determined from the expansions in $t$ of these coordinates.
In Section \ref{iterated} we discover a pattern in the Taylor expansion of $\mathfrak p$ and $\mathfrak q$, and use these in 
 Section \ref{sec:1storder} to compute the first derivatives of the parameters when they solve the monodromy problem. An algorithm to compute higher order derivatives is presented in Section \ref{sec:higheroder}, and derivatives up to order three are computed in the minimal case. The coefficients are given by iterated integrals, which we call $\Omega$-values.  When specializing to the Lawson case of $\varphi = \tfrac{\pi}{4}$ we discover in Section \ref{section:multizetas} that 
every $\Omega$-value can be expressed using one single multiple zeta value using so called iterated $\beta$ integrals introduced by Hirose-Sato \cite{hsBetaIntegral}. This drastically reduces the complexity of computing the $\Omega$-values, which was a crucial step to compute $\alpha_k$ in \eqref{eq:area-series} up to $k=21.$
\medskip

Quantitative estimates on the existence interval for the implicit function theorem argument are given in Section \ref{sec:quantativeimplicitfunction} which yields that the Taylor series of our DPW potential converges for $g \geq 2.65404.$  Using this convergence radius and the expansion of the area to order $k= 21$, we prove monotonicity for the area of the Lawson surfaces for all $g \geq 0$ in Section \ref{sec:monotonicity}.\\

The proofs for some technical or folklore Lemmas  are included in the appendix. Moreover, we also attached numerical outputs, such as the computation of the third order derivatives, and the values of the $\alpha_k$. Supplementary Mathematica notebooks with the implementation of the algorithm to compute higher order derivatives in the minimal and CMC surface case are provided. Moreover, we also provide Mathematica notebooks to estimate the $\Omega$-values and to compute the convergence radius for the quantitive implicit function theorem of Section \ref{sec:quantativeimplicitfunction}.

\section{Preliminaries}\label{sec:prelimi}
The DPW method \cite{DPW} is a technique to parametrize minimal and CMC surfaces in space forms using holomorphic data and loop groups. We first set the basic definitions and the necessary notations here, for more details adapted to our approach see \cite{HHT}. \\
\subsection{Loop groups}
\label{loop-groups}
Let $G$ be a Lie group and $\mathfrak g$ its Lie algebra. Then the associated 
loop group 
 $$\Lambda G:= \{\text{ real analytic maps (loops) }\Phi\colon \S^1\longrightarrow G,\quad \lambda \longmapsto \Phi^\lambda\},$$
 is an infinite dimensional Frechet Lie group via pointwise multiplication. Its Lie algebra is given by
$$\Lambda \mathfrak{g}:=\{\text{ real analytic maps (loops) }\eta\colon \S^1\longrightarrow\mathfrak{g},\quad  \lambda \longmapsto \eta^\lambda\}.$$ 
Let $\overline\D:=\{\lambda\in\C\mid |\lambda|\leq1\}$ and $G$ a complex Lie group. Then 
\[\Lambda^+G:=\{\Phi\in\Lambda G\mid \Phi \text{ extends holomorphically to } \overline\D\}\]
denotes the positive part of the loop group $\Lambda G$. Similarly, 
 \[\Lambda^+\mathfrak g:=\{\eta\in\Lambda \mathfrak g\mid \eta \text{ extends holomorphically to } \overline\D\}\]
 denotes the non-negative part of its Lie algebra.\\

Fix $\rho>1$ and define for $u\in L^2(\S^1,\mathbb C)$, given by its Fourier series
\[u=\sum_{k\in\Z} u_k \lambda^k,\] the norm
\[\parallel u\parallel_{\rho}=\sum_{k\in\Z} |u_k| \rho^{|k|}\leq\infty.\]
As in  \cite{nnoids, HHT} the following functional spaces are considered. Let
 \[\mathcal W_{\rho}:=\{u\in  L^2(\S^1,\mathbb C)\mid \;\parallel u\parallel_{\rho}<\infty\}\] 
be the space of absolutely convergent Fourier series on the annulus 
$$\A_{\rho}=\{\lambda\in\C\mid \tfrac{1}{\rho} < |\lambda| <{\rho}\}.$$
This generalizes the classical Wiener algebra (which has $\rho=1$).
The most important property of $\mathcal W_{\rho}$ is that it is a Banach algebra.
Let
\[\cal{W}^{\geq 0}_{\rho}:=\{u=\sum_k u_k \lambda^k\in \mathcal W_{\rho}\mid
u_k=0\; \;\;\forall  \;k<0\}\]
denote the space of functions $u\in L^2(\S^1,\mathbb C)$ that can be extended holomorphically to the disk $\D_{\rho}=D(0,\rho)$.  Similarly, let
\[\cal{W}^{>0}_{\rho}:=\{u=\sum_k u_k \lambda^k\in \mathcal W_{\rho}\mid u_k=0\; \;\;\forall\; k\leq0\,\}\]
\[\cal{W}^{<0}_{\rho}:=\{u=\sum_k u_k \lambda^k\in \mathcal W_{\rho}\mid u_k=0\;\;\;\forall \;k\geq0\,\}\]
denote the positive and negative space, respectively. Therefore we can decompose every $u\in \mathcal W_{\rho}$
$$u=u^+ + u^0+ u^-$$
into its positive and negative component $u^\pm \in W^{\gtrless 0}_{\rho}$, and a constant component $u^0 = u_0$.  \\

We define the star and conjugation involutions on $\mathcal W_{\rho}$ by
$$u^{*}(\lambda)=\overline{u(1/\overline{\lambda})}
\quad\mbox{ and }\quad \overline{u}(\lambda)=\overline{u(\overline{\lambda})}$$
and we denote by $\mathcal W_{\R,\rho}$ the space of functions $u\in\mathcal W_{\rho}$
with $u=\overline{u}$.
Note that $u=u^*$ means that $u$ is real on the unit circle, while $u=\overline{u}$ means that $u$ is real on the real line.

 \begin{remark}
For an arbitrary matrix group $G$,
$\Lambda G_{\rho}$ denotes the subspace of $\Lambda G$ consisting of loops whose entries
are in $\mathcal W_{\rho}$. Then $\Lambda G_{\rho}$ is a Banach Lie group.
To enhance exposition we will omit the subscript $\rho$ most of the time.
 \end{remark}

\subsection{DPW approach}
A DPW potential $\eta$ on a Riemann surface $M$ is 
a holomorphic 1-form
\[\eta\in\Omega^{1,0}(M,\Lambda \mathfrak{sl}(2,\C))\]
with  
\[\lambda \eta\in \Omega^{1,0}(M,\Lambda^+\mathfrak{sl}(2,\C))\]
such that  its residue at $\lambda=0$ 
\[\eta_{-1}:=\text{Res}_{\lambda=0} (\eta)\]
is a nowhere vanishing and nilpotent 1-form.  

For a given DPW potential $\eta$ the extended frame $\Phi$ is a solution on the universal cover $\widetilde M$ of $M$ of 
$$d_{M} \Phi =  \Phi \eta$$
with some initial value $\Phi (z_0)= \Phi_0\in\Lambda\SL(2,\C)$ at a fixed $z_0 \in M$. In other words, $\Phi$ is a parallel section of $\wt \nabla^\lambda.$ The Iwasawa decomposition is the unique splitting of $\Phi$ into a unitary and a positive factor:
$$\Phi = F B$$ 
with $F \in \Lambda \SU (2)$ and $B \in \Lambda^+_{\R}\SL(2,\C)$,
where $\Lambda^+_{\R}SL(2,\C)\subset\Lambda^+ SL(2,\C)$ denotes the space of positive loops $B$ such that $B(0)$ is upper triangular with positive real numbers on the diagonal.
The splitting depends smoothly on $z \in M$, thus the unitary factor $F$ is also smooth in $z$.
\begin{remark} 
The Iwasawa decomposition is a smooth diffeomorphism between the Banach Lie groups
$\Lambda SL(2,\C)_{\rho}$ and $\Lambda SU(2)_{\rho}\times\Lambda^+_{\R} SL(2,\C)_{\rho}$
(see  Theorem 5 in \cite{minoids}).
\end{remark}

Consider two unitary complex numbers $\lambda_1\neq\lambda_2\in\mathbb S^1$, called the Sym-points.
A conformal immersion of constant mean curvature $H = \ii  \tfrac{\lambda_1 + \lambda_2}{\lambda_1- \lambda_2}$ can be reconstructed from the unitary factor $F$ by the Sym-Bobenko formula
$$f = F(\lambda_1) F(\lambda_2)^{-1}\colon\widetilde M\longrightarrow\mathbb S^3.$$ 
For any element $\gamma\in\pi_1(M,z_0)$ of the fundamental group, let $\cal{M}(\Phi,\gamma)$ denote the monodromy of $\Phi$ with respect to $\gamma.$ The conditions for the DPW potential to give a well-defined immersion $f$ on $M$ are

\begin{equation} \label{monodromy-problem}
\forall \gamma\in\pi_1(M,z_0)\qquad
\left\{\begin{array}{l}
\cal{M}(\Phi,\gamma)\in \Lambda SU(2)\\
\cal{M}(\Phi,\gamma)|_{\lambda=\lambda_1}=\cal{M}(\Phi,\gamma)_{\lambda=\lambda_2}=\pm\Id_2\end{array}\right.\end{equation}
We refer to these conditions in \eqref{monodromy-problem} as the {\em Monodromy Problem}.

\subsection{The Riemann surface}
In this paper we restrict ourselves to genus $g$ Riemann surfaces $M_{g,\varphi}$ with a $\Z_{g+1}$-symmetry. More explicitly,  the Riemann surface $M_{g,\varphi}$, where $\varphi\in(0,\frac{\pi}{2})$ is a parameter, is determined by the algebraic equation
\begin{equation}
\label{riemannsurface}
M_{g, \varphi} \colon y^{g+1} = \frac{(z-p_1)(z-p_3)}{(z-p_2)(z-p_4)},
\end{equation}
so that it admits a $(g+1)$-fold covering
$$\pi \colon M_{g,\varphi}\longrightarrow \C P^1$$
totally branched over the four points
\begin{equation}\label{realvarphipj}p_1=e^{\ii\varphi},\quad p_2=-e^{-\ii\varphi},\quad p_3=-e^{\ii\varphi},\quad p_4=e^{-\ii\varphi}.\end{equation}

\begin{remark}
On a compact Riemann surface
there exists no  DPW potential solving the Monodromy Problem without singularities (e.g. poles).
To obtain compact CMC surfaces we require the necessary singularities to be apparent, i.e., removable by suitable local gauge transformation.
\end{remark}
Rather than constructing the DPW potential directly on $M_{g,\varphi}$, we consider a DPW potential $\eta$ on
$$\Sigma = \Sigma_\varphi:= \C P^1  \setminus \{p_1, ..., p_4\}$$
with simple poles at $p_1,\cdots,p_4$.

\subsection{Fuchsian systems}
A \SL$(2, \C)$ Fuchsian system on the 4-punctured sphere $\Sigma$ is a holomorphic connection on the trivial $\C^2$-bundle over $\Sigma$ of the form $\nabla= d+ \xi$ with
$$\xi = \sum_{j=1}^4 A_j \tfrac{dz}{z-p_j},$$
where $A_j \in \mathfrak {sl}(2, \C)$ and ${\displaystyle \sum_{j=1}^4 A_j= 0}$ to avoid a further singularity at $z= \infty.$  Two Fuchsian systems are equivalent (when fixing the punctures $p_j$), if there exist an invertible matrix $G$ such that $\tilde A_j = G^{-1}A_j G.$
Due to its form, a Fuchsian system is automatically flat, and we can consider the associated monodromy 
representation of the first fundamental group $\pi_1(\Sigma)$. 
Via the monodromy representation the space of these (irreducible) Fuchsian systems (modulo equivalence) 
is biholomorphic to an open dense subset of the space of (irreducible) representations of the first fundamental group of $\Sigma$  to $\mathrm{SL}(2,\C)$ (modulo overall conjugation). Of particular interest for the construction of CMC surfaces
are  Fuchsian systems admitting a unitary monodromy representation. 
 
 \begin{definition}
 A \SL$(2, \C)$ Fuchsian system is called unitarizable if there exist a hermitian metric $h$  on $\underline{\C}^2\to\Sigma$ such that the connection $d+\xi$ is unitary with respect to $h.$
 \end{definition}
  
The analogous definition on the representation side is
\begin{definition}
A monodromy representation is unitarizable if it lies in the conjugacy class of a unitary representation.
\end{definition}
We will make use of the following classical theorem, which dates back to Vogt, and Fricke and Klein, see for example \cite{BeGo}. We give a short and self-contained proof in Appendix \ref{AppCV}.
\begin{theorem}\label{characterLLLL}
Let $L_1,\dots,L_4\in\mathrm{SL}(2,\C)$ satisfying $L_1L_2L_3L_4=\Id$ with
\[\tr(L_1)=\tr(L_2)=\tr(L_3)=0\quad\text{and}\quad \tr(L_4)=\traceL\in(0,2).\]
Then, $L_1,\dots,L_4$ is uniquely determined up to conjugation by its
trace coordinates $x=\tr(L_1L_2),$ $y=\tr(L_2L_3)$ and $z=\tr(L_2L_4).$
These satisfy \[x^2+y^2+z^2+xyz-4+\traceL^2=0.\]
Conversely, every solution $(x,y,z)$ to this equation gives rise to  $L_1,\dots,L_4$ as above with $(x,y,z)$ as trace coordinates.
Moreover,  there exists a $U\in\mathrm{SL}(2,\C)$ such that  $$U L_j U^{-1} \in \mathrm{SU}(2) \quad \text{ for all } \quad j= 1, ..., 4 $$ if and only if the trace coordinates satisfy $(x,y,z)\in(-2,2)\times (-2,2)\times \mathbb R.$ The unitarizer $U$ is uniquely determined up to multiplication by $\mathrm{SU}(2).$ 
\end{theorem}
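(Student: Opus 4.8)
The plan is to prove Theorem~\ref{characterLLLL} by reducing everything to an explicit normal form for the tuple $(L_1,\dots,L_4)$ and then carrying out a direct computation. First I would fix a normalization: since $\tr(L_1)=\tr(L_2)=0$ and the representation is irreducible (which we may assume, treating the reducible locus separately), the pair $L_1,L_2$ has no common eigenvector, so up to conjugation I can put
\[
L_1 = \begin{pmatrix} 0 & 1 \\ -1 & 0 \end{pmatrix}, \qquad L_2 = \begin{pmatrix} 0 & s \\ -s^{-1} & 0 \end{pmatrix}
\]
for some $s\in\C^*$, or some similarly chosen two-parameter normal form adapted to $\tr L_1 = \tr L_2 = 0$. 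Then $L_1 L_2$ is diagonal with eigenvalues $s/s', s'/s$ (schematically), so $x = \tr(L_1L_2)$ fixes the conjugacy class of $L_1,L_2$ up to the residual torus; the residual freedom is a single diagonal conjugation. Writing $L_3$ as a general trace-zero matrix with two free entries and setting $L_4 = (L_1L_2L_3)^{-1}$, the constraint $\tr(L_4)=\tau$ together with $\tr(L_3)=0$ cuts the parameters down, and the remaining one-dimensional torus action is used to normalize one more entry. At that point $(L_1,\dots,L_4)$ depends on exactly the data $(x,y,z)$, and one checks by direct substitution that $y=\tr(L_2L_3)$ and $z=\tr(L_2L_4)$ together with $x$ are free subject only to one polynomial relation.

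The polynomial relation itself I would obtain not by brute force but via the classical trace identities in $\SL(2,\C)$: from $L_1L_2L_3L_4=\Id$ we have $L_4 = L_3^{-1}L_2^{-1}L_1^{-1}$, and repeated use of the Cayley--Hamilton relation $M + M^{-1} = \tr(M)\,\Id$ for $M\in\SL(2,\C)$, plus the fundamental identity $\tr(AB)+\tr(AB^{-1}) = \tr(A)\tr(B)$, lets one express $\tr(L_1L_2L_3L_4)=2$ as a polynomial in the pairwise traces $\tr(L_iL_j)$ and the individual traces $\tr(L_i)$. Substituting $\tr(L_1)=\tr(L_2)=\tr(L_3)=0$, $\tr(L_4)=\tau$, and bookkeeping which pairwise traces are determined (e.g. $\tr(L_3L_4)$, $\tr(L_1L_4)$ in terms of $x,y,z,\tau$ via the same identities applied to triple products like $\tr(L_1L_2L_3)$) collapses the relation to the stated $x^2+y^2+z^2+xyz-4+\tau^2 = 0$. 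The converse direction — that every $(x,y,z)$ on the surface is realized — follows by running the normal-form construction backwards: given $(x,y,z)$ one writes down explicit matrices and verifies the product is the identity and the traces come out right, which is exactly the computation above read in reverse.

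For the unitarizability statement I would argue as follows. If some conjugate $UL_jU^{-1}$ all lie in $\SU(2)$, then each $\tr(L_j)\in[-2,2]$ (automatic here) and, more to the point, $\tr(L_iL_j)\in\R$ for all $i,j$, and moreover $\tr(L_1L_2)$, $\tr(L_2L_3)$ must lie in the open interval $(-2,2)$ because $L_1L_2$ and $L_2L_3$ are products of two non-commuting elliptic elements of $\SU(2)$ hence are themselves elliptic with real trace in $(-2,2)$ (they cannot be $\pm\Id$ by irreducibility) — this gives $x,y\in(-2,2)$ and $z\in\R$. Conversely, given $(x,y,z)\in(-2,2)\times(-2,2)\times\R$ on the surface, I would construct the hermitian form directly: the subgroup $\langle L_1,L_2\rangle$ preserves a unique (up to scale) positive-definite hermitian form $h$ because $L_1,L_2$ are elliptic with real traces and don't share an eigenvector — concretely $h$ can be averaged or written down from the normal form — and then one checks that $L_3$ (hence $L_4$) also preserves $h$ using that $\tr(L_2L_3)$ and $\tr(L_2L_4)$ are real, which forces $L_3$ into the unitary group for $h$. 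Uniqueness of $U$ up to right multiplication by $\SU(2)$ is the standard fact that an irreducible unitary representation preserves a unique hermitian form up to positive scalar, so the unitarizer is unique modulo the stabilizer $\SU(2)$.

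The main obstacle I expect is bookkeeping the reducible locus cleanly and making the passage from ``$\tr(L_iL_j)$ all real'' to ``simultaneously conjugate into $\SU(2)$'' rigorous rather than heuristic: one must rule out the case where the invariant hermitian form degenerates or has signature $(1,1)$ (giving conjugation into $\SU(1,1)$ instead), and this is precisely where the \emph{open} conditions $x,y\in(-2,2)$ — as opposed to merely real — do the work, by guaranteeing the relevant products are elliptic rather than hyperbolic or parabolic. Handling that sign-of-the-form dichotomy carefully, together with the reducible representations where the "up to conjugation uniqueness" must be interpreted with care, is the delicate part; the trace-identity computation, while lengthy, is routine.
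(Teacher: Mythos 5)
Your normalization step rests on a false claim, and it costs you part of the theorem. Irreducibility of the four-tuple does \emph{not} imply that the pair $(L_1,L_2)$ has no common eigenvector: $L_1L_2$ can be a nontrivial parabolic, so $x=\tr(L_1L_2)=\pm2$, while $\langle L_1,L_3\rangle$ is still irreducible. Such tuples really occur and correspond to points of the cubic surface with $x^2=4$ (e.g.\ $(2,y,-y-\ii\traceL)$ solves the relation, since $P^{\traceL}(2,y,z)=\traceL^2+(y+z)^2$), and the bijection asserted in the theorem must cover them; your anti-diagonal normal form for $L_1,L_2$ can only produce $x=\pm2$ through $L_2=\mp L_1$, which forces $L_4=\pm L_3$ and $\tr(L_4)=0$, so the parabolic stratum is silently dropped. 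This is exactly why the paper's appendix splits into Type I ($L_1$ triangular, $x^2=4$) and Type II ($x^2\neq4$). Relatedly, your claim that in the unitary case $x,y\neq\pm2$ ``by irreducibility'' is misattributed: in $\SU(2)$, trace $\pm2$ forces $L_1L_2=\pm\Id$, hence $L_3L_4=\pm\Id$, hence $L_4=\pm L_3$ and $\tr(L_4)=0$, contradicting $\traceL\in(0,2)$; it is the trace condition, not irreducibility, that does the work (the same computation shows the ``reducible locus'' you propose to treat separately is in fact empty, since simultaneous triangularizability also forces $\tr(L_4)=0$).

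The crux of the converse unitarizability direction is asserted rather than proved: ``one checks that $L_3$ also preserves $h$ using that $\tr(L_2L_3)$ and $\tr(L_2L_4)$ are real'' is precisely the nontrivial statement, and reality of two traces does not by itself make an element preserve the form invariant under a subgroup. To close it you need either (a) the character-theoretic route: from the trace identities $\tr(L_1L_3)=-xy-z$ and $\tr(L_1L_2L_3)=\traceL$, so all seven basic traces of the rank-three group are real, hence every word trace is real, hence the irreducible representation is equivalent to its conjugate-transpose-inverse and preserves a nondegenerate hermitian form, unique up to real scale; its definiteness then follows because its restriction to $\langle L_1,L_2\rangle$ (irreducible precisely when $x\in(-2,2)$) is the form making that pair unitary; or (b) the paper's route: put everything in normal form and exhibit the explicit positive diagonal unitarizer depending only on $x$, verifying directly that it conjugates $L_3$ into $\SU(2)$. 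With these two repairs (a separate treatment of the $x^2=4$ stratum, and an actual argument at the hermitian-form step) your plan does go through, and it would differ from the paper mainly in deriving the cubic relation from the classical Fricke trace identities and in proving unitarizability via invariant forms rather than by the appendix's explicit matrix computation.
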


\section{A Fuchsian DPW potential for Lawson-type CMC surfaces of high genus}
\label{sec:potential}
\subsection{The potential}
\label{ssec:thepotential}
Fix $\varphi \in (0, \tfrac{\pi}{2})$.
For small $t>0$, we consider on the 4-punctured sphere $\Sigma_{\varphi}$ a so-called {\em Fuchsian DPW potential} of the form
\begin{equation}\label{thegeneraleta}\eta=t\;{\sum_{j=1}^4 }A_j(\lambda)\frac{dz}{z-p_j}\end{equation}
where the residues $A_j \in \Lambda\mathfrak{sl}(2, \C)_{\rho}$ are given as follows.
Due to the choice of $p_j$ in \eqref{realvarphipj}, the Riemann surface $\Sigma_{\varphi}$ has three symmetries given by
$$\delta(z)=-z,  \quad \tau(z)= \tfrac{1}{z}, \quad  \text{ and }\quad  \sigma(z) = \bar z.$$
We require that the potential $\eta$ is equivariant with respect to these symmetries, i.e., it satisfies 
\begin{align*}
\delta^*\eta&=D^{-1}\eta D \quad \text{ with }\quad D=\matrix{\ii&0\\0&{-\ii}}\\
\tau^*\eta &= C^{-1}\eta C\quad \text{ with }\quad C=\matrix{0&\ii\\\ii& 0 }\\
\sigma^*\overline{\eta}&=\eta\quad \text{ where }\quad \overline{\eta}(z,\lambda)=\overline{\eta(z,\overline{\lambda})}.
\end{align*}
These are meant to encode the symmetries of Lawson surfaces. We will see in Section \ref{section:building} that provided the Monodromy Problem is solved, the resulting immersion indeed has the desired symmetries.
The symmetries of $\eta$ are respectively equivalent to
\begin{align*}
&A_3=D^{-1}A_1 D\quad\text{and}\quad  A_4=D^{-1}A_2 D\\ 
&A_4=C^{-1}A_1 C\quad \text{and}\quad A_3=C^{-1}A_2C\\
&A_4=\overline{A_1} \quad\text{and}\quad
A_3=\overline{A_2}.
\end{align*}
In particular, $A_1=C^{-1} \overline{A_1} C$ so $A_1$ is of the form
$$A_1=\begin{pmatrix}x_1 & x_2+\ii x_3\\ x_2-\ii x_3& -x_1\end{pmatrix}$$
with $\overline{x_1}=-x_1$, $\overline{x_2}=x_2$ and $\overline{x_3}=x_3$.
In other words, $x_1\in \ii\mathcal W_{\R,\rho}$ and $x_2,x_3\in\mathcal W_{\R,\rho}$.
The other residues are then given by
\begin{equation*}
\begin{split}
A_2&=\begin{pmatrix}-x_1 & -x_2+\ii x_3\\ -x_2-\ii x_3& x_1\end{pmatrix}\\
A_3&=\begin{pmatrix}x_1 & -x_2-\ii x_3\\ -x_2+\ii x_3& -x_1\end{pmatrix}\\
A_4&=\begin{pmatrix}-x_1 & x_2-\ii x_3\\ x_2+\ii x_3& x_1\end{pmatrix}.
\end{split}
\end{equation*}
In particular,
\[\sum_{j=0}^4A_j=0,\]
so the potential is regular at $z= \infty$. 
The functions $x_1,x_2,x_3$ are the parameters of our construction. To emphasize the dependence of the potential on $t$ and $x=(x_1,x_2,x_3)$ we write $\eta_{t,x}$ and even $\eta_{t,x,\varphi}$ (in this order) if we need to emphasise the dependence on the angle $\varphi$.
We can rewrite the potential in the form $$\eta_{t,x}=t\begin{pmatrix}
x_1\omega_1&x_2\omega_2+\ii x_3\omega_3\\
x_2\omega_2-\ii x_3\omega_3&-x_1\omega_1\end{pmatrix}
=t\sum_{j=1}^3 x_j\mathfrak m_j\omega_j$$
where the meromorphic 1-forms $\omega_j$ are given by
\[
\begin{split}
\omega_1 = \left (\frac{1}{z-p_1} - \frac{1}{z-p_2}+\frac{1}{z-p_3}- \frac{1}{z-p_4}\right)dz
=\frac{4 \ii\sin(2\varphi)z\,dz}{z^4-2\cos(2\varphi)z^2+1},\\
\omega_2 = \left (\frac{1}{z-p_1} - \frac{1}{z-p_2}-\frac{1}{z-p_3} + \frac{1}{z-p_4}\right)dz
=\frac{4\cos(\varphi)(z^2-1)\,dz}{z^4-2\cos(2\varphi)z^2+1},\\
\omega_3 = \left (\frac{1}{z-p_1} +\frac{1}{z-p_2}-\frac{1}{z-p_3}- \frac{1}{z-p_4}\right)dz
=\frac{4\ii\sin(\varphi)(z^2+1)\,dz}{z^4-2\cos(2\varphi)z^2+1)},
 \end{split}\]
and
\[
 \mathfrak m_1=\begin{pmatrix} 1&0\\0&-1\end{pmatrix},\qquad
\mathfrak m_2=\begin{pmatrix} 0&1\\1&0\end{pmatrix},\qquad
\mathfrak m_3=\begin{pmatrix} 0&i\\-i&0\end{pmatrix}.
\]
Recall that in order for $\eta$ to be DPW potential, we need $\Res_{\lambda=0}\eta$ to be nilpotent. Using an index $-1$ to denote the coefficient of $\lambda^{-1}$ in the expansion, we have
\begin{eqnarray*}
\det(\Res_{\lambda=0}\eta)&=&-t^2\sum_{j=1}^3 x_{j,-1}^2\omega_j^2\\
&=&\frac{16 t^2\left(
x_{1,-1}^2\sin^2(2\varphi)z^2
-x_{2,-1}^2\cos^2(\varphi)(z^2-1)^2
+x_{3,-1}^2\sin^2(\varphi)(z^2+1)^2\right)}{(z^4-2\cos(2\varphi)z^2+1)^2}.\end{eqnarray*}
This gives us the equations
$$\begin{cases}
\sin^2(2\varphi)x_{1,-1}^2+2\cos^2(\varphi)x_{2,-1}^2+2\sin^2(\varphi)x_{3,-1}^2=0\\
x_{2,-1}^2\cos^2(\varphi)=x_{3,-1}^2\sin^2(\varphi)\end{cases}$$
which determines
$$(x_{1,-1},\;x_{2,-1},\;x_{3,-1})=r(\ii,\pm \sin(\varphi),\pm \cos(\varphi)), \quad r\in\R.$$
We choose $r=\frac{1}{2}$ and the following signs:
\begin{equation}
\label{eq:negative-part}
(x_{1,-1},\;x_{2,-1},\;x_{3,-1})=\tfrac{1}{2}(\ii,-\sin(\varphi),-\cos(\varphi)).
\end{equation}
There is no loss in generality in fixing $r$ because of the time parameter $t,$ and choosing other signs corresponds to conjugating the potential by $C$ or $D$.
\subsection{The Monodromy Problem}$\;$\\
The symmetry $\sigma$ indicates that the Sym-points, denoted by $\lambda_1,\lambda_2 \in \S^1$ in the following, should be complex conjugate to each other. Hence, our ansatz for the Sym-points is
\[\lambda_1(\theta) :=e^{ \ii\theta}\quad \text{ and }\quad  \lambda_2(\theta):=e^{-\ii\theta}\]
for some parameter $\theta\in\R.$
Let $\wt\Sigma$ be the universal cover of $\Sigma$ and $\Phi_{t,x}:\wt\Sigma\to\Lambda SL(2,\C)$ be the solution of the Cauchy Problem
\begin{equation}\label{eqn-sol}
d_\Sigma \Phi_{t,x} = \Phi_{t,x} \eta_{t,x} \quad \text{ with initial condition } \quad \Phi_{t,x}(z=0)=\Id.
\end{equation}
Let $\gamma_1,\cdots,\gamma_4$ be generators of the fundamental group
$\pi_1(\Sigma,0)$, with
$\gamma_j$ enclosing only the singularity $p_k$ and
$\gamma_1\gamma_2\gamma_3\gamma_4=1$. Let
$M_k(t,x)=\cal{M}(\Phi_{t,x},\gamma_j)$ be the monodromy of $\Phi_{t,x}$ along $\gamma_j$.
Following \cite{HHT}, the goal is to solve the following Monodromy Problem:

\begin{equation}
\label{monodromy-problem2}
\left\{\begin{array}{l}
(i) \quad\exists U\in\Lambda SL(2,\C),\;\forall j,\;U^{-1}M_j U\in\Lambda SU(2)\;;\\ 
(ii) \quad\exists s>0,\;\forall j,\;M_j\mbox{ has constant eigenvalues $e^{\pm 2\pi\ii s}$}\;;\\
(iii) \quad\exists \theta\in\R, \;\forall j,\;M_j(\lambda=e^{\pm \ii\theta})\mbox{ is diagonal}\;.
\end{array}\right.
\end{equation}
The first point means that the monodromy representation is unitarizable.
Point (iii) ensures that the monodromies commute at the Sym-points.
If Point (ii) is satisfied and $s=\frac{1}{2g+2}$, $\eta_{t,x}$ lifts to $M_{g,\varphi}$ to
a potential $\wt{\eta}_{t,x}$ with monodromies $-\Id$ around the points $\wt{p}_j=\pi^{-1}(p_j)$.
We will see that the points $\wt{p}_j$ are in fact apparent singularities, as desired.
This therefore yields a closed CMC surface $f \colon M_{g,\varphi} \rightarrow \S^3$.

Because of the symmetries imposed on the potential, it suffices to solve Problem \ref{monodromy-problem2} for $j=1$.
We solve this problem in Section \ref{section:IFT} using the implicit function theorem at $t=0$. This will determine
the parameter $x=(x_1,x_2,x_3)$, the unitarizer $U$, $s$ and the Sym-point angle $\theta$ as functions of $t,$ for $t \sim 0$ small enough.

\subsection{Half-trace coordinates}
\label{section:half-trace-coord}
Our goal is to reformulate the Monodromy Problem \eqref{monodromy-problem2}
in terms of traces using Theorem \ref{characterLLLL}.
In the following we will denote by $\mathcal U\subset\C P^1$ the simply connected domain obtained by removing the radial rays from $p_j$ to $\infty$, for $j = 1, ...4$, see Figure \ref{domain}.

 \begin{figure}[h]
\centering  \vspace{-0.8cm}
 \includegraphics[height=0.25\textwidth]{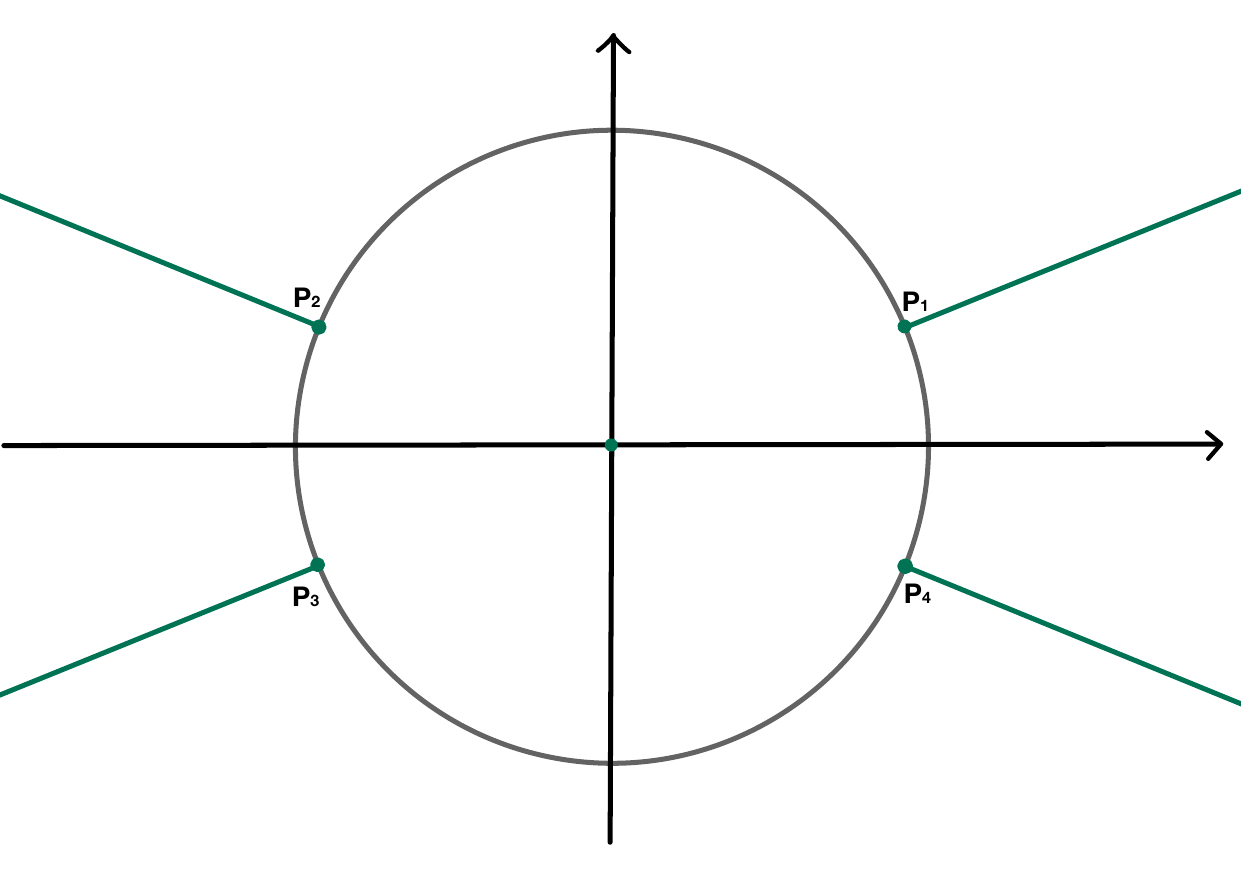} 
  \vspace{-0.5cm}
 \caption{The simply connected domain $\mathcal U$ is $\C$ with the green lines removed.}\label{domain}
\end{figure}

In this section $\Phi=\Phi_{t,x}$ denotes the solution of $d\Phi=\Phi\eta$ in $\mathcal U$ with initial condition $\Phi(0)=\Id$.
Define
$$\mathcal P=\mathcal P(t,x)=\Phi(z=1)\quad\text{ and }\quad
\mathcal Q=\mathcal Q(t,x)=\Phi(z=\ii).$$

 \begin{figure}[h]
\centering
\includegraphics[width=0.75\textwidth]{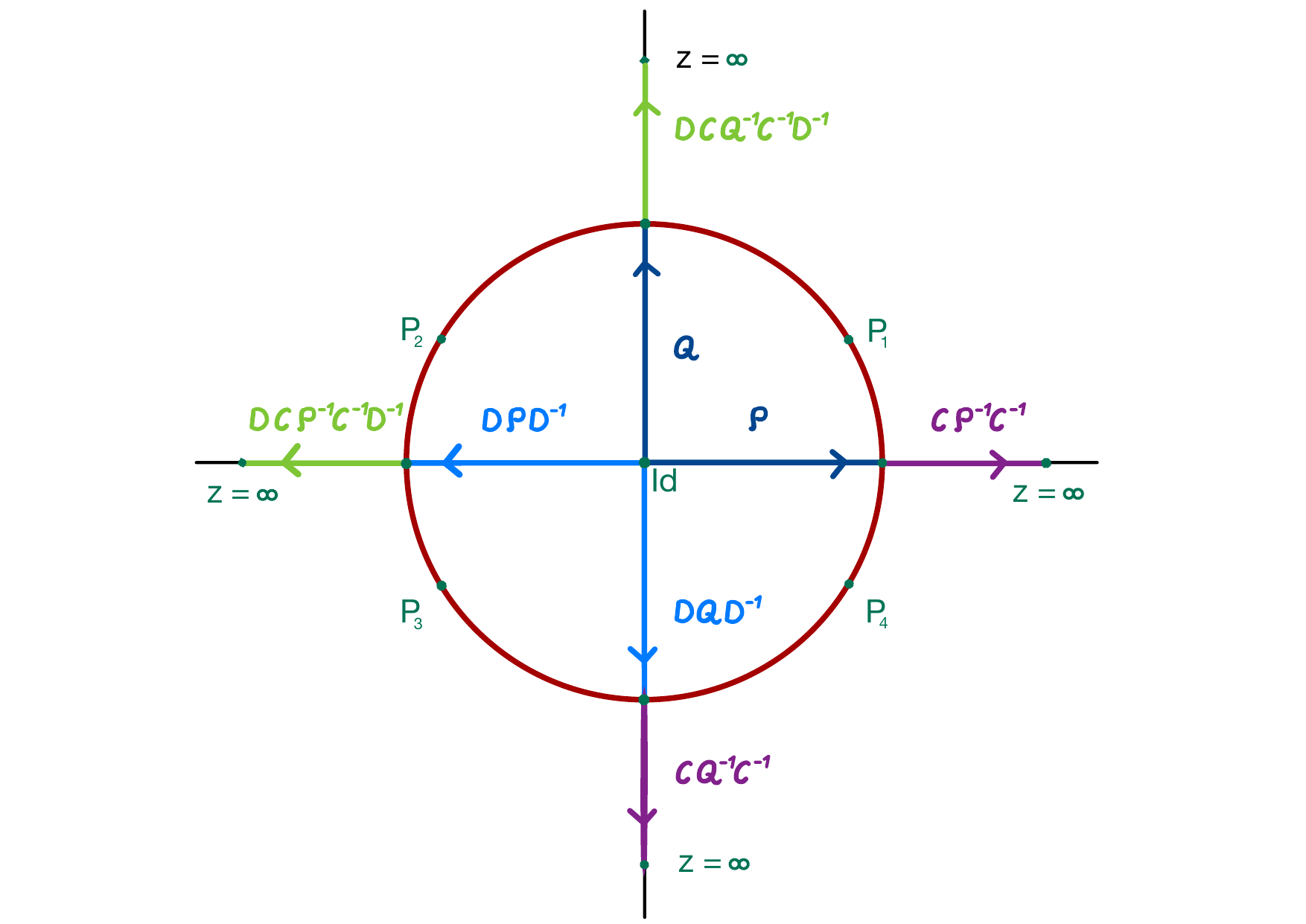}
\hspace{0.25cm}
\caption{
\footnotesize The figure shows a cartoon of the $4$-punctured sphere $\Sigma$ with punctures $p_1, ..., p_4,$ where the points $z= \infty$ need to be identified.    The initial value of $\Phi$ at $z=0$ is $\Id$. The extended frame $\Phi$ along the dark blue curves are given by $\mathcal P$ and $\mathcal Q$ respectively. The principal solution along the light blue curves are given by the principal solution along the dark blue curves together with the symmetry $\delta.$ The $x$-axis and $y$-axis each consists of a dark blue, light blue, green and purple line segment, and on $\Sigma$ each axis is homotopic to a curve around two punctures $p_1, p_2$ and $p_2, p_3,$ respectively.
}
\label{fig:mon}
\end{figure}
\begin{proposition}\label{Pro:monoPQ}
The monodromies $M_1$, $M_2$, $M_3$ and $M_4$ are given by
\begin{align*}
&M_1=-\mathcal P C\mathcal P^{-1}D\mathcal Q CD\mathcal Q^{-1}\\
&M_2=\mathcal Q D C \mathcal Q^{-1} \mathcal P C\mathcal P^{-1}D\\
&M_3=D^{-1} M_1 D\\
&M_4=D^{-1} M_2 D
\end{align*}
where the matrices $C$ and $D$ are defined in Section \ref{ssec:thepotential}.
\end{proposition}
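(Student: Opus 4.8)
The plan is to compute the monodromy of $\Phi_{t,x}$ along each generator $\gamma_j$ by decomposing the loop into arcs connecting the base point $0$ to the symmetry-related special points $z=1$, $z=\ii$, $z=-1$, $z=-\ii$, and invoking the equivariance of $\eta$ under $\delta$, $\tau$, $\sigma$. The key observation is that parallel transport of $\Phi$ along a path from $a$ to $b$ is $\Phi(b)\Phi(a)^{-1}$ (since $\Phi$ solves $d\Phi = \Phi\eta$ with $\Phi(0)=\Id$, the transport operator from $a$ to $b$ acting on the left is $\Phi(b)\Phi(a)^{-1}$), and that the symmetries of $\eta$ convert transport along $\delta$- or $\tau$-images of a path into conjugates of the original transport by $D$ or $C$. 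Concretely, if $P(\gamma)$ denotes left-transport along a path $\gamma$ from $0$, then equivariance $\delta^*\eta = D^{-1}\eta D$ gives $P(\delta\circ\gamma) = D^{-1} P(\gamma) D$ once one accounts for the fact that $\delta$ fixes $0$; similarly for $\tau$, but $\tau$ swaps $0$ and $\infty$, so one must use a path based at a $\tau$-fixed point (e.g.\ $z=1$ or $z=\ii$, the fixed points of $z\mapsto 1/z$ are $\pm1$; for $z=\ii$ one uses $\tau(\ii) = -\ii = \delta(\ii)$, i.e.\ $\tau$ and $\delta$ agree up to the involution structure on the imaginary axis).

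First I would fix explicit representatives for $\gamma_1,\dots,\gamma_4$ adapted to the cut domain $\mathcal U$ of Figure \ref{domain}: following Figure \ref{fig:mon}, take the $x$-axis (from $0$ through $1$ to $\infty$, identified with the segment coming back from $\infty$ through $-1$ to $0$) to represent a loop homotopic to one around $p_1$ and $p_2$, and the $y$-axis similarly for $p_2,p_3$. Then express the loop around $p_1$ alone as a composition: go from $0$ out to $z=1$ (transport $\mathcal P$), encircle $p_1$, come back; using that the segment from $1$ to $\infty$ on one side and back on the $\delta$-image side are related by $\delta$ (hence by conjugation with $D$), and that the arc around $p_1$ near $\infty$ closes up via $\tau$ (conjugation with $C$), one assembles $M_1$. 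The factor $-\Id$ will appear because going around $p_1$ via this route picks up the monodromy of $\sqrt{\phantom{z}}$-type behaviour, equivalently because $\eta_{-1}$ is nilpotent with the chosen normalization \eqref{eq:negative-part} forcing a $\pm$ sign; more honestly, the $-1$ is the standard $\lambda$-dependent normalization coming from the residue eigenvalue being a half-integer shift, which one checks directly from the local form $A_1\,dz/(z-p_1)$.

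I would then obtain $M_2$ analogously: the loop around $p_2$ corresponds to continuing along the $x$-axis past $0$ toward $-1 = \delta(1)$, so its transport is built from $\mathcal P$ conjugated appropriately together with the $\delta$-conjugate ($D$-conjugate) piece, yielding $M_2 = \mathcal Q D C\mathcal Q^{-1}\,\mathcal P C\mathcal P^{-1}D$ — note this is consistent with $M_1 M_2$ being transport around $p_1$ and $p_2$ jointly, i.e.\ around the $x$-axis, which provides a useful consistency check. The relations $M_3 = D^{-1}M_1 D$ and $M_4 = D^{-1}M_2 D$ are then immediate from $\delta$-equivariance, since $\delta$ maps $\gamma_1 \mapsto \gamma_3$ and $\gamma_2\mapsto\gamma_4$ (it interchanges $p_1\leftrightarrow p_3$ and $p_2\leftrightarrow p_4$) while fixing the base point, so $\mathcal M(\Phi,\gamma_3) = D^{-1}\mathcal M(\Phi,\gamma_1)D$ directly.

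The main obstacle I anticipate is bookkeeping the homotopy classes and the cuts correctly: because $\mathcal U$ is obtained by removing radial rays to $\infty$, the "natural" paths along the axes are not small loops around single punctures but loops around pairs, and translating between the axis-loops (which are where $\mathcal P$, $\mathcal Q$ and the symmetries act transparently) and the standard generators $\gamma_j$ requires care with the ordering of the conjugating factors $C$, $D$ and with which side of each cut one passes. Getting the precise word (including the position of $C$ versus $D$ and the global sign) right is the delicate part; everything else is a routine application of the equivariance identities for $\eta$ together with the transport-composition law $P(\gamma_2\gamma_1) = P(\gamma_2)P(\gamma_1)$.
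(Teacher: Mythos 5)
Your overall strategy — exploit the $\delta$-, $\tau$-, $\sigma$-equivariance of $\eta$, anchor the computation at the special points $z=1$ and $z=\ii$, and build the monodromies from transport compositions — is the same as the paper's. But there is a genuine gap in your treatment of the sign. You attribute the factor $-\Id$ in $M_1$ to a ``$\sqrt{\phantom{z}}$-type'' local monodromy around $p_1$, equivalently to ``the residue eigenvalue being a half-integer shift, which one checks directly from the local form $A_1\,dz/(z-p_1)$.'' This is false: the residue of $\eta$ at $p_1$ is $tA_1$, whose eigenvalues are $\pm t\sqrt{\mathcal K(x)}$, so the local monodromy is conjugate to $\exp(2\pi\ii\, tA_1)$ and is close to the identity for small $t$; there is no half-integer residue and no square-root behaviour anywhere in this Fuchsian potential. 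The minus sign is forced purely by the algebra of the constant matrices: one first obtains $M_1=\Phi(+\infty)\Phi(+\ii\infty)^{-1}=\mathcal PC\mathcal P^{-1}C^{-1}DC\mathcal Q C^{-1}D^{-1}\mathcal Q^{-1}$ and then simplifies using $C^2=D^2=-\Id$ and $DC=-CD$, which produces the $-1$. A quick sanity check shows why your mechanism cannot work: the identity must hold for all $(t,x)$, and at $t=0$ one has $\mathcal P=\mathcal Q=\Id$ and $M_1=\Id$, which is consistent with the stated formula only because $-(CD)^2=\Id$; a genuine $-\Id$ coming from a local monodromy at $p_1$ would contradict triviality of the connection at $t=0$.

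The second weak point is the path decomposition itself. There is no ``arc around $p_1$ near $\infty$'': the potential is regular at $z=\infty$, and the point of the argument is precisely that $\Phi$ continues analytically to $\infty$ along each of the four half-axes, with the four limit values $\Phi(+\infty)$, $\Phi(+\ii\infty)$, $\Phi(-\infty)$, $\Phi(-\ii\infty)$ differing only by the homotopy class of the ray. The loop $\gamma_1$ around $p_1$ is homotopic to the positive real half-axis followed by the reversed positive imaginary half-axis, so $M_1=\Phi(+\infty)\Phi(+\ii\infty)^{-1}$, and similarly $M_2=\Phi(+\ii\infty)\Phi(-\infty)^{-1}$. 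The values at infinity are computed from the symmetries using their fixed points: $\tau(z)=1/z$ fixes $z=1$, giving $\Phi(+\infty)=\mathcal PC\mathcal P^{-1}C^{-1}$, while the composition $\delta\circ\tau$ (i.e. $z\mapsto -1/z$) fixes $z=\ii$, giving $\Phi(+\ii\infty)=\mathcal QDC\mathcal Q^{-1}C^{-1}D^{-1}$; the $\delta$-symmetry alone only enters $M_1$ through this imaginary-axis identification, and relates the negative real axis to the positive one for $M_2$ (and sends $\gamma_1\mapsto\gamma_3$, $\gamma_2\mapsto\gamma_4$, which is the part of your argument that is correct as stated). As written, your description of $M_1$ interchanges the roles of $\delta$ and $\tau$ and invokes a nonexistent arc at infinity, so the ``bookkeeping'' you defer is not merely delicate — the decomposition needs to be replaced by the one above before the word in $\mathcal P$, $\mathcal Q$, $C$, $D$ can be assembled correctly.
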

\begin{proof}
Recall that $\eta$ is not singular at $z=\infty$.
Denote by $\Phi(+\infty)$, $\Phi(+\ii\infty)$, $\Phi(-\infty)$ and $\Phi(-\ii\infty)$
the value of $\Phi$ at $\infty$ obtained by analytic continuation along
the positive real axis, the positive imaginary axis, the negative real axis and the negative imaginary axis, respectively, see Figure \ref{fig:mon}.
Using the $\delta$ and $\tau$-symmetries, we obtain
\[\begin{split}
\Phi(+\infty)&=\mathcal PC\mathcal P^{-1}C^{-1},\\
\Phi(+\ii\infty)&=\mathcal QDC\mathcal Q^{-1}C^{-1}D^{-1},\\
\Phi(-\infty)&=D\mathcal PC\mathcal P^{-1}C^{-1}D^{-1}.\\
\end{split}\]
Then
$$M_1=\Phi(+\infty)\Phi(+\ii\infty)^{-1}=\mathcal PC\mathcal P^{-1}C^{-1}DC\mathcal Q C^{-1}D^{-1}\mathcal Q^{-1}$$
$$M_2=\Phi(+\ii\infty)\Phi(-\infty)^{-1}=\mathcal QDC\mathcal Q^{-1}C^{-1}D^{-1}C\mathcal PC^{-1}\mathcal P^{-1}$$
which simplify to the formulas of Proposition \ref{Pro:monoPQ} using
$DC=-CD$ and $C^2=D^2=-\Id$.
The formulas for $M_3$ and $M_4$ follow from the $\delta$-symmetry.
\end{proof}
We will apply Theorem \ref{characterLLLL} to the representation given by
\begin{equation}\label{defLs}
L_1=\mathcal QCD\mathcal Q^{-1},\quad
 L_2=D,\quad 
 L_3=\mathcal PC\mathcal P^{-1}
\quad\text{and}\quad L_4=M_1=-\mathcal P C\mathcal P^{-1}D\mathcal Q CD\mathcal Q^{-1}.\end{equation}
Then $L_1 L_2 L_3 L_4=\Id$ and $\tr(L_1)=\tr(L_2)=\tr(L_3)=0$.
Using Proposition \ref{Pro:monoPQ}, we thus obtain
\begin{equation}
\label{eq:MiLi}
M_1=L_4,\quad M_2=L_1^{-1}L_4L_1,\quad M_3=L_2^{-1}L_4L_2,\quad M_4=L_2^{-1}L_1^{-1}L_4L_1L_2.
\end{equation}
Observe that if $L_1,\dots,L_4$ are simultaneously unitarizable, i.e, there exist a $U\in \text{SL}(2, \C)$ such that $U L_k U^{-1} \in \text{SU}(2)$, then $M_1,\dots,M_4$ are also simultaneously unitarizable.
In view of Theorem \ref{characterLLLL}, we define the half-trace coordinates $\mathfrak p,\mathfrak q,\mathfrak r$ by
$$\mathfrak p(t,x)=\tfrac{1}{2}\tr(L_2 L_3),\quad
\mathfrak q(t,x)=-\tfrac{1}{2}\tr(L_1 L_2)\quad\text{ and }\quad
\mathfrak r(t,x)=-\tfrac{1}{2}\tr(L_2 L_4).$$
Moreover, let
$$\mathcal K(x)=-\det(A_1)=x_1^2+x_2^2+x_3^2.$$
\begin{proposition}
\label{prop:monodromy-reduction}
Let $t>0$ and $x$ be the parameter vector of the DPW potential $\eta$ such that:
\begin{enumerate}[(i)]
\item $\mathfrak p(t,x)$ and $\mathfrak q(t,x)$ are real along the unit circle $\lambda\in\S^1$.
\item $\mathcal K(x)$ is a positive constant (with respect to $\lambda$) which satisfies
\begin{equation}
\label{eq:hypothesisK}
0<t \sqrt{\mathcal K(x)}<\frac{1}{4}.\end{equation}
\item There exists $\lambda_1=e^{\ii\theta}\in\S^1$ such that $\mathfrak p(t,x)(\lambda_1)=\mathfrak q(t,x)(\lambda_1)=0$.
\end{enumerate}
Then there exists $U\in\Lambda SL(2,\C)$ such that $U L_j U^{-1}\in\Lambda SU(2)$ for all $j = 1, ... 4$.
The unitarizer $U$ is unique up to left multiplication by $\Lambda SU(2)$.
Moreover, the Monodromy Problem \eqref{monodromy-problem2} is solved.
\end{proposition}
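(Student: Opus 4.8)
The plan is to apply Theorem~\ref{characterLLLL} at each $\lambda\in\S^1$ to the tuple $L_1,\dots,L_4$ of \eqref{defLs}. We already have $L_1L_2L_3L_4=\Id$ and $\tr(L_1)=\tr(L_2)=\tr(L_3)=0$, so I would first pin down $\tr(L_4)=\tr(M_1)$. Since $\eta_{t,x}$ is Fuchsian with residue $tA_j$ at $p_j$, $\det(A_j)=-\mathcal K(x)$ for every $j$, and by \eqref{eq:hypothesisK} the eigenvalue gap $2t\sqrt{\mathcal K(x)}\in(0,\tfrac12)$ is not a nonzero integer, the local monodromy $M_j$ around $p_j$ is conjugate to $\exp(\pm2\pi\ii\,tA_j)$, so $\tr(M_j)=2\cos(2\pi t\sqrt{\mathcal K(x)})$. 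By hypothesis~(ii) this is a constant $\tau\in(0,2)$, and with $s:=t\sqrt{\mathcal K(x)}>0$ the $M_j$ have the constant eigenvalues $e^{\pm2\pi\ii s}$ --- which is already point~(ii) of \eqref{monodromy-problem2}.

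In the trace coordinates of Theorem~\ref{characterLLLL} one has $(x,y,z)=(\tr(L_1L_2),\tr(L_2L_3),\tr(L_2L_4))=(-2\mathfrak q,2\mathfrak p,-2\mathfrak r)$, and the Fricke relation reads
\[
\mathfrak p^2+\mathfrak q^2+\mathfrak r^2+2\mathfrak p\mathfrak q\mathfrak r=\sin^2(2\pi s).
\]
Theorem~\ref{characterLLLL} then gives a simultaneous unitarizer of $L_1(\lambda),\dots,L_4(\lambda)$, unique up to left multiplication by $SU(2)$, precisely when $\mathfrak p(\lambda),\mathfrak q(\lambda)\in(-1,1)$ and $\mathfrak r(\lambda)\in\R$. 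Reality of $\mathfrak p,\mathfrak q$ on $\S^1$ is hypothesis~(i). For reality of $\mathfrak r$ I would exploit the $\delta$- and $\sigma$-equivariances of $\eta$: they force $\overline{\Phi_{t,x}(\bar z,\bar\lambda)}=\Phi_{t,x}(z,\lambda)$ and $\Phi_{t,x}(-z,\lambda)=D^{-1}\Phi_{t,x}(z,\lambda)D$, hence $\overline{\mathcal P(\bar\lambda)}=\mathcal P(\lambda)$ and $\overline{\mathcal Q(\bar\lambda)}=D^{-1}\mathcal Q(\lambda)D$; inserting these into Proposition~\ref{Pro:monoPQ} together with the identity $\tr(L_2L_4)=-\tr(L_1L_2)\tr(L_2L_3)-\tr(L_1L_3)$ (valid since $\tr(L_1)=\tr(L_2)=\tr(L_3)=0$ and $L_1L_2L_3L_4=\Id$) produces the functional relations $\overline{\mathfrak p(\bar\lambda)}=\mathfrak p(\lambda)$, $\overline{\mathfrak q(\bar\lambda)}=\mathfrak q(\lambda)$ and $\overline{\mathfrak r(\bar\lambda)}=-\mathfrak r(\lambda)-2\mathfrak p(\lambda)\mathfrak q(\lambda)$; combined with~(i), which makes $\mathfrak p,\mathfrak q$ invariant under $\lambda\mapsto1/\lambda$, these force $\mathfrak r$ to be real on $\S^1$. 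I expect this symmetry bookkeeping to be the main technical obstacle.

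Granting that $\mathfrak p,\mathfrak q,\mathfrak r$ are all real on $\S^1$, the Fricke relation read as a quadratic in $\mathfrak r$ with a real root forces $(1-\mathfrak p^2)(1-\mathfrak q^2)\ge\cos^2(2\pi s)>0$. Hence $\{\,|\mathfrak p|<1,\,|\mathfrak q|<1\,\}$ and $\{\,|\mathfrak p|>1,\,|\mathfrak q|>1\,\}$ are disjoint open subsets covering $\S^1$; since $\mathfrak p(\lambda_1)=\mathfrak q(\lambda_1)=0$ by~(iii), connectedness of $\S^1$ puts us in the first one everywhere, so $(x,y,z)(\lambda)\in(-2,2)\times(-2,2)\times\R$ for all $\lambda\in\S^1$. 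Theorem~\ref{characterLLLL} now provides a pointwise unitarizer $U(\lambda)$, unique up to left $SU(2)$; fixing that ambiguity by a standard normalization and using the real-analytic dependence of $\lambda\mapsto L_j(\lambda)$ on $\A_\rho$ upgrades $U$ to an element of $\Lambda SL(2,\C)$ with $UL_jU^{-1}\in\Lambda SU(2)$, unique up to left multiplication by $\Lambda SU(2)$, exactly as in \cite{HHT,nnoids}.

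Finally I would verify \eqref{monodromy-problem2}. Point~(i) holds because each $M_j$ is a word in $L_1,\dots,L_4$ and their inverses by \eqref{eq:MiLi}, so the same $U$ unitarizes the $M_j$; point~(ii) was obtained above. For point~(iii): at $\lambda_1=e^{\ii\theta}$, the vanishing $\mathfrak p(\lambda_1)=\tfrac12\tr(L_2L_3)=0$ and $\mathfrak q(\lambda_1)=-\tfrac12\tr(L_1L_2)=0$ (with $L_2=D$ diagonal) forces $L_1(\lambda_1)$ and $L_3(\lambda_1)$ to be off-diagonal, so $M_1(\lambda_1)=L_4(\lambda_1)=-L_3(\lambda_1)D L_1(\lambda_1)$ is diagonal, and then $M_3=D^{-1}M_1D$, $M_2=-L_1M_1L_1$ and $M_4=D^{-1}M_2D$ are diagonal as well. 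Since the relations above make $\mathfrak p,\mathfrak q$ real on $\S^1$ with symmetric Laurent coefficients, $\mathfrak p(\lambda_2)=\mathfrak q(\lambda_2)=0$ also at $\lambda_2=e^{-\ii\theta}=\overline{\lambda_1}$, and the same computation gives all $M_j(\lambda_2)$ diagonal. Thus \eqref{monodromy-problem2} holds with the Sym-point angle $\theta$ from hypothesis~(iii).
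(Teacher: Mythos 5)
Your overall architecture (non-resonance giving $\tr(M_1)=2\cos(2\pi s)$ and point (ii) of \eqref{monodromy-problem2}, the Fricke relation, the connectedness argument from hypothesis (iii) to get $|\mathfrak p|,|\mathfrak q|<1$, Theorem \ref{characterLLLL} for the unitarizer, and the off-diagonality of $L_1,L_3$ at $\lambda_1$ for point (iii)) matches the paper's proof. But there is a genuine gap at the step you yourself flag as the main obstacle: the reality of $\mathfrak r$ on $\S^1$. The functional equation you derive, $\overline{\mathfrak r(\overline\lambda)}=-\mathfrak r(\lambda)-2\mathfrak p(\lambda)\mathfrak q(\lambda)$, is correct (it is equivalent to $\tfrac12\tr(L_1L_3)=\mathfrak r+2\mathfrak p\mathfrak q$ together with $\overline{\mathcal P}=\mathcal P$, $\overline{\mathcal Q}=D^{-1}\mathcal Q D$), but it does not imply reality. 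Writing $\mathfrak r=a+\ii b$ on $\S^1$ and using that $\mathfrak p,\mathfrak q$ are real there, the relation only yields $a(\overline\lambda)=-a(\lambda)-2\mathfrak p\mathfrak q(\lambda)$ and $b(\overline\lambda)=b(\lambda)$: the imaginary part is merely symmetric under $\lambda\mapsto\overline\lambda$, not zero. There is no a priori relation between $\mathfrak r(1/\lambda)$ and $\mathfrak r(\lambda)$ (unlike for $\mathfrak p,\mathfrak q$, whose $*$-invariance is exactly what hypothesis (i) imposes), so the symmetry bookkeeping cannot close this step. Moreover your next inequality $(1-\mathfrak p^2)(1-\mathfrak q^2)\geq\cos^2(2\pi s)$ presupposes that $\mathfrak r$ is real, so the gap propagates into the key bound $|\mathfrak p|,|\mathfrak q|<1$.

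The missing idea, and the heart of the paper's proof, is analytic rather than algebraic: $\mathfrak r$ is a single-valued holomorphic function of $\lambda$ near $\S^1$ which satisfies the quadratic \eqref{eq:quadratic} whose coefficients are real on $\S^1$ by hypothesis (i); hence its discriminant $\Delta=4(1-\mathfrak p^2)(1-\mathfrak q^2)-4\cos^2(2\pi s)$ cannot change sign on $\S^1$ (any zero must have even order, since otherwise the root $\mathfrak r$ of the quadratic would fail to be single-valued holomorphic), and since $\Delta(\lambda_1)=4\sin^2(2\pi s)>0$ by hypotheses (ii) and (iii), one gets $\Delta\geq0$ on all of $\S^1$ and therefore $\mathfrak r\in\R$ there. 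Note that hypothesis (iii) must already enter at this stage through $\lambda_1$, whereas in your write-up it only appears later; without the discriminant argument the reality of $\mathfrak r$ is simply not a consequence of (i) and the built-in symmetries. Once this step is supplied, the rest of your argument goes through essentially as in the paper (with the smoothness and diagonality of $U$ handled by Proposition \ref{prop:symmetriesU} and Theorem \ref{unitarizer}).
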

\begin{proof}
Let
$$s=t\sqrt{\mathcal K}.$$
Then the residue $tA_1$ of the potential at $p_1$ has eigenvalues $\pm s$.
By the hypothesis \eqref{eq:hypothesisK}, the Fuchsian system given by $d\Phi=\Phi\eta$ is non-resonant at $p_1$,
so the monodromy $M_1$ has eigenvalues $e^{\pm 2\pi \ii s}$, proving Point (ii) of Problem \eqref{monodromy-problem2}. Moreover,
$$\traceL=\tr(L_4)=\tr(M_1)=2\cos(2\pi s)\in (-2,2).$$
By Theorem \ref{characterLLLL}, the half-traces coordinates $\mathfrak p,\mathfrak q,\mathfrak r$ satisfy the equation (after dividing by 4)
\begin{equation}
\label{eq:quadratic}
\mathfrak p^2 +\mathfrak q^2 +\mathfrak r^2 +2 \mathfrak p\mathfrak q\mathfrak r -1+\cos^2(2\pi s) =0.
\end{equation}
The function $\mathfrak r$ is a well-defined holomorphic function of $\lambda$ in a neighborhood of the unit circle. On the other hand,
$\mathfrak r$ is given as the solution of
the quadratic polynomial \eqref{eq:quadratic}.
Since $\mathfrak p$ and $\mathfrak q$ are real along the unit circle, this polynomial \eqref{eq:quadratic} has real coefficients,
and since $\mathfrak r$ is well-defined and holomorphic in $\lambda$, its discriminant
  \[\Delta=4(1-\mathfrak p^2)(1-\mathfrak q^2)-4\cos^2(2\pi s)\]
  has constant sign on the unit-circle (its zeros, if any, must have even order).
  At $\lambda_1$, we have $\Delta=4\sin^2(2\pi s)>0$, so $\Delta\geq 0$ on the unit circle.
  Hence $\mathfrak r$ must be real on the unit circle.
If $p^2=1$ or $q^2=1$, then 
$\Delta<0$. Hence, $|\mathfrak p|$ and $|\mathfrak q|$ are bounded by 1 along $\S^1$.
So $\tr(L_1 L_2)\in (-2,2)$, $\tr(L_2 L_3)\in (-2,2)$ and $\tr(L_2 L_4)\in\R$.
By Theorem \ref{characterLLLL}, there exists $U\in \Lambda SL(2,\C)$ such that $U L_j U^{-1}\in \Lambda SU(2)$
for $j= 1, ..., 4$. Moreover, the unitarizer is diagonal by Proposition \ref{prop:symmetriesU}, depends smoothly on $\lambda$ along the unit circle by Theorem \ref{unitarizer}, and unique up to left multiplication by $\Lambda SU(2)$.

Thus point (i) of Problem \eqref{monodromy-problem2} follows from Equation \eqref{eq:MiLi}.
Finally, to prove Point (iii) of Problem \eqref{monodromy-problem2}, we compute
\[\Phi(+\infty)=\mathcal PC\mathcal P^{-1}C^{-1}=\begin{pmatrix}\mathcal P_{11}^2-\mathcal P_{12}^2&-\mathfrak p\\\mathfrak p&\mathcal P_{22}^2-\mathcal P_{21}^2\end{pmatrix}\]
\[\Phi(\ii\infty)=\mathcal Q DC\mathcal Q^{-1}C^{-1}D^{-1}=\begin{pmatrix}\mathcal Q_{11}^2+\mathcal Q_{12}^2&-\ii\mathfrak q\\ -\ii\mathfrak q& \mathcal Q_{22}^2+\mathcal Q_{21}^2\end{pmatrix}.\]
Hence both are diagonal at $\lambda_1$ and so is $M_1$.
By Proposition \ref{prop:pq} below, we have
$\mathfrak p(\overline{\lambda_1})=\overline{\mathfrak p(\lambda_1)}=0$ and
similarly
$\mathfrak q(\overline{\lambda_1})=0$, so $M_1$ is diagonal at $\lambda_2=e^{-\ii\theta}$, proving Point (iii)
of Problem \eqref{monodromy-problem2}.
\end{proof}

\begin{proposition}
\label{prop:pq}\mbox{}
\begin{enumerate}
\item The half-trace coordinate $\mathfrak p$ and $\mathfrak q$ can be expressed in terms of the entries of $\mathcal P$ and $\mathcal Q$ by
$$\mathfrak p=\mathcal P_{11}\mathcal P_{21}-\mathcal P_{12}\mathcal P_{22}\quad\text{and}\quad
\mathfrak q =\ii (\mathcal Q_{11}\mathcal Q_{21}+\mathcal Q_{12}\mathcal Q_{22}).$$
\item They satisfy $\bar{\mathfrak p}=\mathfrak p$ 
(which means $\overline{\mathfrak p(\overline{\lambda})}=\mathfrak p$) and $\bar{\mathfrak q}=\mathfrak q$.
\item At $t=0$, we have $\mathfrak p(0,x)=\mathfrak q(0,x)=0$ and the derivatives of $\mathfrak p$ and $\mathfrak q$ with respect to $t$ at $t=0$ are given by
$$\mathfrak p'(0,x)=2\pi x_3\quad\text{ and }\quad
\mathfrak q'(0,x)=2\pi x_2.$$
\end{enumerate}
\end{proposition}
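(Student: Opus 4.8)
The plan is to establish the three parts in order, since each builds on the previous one. For part (1), I would start from the explicit formulas for $L_1,\dots,L_4$ in \eqref{defLs}. Recall $L_2 = D = \minimatrix{\ii & 0 \\ 0 & -\ii}$ and $L_3 = \mathcal P C \mathcal P^{-1}$. A direct matrix computation gives $L_2 L_3 = D\mathcal P C \mathcal P^{-1}$, and expanding the trace using $C = \minimatrix{0 & \ii \\ \ii & 0}$ and $\det\mathcal P = 1$ yields $\tfrac12\tr(L_2L_3) = \mathcal P_{11}\mathcal P_{21} - \mathcal P_{12}\mathcal P_{22}$ after simplification; this is exactly the off-diagonal entry $-\mathfrak p$ appearing in the formula for $\Phi(+\infty)$ in the proof of Proposition \ref{prop:monodromy-reduction}, so the two computations cross-check. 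The same kind of computation with $L_1 = \mathcal Q CD \mathcal Q^{-1}$ gives $\mathfrak q = -\tfrac12\tr(L_1 L_2) = -\tfrac12\tr(\mathcal Q CD \mathcal Q^{-1} D)$, and using $CD = -DC$ together with $D^2 = -\Id$ this reduces to $\ii(\mathcal Q_{11}\mathcal Q_{21} + \mathcal Q_{12}\mathcal Q_{22})$; again this matches the off-diagonal entry $-\ii\mathfrak q$ of $\Phi(\ii\infty)$.

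For part (2), the reality statements $\bar{\mathfrak p} = \mathfrak p$ and $\bar{\mathfrak q} = \mathfrak q$ should follow from the $\sigma$-equivariance $\sigma^*\overline{\eta} = \eta$ of the potential. Concretely, $\overline{\Phi(z,\overline\lambda)}$ solves the same Cauchy problem as $\Phi(\overline z,\lambda)$ with the same initial condition $\Id$ at $z=0$, so $\overline{\Phi(\overline z,\overline\lambda)} = \Phi(z,\lambda)$ by uniqueness. Evaluating at $z=1$ (which is fixed by $z\mapsto\overline z$) gives $\overline{\mathcal P(\overline\lambda)} = \mathcal P(\lambda)$, and then the formula from part (1) shows $\overline{\mathfrak p(\overline\lambda)} = \mathfrak p(\lambda)$. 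For $\mathfrak q$ one evaluates at $z = \ii$, whose conjugate is $-\ii$; here one needs to also invoke the $\delta$-symmetry (or the $\tau$-symmetry relating $\mathcal Q$ at $\ii$ and $-\ii$) so that $\overline{\mathcal Q(\overline\lambda)}$ is expressed back in terms of $\mathcal Q(\lambda)$ up to conjugation by $D$, and the specific shape of the formula for $\mathfrak q$ (with its $\ii$ prefactor and $+$ sign) is exactly what makes the conjugation come out right. This bookkeeping with the symmetry matrices is the step I expect to be the most delicate, though it is still routine.

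For part (3), at $t=0$ the potential $\eta_{0,x}$ vanishes, so $\Phi_{0,x} \equiv \Id$, giving $\mathcal P(0,x) = \mathcal Q(0,x) = \Id$ and hence $\mathfrak p(0,x) = \mathfrak q(0,x) = 0$ directly from part (1). For the $t$-derivatives, I would differentiate the Cauchy problem \eqref{eqn-sol} in $t$ at $t=0$: writing $\dot\Phi = \partial_t\Phi|_{t=0}$, we get $d\dot\Phi = \partial_t\eta|_{t=0} = \sum_j A_j\tfrac{dz}{z-p_j}$ (the potential is linear in $t$), so $\dot\Phi(z) = \int_0^z \sum_j A_j\tfrac{dw}{w-p_j}$ along the relevant path in $\mathcal U$. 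Then $\partial_t\mathcal P|_{t=0} = \int_0^1 \sum_j A_j \tfrac{dz}{z-p_j}$ and similarly for $\mathcal Q$ at $z=\ii$, and differentiating the part (1) formulas at $t=0$ (where $\mathcal P = \Id$) picks out a single off-diagonal entry of this integral. Using the rewriting $\eta = t\sum_j x_j \mathfrak m_j\omega_j$ with the explicit $\omega_j$, one computes $\int_0^1\omega_j$ and $\int_0^\ii\omega_j$ — these are elementary $\arctan$/$\log$-type integrals — and the claim is that after extracting the correct entry only the $x_3\omega_3$ term survives for $\mathfrak p$ with $\int_0^1 4\ii\sin\varphi\,(z^2+1)/(z^4 - 2\cos2\varphi\,z^2+1)\,dz$ evaluating (together with the $\mathfrak m_3$ entry and the overall factors) to $2\pi x_3$, and symmetrically $\int_0^\ii\omega_2$ contributes $2\pi x_2$ for $\mathfrak q$ while the other $\omega_j$ integrate to zero over the respective contours by parity. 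Verifying these specific integral evaluations — in particular that the irrelevant terms vanish and the surviving one gives exactly $2\pi$ — is the only genuine computation, and it is the part most likely to need care with branch choices of the logarithm on the domain $\mathcal U$.
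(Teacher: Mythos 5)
Your proposal follows essentially the same route as the paper: part (1) by direct matrix computation from $L_2L_3=D\mathcal P C\mathcal P^{-1}$ and $L_1L_2=\mathcal Q CD\mathcal Q^{-1}D$, part (2) via the $\sigma$- and $(\delta\circ\sigma)$-equivariance of $\Phi$ (the latter giving $\overline{\mathcal Q}=D^{-1}\mathcal Q D$, which is exactly what the $\ii$ prefactor and the $+$ sign in $\mathfrak q$ absorb), and part (3) by differentiating the Cauchy problem at $t=0$ and evaluating the primitives $\Omega_j$ at $z=1$ and $z=\ii$. One correction in part (3): the unwanted contributions do not drop because the other $\omega_j$ ``integrate to zero by parity'' --- in fact $\Omega_1(1)=\ii(\pi-2\varphi)$, $\Omega_2(1)=\log\tfrac{1-\cos\varphi}{1+\cos\varphi}$, $\Omega_1(\ii)=-2\ii\varphi$ and $\Omega_3(\ii)=\log\tfrac{1-\sin\varphi}{1+\sin\varphi}$ are all nonzero in general --- but rather because the diagonal $x_1\Omega_1$ entries never enter the part (1) formulas, while $\mathfrak p'(0,x)=\mathcal P_{21}'-\mathcal P_{12}'$ cancels the $x_2\Omega_2(1)$ terms and $\mathfrak q'(0,x)=\ii(\mathcal Q_{21}'+\mathcal Q_{12}')$ cancels the $\ii x_3\Omega_3(\ii)$ terms, leaving $-2\ii x_3\Omega_3(1)=2\pi x_3$ and $2\ii x_2\Omega_2(\ii)=2\pi x_2$ as claimed.
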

\begin{proof} The first point is a direct computation using the definitions of $\mathfrak p$ and $\mathfrak q$.
 Recalling the symmetries of the potential, we have $\sigma^*\overline{\Phi}=\Phi$ so
 $\overline{\mathcal P}=\mathcal P$, which gives
$\overline{\mathfrak p}=\mathfrak p$. In the same way,
$(\delta\circ\sigma)^*\overline{\Phi}=D^{-1}\Phi D$ so  $\overline{\mathcal Q}=D^{-1}\mathcal Q D$.
This gives $\overline{\mathfrak q}=\mathfrak q$.

At $t=0$, we have $\mathcal P(0,x)=\mathcal Q(0,x)=\Id$ so $\mathfrak p(0,x)=\mathfrak q(0,x)=0$.
Differentiating the equation $d_{\Sigma}\Phi_{t,x}=\Phi_{t,x}\eta_{t,x}$ with respect to $t$ we obtain
$$d_{\Sigma}\Phi'_{0,x}=\eta'_{0,x}
=\sum_{j=1}^3 x_j\mathfrak m_j\omega_j.$$
Integrating, we obtain for $z\in\mathcal U$ --- the simply connected domain in $\C P^1$ obtained by removing the radial rays from $p_k$ to $z=\infty$ ---
$$\Phi'_{0,x}(z)=\int_0^z\eta'_{0,x}=\matrix{x_1\Omega_1(z)&x_2\Omega_2(z)+\ii x_3\Omega_3(z)\\
x_2\Omega_2(z)-\ii x_3\Omega_3(z)&-x_1\Omega_1(z)}$$
where
$$\Omega_j(z)= \int_0^z \omega_j.$$
For further reference, we list the values of all $\Omega_j$ at $z=1$ and $z=\ii$
(which are easily computed using complex logarithms)
\begin{equation}\label{eq:Omega1}
\begin{split}
\Omega_1(1)=\ii(\pi-2\varphi), \quad 
\Omega_2(1)=\log\left(\frac{1-\cos(\varphi)}{1+\cos(\varphi)}\right),\quad
\Omega_3(1)=\ii \pi
\end{split}
\end{equation}
\begin{equation}\label{eq:OmegaI}
\begin{split}
\Omega_1(\ii)=-2\ii\varphi,\quad
\Omega_2(\ii)=-\ii\pi,\quad
\Omega_3(\ii)=\log\left(\frac{1-\sin(\varphi)}{1+\sin(\varphi)}\right).
\end{split}
\end{equation}
This gives
\begin{equation*}
\begin{split}
\mathfrak p'(0,x)&=\mathcal P_{21}'(0,x)-\mathcal P_{12}'(0,x)=-2\ii x_3\,\Omega_3(1)=2\pi x_3\\
\mathfrak q'(0,x)&=\ii(\mathcal Q_{21}'(0,x)+\mathcal Q_{12}'(0,x))=2\ii x_2\,\Omega_2(\ii)=
2\pi x_2.
\end{split}
\end{equation*}
\end{proof}
Using Iwasawa decomposition, we may choose a positive unitarizer $U\in\Lambda^+_{\R} SL(2,\C)$.
It then has additional properties following from the symmetries:
\begin{proposition}
\label{prop:symmetriesU}
The unitarizer $U$ is diagonal and satisfies $U=\overline{U}$, i.e. $\overline{U(\overline{\lambda})}=U(\lambda)$.
\end{proposition}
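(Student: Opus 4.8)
The plan is to exploit the uniqueness of the unitarizer (established in Proposition \ref{prop:monodromy-reduction}, up to left multiplication by $\Lambda SU(2)$) together with the symmetries $\delta$ and $\sigma$ of the potential, which induce compatible symmetries on the representation $L_1,\dots,L_4$. First I would record how the symmetries act on the monodromy data. From $\delta^*\eta = D^{-1}\eta D$ one gets $\delta^*\Phi = D^{-1}\Phi D$ (both solve the same Cauchy problem with initial value $\mathrm{Id}$ after conjugation, since $D$ is constant and $\Phi(0)=\mathrm{Id}$), hence by the formulas \eqref{defLs} for $L_j$ in terms of $\mathcal P$, $\mathcal Q$, $C$, $D$ and the relations $DC=-CD$, $C^2=D^2=-\mathrm{Id}$, one checks that conjugation by $D$ permutes the $L_j$ among themselves (up to the $C$-conjugacies already present), i.e. $\{D L_j D^{-1}\}$ is again of the form required by Theorem \ref{characterLLLL} with the same trace coordinates $\mathfrak p, \mathfrak q, \mathfrak r$. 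Similarly, from $\sigma^*\overline\eta = \eta$ one gets $\overline{\mathcal P} = \mathcal P$ and, using the $\delta$-twist at the point $\ii$, $\overline{\mathcal Q} = D^{-1}\mathcal Q D$ (both recorded already in the proof of Proposition \ref{prop:pq}); combined with $\overline C = C$, $\overline D = -D$ this shows that $\lambda \mapsto \overline{U(\overline\lambda)}$ is again a unitarizer of the $L_j$'s.

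Next, the uniqueness statement kicks in. Since $U$ unitarizes $L_1,\dots,L_4$, so does $D U D^{-1}$ (by the $\delta$-computation above: it unitarizes $D L_j D^{-1}$, which is a relabelling of the same set, and a unitarizer of a set is a unitarizer of any relabelling), and so does $\overline{U(\overline{\,\cdot\,})}$ (by the $\sigma$-computation). By the uniqueness clause there exist $V_\delta, V_\sigma \in \Lambda SU(2)$ with $D U D^{-1} = V_\delta U$ and $\overline{U(\overline\lambda)} = V_\sigma U$. Now I impose the normalization: by Iwasawa we may take $U \in \Lambda^+_{\R}SL(2,\mathbb C)$, so $U(0)$ is upper triangular with positive diagonal. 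Evaluating $D U D^{-1} = V_\delta U$ at $\lambda = 0$ and using that $D$ is diagonal shows $V_\delta(0)$ is upper triangular and unitary, hence diagonal; then both sides of $DUD^{-1}=V_\delta U$ lie in $\Lambda^+ SL(2,\mathbb C)$, and comparing with the uniqueness of the positive-loop normalization (or: $V_\delta = (DUD^{-1}) U^{-1}$ is simultaneously in $\Lambda SU(2)$ and $\Lambda^+ SL(2,\mathbb C)$, hence constant and in $SU(2)\cap$ (upper triangular) $=$ diagonal unitary with positive diagonal $= \mathrm{Id}$). This forces $DUD^{-1} = U$, i.e. $U$ commutes with $D = \mathrm{diag}(\ii,-\ii)$, which means precisely that $U$ is diagonal.

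Finally, for the reality statement $U = \overline U$: with $U$ now known to be diagonal, write $U = \mathrm{diag}(\mu, \mu^{-1})$ with $\mu \in \mathcal W_\rho$; the relation $\overline{U(\overline\lambda)} = V_\sigma U$ with $V_\sigma \in \Lambda SU(2)$ and $U \in \Lambda^+_\R SL(2,\mathbb C)$ again gives, by the same Iwasawa-uniqueness argument (the left side $\overline{U(\overline{\,\cdot\,})}$ is still a positive $\mathbb R$-normalized loop because conjugation-by-$\lambda\mapsto\overline\lambda$ preserves the conditions "holomorphic in $\overline{\mathbb D}$" and "upper triangular with positive diagonal at $0$"), that $V_\sigma = \mathrm{Id}$, hence $\overline{U(\overline\lambda)} = U(\lambda)$, i.e. $U \in \Lambda SL(2,\mathbb C)_{\R}$ in the notation of Section \ref{loop-groups}.

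The main obstacle is the bookkeeping in the first step: one must verify carefully that conjugation by $D$ (respectively the antiholomorphic involution $\lambda\mapsto\overline\lambda$ together with entrywise conjugation) sends the specific quadruple $(L_1,L_2,L_3,L_4) = (\mathcal Q CD\mathcal Q^{-1}, D, \mathcal PC\mathcal P^{-1}, M_1)$ to another quadruple satisfying the hypotheses of Theorem \ref{characterLLLL} \emph{with the same trace coordinates}, so that the uniqueness clause genuinely applies; this uses $\overline{\mathcal P}=\mathcal P$, $\overline{\mathcal Q}=D^{-1}\mathcal Q D$, the commutation rules among $C,D$, and the fact that $\mathfrak p,\mathfrak q,\mathfrak r$ are real on $\S^1$ (Proposition \ref{prop:pq}(2) and Proposition \ref{prop:monodromy-reduction}(i)). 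Once that is in place, everything else is a clean application of the uniqueness of the Iwasawa-normalized unitarizer.
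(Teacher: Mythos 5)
Your proof is correct and follows essentially the same route as the paper: the symmetries make $DUD^{-1}$ and $\lambda\mapsto\overline{U(\overline\lambda)}$ again positively normalized unitarizers, and uniqueness together with $\Lambda SU(2)\cap\Lambda^+_{\R}SL(2,\C)=\{\Id\}$ forces them to equal $U$ --- the paper's diagonality step is just the more economical version of yours, using only that $U$ unitarizes $L_2=D$. Minor slips that do not affect the argument: $\overline{C}=-C$ (not $C$), conjugation by $D$ is a global conjugation of the quadruple rather than a permutation of the $L_j$ (which is all you need, e.g.\ since $D=L_2$), and the verification that $\overline{U}$ unitarizes $L_1$ uses that $U$ is diagonal (or the identity $\overline{L_1}=-L_2^{-1}L_1L_2$), which is available by the time you invoke it.
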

\begin{proof}
Since $U$ unitarizes $L_2$, we have
$$U D U^{-1} D^{-1}=U L_2 U^{-1} D^{-1}\in \Lambda SU(2)$$
and since $D$ is constant diagonal, also
$$U D U^{-1} D^{-1}\in \Lambda^+_{\R} SL(2,\C).$$
Hence $UDU^{-1}D^{-1}=\Id$, so the unitarizer $U$ is diagonal.

For the second point, we prove that $\overline{U}$ unitarizes all $L_j$.
From the $\sigma$ symmetry, we have $M_1=\overline{M_4}^{-1}$, so
$$\overline{U}L_4\overline{U}^{-1}=\overline{U} M_1\overline{U}^{-1}=\overline{ U M_4^{-1} U^{-1}}\in\Lambda SU(2).$$
Since $\mathcal P=\overline{\mathcal P}$,
$$\overline{U} L_3\overline{U}^{-1}=\overline{U} \mathcal P C \mathcal P^{-1}\overline{U}^{-1}
=-\overline{ U \mathcal P C \mathcal P^{-1} U^{-1}}=-\overline{ U L_3 U^{-1}}\in\Lambda SU(2).$$
In the same way, using $\mathcal Q= D \overline{\mathcal Q}D^{-1}$ and that $U$ is diagonal, we have
$$\overline{U} L_1\overline{U}^{-1}=\overline{U}(D\overline{Q}D^{-1})CD(D\overline{Q}^{-1}D^{-1})\overline{U}^{-1}
=-D\overline{U L_1 U^{-1}}D^{-1}\in\Lambda SU(2).$$
Finally, since $D$ is diagonal, $\overline{U}L_2 \overline{U}^{-1}=D\in\Lambda SU(2)$.
By uniqueness of the unitarizer of $L_1,\cdots, L_4$ up to left multiplication by $\Lambda SU(2)$, we have $\overline{U}U^{-1}\in\Lambda SU(2)$. On the other hand, $\overline{U}U^{-1}\in\Lambda^+_{\R} SL(2,\C)$, so $\overline{U}U^{-1}=\Id$.
\end{proof}

\subsection{Solving the Monodromy Problem for small $t$ using implicit functions}
\label{section:IFT}
If $f$ is an analytic function of $t$ such that $f(0)=0$, we denote by $\wh{f}$ the analytic function
\[\wh{f}(t)=\left\{\begin{array}{ll}
t^{-1}f(t)\quad&\text{ if } t\neq 0\\
f'(0)&\text{ if } t=0.\end{array}\right.\]
Consider the analytic functions $\wh{\mathfrak p}$ and $\wh{\mathfrak q}$ and  define
\[
\begin{split}
&\mathcal F_1(t,x)=\wh{\mathfrak p}(t,x)-\wh{\mathfrak p}(t,x)^*\\
&\mathcal F_2(t,x)=\wh{\mathfrak q}(t,x)-\wh{\mathfrak q}(t,x)^*\\
&\mathcal H_1(t,x,\theta)=\Re\left(\wh{\mathfrak p}(t,x)\mid_{\lambda=e^{\ii\theta}}\right)\\
&\mathcal H_2(t,x,\theta)=\Re\left(\wh{\mathfrak q}(t,x)\mid_{\lambda=e^{\ii\theta}}\right),
\end{split}
\]
where $\theta$ is a real parameter. We shall solve the following problem:
\begin{equation}
\label{monodromy-problem3}
\left\{\begin{array}{l}
(i)\quad\;\;\mathcal F_1(t,x)=\mathcal F_2(t,x)= 0\\ 
(ii)\quad\;\mathcal K(x)\text{ is constant with respect to $\lambda$}.\\
(iii)\quad\;\mathcal H_1(t,x,\theta)=\mathcal H_2(t,x,\theta)=0.
\end{array}\right.
\end{equation}
Note that $\mathcal F_1=0$ is equivalent to  $\mathfrak p$ being real on the unit circle.
In this case, $\mathfrak p(e^{\ii\theta})\in\R$ and thus $\mathcal H_1=0$ is equivalent to  $4\mathfrak p(e^{\ii\theta})=0$.
Hence, by Proposition \ref{prop:monodromy-reduction}, a solution $(t,x,\theta)$ of Problem \eqref{monodromy-problem3} gives a solution of the Monodromy Problem \eqref{monodromy-problem2}, provided $\mathcal K$ satisfies the bound \eqref{eq:hypothesisK}.
\begin{proposition}
\label{prop:symmetryFGK}
The maps $\mathcal F_i$ and $\mathcal K$ have the following symmetries:
\[\mathcal F_i=-\mathcal F_i^* \quad\text{ and }\quad\overline{\mathcal F_i}=\mathcal F_i \]
\[\overline{\mathcal K}=\mathcal K.\]
 In particular, the equation $\mathcal F_i= 0$ is equivalent to $\mathcal F_i^+=0$. 
\end{proposition}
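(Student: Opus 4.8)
The plan is to derive each symmetry of $\mathcal{F}_i$ and $\mathcal{K}$ from the corresponding symmetry of the half-trace coordinates $\mathfrak{p}, \mathfrak{q}$ established in Proposition \ref{prop:pq}, combined with the elementary behaviour of the $\,\widehat{\phantom{f}}\,$ operation and the star involution under these symmetries. First I would record that the star involution $u \mapsto u^*$ and the conjugation $u \mapsto \overline{u}$ commute with the operation $f \mapsto \widehat{f}$: indeed, if $f(t)$ is analytic with $f(0)=0$, then so is $f^*(t)$ (resp.\ $\overline{f}(t)$) with value $0$ at $t=0$, and dividing by $t$ (which is real, hence fixed by both involutions) shows $\widehat{f^*} = (\widehat{f})^*$ and $\widehat{\overline{f}} = \overline{\widehat{f}}$. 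This reduces everything to symmetries of $\mathfrak{p}$ and $\mathfrak{q}$ themselves.

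For the relation $\mathcal{F}_i = -\mathcal{F}_i^*$: by definition $\mathcal{F}_1 = \widehat{\mathfrak{p}} - \widehat{\mathfrak{p}}^*$, so $\mathcal{F}_1^* = \widehat{\mathfrak{p}}^* - (\widehat{\mathfrak{p}}^*)^* = \widehat{\mathfrak{p}}^* - \widehat{\mathfrak{p}} = -\mathcal{F}_1$, using $(u^*)^* = u$; the same computation applies verbatim to $\mathcal{F}_2$ with $\mathfrak{q}$ in place of $\mathfrak{p}$. For the relation $\overline{\mathcal{F}_i} = \mathcal{F}_i$: Proposition \ref{prop:pq}(2) gives $\overline{\mathfrak{p}} = \mathfrak{p}$ and $\overline{\mathfrak{q}} = \mathfrak{q}$, hence $\overline{\widehat{\mathfrak{p}}} = \widehat{\overline{\mathfrak{p}}} = \widehat{\mathfrak{p}}$ and likewise for $\mathfrak{q}$; since $\overline{u^*} = (\overline{u})^*$ (both involutions act on Fourier coefficients by complex conjugation and either inversion or the identity in $\lambda$, and these commute), we get $\overline{\mathcal{F}_i} = \overline{\widehat{\mathfrak{p}}} - \overline{\widehat{\mathfrak{p}}^*} = \widehat{\mathfrak{p}} - \widehat{\mathfrak{p}}^* = \mathcal{F}_i$. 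For $\overline{\mathcal{K}} = \mathcal{K}$: since $\mathcal{K}(x) = x_1^2 + x_2^2 + x_3^2$ with $x_1 \in \ii\mathcal{W}_{\R,\rho}$ and $x_2, x_3 \in \mathcal{W}_{\R,\rho}$, each $x_j^2$ satisfies $\overline{x_j^2} = \overline{x_j}^2 = x_j^2$ (the sign from $\overline{x_1} = -x_1$ being squared away), so the sum is fixed by $\overline{\phantom{x}}$.

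Finally, the equivalence of $\mathcal{F}_i = 0$ with $\mathcal{F}_i^+ = 0$: decompose $\mathcal{F}_i = \mathcal{F}_i^+ + \mathcal{F}_i^0 + \mathcal{F}_i^-$ into positive, constant, and negative parts. The identity $\mathcal{F}_i = -\mathcal{F}_i^*$ forces $\mathcal{F}_i^0 = -(\mathcal{F}_i^0)^* = -\overline{\mathcal{F}_i^0}$ and $\mathcal{F}_i^- = -(\mathcal{F}_i^+)^*$; combined with $\overline{\mathcal{F}_i} = \mathcal{F}_i$, the constant term satisfies $\mathcal{F}_i^0 = \overline{\mathcal{F}_i^0} = -\mathcal{F}_i^0$, hence $\mathcal{F}_i^0 = 0$, while the negative part is determined by the positive part via $\mathcal{F}_i^- = -(\mathcal{F}_i^+)^*$. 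Therefore $\mathcal{F}_i = 0$ if and only if $\mathcal{F}_i^+ = 0$. I do not anticipate a genuine obstacle here; the only point requiring a little care is tracking how $*$ and $\overline{\phantom{x}}$ act on the constant Fourier mode (the former fixes it up to conjugation, the latter conjugates it), which is exactly what kills $\mathcal{F}_i^0$.
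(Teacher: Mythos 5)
Your argument is correct and uses exactly the same ingredients as the paper's (one-line) proof: the involutive property of $*$, the symmetries $\overline{\mathfrak p}=\mathfrak p$, $\overline{\mathfrak q}=\mathfrak q$ from Proposition \ref{prop:pq}, and $\overline{x_i}=\pm x_i$; you merely spell out the Fourier-mode bookkeeping (constant mode purely imaginary and real, hence zero; negative part determined by $-(\mathcal F_i^+)^*$) that the paper leaves implicit for the ``in particular'' clause.
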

\begin{proof}
The proposition follows directly from the fact that $*$ is an involution, the symmetries of $\mathfrak p$ and  $\mathfrak q$ in Proposition \ref{prop:pq} and $\overline{x_i}=\pm x_i$.
\end{proof}
\begin{proposition}
\label{prop:central}
At $t=0$, the Monodromy Problem \eqref{monodromy-problem3} has a unique solution
such that $\mathcal K>0$. It is given by
\begin{align*}
x_1&=\tfrac{i}{2}(\lambda^{-1}-\lambda):=\cv{x}_1\\
x_2&=-\tfrac{1}{2}\sin(\varphi)(\lambda^{-1}+\lambda):=\cv{x}_2\\
x_3&=-\tfrac{1}{2}\cos(\varphi)(\lambda^{-1}+\lambda):=\cv{x}_3\\
\theta&=\tfrac{\pi}{2}:=\cv{\theta}
\end{align*}
and satisfies $\mathcal K=1$.
We use an underscore to denote the value of the parameters at $t=0$ and call it the central value.
\end{proposition}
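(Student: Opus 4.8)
The plan is to use that at $t=0$ the Monodromy Problem \eqref{monodromy-problem3} linearises into an explicit, essentially algebraic, system in the Laurent coefficients of $x_1,x_2,x_3$ and in $\theta$, and to read off from it the unique solution with $\mathcal K>0$. The starting point is Proposition \ref{prop:pq}: at $t=0$ we have $\mathfrak p(0,x)=\mathfrak q(0,x)=0$, so $\wh{\mathfrak p}(0,x)=\mathfrak p'(0,x)=2\pi x_3$ and $\wh{\mathfrak q}(0,x)=\mathfrak q'(0,x)=2\pi x_2$. Substituting these into the definitions of the $\mathcal F_i$ and $\mathcal H_i$ (and using that $*$ is conjugate-linear), Condition (i) of \eqref{monodromy-problem3} at $t=0$ becomes simply $x_2=x_2^*$ and $x_3=x_3^*$, i.e.\ $x_2,x_3$ are real on $\S^1$, and then (using (i)) Condition (iii) becomes $x_2(e^{\ii\theta})=x_3(e^{\ii\theta})=0$. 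Note that $x_1$ enters the $t=0$ problem only through $\mathcal K$ in Condition (ii).

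Next I would pin down $x_2$ and $x_3$. Since $\lambda\eta$ extends holomorphically across $\lambda=0$, each $x_j$ is a Laurent series in $\lambda$ whose lowest term is $\lambda^{-1}$; combining $x_j=x_j^*$ with the structural symmetries $\overline{x_2}=x_2$, $\overline{x_3}=x_3$ forces the Fourier coefficients to satisfy $x_{j,k}=x_{j,-k}\in\R$, which together with the absence of poles of order $\ge 2$ gives $x_{j,k}=0$ for $|k|\ge 2$ and $x_{j,1}=x_{j,-1}$. With the normalisation \eqref{eq:negative-part}, and writing $w:=\lambda^{-1}+\lambda$, this yields $x_2=-\tfrac12\sin(\varphi)\,w+x_{2,0}$ and $x_3=-\tfrac12\cos(\varphi)\,w+x_{3,0}$ with $x_{2,0},x_{3,0}\in\R$; Condition (iii) then forces $x_{2,0}=\sin(\varphi)\cos\theta$ and $x_{3,0}=\cos(\varphi)\cos\theta$, so that $x_2=-\sin(\varphi)(\tfrac12 w-\cos\theta)$, $x_3=-\cos(\varphi)(\tfrac12 w-\cos\theta)$ and $x_2^2+x_3^2=(\tfrac12 w-\cos\theta)^2$.

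It then remains to use Condition (ii) to fix $x_1$ and $\theta$. Writing $x_1=\tfrac{\ii}{2}\lambda^{-1}+y_1$ with $y_1$ holomorphic at $0$ and purely imaginary Fourier coefficients, the constancy of $\mathcal K=x_1^2+x_2^2+x_3^2$ forces $x_1^2$ to be a Laurent polynomial supported in degrees $-2$ to $2$, hence $y_1$ to be at most linear; matching the $\lambda^{\pm 2}$-coefficients with those of $-(x_2^2+x_3^2)$ then gives $y_1=x_{1,0}\pm\tfrac{\ii}{2}\lambda$. If the sign is $+$, then $x_1=\tfrac{\ii}{2}w+x_{1,0}$ and $\mathcal K=(\ii x_{1,0}-\cos\theta)\,w+x_{1,0}^2+\cos^2\theta$; constancy forces $x_{1,0}=-\ii\cos\theta$, whence $\mathcal K\equiv 0$, contradicting $\mathcal K>0$. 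Hence the sign is $-$, $x_1=\tfrac{\ii}{2}(\lambda^{-1}-\lambda)+x_{1,0}$, and using $(\lambda^{-1}-\lambda)^2=w^2-4$ one computes $\mathcal K=1+x_{1,0}^2+\cos^2\theta+\ii x_{1,0}(\lambda^{-1}-\lambda)-\cos\theta\,w$. Since $\lambda^{-1}-\lambda$ and $w$ are linearly independent, constancy forces $x_{1,0}=0$ and $\cos\theta=0$, whence $\mathcal K\equiv 1$ and $\theta\equiv\tfrac\pi2\pmod\pi$; fixing the branch $\theta=\tfrac\pi2$ we get $x_{2,0}=x_{3,0}=0$ and recover precisely the central values $\cv x_1,\cv x_2,\cv x_3,\cv\theta$. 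Existence is then the direct check that these satisfy (i)--(iii) with $\mathcal K=1$.

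The main obstacle is this last step. Conditions (i)--(iii) alone do \emph{not} determine the solution: they admit a one-parameter family (the ``$+$'' branch above, parametrised by $\theta$) along which $\mathcal K\equiv 0$, so it is the strict positivity $\mathcal K>0$ that both selects the correct component and pins down $\theta=\tfrac\pi2$ --- this hypothesis is genuinely used. One must also be careful that the underlying parameter space really consists of loops with at worst a simple pole in $\lambda$ at $0$; this is exactly what lets the $*$-symmetry collapse the a priori infinitely many Fourier modes of $x_2,x_3$ (and, a posteriori via the constancy of $\mathcal K$, of $x_1$) down to the three explicit Laurent polynomials above.
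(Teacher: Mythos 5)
Your argument is correct and follows essentially the same route as the paper's own proof: reduce at $t=0$ via $\wh{\mathfrak p}(0,x)=2\pi x_3$, $\wh{\mathfrak q}(0,x)=2\pi x_2$, use $\mathcal F_i=0$ and the fixed negative part to force $x_2,x_3$ (and then, via constancy of $\mathcal K$, also $x_1$) to be degree-one Laurent polynomials, and use $\mathcal K>0$ to discard the sign choice giving the $\mathcal K\equiv 0$ family, leaving $\cos\theta=0$, $\mathcal K=1$ and the stated central values. The only difference is cosmetic: your constant terms $x_{2,0}=\sin(\varphi)\cos(\theta)$, $x_{3,0}=\cos(\varphi)\cos(\theta)$ are the correct evaluation of $\mathcal H_i=0$ (the paper's extra factor of $\pi$ at that intermediate step is a harmless slip), and in any case both vanish once $\cos\theta=0$.
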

\begin{proof}
Remember that we have fixed the negative part of each parameter $x_i$ in \eqref{eq:negative-part}.
At $t=0$, we have by Proposition \ref{prop:pq}
\begin{equation}
\label{eq:pq0}
\wh{\mathfrak p}(0,x)=2\pi x_3\quad\text{and}\quad
\wh{\mathfrak q}(0,x)=2\pi x_2.\end{equation}
The equation $\mathcal F_1=0$ gives $x_3=x_3^*$, hence $x_3$ is a degree-1 Laurent polynomial:
$$x_3=-\tfrac{1}{2}\cos(\varphi)(\lambda^{-1}+\lambda)+x_{3,0}.$$
Then $\mathcal H_1=0$ gives
$$x_{3,0}=\pi\cos(\varphi)\cos(\theta).$$
In the same way, the equations $\mathcal F_2=0$ and $\mathcal H_2=0$ give
$$x_2=-\tfrac{1}{2}\sin(\varphi)(\lambda^{-1}+\lambda)+\pi\sin(\varphi)\cos(\theta).$$
From $\mathcal K$ being constant, we see that $x_1$ must also be a Laurent polynomial of degree 1 which we write as $x_1=\tfrac{\ii}{2}\lambda^{-1}+x_{1,0}+x_{1,1}\lambda$. Then $\mathcal K$ expands as
$$\mathcal K=\lambda^{-1}(\ii x_{1,0}-\pi\cos(\theta))
+\lambda^0(x_{1,0}^2+\ii x_{1,1}+\pi^2\cos^2(\theta)+\tfrac{1}{2})
+\lambda(2 x_{1,0}x_{1,1}-\pi\cos(\theta))
+\lambda^2(x_{1,1}^2+\tfrac{1}{4}).$$
Since $\mathcal K$ is constant in $\lambda$, we obtain
$$\begin{cases}
x_{1,0}=-\ii\pi\cos(\theta)\\
x_{1,1}=\frac{1}{2}\varepsilon\ii\quad\text{ with }\quad \varepsilon=\pm 1\\
\pi\cos(\theta)(\varepsilon-1)=0\\
\mathcal K=\frac{1}{2}(1-\varepsilon).
\end{cases}$$
The choice $\varepsilon=1$ leads to $\mathcal K=0$ which is excluded. Hence
$\varepsilon=-1$ and $\cos(\theta)=0$.
\end{proof}

\begin{proposition}\label{prop:IFT} For $\varphi\in(0,\tfrac{\pi}{2})$ fixed,
there exists $T=T(\varphi)>0$ such that for $|t|<T$, the Monodromy Problem \eqref{monodromy-problem3} has a unique solution $(x(t),\theta(t))$, analytic in $t$, such that
$(x(0),\theta(0))=(\cv{x},\cv{\theta}).$
\end{proposition}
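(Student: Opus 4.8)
The plan is to apply the analytic implicit function theorem in Banach spaces at the central value $(\cv x,\cv\theta)$ of Proposition \ref{prop:central}. Since the principal part of each $x_j$ is pinned down by \eqref{eq:negative-part}, the genuine unknowns are the non-negative parts $x_1\in\ii\mathcal W^{\geq0}_{\R,\rho}$, $x_2,x_3\in\mathcal W^{\geq0}_{\R,\rho}$ together with $\theta\in\R$. Using Proposition \ref{prop:symmetryFGK} to replace $\mathcal F_i=0$ by $\mathcal F_i^+=0$, and noting that the $\lambda^{-2}$-coefficient of $\mathcal K(x)=x_1^2+x_2^2+x_3^2$ vanishes identically because of the choice \eqref{eq:negative-part}, I would assemble Problem \eqref{monodromy-problem3} into the single map
\[
\mathcal G(t,x,\theta)=\bigl(\mathcal F_1^+(t,x),\ \mathcal F_2^+(t,x),\ [\mathcal K(x)]^{\neq0},\ \mathcal H_1(t,x,\theta),\ \mathcal H_2(t,x,\theta)\bigr),
\]
valued in $\mathcal W^{>0}_{\R,\rho}\times\mathcal W^{>0}_{\R,\rho}\times\bigl(\R\oplus\mathcal W^{>0}_{\R,\rho}\bigr)\times\R\times\R$. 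Then $\mathcal G(0,\cv x,\cv\theta)=0$ by Proposition \ref{prop:central}, and for $t$ small enough a zero of $\mathcal G$ has $\mathcal K>0$, hence obeys the bound \eqref{eq:hypothesisK} and solves the Monodromy Problem \eqref{monodromy-problem2} by Proposition \ref{prop:monodromy-reduction}.

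Next I would check that $\mathcal G$ is analytic near $(0,\cv x,\cv\theta)$. The solution $\Phi_{t,x}$ of $d\Phi=\Phi\eta_{t,x}$, with $\eta_{t,x}=t\sum_j x_j\,\mathfrak m_j\omega_j$ depending polynomially on $(t,x)$, depends analytically on $(t,x)$ as a map into $\Lambda\SL(2,\C)_\rho$ by the usual Picard iteration, which converges because $\mathcal W_\rho$ is a Banach algebra. Hence $\mathcal P=\Phi_{t,x}(1)$, $\mathcal Q=\Phi_{t,x}(\ii)$, and therefore $\mathfrak p,\mathfrak q$, are analytic in $(t,x)$; since $\mathfrak p(0,x)=\mathfrak q(0,x)=0$ (Proposition \ref{prop:pq}), the normalised functions $\wh{\mathfrak p},\wh{\mathfrak q}$ are analytic as well, and so are $\mathcal F_i^+$, $[\mathcal K]^{\neq0}$ and the (real-)analytic functionals $\mathcal H_i$.

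The heart of the proof is that $D_{(x,\theta)}\mathcal G(0,\cv x,\cv\theta)$ is a Banach-space isomorphism, and here Proposition \ref{prop:pq} is what makes the computation feasible: at $t=0$ one has $\wh{\mathfrak p}(0,x)=2\pi x_3$ and $\wh{\mathfrak q}(0,x)=2\pi x_2$, so the derivative is completely explicit. Writing a tangent vector as $(\delta x_1,\delta x_2,\delta x_3,\delta\theta)$ with $\delta x_1=\ii u$, the linearisations of $\mathcal F_1^+$ and $\mathcal F_2^+$ are $2\pi(\delta x_3)^{>0}$ and $2\pi(\delta x_2)^{>0}$, so they determine (and, in the kernel, kill) the strictly positive parts of $\delta x_3$ and $\delta x_2$, leaving only the constants $\delta x_{2,0},\delta x_{3,0}$; the linearisation of ``$\mathcal K$ constant'' is the requirement that
\[
-(\lambda^{-1}-\lambda)\,u-\sin(\varphi)(\lambda^{-1}+\lambda)\,\delta x_2-\cos(\varphi)(\lambda^{-1}+\lambda)\,\delta x_3
\]
be independent of $\lambda$, which after comparing Fourier coefficients amounts to a two-step recursion $u_{m+1}=u_{m-1}+(\text{known})$ for $m\geq2$ together with the $\lambda^{-1}$- and $\lambda^1$-coefficient equations; because $u\in\mathcal W^{\geq0}_\rho$ with $\rho>1$ forces $u_k\to0$, this recursion is uniquely solvable with a bounded solution operator, which determines $u$ and produces exactly one linear relation among $\delta x_{2,0},\delta x_{3,0},\delta\theta$. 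Finally $D\mathcal H_1=D\mathcal H_2=0$ contribute two further relations in which $\delta\theta$ enters with coefficients $2\pi\cos(\varphi)$ and $2\pi\sin(\varphi)$ respectively, both nonzero precisely because $\varphi\in(0,\tfrac\pi2)$; the resulting $3\times3$ system is nonsingular (its only solution is $0$), which gives injectivity, and running the same computation with arbitrary right-hand sides shows the map is onto with bounded inverse. The analytic implicit function theorem then yields $T=T(\varphi)>0$ and a unique analytic branch $t\mapsto(x(t),\theta(t))$ through $(\cv x,\cv\theta)$ solving \eqref{monodromy-problem3}.

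The only substantial obstacle is this last isomorphism claim. Although the linearisation at $t=0$ is explicit, it couples the Fourier-coefficient recursion coming from the constancy of $\mathcal K$ with the two pointwise conditions $\mathcal H_i$, so one must verify separately that the recursion has a unique decaying solution and that the residual finite-dimensional linear system is invertible --- and it is in that finite system that the hypothesis $\varphi\in(0,\tfrac\pi2)$, i.e.\ $\sin(\varphi)\cos(\varphi)\neq0$, enters. Everything else --- analyticity of $\mathcal G$, and $\mathcal K(x(t))>0$ for small $t$ by continuity from $\mathcal K(\cv x)=1$ --- is routine.
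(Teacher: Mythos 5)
Your argument is correct and is in essence the paper's: the implicit function theorem at the central value of Proposition \ref{prop:central}, made explicit by the $t=0$ formulas $\wh{\mathfrak p}(0,x)=2\pi x_3$ and $\wh{\mathfrak q}(0,x)=2\pi x_2$ of Proposition \ref{prop:pq}, so only the packaging differs. The paper nests two applications of the implicit function theorem --- first solving $\mathcal F_1^+=\mathcal F_2^+=\mathcal H_1=\mathcal H_2=0$ for $(y_2,y_3)$ as functions of $(t,y_1,\theta)$, then imposing the constancy of $\mathcal K$ on the reduced problem in $(y_1,\theta)$ --- whereas you invert the full linearization in one shot; these are equivalent. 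The more substantive difference is how the constancy of $\mathcal K$ is handled: the paper writes $\lambda\mathcal K=\mathcal K_{-1}+\lambda\mathcal K_0+(\lambda^2-1)\wc{\mathcal K}$ via the Euclidean division of Proposition \ref{Pro:decomposition}, so that $d\mathcal K_{-1}=2\,d\theta$ and $d\wc{\mathcal K}=-\ii\,dy_1+d\theta$ give an isomorphism with an explicit bounded inverse, while your two-step Fourier recursion with the decay argument is the same computation carried out mode by mode; the boundedness of your solution operator, which you assert rather than prove, is exactly the norm estimate of Proposition \ref{Pro:decomposition}, so you should cite it (or reproduce its proof). Two small corrections: in your one-shot setup the relation extracted from the constancy of $\mathcal K$ involves only $\delta x_{2,0}$ and $\delta x_{3,0}$ (not $\delta\theta$, since $\mathcal K$ does not depend on $\theta$); and the resulting $3\times3$ system has determinant $\sin^2(\varphi)+\cos^2(\varphi)=1$, so its invertibility does not use $\varphi\in(0,\tfrac{\pi}{2})$ at all --- the remark following the paper's proof exploits precisely this persistence of invertibility at $\varphi=0,\tfrac{\pi}{2}$ to obtain a uniform $T$ in Section \ref{limitvarphi}, the hypothesis on $\varphi$ being needed only so that the punctures $p_j$ are distinct and the potential, hence $\mathcal F_i$ and $\mathcal H_i$, are defined and analytic.
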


\begin{proof}
We write $x_i=\cv{x}_i+y_i$ where $\cv{x}_i$ is given by Proposition \ref{prop:central}
and
$$y=(y_1,y_2,y_3)\in \ii\mathcal W^{\geq 0}_{\R}\times \mathcal W^{\geq 0}_{\R}\times \mathcal W^{\geq 0}_{\R}.$$
By Equation \eqref{eq:pq0}, we have at $t=0$
\[\begin{split}
\mathcal F_1(0,x)&=2\pi (y_3-y_3^*)\\
\mathcal F_2(0,x)&=2\pi (y_2-y_2^*)\\
\mathcal H_1(0,x,\theta)&=2\pi \left(-\cos(\varphi)\cos(\theta)+y_3(e^{\ii\theta})\right)\\
\mathcal H_2(0,x,\theta)&=2\pi \left(-\sin(\varphi)\cos(\theta)+y_2(e^{\ii\theta})\right).
\end{split}\]
So the differential of $(\mathcal F^+,\mathcal G^+,\mathcal H_1,\mathcal H_2)$ with respect to $(y,\theta)$ at $(t,y,\theta)=(0,0,\cv{\theta})$ is given by
\[\begin{split}
d\mathcal F_1^+&=2\pi d y_3^+\\
d\mathcal F_2^+&=2\pi d y_2^+\\
d\mathcal H_1&=2\pi\cos(\varphi)d\theta+2\pi d y_3(\ii)\\
d\mathcal H_2&=2\pi\sin(\varphi)d\theta+2\pi d y_2(\ii).
\end{split}\]
The partial differential with respect to $(y_2,y_3)$ is clearly an
isomorphism from $(\mathcal W^{\geq 0}_{\R})^2$ to $(\mathcal W^{>0}_{\R})^2\times\R^2$.
By the implicit function theorem, for $(t,y_1,\theta)$ in a neighborhood of $(0,0,\cv{\theta})$, there exists unique values of $y_2$ and $y_3$
in $\mathcal W^{\geq 0}_{\R}$ solving $\mathcal F_1^+=\mathcal F_2^+=0$ and
$\mathcal H_1=\mathcal H_2=0$.
Moreover, the differential of $y_2(t,y_1,\theta)$ and $y_3(t,y_1,\theta)$ with respect to the remaining parameters $(y_1,\theta)$ at $(t,y_1,\theta)=(0,0,\cv{\theta})$ is given by
$$d y_2=d y_2^0=-\sin(\varphi)d\theta\quad\text{ and }\quad
d y_3=d y_3^0 = -\cos(\varphi) d\theta.$$
Then the differential of $\mathcal K$ with respect to $(y_1,\theta)$ at
$(t,y_1,\theta)=(0,0,\cv{\theta})$ is
$$d\mathcal K=\sum_{j=1}^3 2\cv{x}_i dy_i
=\ii(\lambda^{-1}-\lambda)dy_1+(\lambda^{-1}+\lambda)d\theta.$$
Observe that for all values of the parameter $x$ we have $\lambda\mathcal K(x)\in\mathcal W^{\geq 0}_{\R}$.
Using Proposition \ref{Pro:decomposition} from Appendix \ref{appendix:division}
with $(\mu_1,\mu_2)=(1,-1)$, we decompose $\lambda\mathcal K$ as
$$\lambda\mathcal K(x)=\mathcal K_{-1}(x)+
\lambda\mathcal K_0(x)
+(\lambda^2-1) \wc{\mathcal K}(x)$$
with $(\mathcal K_{-1}(x),\mathcal K_0(x))\in\R^2$ and $\wc{K}(x)\in\mathcal W^{\geq 0}_{\R}$.
We want to solve $\mathcal K_{-1}=0$ and $\wc{\mathcal K}=0$.
From the formula for $d\mathcal K$ we obtain
\[\begin{split}
&d\mathcal K_{-1}=2d\theta\\
&d\wc{\mathcal K}=-\ii d y_1+d\theta.\end{split}\]
The partial derivative of $(\wc{\mathcal K},\mathcal K_{-1})$
with respect to $(y_1,\theta)$ is an isomorphism
from $\ii\mathcal W^{\geq 0}_{\R}\times\R$ to $\mathcal W^{\geq 0}_{\R}\times\R$.
Hence the implicit function theorem gives rise to a $T>0$ such that there exists unique 
$(y_1(t),\theta(t))\in\ii \mathcal W^{\geq 0}_{\R}\times\R$ in a neighborhood of
$(0,\cv{\theta})$ with $\mathcal K\in\mathcal W^0$ for all $t$ with $|t|<T$.
\end{proof}
\begin{remark}
A priori, $T$ depends on the angle $\varphi$.
However, the partial derivatives, computed in the above proof, remain invertible in the limit cases
$\varphi=0$ and $\varphi=\pi/2$.
So provided the functions $\mathcal F_i$ and $\mathcal H_i$ extend smoothly to $\varphi =0$ and $\varphi=\pi/2$, the implicit function theorem will give a uniform $T>0$ such that
 for all $\varphi\in[0,\pi/2]$ and $|t|<T$ the Problem \eqref{monodromy-problem3} has a unique solution
 $(x(t),\theta(t))$.
We prove that this is indeed the case in Section \ref{limitvarphi}.
\end{remark}
\begin{proposition}
\label{prop:symmetries}\begin{enumerate}
\item The solution $(x(t),\theta(t))$ given by Proposition \ref{prop:IFT} has the following
parity with respect to $t$:
\[x(-t)(-\lambda)=-x(t)(\lambda)\quad\text{ and }\quad
\theta(-t)=\pi-\theta(t).\]
\item Moreover, $x(t,\varphi)$ and $\theta(t,\varphi)$ have the following symmetries as a function of $\varphi$,
\begin{equation*}
\begin{split}
&x_1(-t,\tfrac{\pi}{2}-\varphi)=x_1(t,\varphi),\quad
x_2(-t,\tfrac{\pi}{2}-\varphi)=x_3(t,\varphi),\quad
x_3(-t,\tfrac{\pi}{2}-\varphi)=x_2(t,\varphi)\\
&\theta(-t,\tfrac{\pi}{2}-\varphi)=\theta(t,\varphi).
\end{split}
\end{equation*}
\end{enumerate}
In particular, in the case of $\varphi=\tfrac{\pi}{4}$ combining (1) and (2) gives
$$x_1(t)(\lambda)=-x_1(t)(-\lambda),\quad
x_2(t)(\lambda)=-x_3(t)(-\lambda),\quad\text{ and }\quad
\theta(t)=\frac{\pi}{2},$$
thus $\varphi=\tfrac{\pi}{4}$ yields a family of minimal surfaces.
\end{proposition}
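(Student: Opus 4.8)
I would prove both assertions by exploiting the uniqueness clause of Proposition \ref{prop:IFT}: for each fixed $\varphi$ and $|t|<T$ there is exactly one solution $(x(t),\theta(t))$ of the Monodromy Problem \eqref{monodromy-problem3} that is analytic in $t$ and equals the central value $(\cv{x},\cv{\theta})$ of Proposition \ref{prop:central} at $t=0$. In each case the plan is to transform the given solution into a new family that manifestly still solves \eqref{monodromy-problem3}, to check it takes the central value at $t=0$, and to conclude by uniqueness. Throughout I write $\eta_{t,x,\varphi}=t\sum_j x_j\,\mathfrak m_j\,\omega_j^{\varphi}$, displaying the dependence of the forms $\omega_j$ on $\varphi$, and I use that the $\omega_j^\varphi$ are independent of $t$ and $\lambda$, that $\Phi_{t,x,\varphi}(0)=\Id$, that $\mathcal U$ is simply connected, and that the $\delta$-symmetry of the potential yields $\Phi(-z)=D^{-1}\Phi(z)D$ along corresponding paths, so that $\Phi(\pm 1),\Phi(\pm\ii)$ are all expressed through $\mathcal P=\Phi(1)$, $\mathcal Q=\Phi(\ii)$ and $D$.

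\textbf{Part (1): the involution $(t,\lambda)\mapsto(-t,-\lambda)$.} Set $\tilde x(t)(\lambda):=-x(-t)(-\lambda)$ and $\tilde\theta(t):=\pi-\theta(-t)$. Since the $\omega_j$ do not depend on $\lambda$, one has $\eta_{t,\tilde x(t),\varphi}(z)(\lambda)=\eta_{-t,x(-t),\varphi}(z)(-\lambda)$, hence $\Phi_{t,\tilde x(t),\varphi}(z)(\lambda)=\Phi_{-t,x(-t),\varphi}(z)(-\lambda)$, and therefore $\mathfrak p(t,\tilde x(t))(\lambda)=\mathfrak p(-t,x(-t))(-\lambda)$, similarly for $\mathfrak q$, and $\mathcal K(\tilde x(t))(\lambda)=\mathcal K(x(-t))(-\lambda)$. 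Reality on $\mathbb S^1$ and $\lambda$-independence of $\mathcal K$ are stable under $\lambda\mapsto-\lambda$, so conditions (i)--(ii) of \eqref{monodromy-problem3} transfer to $(\tilde x(t),\tilde\theta(t))$; for (iii) one uses $-e^{\pm\ii\theta}=e^{\pm\ii(\pi-\theta)}$ together with the symmetries $\overline{\mathfrak p(\overline\lambda)}=\mathfrak p$, $\overline{\mathfrak q(\overline\lambda)}=\mathfrak q$ of Proposition \ref{prop:pq} to move the vanishing from $e^{\ii\theta}$ to $e^{-\ii\theta}$. A direct inspection shows $\tilde x(t)$ carries exactly the prescribed negative part \eqref{eq:negative-part} and lies in the same function spaces as $x(t)$, while $\tilde x(0)=\cv{x}$ and $\tilde\theta(0)=\cv\theta$. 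Uniqueness in Proposition \ref{prop:IFT} then gives $(\tilde x,\tilde\theta)=(x,\theta)$, i.e. $x(-t)(-\lambda)=-x(t)(\lambda)$ and $\theta(-t)=\pi-\theta(t)$.

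\textbf{Part (2): the map $z\mapsto\ii z$.} Put $\psi:=\tfrac{\pi}{2}-\varphi$. The Möbius map $z\mapsto w=\ii z$ is a biholomorphism $\Sigma_\varphi\to\Sigma_\psi$ (it permutes $p_1,\dots,p_4$ and the radial slits of $\mathcal U$), and substituting it into the explicit formulas for $\omega_1,\omega_2,\omega_3$ — using $\sin 2\psi=\sin 2\varphi$, $\cos 2\psi=-\cos 2\varphi$, $\cos\psi=\sin\varphi$, $\sin\psi=\cos\varphi$ — gives $\omega_1^\varphi\mapsto-\omega_1^\psi$, $\omega_2^\varphi\mapsto\omega_3^\psi$, $\omega_3^\varphi\mapsto-\omega_2^\psi$. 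Conjugating moreover by the constant diagonal unitary $G=\mathrm{diag}(e^{-\ii\pi/4},e^{\ii\pi/4})$, which satisfies $G\mathfrak m_1G^{-1}=\mathfrak m_1$, $G\mathfrak m_2G^{-1}=-\mathfrak m_3$, $G\mathfrak m_3G^{-1}=\mathfrak m_2$, one checks that $(z,M)\mapsto(\ii z,\,GMG^{-1})$ turns $\eta_{t,x,\varphi}$ into $\eta_{-t,x',\psi}$ with $x'=(x_1,x_3,x_2)$; in particular $x'$ has exactly the negative part \eqref{eq:negative-part} prescribed for angle $\psi$ and lies in the correct spaces. As $\ii z$ fixes $0$ and interchanges the real and imaginary segments defining $\mathcal P$ and $\mathcal Q$, tracking the frame (and using $\Phi(-z)=D^{-1}\Phi(z)D$) gives $\mathcal P(-t,x',\psi)=GD^{-1}\mathcal Q(t,x,\varphi)DG^{-1}$ and $\mathcal Q(-t,x',\psi)=G\mathcal P(t,x,\varphi)G^{-1}$; plugging these into Proposition \ref{prop:pq}(1) collapses everything to $\mathfrak p(-t,x',\psi)=-\mathfrak q(t,x,\varphi)$, $\mathfrak q(-t,x',\psi)=-\mathfrak p(t,x,\varphi)$, and $\mathcal K(x')=\mathcal K(x)$. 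Hence (i)--(iii) of \eqref{monodromy-problem3} for $(-t,x',\psi)$ at Sym-angle $\theta(t,\varphi)$ follow from the corresponding statements at $(t,x,\varphi)$, with $\wh{\mathfrak p}(-t,x',\psi)=\wh{\mathfrak q}(t,x,\varphi)$ and $\wh{\mathfrak q}(-t,x',\psi)=\wh{\mathfrak p}(t,x,\varphi)$ matching $\mathcal H_1\leftrightarrow\mathcal H_2$. Since the central value at angle $\psi$ is $\cv{x}^\psi=(\cv{x}_1^\varphi,\cv{x}_3^\varphi,\cv{x}_2^\varphi)$ with $\cv\theta^\psi=\cv\theta^\varphi=\tfrac\pi2$, uniqueness for angle $\psi$ forces $(x_1(-t,\psi),x_2(-t,\psi),x_3(-t,\psi))=(x_1(t,\varphi),x_3(t,\varphi),x_2(t,\varphi))$ and $\theta(-t,\psi)=\theta(t,\varphi)$, which is (2).

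\textbf{The minimal case and the main obstacle.} For $\varphi=\tfrac\pi4$ (so $\psi=\varphi$), combining (1) and (2) gives $\theta(t)=\pi-\theta(-t)=\pi-\theta(t)$, hence $\theta(t)\equiv\tfrac\pi2$, and $x_1(t)(\lambda)=x_1(-t)(\lambda)=-x_1(t)(-\lambda)$, $x_2(t)(\lambda)=x_3(-t)(\lambda)=-x_3(t)(-\lambda)$; since the Sym-points are then $\pm\ii$, the mean curvature $H=\ii\tfrac{\lambda_1+\lambda_2}{\lambda_1-\lambda_2}=\cot\theta$ vanishes, so the surface is minimal. Conceptually the argument is short — a uniqueness trick — and I expect the only real work to be bookkeeping: one must verify that each transformed family lands \emph{exactly} in the normalised setting of Proposition \ref{prop:IFT} (right function spaces, the exact negative parts \eqref{eq:negative-part}, the exact central value), and one must chase signs carefully through the substitutions in the $\omega_j$ and through the conjugations by $G$ and $D$ — in particular through the point that condition (iii) is stated at $e^{\ii\theta}$ whereas the transformations naturally produce evaluations at $-e^{\pm\ii\theta}=e^{\pm\ii(\pi-\theta)}$, where the reality identities of Proposition \ref{prop:pq} are exactly what makes the argument close.
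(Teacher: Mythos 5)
Your proposal is correct and follows essentially the same route as the paper: in both parts you transform the known solution (via $(t,\lambda)\mapsto(-t,-\lambda)$ with $\theta\mapsto\pi-\theta$, resp.\ via $\iota(z)=\ii z$ together with conjugation by the constant diagonal matrix — your $G$ is the paper's $S^{-1}$ — and the swap $x_2\leftrightarrow x_3$, $t\mapsto -t$, $\varphi\mapsto\tfrac{\pi}{2}-\varphi$), verify it still solves Problem \eqref{monodromy-problem3} with the same central value, and conclude by uniqueness in the implicit function theorem, exactly as in the paper's proof. The sign bookkeeping you flag (the pullbacks $\iota^*\omega_{j}$, the relations $\mathcal Q(-t,x',\psi)=G\mathcal P G^{-1}$, $\mathfrak q(-t,x',\psi)=-\mathfrak p(t,x,\varphi)$, and the use of the reality identities of Proposition \ref{prop:pq} to move condition (iii) from $-e^{\ii\theta}$ to $e^{-\ii\theta}$) matches the paper's computations.
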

\begin{proof}
Given parameters $(x,\theta)$, consider the parameters $(\widefrown{x},\widefrown{\theta})$ defined by
$$\widefrown{x}(\lambda)=-x(-\lambda),\quad
\quad\text{and}\quad
\widefrown{\theta}=\pi-\theta.$$
Note that at the central value,
$\widefrown{\cv{x}}=\cv{x}$ and $\widefrown{\cv{\theta}}=\cv{\theta}$.
We then have by inspection
\begin{align*}
&\eta_{-t,\widefrown{x}}(-\lambda)=\eta_{t,x}(\lambda)\\
&\Phi_{-t,\widefrown{x}}(-\lambda)=\Phi_{t,x}(\lambda)\\
&\wh{\mathfrak p}(-t,\widefrown{x})(-\lambda)=-\wh{\mathfrak p}(t,x)(\lambda)\\
&\mathcal F_1(-t,\widefrown{x})(-\lambda)=-\mathcal F_1(t,x)(\lambda)\\
&\mathcal H_1(-t,\widefrown{x},\widefrown{\theta})
=\Re\,\widehat{\mathfrak p}(-t,\widefrown{x})(e^{\ii\widefrown{\theta}})
=-\Re\,\widehat{\mathfrak p}(t,x)(-e^{\ii(\pi-\theta)})
=-\Re\,\overline{\widehat{\mathfrak p}(t,x)(e^{\ii\theta})}
=-\mathcal H_1(t,x,\theta)\\
&\mathcal K(\widefrown{x})(-\lambda)=\mathcal K(x)(\lambda)
\end{align*}
and similar statements hold for $\widehat{\mathfrak q}$, $\mathcal F_2$ and $\mathcal H_2$.
Therefore, if $(t,x,\theta)$ solves the Monodromy Problem \eqref{monodromy-problem3} then $(-t,\widefrown{x},\widefrown{\theta})$ also solves the Monodromy Problem.
By uniqueness in the implicit function theorem, $\widefrown{x}(-t)=x(t)$ and
$\widefrown{\theta}(-t)=\theta(t)$ from which
point (1) follows.
\medskip

To prove point (2), consider this time the parameter $\widefrown{x}$ and the angle $\widefrown{\varphi}$ defined by
$$\widefrown{x}_1=x_1,\quad
\widefrown{x}_2=x_3,\quad
\widefrown{x}_3=x_2\quad\text{and}\quad
\widefrown{\varphi}=\tfrac{\pi}{2}-\varphi.$$
Note that at the central value,
$\widefrown{x}(0,\widefrown{\varphi})=x(0,\varphi)$. Let $\iota(z)=\ii z$.
To state the dependance of objects on the angle $\varphi$, we add a subscript to all objects, for example we write
$p_{j,\varphi}$, $\omega_{j,\varphi}$ and $\eta_{t,x,\varphi}$.
Then $\iota(p_{j,\varphi})=p_{j+1,\widefrown{\varphi}}$, where the indices are considered mod 4. This gives
$$\iota^*\left(\frac{dz}{z-p_{j+1,\widefrown{\varphi}}}\right)=\frac{\ii dz}{\ii z-\ii p_{j,\varphi}}=\frac{dz}{z-p_{j,\varphi}}$$
hence
\begin{equation}
\label{eq:iota*omega}
\iota^*\omega_{1,\widefrown{\varphi}}=-\omega_{1,\varphi},\quad
\iota^*\omega_{2,\widefrown{\varphi}}=-\omega_{3,\varphi}\quad\text{and}\quad
\iota^*\omega_{3,\widefrown{\varphi}}=\omega_{2,\varphi}.
\end{equation}
Therefore, we obtain for the pull-back of the potential under $\iota$
\begin{eqnarray*}
\iota^*\left(\eta_{-t,\widefrown{x},\widefrown{\varphi}}\right)
&=&-t\begin{pmatrix} x_1 (-\omega_{1,\varphi})&
x_3(-\omega_{3,\varphi})+\ii x_2 \omega_{2,\varphi}\\
x_3(-\omega_{3,\varphi})-\ii x_2\omega_{2,\varphi}& - x_1(-\omega_{1,\varphi})\end{pmatrix}
=S^{-1}\eta_{t,x,\varphi}S
\end{eqnarray*}
with
$$S=\begin{pmatrix}e^{\ii\pi/4}&0\\0&e^{-\ii\pi/4}\end{pmatrix}.$$
The same holds for the extended frame
$$\iota^*\Phi_{-t,\widefrown{x},\widefrown{\varphi}}=S^{-1}\Phi_{t,x,\varphi}S.$$
Evaluating at $z=1$ we obtain 
$$\mathcal Q(-t,\widefrown{x},\widefrown{\varphi})=S^{-1} \mathcal P(t,x,\varphi)S$$
which gives
$$\mathfrak q(-t,\widefrown{x},\widefrown{\varphi})=-\mathfrak p(t,x,\varphi).$$

Consequently, if $(t,x,\theta)$ solves the Monodromy Problem \eqref{monodromy-problem3} for the angle $\varphi$ then $(-t,\widefrown{x},\theta)$ solves the Monodromy Problem for the angle $\widefrown{\varphi}$ and point (2) also follows by the uniqueness in the implicit function theorem.
\end{proof}

\subsection{Building the surface}
\label{section:building}
In this section, we fix $\varphi\in(0,\frac{\pi}{2})$ and consider the solution $(x(t,\varphi),\theta(t,\varphi))$ of Problem \eqref{monodromy-problem3}
given by Proposition \ref{prop:IFT}. Let
$$\psi(t)=t\sqrt{\mathcal K(x(t,\varphi))}.$$
At $t=0$, we have $\mathcal K(\cv{x})=1$ so that $\psi'(0)=1$. Hence $\psi$ is a diffeomorphism in a neighborhood of $0$.
\begin{proposition}\label{prop:building}
For $g\in\N$ large enough let
\begin{equation}
\label{eq:t}
t=\psi^{-1}\left(\tfrac{1}{2g+2}\right).
\end{equation}
Then the lift of $\eta_{t,x(t,\varphi)}$ to $M_{g,\varphi}$ has apparent singularities at $\wt p_j$ and defines a conformal CMC immersion $f_{g,\varphi}:M_{g,\varphi}\to\S^3$ with the following properties:
\begin{enumerate}
\item $f_{g,\varphi}$ has mean curvature
$H=\operatorname{cotan}(\theta(t,\varphi)).$
\item The Willmore energy of $f_{g,\varphi}$ is given by
$$\mathcal W(f_{g,\varphi})=8\pi\left[1-\mathcal K(x(t,\varphi))^{-1/2}\left(\cos(\varphi)x_2^0(t,\varphi)-\sin(\varphi)x_3^0(t,\varphi)\right)\right].$$
\item  Up to an isometry of $\S^3$, the image of the quarter disk $\{\Re(z)\geq 0$, $\Im(z)\geq 0$, $|z|\leq 1\}$ under $f_{g,\varphi}$
is bounded by four symmetry curves. Each of the four curves is contained in one hyperplane given by $\{x_2=0\}$,
$\{x_3=0\}$, $\{x_4=0\}$ or $\{\arg(x_1+\ii x_2)=\frac{\pi}{g+1}\}$, respectively, 
where $(x_1,x_2,x_3,x_4)$ denote the coordinates in $\R^4$ (not to be confused with the parameters $(x_1,x_2,x_3)$. In other words, $f_{g,\varphi}$ has the same planar symmetries as the Lawson surface $\xi_{1,g}$.

\item The image of $f_{g,\pi/4}$ is the Lawson minimal surface $\xi_{1,g}$ of genus $g$.

\item As $g\to\infty$, the blow-up $g\times(f_{g,\varphi}-\Id)$ converges in the simply connected domain $\mathcal U$ (given by $\C$ with radial rays from $z=p_j$ to $z=\infty$ removed) to a Scherk minimal surface in the tangent space of $\S^3\cong SU(2)$ at $\Id$, identified with $\R^3$.
The Scherk surface has period $2\pi$ and angle $2\varphi$ between the wings.

\item As $g\to\infty$, the image of $f_{g,\varphi}$ converges to the union of the
great sphere given by $\{\sin(\varphi)x_3+\cos(\varphi)x_4=0\}$ and the great sphere given by 
$\{\sin(\varphi)x_3-\cos(\varphi)x_4=0\}$. These two spheres intersect along the great circle $\{x_3=x_4=0\}$ with an angle $2\varphi$. The convergence is smooth away from the intersection circle.
\end{enumerate}
\end{proposition}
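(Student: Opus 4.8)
The plan is to first upgrade Proposition~\ref{prop:IFT} to a statement about the closed surface, then read off (1)--(2) from the Sym--Bobenko formula together with a residue computation, deduce (3)--(4) from the equivariance of the potential, and finally analyse the two degenerations (5)--(6).

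\emph{From the potential to the closed immersion.} Fix $\varphi\in(0,\tfrac\pi2)$ and let $T=T(\varphi)$ be as in Proposition~\ref{prop:IFT}; for $g$ large, $t=\psi^{-1}(\tfrac1{2g+2})$ lies in $(0,T)$ and, by the definition of $\psi$, satisfies $t\sqrt{\mathcal K(x(t,\varphi))}=\tfrac1{2g+2}<\tfrac14$, so hypothesis~\eqref{eq:hypothesisK} holds. Hence Proposition~\ref{prop:monodromy-reduction} solves the Monodromy Problem~\eqref{monodromy-problem2} with $s=\tfrac1{2g+2}$, and, as explained after~\eqref{monodromy-problem2}, $\eta_{t,x(t,\varphi)}$ lifts to a DPW potential $\wt\eta$ on $M_{g,\varphi}$ with monodromy $-\Id$ around each $\wt p_j$. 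Since this monodromy is central, the induced $\mathrm{PSL}(2,\C)$--monodromy around $\wt p_j$ is trivial, so no logarithmic term can arise and $\wt p_j$ is an apparent singularity, removable by a suitable local gauge. Iwasawa splitting and the Sym--Bobenko formula applied to the resulting monodromy--free unitarizable data then produce the conformal CMC immersion $f_{g,\varphi}\colon M_{g,\varphi}\to\S^3$.

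\emph{Parts (1)--(2).} Part (1) is immediate from $H=\ii\tfrac{\lambda_1+\lambda_2}{\lambda_1-\lambda_2}$ with $\lambda_1=e^{\ii\theta(t,\varphi)}$, $\lambda_2=\overline{\lambda_1}$, which equals $\cot\theta(t,\varphi)$. For (2) I would combine $\mathcal W(f)=(H^2+1)\,\text{Area}(f)=\text{Area}(f)/\sin^2\theta$ (valid for CMC $f$) with the residue formula expressing the area of a surface built from a Fuchsian DPW potential as a sum of local contributions at the punctures (Stokes' theorem applied to the area form written via the Iwasawa factor, cf.~\cite{HHT}); the $\delta,\tau,\sigma$--equivariance makes the four contributions equal, so everything reduces to the residue at $p_1$, and inserting $tA_1$, the eigenvalue $t\sqrt{\mathcal K}$ and the values at the Sym points yields the stated formula, with $x_2^0,x_3^0$ entering through the $\lambda^0$--part of $A_1$.

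\emph{Parts (3)--(4).} The equivariances $\delta^*\eta=D^{-1}\eta D$, $\tau^*\eta=C^{-1}\eta C$, $\sigma^*\overline\eta=\eta$ pass to $\Phi$ and, via the diagonal real unitarizer $U$ (Proposition~\ref{prop:symmetriesU}), to the unitary frame $F$ and hence to $f=F(\lambda_1)F(\lambda_2)^{-1}$; thus each generator of the symmetry group of $\Sigma$, together with the $\Z_{g+1}$ deck transformation of $\pi$, induces an isometry of $\S^3$ fixing the image. A reflective such isometry fixes a great $2$--sphere $\{x_i=0\}\subset\R^4$, and a short computation with $C,D$ and the deck map identifies the four mirrors bounding the image of the quarter--disk (a fundamental domain for the group they generate) as $\{x_2=0\}$, $\{x_3=0\}$, $\{x_4=0\}$ and $\{\arg(x_1+\ii x_2)=\tfrac\pi{g+1}\}$ --- precisely the mirror planes of $\xi_{1,g}$. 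For (4): at $\varphi=\tfrac\pi4$, Proposition~\ref{prop:symmetries} gives $\theta=\tfrac\pi2$, so $H=0$ by (1); then $f_{g,\pi/4}$ is a closed minimal surface of genus $g$ whose fundamental piece is a minimal disc spanning the geodesic quadrilateral bounding Lawson's fundamental piece, and such a disc (being a graph) is unique by the maximum principle, so $f_{g,\pi/4}=\xi_{1,g}$; alternatively one compares the data with the DPW construction of~\cite{HHT}.

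\emph{Parts (5)--(6), and the main obstacle.} Since $\mathcal K(\cv x)=1$ we have $(2g+2)t\to1$ as $g\to\infty$, i.e. $gt\to\tfrac12$. For (5), expand $\Phi_{t,x(t)}=\Id+t\Phi'_0+O(t^2)$ with $\Phi'_0(z)=\sum_j x_j\mathfrak m_j\Omega_j(z)$ (as in the proof of Proposition~\ref{prop:pq}); pushing this through the Iwasawa splitting and the Sym--Bobenko formula shows that $g(f_{g,\varphi}-\Id)$ converges on compact subsets of $\mathcal U$ to the explicit $\mathfrak{su}(2)\cong\R^3$--valued map built from $\cv x_j$ (at $\lambda_1=\ii$, $\lambda_2=-\ii$) and the $\Omega_j$, and the values~\eqref{eq:Omega1}--\eqref{eq:OmegaI} identify this map as a conformal parametrisation of Scherk's singly periodic surface, with period $2\pi$ coming from the $\pi$'s and wing angle $2\varphi$ from the $\varphi$'s in $\Omega_1=\ii(\pi-2\varphi)$ and the prefactors of $\omega_2,\omega_3$. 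For (6), away from the $\wt p_j$ one has $f_{g,\varphi}\to\Id$ from $\Phi=\Id+O(t)$, while the area concentrates in fixed--size neighbourhoods of the branch points (where $z-p_j\sim w^{g+1}$ in a local coordinate $w$), where the rescaled connection converges to a model Fuchsian connection with residue eigenvalues $\pm\tfrac12$ whose Sym--Bobenko image is a totally geodesic disc; assembling the four limit discs via the symmetries of (3) gives the union of the great spheres $\{\sin\varphi\,x_3\pm\cos\varphi\,x_4=0\}$, meeting along $\{x_3=x_4=0\}$ at the angle $2\varphi$ forced by the Scherk limit, with smooth convergence off that circle by elliptic regularity once $C^0$--closeness is established. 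I expect (6) to be the main obstacle: turning the formal statements ``$f\to\Id$ on $\mathcal U$'' and ``local model at $\wt p_j$'' into honest Hausdorff and smooth convergence of the closed surfaces requires a genuine neck analysis at the branch points and a gluing of the limit discs, which is not formal; among the rest, the uniqueness of the minimal disc in (4) and the precise area identity in (2) also need some care but are comparatively routine given~\cite{HHT}.
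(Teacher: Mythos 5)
Your overall plan coincides with the paper's proof for items (1)--(5), but there are two places where what you assert is exactly what has to be proved. First, the passage from ``the monodromy around $\wt p_j$ is $-\Id$'' to ``$\wt p_j$ is an apparent singularity, removable by a suitable local gauge'' is a genuine gap. Central monodromy does rule out logarithmic terms, but the lifted residue $t(g+1)A_1$ has eigenvalues $\pm\tfrac12$ (a resonant difference), so the frame behaves like $w^{\pm 1/2}$ and nothing formal guarantees that the Sym--Bobenko map extends across $\wt p_j$ as an \emph{unbranched} conformal immersion, nor that an admissible gauge exists within the loop-group setup. The paper proves this by exhibiting the explicit gauge $G_1$ with $k=\frac{\sqrt{\mathcal K}-x_1}{x_2+\ii x_3}$, checking via the identity $\mathcal K=x_1^2+x_2^2+x_3^2$ that $\wt\eta_g.G_1$ is holomorphic with non-degenerate residue data at $w=0$, and noting (this is not cosmetic) that $G_1$ can acquire a pole inside the unit $\lambda$-disk, so one must use the $r$-Iwasawa decomposition. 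Relatedly, in (2) the Willmore formula is obtained by applying the residue formula \eqref{areaasresidue} on a double cover of $M_{g,\varphi}$ (the gauges involve square roots), and the four residues are only pairwise equal, not all equal as you claim; and in (4) the identification of the boundary of the fundamental piece as Lawson's geodesic $4$-gon is not free: it needs the extra symmetry $\iota(z)=\ii z$ at $\varphi=\pi/4$, the evenness $U^\dagger=U$ of the unitarizer, and the relation $\beta=-\alpha$, before the Plateau uniqueness argument of \cite{HHT} can be invoked.

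Second, for item (6) you have correctly diagnosed, but not closed, the main gap: your ``model Fuchsian connection whose Sym--Bobenko image is a totally geodesic disc'' and ``once $C^0$-closeness is established'' are precisely the content of the paper's argument. The difficulty is that $\wt\eta_g\to \cv A_1\frac{dw}{2w}$ only on compact subsets of the $w$-disk while the frame is normalized at $z=0$, far outside; the paper overcomes this in Lemma \ref{lemma:Phi-infty} by comparing $\Phi_t$ with $\Psi_t=\exp\bigl(tA_1\log(1-z/p_1)\bigr)$ along the segment $[0,q_1(t)]$ via variation of constants and Gronwall's inequality, together with $U_t\to\Id$. One then still has to compute the Iwasawa factorization of $\wt\Phi_\infty=\exp\bigl(\tfrac12\cv A_1\log w\bigr)$ explicitly (using the rotational equivariance $\wt\Phi_\infty(we^{\ii\alpha})=R_\alpha\wt\Phi_\infty(w)$ to reduce to real $w$) in order to identify the limit as the hemisphere in $\{\sin(\varphi)x_3+\cos(\varphi)x_4=0\}$; the intersection angle $2\varphi$ comes from this explicit identification of the two limit spheres, not from the Scherk blow-up of item (5). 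Without this comparison lemma and the explicit limit frame, your outline of (6) remains a plausible picture rather than a proof.
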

\begin{remark}
We will show in Proposition \ref{prop:willmore} that the Willmore energy of $f_{g,\varphi}$ is below $8\pi$ for large $g$, ensuring that $f_{g,\varphi}$ is embedded.
\end{remark}

\begin{proof}\mbox{}
The Riemann surface $M_{g,\varphi}$ is given as an algebraic curve  by the equation
$$y^{g+1} = \frac{(z-p_1)(z-p_3)}{(z-p_2)(z-p_4)}.$$
Let $\pi\colon M_{g,\varphi}\to\C P^1$ denote the projection onto the $z-$plane, namely
$\pi(y,z)=z$.
Then $\pi$ is a $(g+1)$-sheeted covering totally branched over $p_1,\cdots,p_4$.
Let $\widetilde{p}_j=\pi^{-1}(p_j),$ $\widetilde{\eta}_g=\pi^*\eta_{t,x(t)}$ and
$\widetilde{\Phi}_g$ be the solution of $d\widetilde{\Phi}_g=\widetilde{\Phi}_g\widetilde{\eta}_g$ with initial condition $\widetilde{\Phi}_g(\widetilde{0})=U(t,\varphi)$,
where $\pi(\widetilde{0})=0$ and the unitarizer
$U$ is given by Proposition \ref{prop:monodromy-reduction}. We first prove that $\widetilde{\Phi}_g$ solves the Monodromy Problem \eqref{monodromy-problem}.
Let $\gamma\in\pi_1(M_{g,\varphi}\setminus\{\widetilde{p}_1,\cdots,\widetilde{p}_4\},\widetilde{0})$.
Using the generators $\gamma_1,\gamma_4,\gamma_3,\gamma_4$ of the fundamental group of the 4-punctured sphere, we decompose $\pi\circ\gamma$ as
$$\pi\circ\gamma=\prod_{j=1}^\ell(\gamma_{i_j})^{n_j}$$
for some indices $i_1,\cdots,i_{\ell}\in \{1,2,3,4\}$ and integers $n_1, ..., n_{\ell} \in \Z$ satisfying
\begin{equation}
\label{eq:congruence}\sum_{j=1}^\ell (-1)^{i_j} n_j\equiv 0\mod (g+1).
\end{equation}
Equation \eqref{eq:congruence} comes from the condition that the closed curve $\pi \circ\gamma$ on the quotient is induced from a closed curve $\gamma$ on $M_{g,\varphi},$ i.e., it stems from the monodromy of the covering map $\pi$,
see \cite[Section 4.2]{HHT}.
Then
$$\mathcal M(\widetilde{\Phi}_g,\gamma)=\prod_{j=1}^\ell U M_{i_j}(t)^{n_j}U^{-1}$$
so $\mathcal M(\widetilde{\Phi}_g,\gamma)\in\Lambda SU(2)$.
By our choice of $t$, we have
$$\det(tA_1)=-t^2\mathcal K(x(t))=-s^2\quad\text{ with }\quad s=\tfrac{1}{2g+2}$$
so $t A_1$ has eigenvalues $\pm s$.
At the Sym points, all $M_{i_j}$ are diagonal and have the same eigenvalues $e^{\pm 2\pi \ii s}$.
From the $\delta$ symmetry, we have $M_3=M_1$ and $M_4=M_2$, and since
$M_1 M_2 M_3 M_4=\Id$, 
$M_1=M_2^{-1}=M_3=M_4^{-1}$.
Hence $\mathcal M(\widetilde{\Phi}_g,\gamma)\mid_{\lambda=\lambda_1,\lambda_2}$ is diagonal and has eigenvalues
$$\exp\big(\pm 2\pi\ii s\sum_{j=1}^\ell(-1)^{i_j} n_j\big)=\pm 1$$
thanks to Equations \eqref{eq:t}, \eqref{eq:congruence} and $s=\frac{1}{2g+2}$.
Hence the Monodromy Problem \eqref{monodromy-problem} is solved
and the Sym-Bobenko formula defines an immersion $f_{g,\varphi}$ on
$M_{g,\varphi}\setminus \{\widetilde{p}_1,\cdots,\widetilde{p}_4\}$ with mean curvature 
$$H= \ii\frac{\lambda_1+\lambda_2}{\lambda_1-\lambda_2}
=\ii \frac{e^{\ii \theta} + e^{-\ii\theta}}{e^{\ii \theta} - e^{-\ii\theta}}=\operatorname{cotan}(\theta(t,\varphi)).$$

Next we prove that $\widetilde{p}_1,\cdots,\widetilde{p}_4$ are apparent singularities.
Let $w$ be a local coordinate in a neighborhood of $\widetilde{p}_1$ such that
$w^{g+1}=z-p_1$. Then using $t\sqrt{\mathcal K}=\frac{1}{2g+2}$ we have
\begin{equation}
\label{eq:wteta}
\widetilde{\eta}_g=t\,(g+1)A_1\frac{dw}{w}+O(w^g)dw
=\frac{1}{2\sqrt{\mathcal K}}\begin{pmatrix}x_1&x_2+\ii x_3\\x_2-\ii x_3&-x_1\end{pmatrix}
\frac{dw}{w}+O(w^g)dw.\end{equation}
Consider the local gauge
$$G_1=\begin{pmatrix}1&0\\k&1\end{pmatrix}\begin{pmatrix}
w^{-1/2}&0\\0&w^{1/2}\end{pmatrix}
\quad\text{with}\quad
k=\frac{\sqrt{\mathcal K}-x_1}{x_2+\ii x_3}.$$
A computation gives
$$\widetilde{\eta}_g . G_1=\frac{1}{2 \sqrt{\mathcal K}}\begin{pmatrix}0&x_2+\ii x_3\\
\frac{x_1^2+x_2^2+x_3^2-\mathcal K}{(x_2+\ii x_3)\,w^2}&0\end{pmatrix}dw
+O(w^g)dw$$
which is holomorphic at $w=0$ as $\mathcal K=x_1^2+x_2^2+x_3^2$.
This ensures that $f_{g,\varphi}$ extends to a regular immersion at $\widetilde{p}_1$. Similarly, $f_{g,\varphi}$ extends
 to $\widetilde{p}_2$, $\widetilde{p}_3$ and $\widetilde{p}_4$ as a regular immersion by replacing $(x_1,x_2,x_3)$ by $(-x_1,-x_2,x_3)$, $(x_1,-x_2,-x_3)$ and $(-x_1,x_2,-x_3),$  respectively,  in the definition of $k$.
\begin{remark} At $t=0$, we have
$$k=-e^{\ii\varphi}\frac{\lambda-\ii}{\lambda+\ii},$$
and hence the gauge $G_1$ has in fact a pole at $\lambda=-\ii$. For small $t\neq 0$, $G_1$ may have a pole in the unit $\lambda$-disk close to $-\ii$. Therefore, we need to apply the $r$-Iwasawa decomposition (for some $0<r<1$, see for example \cite{McI}) instead of the ordinary Iwasawa decomposition. This does not alter the corresponding immersion.
\end{remark}

\subsubsection*{Proof of Point (2)} The Willmore energy of a compact CMC immersion $f$ in the 3-sphere constructed from a meromorphic DPW potential $\eta$ is given
by
\begin{equation}\label{areaasresidue}
\mathcal W(f)=4\pi\sum_{j=1}^n\operatorname{Res}_{q_j}\operatorname{trace}(\eta_{-1}G_{j,1} G_{j,0}^{-1}),\end{equation}
where $q_1,\dots,q_n$ are the poles of the potential $\eta=\sum_{k=-1}^{\infty}\eta_k\lambda^k$ and $G_j=\sum_{k=0}^\infty G_{j,k}\lambda^k$ is a local gauge defined near $q_j$ such that
$\eta.G_j$ is regular at $q_j$.
This is proved in \cite[Corollary 17]{HHT} in the minimal case (where the Willmore energy is of course the area) and in \cite{volume} in the general CMC case.

As our gauges involve square roots,
we need to work on a double covering where the gauges are well-defined. Hence,
in order to apply \cite[Corollary 17]{HHT} directly, we work on a double covering
$\widehat M_{g,\varphi}\to  M_{g,\varphi}$
branched at $\widehat{p}_1,\cdots,\widehat{p}_4$ and let $\widehat{\eta}_g$ be the pullback of $\widetilde{\eta}_g$ to $\widehat M_{g,\varphi}$.
Using $v=\sqrt{w}$ as local coordinate in a neighborhood of $\widehat{p}_1$, we have
$$\Res_{\widehat{p}_1}\widehat{\eta}_{g,-1}=\frac{1}{\sqrt{\mathcal K}}\begin{pmatrix}x_{1,-1}& x_{2,-1}+\ii x_{3,-1}\\
x_{2,-1}-\ii x_{3,-1}&-x_{1,-1}\end{pmatrix}$$
$$G_{1,1}G_{1,0}^{-1}=\begin{pmatrix}1&0\\k_1-k_0&1\end{pmatrix}$$
$$k_0=\frac{-x_{1,-1}}{x_{2,-1}+\ii x_{3,-1}},\qquad
k_1=\frac{\sqrt{\mathcal K}-x_{1,0}}{x_{2,-1}+\ii x_{3,-1}}+\frac{x_{1,-1}(x_{2,0}+\ii x_{3,0})}{(x_{2,-1}+\ii x_{3,-1})^2}$$
\begin{eqnarray*}
\operatorname{Res}_{\widehat{p}_1}\operatorname{trace}\left(\widehat{\eta}_{g,-1}G_{1,1} G_{1,0}^{-1}\right)
&=&\tfrac{1}{\sqrt{\mathcal K}}(x_{2,-1}+\ii x_{3,-1})(k_1-k_0)\\
&=&1+\tfrac{1}{\sqrt{\mathcal K}}\left(x_{1,-1}-x_{1,0}+\frac{x_{1,-1}(x_{2,0}+\ii x_{3,0})}{x_{2,-1}+\ii x_{3,-1}}\right)\\
&=&1+\tfrac{1}{\sqrt{\mathcal K}}\left(x_{1,-1}-x_{1,0}-e^{\ii\varphi}(x_{2,0}+\ii x_{3,0})\right).
\end{eqnarray*}
By replacing $(x_1,x_2,x_3)$ by $(-x_2,-x_2,x_3)$, $(x_1,-x_2,-x_3),$ and $(-x_1,x_2,-x_3)$ respectively, we obtain
$$\operatorname{Res}_{\widehat{p}_2}\operatorname{trace}\left(\widehat{\eta}_{g,-1}G_{2,1}G_{2,0}^{-1}\right)=1+\tfrac{1}{\sqrt{\mathcal K}}\left(-x_{1,-1}+x_{1,0}+e^{-\ii\varphi}(-x_{2,0}+\ii x_{3,0})\right)$$
$$\operatorname{Res}_{\widehat{p}_3}\operatorname{trace}\left(\widehat{\eta}_{g,-1}G_{3,1}G_{3,0}^{-1}\right)=1+\tfrac{1}{\sqrt{\mathcal K}}\left(x_{1,-1}-x_{1,0}-e^{\ii\varphi}(x_{2,0}+\ii x_{3,0})\right)$$
$$\operatorname{Res}_{\widehat{p}_4}\operatorname{trace}\left(\widehat{\eta}_{g,-1}G_{4,1}G_{4,0}^{-1}\right)=1+\tfrac{1}{\sqrt{\mathcal K}}\left(-x_{1,-1}+x_{1,0}+e^{-\ii\varphi}(-x_{2,0}+\ii x_{3,0})\right).$$
Adding all four residues, we obtain, 
$$2\operatorname{Area}(f_{g,\varphi})=16\pi\left(1-\tfrac{1}{\sqrt{\mathcal K}}(x_{2,0}\cos(\varphi)-x_{3,0}\sin(\varphi))\right),$$
where the factor $2$ is due to us working on a double cover.
\subsubsection*{Proof of Point (3)} To describe the symmetries explicitly, we consider the following identification of $\S^3$ with $SU(2)$:
\begin{equation}
\label{eq:S3model}
(x_1,x_2,x_3,x_4)\in\S^3\subset\R^4\;\longleftrightarrow
\matrix{x_1+\ii x_2&x_3+\ii x_4\\ -x_3+\ii x_4&x_1-\ii x_2}\in SU(2).
\end{equation}
\begin{remark}
Note that the $(x_1, x_2, x_3, x_4)$ denote the standard coordinates of $\R^4$ and not the parameter $x=(x_1, x_2, x_3)$ of the DPW potential in this subsection.
\end{remark}
We consider the branch of the extended frame $\widetilde{\Phi}$ in the simply connected domain $\mathcal U$ such that
$\widetilde{\Phi}(0)=U$, and denote $(F,B)$ its Iwasawa decomposition. We study the symmetries
$\sigma$, $\sigma\circ\delta$ and $\sigma\circ\tau$.
\begin{itemize}
\item{\em The symmetry $\sigma(z)=\overline{z}$.}
 Since $\sigma^*\eta=\overline{\eta}$ and $\widetilde{\Phi}(z=0)=U=\overline{U}$ by Proposition \ref{prop:symmetriesU}, we have
$\sigma^*\widetilde{\Phi}=\overline{\widetilde{\Phi}}$. Hence
$\sigma^* F=\overline{F}$ and
$$\sigma^* f= \overline{F}(\lambda_1)\overline{F}(\lambda_2)^{-1}=
\overline{F(\lambda_2)}\;\overline{F(\lambda_1)^{-1}}
=\overline{f^{-1}}.$$
In the model \eqref{eq:S3model}, this corresponds to the symmetry $x_3\to -x_3$. Hence
the real line is mapped to a symmetry curve in the $x_3=0$ hyperplane.
\item {\em The symmetry $\sigma\circ\delta(z)=-\overline{z}$.}
Since $\delta^*\Phi=D^{-1}\Phi D$ in $\mathcal U$, we have
$$(\sigma\circ\delta)^*\widetilde{\Phi}=D^{-1}\overline{\widetilde{\Phi}} D.$$
$$(\sigma\circ\delta)^*F=D^{-1}\overline{F}D$$
$$(\sigma\circ\delta)^*f
=D^{-1}\overline{f^{-1}} D.$$
This corresponds to the symmetry $x_4\to -x_4$. Hence the imaginary axis is mapped to a symmetry curve in the $x_4=0$ hyperplane.
\item {\em The symmetry $\sigma\circ\tau(z)=\frac{1}{\overline{z}}$ in the east sector.}
 \begin{figure}[h]
\centering  \vspace{-0.3cm}
 \includegraphics[height=0.25\textwidth]{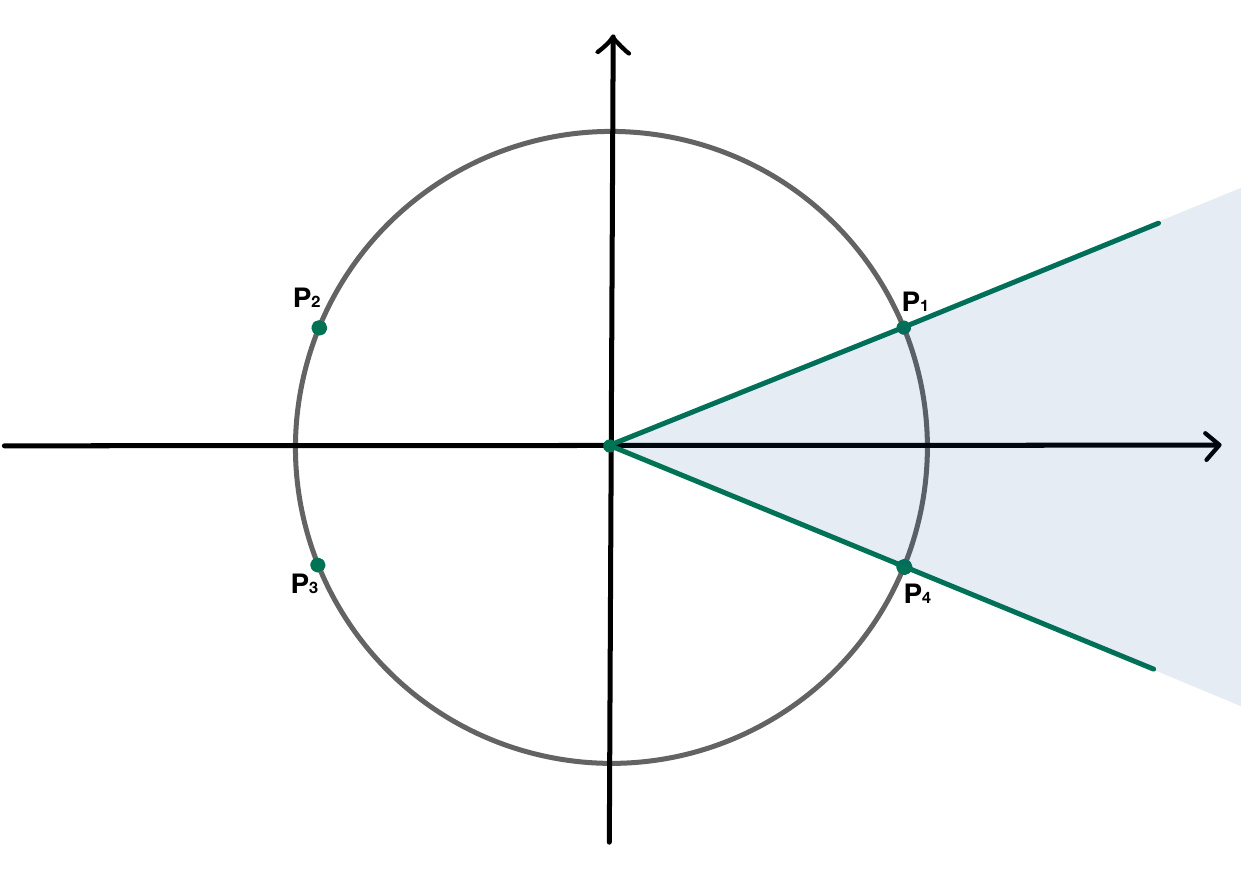} 
  \vspace{-0.5cm}
 \caption{The blue shaded region shows the east sector considered in this subsection.}\label{eastsector}
\end{figure}
Here one must be careful since the domain $\mathcal U, $ see Figure \ref{domain}, is not invariant under the symmetry $\tau$. We first study the symmetry $\sigma\circ\tau$ in the sector $-\varphi<\arg(z)<\varphi$ which contains the positive real 
axis and is preserved by $\sigma\circ\tau$. From $(\sigma\circ\tau)^*\eta=C^{-1}\overline{\eta}C$,
we have
$$(\sigma\circ\tau)^*\widetilde{\Phi}=R C^{-1}\overline{\widetilde{\Phi}}C$$
for some $R\in\Lambda SL(2,\C)$. Evaluating at $z=1$, we obtain
$$U\mathcal P=R C^{-1}\overline{U\mathcal P}C=RC^{-1}U\mathcal P C.$$
Hence
$$R=U\mathcal P C^{-1}\mathcal P^{-1} U^{-1} C=(UL_3 U^{-1})C\in\Lambda SU(2)$$
because $U$ unitarizes $L_3$. Moreover, from the properties of $\mathcal P$ and $U$, $R$ is diagonal at the Sym points and $\overline{R}=R$. Hence we can write
$$R(\lambda_1)=\matrix{e^{\ii\alpha}&0\\0&e^{-\ii\alpha}}\quad\text{ and }\quad
R(\lambda_2)=\overline{R(\lambda_1)}=R(\lambda_1)^{-1}$$
for some real $\alpha$. To compute the Iwasawa decomposition of $(\sigma\circ\tau)^*\widetilde{\Phi}$,
we write $C^{-1}\overline{B}(\lambda=0)C=Q_0 R_0$ with $Q_0\in SU(2)$ and $R_0$ upper triangular with positive diagonal. Then
$$(\sigma\circ\tau)^*\widetilde{\Phi}=\underbrace{\left(RC^{-1} \overline{F} C Q_0\right)}_{\in\Lambda SU(2)}\underbrace{\left(Q_0^{-1}C^{-1}\overline{B}C\right)}_{\in\Lambda^+_{\R}SL(2,\C)}$$
so 
$$(\sigma\circ\tau)^*F=RC^{-1} \overline{F} C Q_0$$
$$(\sigma\circ\tau)^*f=R(\lambda_1)C^{-1}\overline{f^{-1}}C R(\lambda_2)^{-1}$$
which corresponds to the symmetry $x_1+\ii x_2\to e^{2\ii\alpha}(x_1-\ii x_2)$.
Hence the image of the arc $-\varphi\leq\arg(z)\leq\varphi$ on the unit circle is a symmetry curve in the plane $\arg(x_1+\ii x_2)=\alpha$.

\item {\em The symmetry $\sigma\circ\tau(z)=\frac{1}{\overline{z}}$ in the north sector.}

 \begin{figure}[h] \vspace{-0.3cm}
\centering 

 \includegraphics[height=0.25\textwidth]{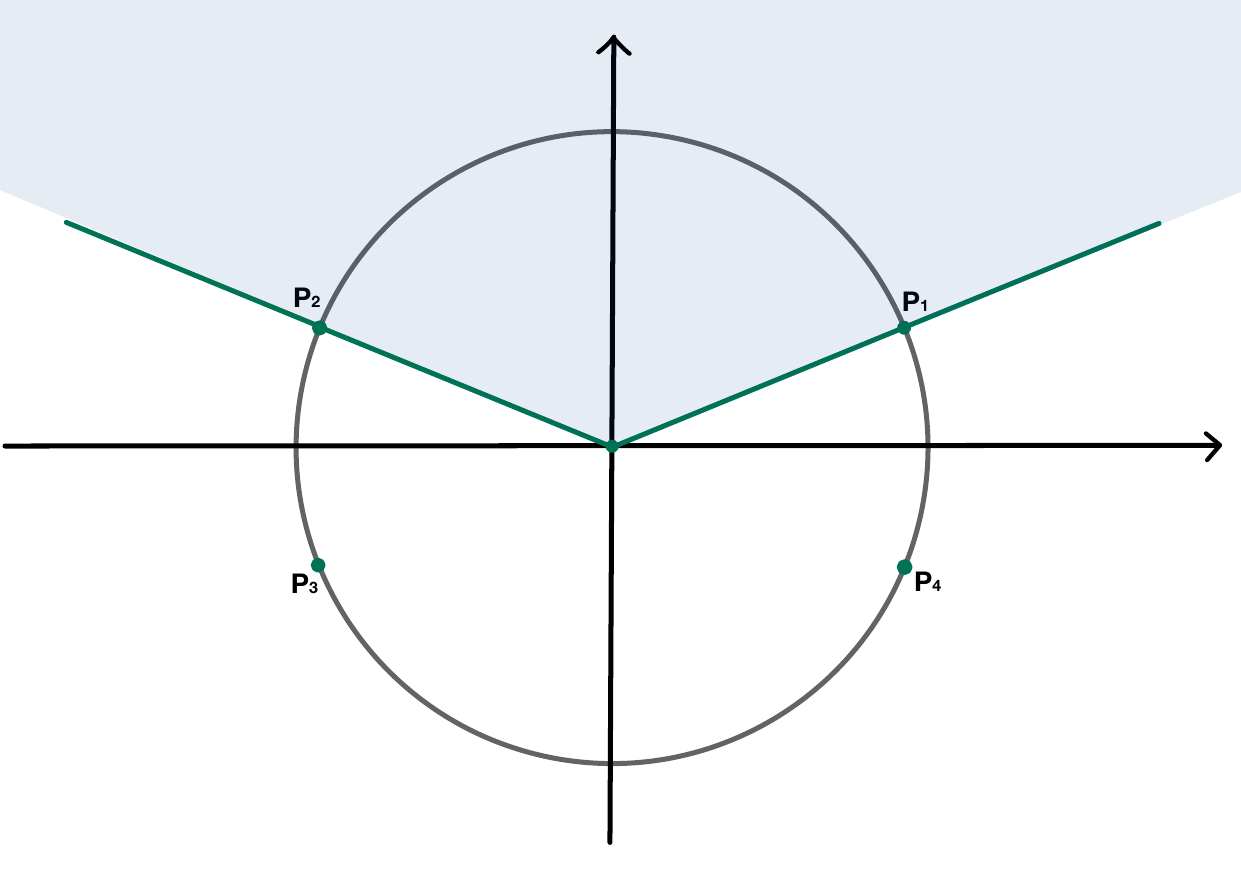} 
 \vspace{-0.5cm}
\caption{The blue shaded region shows the north sector considered in this subsection.}\label{northsector}
\end{figure}
\noindent
In the same way, in the sector $\varphi<\arg(z)<\pi-\varphi$ which contains the positive imaginary 
axis, we have
$$(\sigma\circ\tau)^*\widetilde{\Phi}=R' C^{-1}\overline{\widetilde{\Phi}}C$$
for some $R'\in\Lambda SL(2,\C)$. Evaluating at $z=\ii$, we obtain
$$U\mathcal Q=R' C^{-1}\overline{U\mathcal Q}C=R'C^{-1}UD^{-1}\mathcal Q D C.$$
Hence using that $U$ commutes with $D$
$$R'=U\mathcal Q CD\mathcal Q^{-1} U^{-1} DC=(UL_1 U^{-1})DC\in\Lambda SU(2).$$
Moreover, from the properties of $U$ and $\mathcal Q$, $R'$ is diagonal at the Sym points and
$\overline{R'}=D^{-1}R' D$. Hence we may write
$$R'(\lambda_1)=\matrix{e^{\ii\beta}&0\\0&e^{-\ii\beta}}\quad\text{ and }\quad
R'(\lambda_2)=R'(\lambda_1)^{-1}$$
and by the same argument as in Point (c), the image of the arc $\varphi\leq \arg(z)\leq \pi-\varphi$ on the unit circle is a symmetry curve in the plane $\arg(x_1+\ii x_2)=\beta$.
The angle $\beta$ is related to $\alpha$ by observing that the monodromy of $\widetilde{\Phi}$ along $\gamma_1$ is equal to
$$U M_1 U^{-1}=\widetilde{\Phi}(+\infty)\widetilde{\Phi}(+\ii\infty)^{-1}=R(R')^{-1}.$$
Hence $R(R')^{-1}$ has eigenvalues $e^{\pm 2\pi\ii s}$ which gives
$$\alpha-\beta=\pm 2\pi s=\pm\frac{\pi}{g+1}$$
proving point (3) of Proposition \ref{prop:building} up to a rotation.
\end{itemize}
\subsubsection*{Proof of Point (4)} If $\varphi=\frac{\pi}{4}$, $f_{g,\pi/4}$ is minimal and $M_{g,\pi/4}$ has the additional symmetry
$\iota(z)=i z$ which we study in the same way as the symmetries in Point (3).
For a function $u\in\mathcal W$, we use the notation $u^{\dagger}(\lambda)=u(-\lambda)$.
\begin{lemma}
If $\varphi=\frac{\pi}{4}$, the unitarizer $U$ satisfies $U^{\dagger}=U$, i.e., $U$ is an even function.
\end{lemma}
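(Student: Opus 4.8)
The plan is to exploit the extra automorphism $\iota(z)=\ii z$ of the Riemann surface $M_{g,\pi/4}$ to control how the extended frame behaves under $\lambda\mapsto-\lambda$, and then to conclude by uniqueness of the positive unitarizer.

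First I would feed the case $\varphi=\tfrac{\pi}{4}$ of Proposition \ref{prop:symmetries} into the potential: there the parameters satisfy $x_1(\lambda)=-x_1(-\lambda)$ and $x_3(\lambda)=-x_2(-\lambda)$ (so $x_2=-x_3^{\dagger}$) and $\theta\equiv\tfrac{\pi}{2}$. Combined with $\iota^{*}\omega_1=-\omega_1$, $\iota^{*}\omega_2=-\omega_3$, $\iota^{*}\omega_3=\omega_2$ (this is \eqref{eq:iota*omega} with $\widefrown{\varphi}=\varphi$), matching the three summands $t\,x_j\mathfrak m_j\omega_j$ term by term shows
$$\eta_{t,x}(-\lambda)=S^{-1}\bigl(\iota^{*}\eta_{t,x}(\lambda)\bigr)S,\qquad S=\mathrm{diag}\bigl(e^{-\ii\pi/4},e^{\ii\pi/4}\bigr).$$
Here one records the constant-matrix identities $S\in SU(2)$, $S$ diagonal — hence $S$ commutes with $D$ and, by Proposition \ref{prop:symmetriesU}, with the unitarizer $U$ — and $SCS^{-1}=CD$, equivalently $S^{-1}(CD)S=C$. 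Since $\iota$ preserves the simply connected domain $\mathcal U$ and fixes $0$, the two maps $z\mapsto S\,\Phi_{t,x}(-\lambda)(z)\,S^{-1}$ and $z\mapsto\Phi_{t,x}(\lambda)(\ii z)=\bigl(\iota^{*}\Phi_{t,x}(\lambda)\bigr)(z)$ both solve $d\Psi=\Psi\,\iota^{*}\eta_{t,x}(\lambda)$ on $\mathcal U$ with value $\Id$ at $z=0$, hence agree. Evaluating at $z=1$ gives the key identity
$$\mathcal Q=S\,\mathcal P^{\dagger}S^{-1},\qquad\text{where }\mathcal P^{\dagger}(\lambda):=\mathcal P(-\lambda).$$

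Next I would transport this through the definitions \eqref{defLs}. Using $S^{-1}(CD)S=C$ and that $S$ commutes with $D=L_2$, a direct computation gives $L_1=S\,L_3^{\dagger}S^{-1}$, whence $L_1^{\dagger}=S\,L_3\,S^{-1}$ and $L_3^{\dagger}=S^{-1}L_1\,S$, while $L_2^{\dagger}=L_2$. Now I would verify that $U^{\dagger}$ again simultaneously unitarizes $L_1,\dots,L_4$. Since $U$ is diagonal it commutes with $S$, so
$$U^{\dagger}L_1(U^{\dagger})^{-1}=U(-\lambda)\,S\,L_3(-\lambda)\,S^{-1}U(-\lambda)^{-1}=S\,\bigl(UL_3U^{-1}\bigr)^{\dagger}\,S^{-1};$$
as $\lambda\mapsto-\lambda$ maps $\S^1$ to $\S^1$ the middle factor lies in $\Lambda SU(2)$, and conjugation by the constant $S\in SU(2)$ preserves $\Lambda SU(2)$. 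Symmetrically $U^{\dagger}L_3(U^{\dagger})^{-1}=S^{-1}\bigl(UL_1U^{-1}\bigr)^{\dagger}S\in\Lambda SU(2)$; the case $j=2$ is immediate since $U^{\dagger}D(U^{\dagger})^{-1}=D$; and $j=4$ follows from $L_1L_2L_3L_4=\Id$.

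Finally I would close the argument. By Theorem \ref{characterLLLL} (as invoked in Proposition \ref{prop:monodromy-reduction}) the simultaneous unitarizer of $L_1,\dots,L_4$ is unique up to left multiplication by $\Lambda SU(2)$, so $U^{\dagger}=GU$ for some $G\in\Lambda SU(2)$. On the other hand $U\in\Lambda^{+}_{\R}SL(2,\C)$, and since $\lambda\mapsto-\lambda$ preserves $\overline{\D}$ and $U^{\dagger}(0)=U(0)$ is upper triangular with positive diagonal, also $U^{\dagger}\in\Lambda^{+}_{\R}SL(2,\C)$; hence $G=U^{\dagger}U^{-1}\in\Lambda SU(2)\cap\Lambda^{+}_{\R}SL(2,\C)=\{\Id\}$ by uniqueness of the Iwasawa decomposition, so $U^{\dagger}=U$. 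The step that needs the most care is the first one: pinning down the constant matrix $S$ and the identity $SCS^{-1}=CD$, and making sure the $\iota$-pulled-back frame is compared along the correct path inside $\mathcal U$ so that $\mathcal Q=S\,\mathcal P^{\dagger}S^{-1}$ holds on the nose. Once that identity is available, everything else is a short diagram chase together with uniqueness of the positive unitarizer.
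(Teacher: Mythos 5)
Your proof is correct and follows essentially the same route as the paper: pull back the potential and frame under $\iota(z)=\ii z$ to obtain $\mathcal Q=S\,\mathcal P^{\dagger}S^{-1}$, check that $U^{\dagger}$ simultaneously unitarizes the $L_j$, and conclude by uniqueness of the positive unitarizer together with the Iwasawa splitting. The only cosmetic differences are your choice of $S$ (the inverse of the paper's) and that you handle $L_4$ via the relation $L_1L_2L_3L_4=\Id$ rather than via $\iota(\gamma_1)=\gamma_2$ and the monodromy, which is an equally valid shortcut.
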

\begin{proof}
By Proposition \ref{prop:symmetries} we have
$$x_1=-x_1^{\dagger},\quad x_2=-x_3^{\dagger},\quad x_3=-x_2^{\dagger}$$
Using \eqref{eq:iota*omega}, we obtain
\begin{equation}
\label{eq:iota*eta}
\iota^*\eta=t\matrix{x_1^{\dagger}\omega_1&x_3^{\dagger}\omega_3-\ii x_2^{\dagger}\omega_2\\
x_3^{\dagger}\omega_3+\ii x_2^{\dagger}\omega_2&-x_1^{\dagger}\omega_1}=S^{-1}\eta^{\dagger}S\end{equation}
with $S$ as defined in the proof of Proposition \ref{prop:symmetries}.
Hence
$$\iota^*\Phi=S^{-1}\Phi^{\dagger} S$$
which gives at $z=1$
$$\mathcal Q=S^{-1}\mathcal P^{\dagger} S.$$
Then, using that $U$ commutes with $S$
$$U^{\dagger} L_1 (U^{\dagger})^{-1}=U^{\dagger}
S^{-1}\mathcal P^{\dagger} S CD S^{-1}(\mathcal P^{\dagger})^{-1}S (U^{\dagger})^{-1}
=S^{-1}(U L_3 U^{-1})^{\dagger} S\in\Lambda SU(2).$$
In the same way,
$$U^{\dagger}L_3 (U^{\dagger})^{-1}=U^{\dagger} S\mathcal Q^{\dagger} S^{-1} C S (\mathcal Q^{\dagger})^{-1} S^{-1}(U^{\dagger})^{-1}=
S(U L_1 U^{-1})^{\dagger} S^{-1}\in\Lambda SU(2).$$
Since $\iota(\gamma_1)=\gamma_2$, we have
$$M_2=S^{-1} M_1^{\dagger}S$$
$$U^{\dagger}L_4 (U^{\dagger})^{-1}=U^{\dagger}S M_2^{\dagger} S^{-1}(U^{\dagger})^{-1}
=S (U M_2 U^{-1})^{\dagger}S^{-1}\in\Lambda SU(2).$$
Finally, since $U$ is diagonal, $U^{\dagger} L_2 (U^{\dagger})^{-1}=D$.
By uniqueness of the unitarizer $U$, we have $U^{\dagger} U^{-1}\in\Lambda SU(2)$.
Since it is also in $\Lambda^+_{\R} SL(2,\C)$, we have $U^{\dagger}U^{-1}=\Id.$
\end{proof}
Returning to the proof of Point (4), we have
by equation \eqref{eq:iota*eta} and $U=U^{\dagger}$
$$\iota^*\widetilde{\Phi}=S^{-1}\widetilde{\Phi}^{\dagger} S$$
$$\iota^* F=S^{-1}F^{\dagger}S$$
$$\iota^* f= S^{-1} F(-\lambda_2)F(-\lambda_1)^{-1}S=S^{-1}f^{-1}S,$$
and finally for the symmetry $\iota\circ\sigma(z)=\ii\overline{z}$:
$$(\iota\circ\sigma)^*f=\sigma^*(S^{-1}f^{-1} S)=S^{-1}\overline{f} S.$$
This corresponds to the isometry
$$(x_1,x_2,x_3,x_4)\mapsto (x_1,-x_2,-x_4,-x_3).$$
Hence the image of the segment $[0,p_1]$ lies on the great circle defined by the equations $x_2=0,\;x_3+x_4=0$.
Furthermore, from $\widetilde{\Phi}(+\ii\infty)=\iota^*\widetilde{\Phi}(+\infty)$ we obtain
$$R' C^{-1} U C=S^{-1}R^{\dagger}C^{-1}UC S.$$
Evaluating at $\lambda_1$, we obtain $\beta=-\alpha$ (as should be expected).
Using the symmetries of Point (3), the image of the east sector is bounded by a geodesic 4-gon with two angles of $\frac{\pi}{g+1}$ at $f(p_1)$ and $f(p_4)$ and two right angles at $f(0)$ and $f(\infty)$: the boundary of the fundamental piece of Lawson surface $\xi_{1,g}$. The conclusion follows from the uniqueness of the solution to the Plateau problem in this situation (see the details of the argument in \cite[Theorem 5]{HHT}).
\subsubsection*{Proof of Point (5)} We fix $\varphi$ and use an index $t$ to emphasize the dependence on the parameter $t$, namely, we denote by $\eta_t=\eta_{t,x(t)}$ the DPW potential, by  $U(t)$ the unitarizer (which is chosen to be positive), by
$\widetilde{\Phi}_g=U(t) \Phi_{t,x(t)}$ the extended frame, by $F_t$ the unitary part of $\widetilde{\Phi}_g$, and by
$f_t=F_t\left(e^{\ii\theta(t)}\right)F_t\left(e^{-\ii\theta(t)}\right)^{-1}$ the immersion, which is well defined in the simply connected domain $\mathcal U$, see Figure \ref{domain}.
At $t=0$, we have $\eta_0=0$ and $\Phi_0=\Id$, hence $U(0)=\Id$ and $f_0=\Id$.
We prove that the limit
$$\lim_{t\to 0}\tfrac{1}{2t}(f_t-\Id)=\tfrac{1}{2}f'\mid_{t=0}$$
parametrizes a Scherk surface in  $\mathcal U$, which proves Point (5) since
$g\sim\frac{1}{2t}$ as $g\to\infty$.
The argument is similar to the proof of Theorem 4 in \cite{minoids}.

We have the following general formula for the differential of a CMC immersion into $\S^3$ constructed by the DPW method:
$$df=F(\lambda_1)\left[(\overline{\lambda_1}-\overline{\lambda_2})B^0 \eta^- (B^0)^{-1} 
+(\lambda_2-\lambda_1)(\overline{B^0\eta^-(B^0)^{-1}})^T
\right]F(\lambda_2)^{-1}$$
where $\lambda_1,\lambda_2$ are the Sym-points, $\eta^-$ is the negative part of the DPW potential,
$F$ is the unitary part of $\Phi$ and $B^0$ is the positive part of $\Phi$ evaluated at $\lambda=0$.
We differentiate this equation with respect to $t$ and obtain at $t=0$\begin{eqnarray*}
&&\lefteqn{df'=-\ii\left((\eta')^-+(\overline{\eta'\mbox{}^T})^-\right)}\\
&&=\matrix{
\omega_1-\overline{\omega_1}&
 \ii\sin(\varphi)(\omega_2+\overline{\omega_2})-\cos(\varphi)(\omega_3+\overline{\omega_3})\\
\ii\sin(\varphi)(\omega_2+\overline{\omega_2})+\cos(\varphi)(\omega_3+\overline{\omega_3})&
-\omega_1+\overline{\omega_1}},
\end{eqnarray*}
because $\eta_0=0$, $F_0=B_0=\Id$ and $\theta(0)=\frac{\pi}{2}$. In the identification of $SU(2)$ with $\S^3$ given by \eqref{eq:S3model}, this translates to
$$df'=(0,-2\,\Re(\ii\omega_1),-2\cos(\varphi)\Re(\omega_3),2\sin(\varphi)\Re(\omega_2))
=\Re(0,\phi_3,\phi_2,\phi_1)$$
with
\begin{align*}
\phi_1&=2\sin(\varphi)\omega_2=\frac{4\sin(2\varphi)(z^2-1)dz}{z^4-2\cos(2\varphi) z^2+1}\\
\phi_2&=-2\cos(\varphi)\omega_3=\frac{-4\ii\sin(2\varphi)(z^2+1)dz}{z^4-2\cos(2\varphi) z^2+1}\\
\phi_3&=-2\ii\,\omega_1=\frac{8\sin(2\varphi)z\,dz}{z^4-2\cos(2\varphi) z^2+1}\,.
\end{align*}
We have $\phi_1^2+\phi_2^2+\phi_3^2=0$ and the Gauss map is given by
$$G=\frac{\phi_1+\ii\phi_2}{\phi_3}=z.$$
With $Ox_2$ as the ``vertical'' direction, this is the Weierstrass Representation of a Scherk surface with period $4\pi$ and angle $2\varphi$.
Scaling by $\frac{1}{2}$ we obtain Point (5).
\subsubsection*{Proof of Point (6)}
Consider again the coordinate $w$ in a neighbourhood of $\wt{p}_1$ such that
$w^{g+1}=z-p_1$.
We shall prove that the image of the disk $|w|<1$ converges to a hemisphere as
$g\to\infty$.
\begin{lemma}
\label{lemma:Phi-infty}
 In the disk $\mathbb D= \{w \in \C\; |\;  |w|<1\}$, we have
$$\lim_{g\to\infty}\wt{\Phi}_g=\wt{\Phi}_\infty=\exp\left(\tfrac{1}{2}\cv{A}_1\log(w)\right)$$
where $\cv{A}_1$ denotes the matrix $A_1$ at the central value of the parameters.
The convergence is uniform on compact subsets of the disk $\mathbb D$.
\end{lemma}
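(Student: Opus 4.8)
The plan is to treat this as a singular‑perturbation limit for the linear system $d\Phi_{t,x}=\Phi_{t,x}\,\eta$ near the pole $p_1$. The governing heuristic is that the residue $tA_1$ of $\eta$ at $p_1$ has size $O(1/g)$ — because $t\sqrt{\mathcal K(x(t))}=\tfrac{1}{2g+2}$ — while in the coordinate $w$ with $w^{g+1}=z-p_1$ the quantity $\log(z-p_1)=(g+1)\log w$ grows linearly in $g$; these two effects balance, so the transported frame stabilizes to $\exp(t(g+1)A_1\log w)$, and $t(g+1)A_1\to\tfrac12\cv A_1$.

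First I would record the ingredients of the limit $g\to\infty$, equivalently $t\to 0$: by Proposition \ref{prop:IFT} the parameter $x(t)\to\cv x$, hence $A_1(x(t))\to\cv A_1$; by Proposition \ref{prop:central} $\mathcal K(\cv x)=1$, so $t(g+1)=\tfrac{1}{2\sqrt{\mathcal K(x(t))}}\to\tfrac12$ and therefore $t(g+1)A_1(x(t))\to\tfrac12\cv A_1$ while $\|tA_1(x(t))\|\to 0$; and $U(0)=\Id$ with $U(t)$ continuous in $t$ (as already used in the proof of Point (5)). Since $\wt\Phi_g=U(t)\,\pi^*\Phi_{t,x(t)}$ near $\wt 0$ and $\Phi_{t,x(t)}(0)=\Id$, it suffices to show that, on each compact $K\subset\mathbb D$ with $0\notin K$ and for a fixed branch of $\log$, the value of $\Phi_{t,x(t)}$ at the point of $M_{g,\varphi}$ with coordinate $w$, i.e.\ $\Phi_{t,x(t)}\big(p_1+w^{g+1}\big)$ along the appropriate lift, converges uniformly on $K$ to $\exp(\tfrac12\cv A_1\log w)$; then $\wt\Phi_g(w)\to\Id\cdot\exp(\tfrac12\cv A_1\log w)=\wt\Phi_\infty$.

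To do this I would integrate from $0$ along a path $\gamma_w$ that runs along the segment $[0,p_1]$ until a fixed small distance $\epsilon_\varphi$ from $p_1$ and then spirals into $p_1+w^{g+1}$ inside $\{0<|z-p_1|<\epsilon_\varphi\}$ in the homotopy class realizing $w=(z-p_1)^{1/(g+1)}$; here $\epsilon_\varphi>0$ is chosen so that this disc contains no other puncture, and $[0,p_1]$ keeps a fixed positive distance from $p_2,p_3,p_4$, so that along $\gamma_w$ one may write $\eta=tA_1\tfrac{dz}{z-p_1}+\beta$ with $\beta=t\sum_{j\neq 1}A_j\tfrac{dz}{z-p_j}$ holomorphic and $\|\beta\|=O(t)$ uniformly. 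Factoring $\Phi_{t,x}=\Phi_{\mathrm{corr}}\Phi_{\mathrm{sing}}$ with $\Phi_{\mathrm{sing}}(z):=\exp\!\big(tA_1\log\tfrac{z-p_1}{-p_1}\big)$ — the exact solution for the residue part, $\Phi_{\mathrm{sing}}(0)=\Id$ — the factor $\Phi_{\mathrm{corr}}$ solves $d\Phi_{\mathrm{corr}}=\Phi_{\mathrm{corr}}\big(\Phi_{\mathrm{sing}}\beta\Phi_{\mathrm{sing}}^{-1}\big)$, $\Phi_{\mathrm{corr}}(0)=\Id$. At the endpoint, since $A_1$ commutes with itself, $\Phi_{\mathrm{sing}}\big(p_1+w^{g+1}\big)=\exp\!\big(t(g+1)A_1(x(t))\log w\big)\exp\!\big(-tA_1(x(t))\log(-p_1)\big)$, which converges uniformly on $K$ to $\exp(\tfrac12\cv A_1\log w)$ by the limits above; so everything reduces to $\Phi_{\mathrm{corr}}\to\Id$ uniformly on $K$.

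The hard part — and the crux of the argument — is the uniform bound $\|\Phi_{\mathrm{sing}}^{\pm 1}\|\le C(K,\varphi)$ along $\gamma_w$. It holds because $\big|\log\tfrac{z-p_1}{-p_1}\big|\le C(K)(g+1)$ along $\gamma_w$, whereas $\|tA_1\|=O(1/g)$, so $\|tA_1\log\tfrac{z-p_1}{-p_1}\|$ stays bounded: precisely the balance of scales enforced by $t(g+1)=\tfrac{1}{2\sqrt{\mathcal K}}$. Granting it, $\|\Phi_{\mathrm{sing}}\beta\Phi_{\mathrm{sing}}^{-1}\|\le C(K,\varphi)^2\,O(t)$ is integrable along $\gamma_w$ (whose length stays bounded), so Grönwall gives $\|\Phi_{\mathrm{corr}}-\Id\|=O(t)$ uniformly on $K$. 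Combining the three limits yields $\wt\Phi_g\to\wt\Phi_\infty$ uniformly on compact subsets of $\mathbb D$. The remaining work is bookkeeping: fixing the homotopy class of $\gamma_w$ so that its lift to $M_{g,\varphi}$ genuinely ends at the point with coordinate $w$, keeping all constants uniform in $w\in K$, and matching branches of the logarithm consistently on the $(g+1)$‑sheeted cover.
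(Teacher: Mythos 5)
Your argument is correct, and its engine coincides with the paper's: your $\Phi_{\mathrm{sing}}(z)=\exp\bigl(tA_1\log\tfrac{z-p_1}{-p_1}\bigr)$ is exactly the paper's comparison solution $\Psi_t(z)=\exp\bigl(tA_1\log(1-z/p_1)\bigr)$, and the variation-of-constants plus Gr\"onwall control of the correction, the balance $\|tA_1\|=O(1/g)$ against $\log(z-p_1)=O(g)$, the identification $t(g+1)=\frac{1}{2\sqrt{\mathcal K(x(t))}}\to\frac12$, and the final use of $U(t)\to\Id$ are all the same. Where you genuinely deviate is the passage to uniformity on compacta: the paper computes the limit at the single point $w=\tfrac12$, reached by the straight segment from $0$ to $q_1(t)=p_1+2^{-(g+1)}$, and then deduces the full statement from the locally uniform convergence $\wt\eta_g\to\cv A_1\tfrac{dw}{2w}$ (this one-point reduction is exactly what the remark after the lemma is about), whereas you run the comparison along a $g$-dependent path ending at $p_1+w^{g+1}$ for every $w\in K$, which must wind roughly $(g+1)\arg(w)/2\pi$ times around $p_1$. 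That works, but two items you file under bookkeeping carry the weight that the paper's reduction avoids: the bounded-length claim for the spiral is not automatic for an arbitrary path in the right homotopy class --- take the image under $z=p_1+w^{g+1}$ of the radial segment in the $w$-coordinate, a logarithmic spiral whose length is bounded uniformly in $g$ and $w\in K$ because the radius decays exponentially while the winding grows only linearly, so that $\int_{\gamma_w}\|\Phi_{\mathrm{sing}}\beta\Phi_{\mathrm{sing}}^{-1}\|=O(t)$ indeed holds --- and the branch of $\log\tfrac{z-p_1}{-p_1}$ continued along $\gamma_w$ must be checked to equal $(g+1)\log w-\log(-p_1)$ for the same branch of $\log w$ appearing in $\wt\Phi_\infty$. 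Your restriction to compacta avoiding $w=0$ is the correct reading of the statement, since $\wt\Phi_\infty$ is singular and two-valued there. In sum: your route dispenses with the ``convergent potentials, so one value suffices'' step at the price of the spiral estimates; the paper keeps all integration paths straight at the price of that extra ODE-limit argument.
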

\begin{proof}
By Equation \eqref{eq:wteta}, we have on compact subsets of the disk $\mathbb D$
$$\lim_{g\to\infty}\wt{\eta}_g=\cv{A}_1\frac{dw}{2w}:=\wt{\eta}_{\infty}.$$
Hence to prove the lemma, it suffices to compute the limit of $\wt{\Phi}_g$ at the point $w=\frac{1}{2}$, which corresponds in $\Sigma$ to the point $q_1(t)=p_1+(\frac{1}{2})^{g+1}$, where $t$ and $g$ are related by Equation \eqref{eq:t}.
We define $\Psi_t:\mathcal U\to\Lambda SL(2,\C)$ by
$$\Psi_t(z)=\exp\left(tA_1\log(1-z/p_1)\right).$$
By Equation \eqref{eq:t}, we have
$$\lim_{t\to 0}\Psi_t(q_1(t))=\exp\left(\tfrac{1}{2}A_1\log(\tfrac{1}{2})\right).$$
Moreover, $\Psi_t$ is uniformly bounded on the segment $[0,q_1(t)]$.
We compare $\Phi_t$ and $\Psi_t$ on this segment using the variation of constants method. Writing $\Phi_t=Y_t\Psi_t$, we have
$$d\Phi_t=Y_t\Psi_t\eta_t=dY_t \,\Psi_t+Y_t\Psi_t tA_1\frac{dz}{z-p_1}.$$
Hence
$$dY_t=\sum_{j=2}^4 Y_t\Psi_t\, tA_j\frac{dz}{z-p_j}\Psi_t^{-1}.$$
Integrating the equation and taking norm, we obtain on the segment $[0,q_1]$ the estimate
$$\|Y_t-\Id\|\leq C|t|\int_0^z \|Y_t\|$$
for some uniform constant $C$.
By Gronwall inequality this therefore yields
$$\|Y_t(q_1(t))-\Id\|\leq C|t|\exp(C|t|).$$
Hence,
$$\lim_{t\to 0}\Phi_t(q_1(t))=\exp\left(\tfrac{1}{2}A_1\log(\tfrac{1}{2})\right).$$
Since $\lim_{t\to 0} U_t=\Id$ by Theorem \ref{unitarizer} we have
$$\lim_{g\to\infty}\wt{\Phi}_g(w=\tfrac{1}{2})=\exp\left(\tfrac{1}{2}A_1\log(\tfrac{1}{2})\right).$$
\end{proof}
\begin{remark}
The content of Lemma \ref{lemma:Phi-infty} is that $\wt{\Phi}_{\infty}(w=1)=\Id$.
Since the point $w=1$ corresponds to $z=p_1+1$ and
$\lim_{t\to 0}\Phi_t(p_1+1)=\Id$, the lemma seems trivial. But since the convergence to $\wt{\eta}_\infty$ only holds in compact subsets of
$\mathbb D$, it is not possible to conclude in that way.
\end{remark}
We now identify the limit immersion $f_{\infty}$ on the disk $\mathbb D$.
Since $\cv{A}_1$ is hermitian on the unit circle $|\lambda|=1$, we have for any angle $\alpha$
$$\wt{\Phi}_\infty(we^{\ii\alpha})=R_\alpha\wt{\Phi}_{\infty}(w)
\quad\text{ with }\quad
R_\alpha=\exp\left(\tfrac{1}{2}\,\ii\alpha \cv{A}_1\right)\in\Lambda SU(2).$$
Hence by the Sym-Bobenko formula
$$f_\infty(w e^{\ii\alpha})=R_\alpha(\ii)f_{\infty}(w)R_\alpha(-\ii)^{-1}
=\matrix{e^{\ii\alpha/2}&0\\0&e^{-\ii\alpha/2}}
f_{\infty}(w)\matrix{e^{\ii\alpha/2}&0\\0&e^{-\ii\alpha/2}},$$
so it suffices to study the immersion for $w$ being real.
Further, 
$$\wt{\Phi}_\infty(w)=\frac{\ii}{4 \lambda 
   \sqrt{w}}
   \matrix{
(\lambda -\ii)^2-(\lambda +\ii)^2 w
 & e^{-\ii \varphi }\left(\lambda ^2+1\right) 
   (1-w) \\
    e^{\ii \varphi }\left(\lambda ^2+1\right)  (w-1)
 & - (\lambda +\ii)^2+(\lambda -\ii)^2w 
}.$$
Computing the Iwasawa decomposition
of $\wt{\Phi}_{\infty} = F_{\infty}B_{\infty}$ for $w\in\R$ gives
\begin{align*}
&F_{\infty}(w)=\\
r&\small\matrix{
 \ii\lambda^{-1} (w-1)(w^2+1)+(w+1)^3+2 \ii \lambda  (w^2-w)
   & e^{-\ii \varphi } (w-1) \left(-2 \ii \lambda^{-1} w +w^2-1+\ii\lambda (w^2+1)\right)
   \\
 e^{\ii \varphi } (w-1) \left(\ii\lambda^{-1}(w^2+1)+1-w^2-2\ii\lambda w\right)
 & -2\ii\lambda^{-1}(w^2-w)+ (w+1)^3 -\ii \lambda  (w-1)
   (w^2+1)
}\end{align*}
$$
B_{\infty}(w)=\frac{r}{\sqrt{w}}\matrix{
-\ii \lambda  \left(w^4-1\right)+w^4+6 w^2+1 &
   e^{-\ii \varphi } \left(-\left(w^2-1\right)^2-\ii \lambda 
   \left(w^4-1\right)\right) \\
 e^{\ii \varphi }\,2 \ii \lambda   (w^3-w) & 
 4 (w^3+w)+2\ii \lambda  (w^3-w)
}$$
with
$$r=\frac{1}{2 \sqrt{w^6+7 w^4+7 w^2+1}}.$$
\begin{remark}
It is clear that $F_{\infty}\in\Lambda SU(2)$ and $B_{\infty}\in\Lambda^+_{\R} SL(2,\C)$. One can verify that $F_{\infty} B_{\infty}=\wt{\Phi}_{\infty}$ by expanding the product.
The Iwasawa decomposition was explicitly computed using another observation. The entries of $\wt{\Phi}_\infty$ are degree-1 Laurent polynomials in $\lambda$. In this case, the entries of the unitary part $F_{\infty}\in\Lambda SU(2)$ are Laurent polynomials of the same degree.  Thus $F_{\infty}$ is entirely determined by its entries $F_{11}$ and $F_{12}$
which are given by 6 unknown complex numbers. We define $B_{\infty}=F_{\infty}^{-1}\wt{\Phi}_{\infty}$ and solve the equations given by $B_{\infty}\in\Lambda^+_{\R} SL(2,\C)$ using Mathematica.
\end{remark}
The Sym-Bobenko formula then gives after simplification for $w\in\R$
$$f_{\infty}(w)=\frac{1}{w^2+1}\matrix{
 2 w& e^{-\ii \varphi } \left(1-w^2\right)
   \\
 e^{\ii \varphi } \left(w^2-1\right)& 2 w
}\,.$$
Hence for $w$ in the unit disk $\mathbb D$ we have
$$f_{\infty}(w)=\frac{1}{|w|^2+1}\matrix{
 2 w& e^{-\ii \varphi } \left(1-|w|^2\right)
   \\
 e^{\ii \varphi } \left(|w|^2-1\right)& 2\overline{w}
}\,,$$
from which we can deduce that the image of $\mathbb D$ under $f_{\infty}$ is the hemisphere
given by
$$\begin{cases}
\sin(\varphi)x_3+\cos(\varphi)x_4=0\\
\cos(\varphi)x_3-\sin(\varphi)x_4>0.
\end{cases}$$
Introducing similar coordinates $w_j$ in a neighborhood of each $\wt{p}_j$, and
using the symmetries $\sigma$ and $\delta$, we conclude that
the image of the unit disk $|w_2|<1$ is the hemisphere
$$\begin{cases}
\sin(\varphi)x_3-\cos(\varphi)x_4=0\\
\cos(\varphi)x_3+\sin(\varphi)x_4>0.
\end{cases}$$
and the images of the unit disks $|w_3|<1$ and $|w_4|<1$ are the complementary hemispheres of the above two hemispheres.
This concludes the proof of Proposition \ref{prop:building}.
\end{proof}

\section{Degenerating conformal type} \label{limitvarphi}
The aim of this section is to strengthen the conclusion of Proposition \ref{prop:IFT} by proving that $T$ is uniform with respect to $\varphi$.
By Proposition \ref{prop:symmetries}, it suffices to consider
$\varphi\in[0,\frac{\pi}{4}]$.

\begin{proposition}\label{prop:uniformT}
There exist $T>0$ such that for all $t\in (-T,T)$ and  $\varphi \in [0, \tfrac{\pi}{4}]$,
Problem \eqref{monodromy-problem3} has a unique solution $(x(t,\varphi),\theta(t,\varphi))$.
Moreover, $x(t, \varphi)$ and $\theta(t,\varphi)$ are analytic functions of $t$, $\varphi$ and $\varphi \log(\varphi)$, and at $\varphi=0$, we have
$x(t,0)=\cv{x}(\varphi=0)$ for all $t$.
As a consequence, there exists $g_0\in\N$ such that the immersion $f_{g,\varphi}:M_{g,\varphi}\to\S^3$
exists for all $g\geq g_0$ and $\varphi\in (0,\tfrac{\pi}{4})$.
Finally, for fixed $g$, the image of $f_{g,\varphi}$ converges to a doubly covered great sphere when $\varphi\to 0$
\end{proposition}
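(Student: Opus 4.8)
The plan is to re-run the implicit function theorem of Proposition~\ref{prop:IFT}, but now with the angle $\varphi$ promoted to a variable in a neighbourhood of $0$ (by Proposition~\ref{prop:symmetries} it suffices to treat $\varphi\in[0,\tfrac\pi4]$). The only genuine difficulty, flagged already in the remark following Proposition~\ref{prop:IFT}, is that at $\varphi=0$ the punctures collide, $p_1,p_4\to1$ and $p_2,p_3\to-1$, so that the frame $\Phi$ evaluated at $z=1$ — and hence $\mathfrak p$, $\mathcal F_1$, $\mathcal H_1$ and the splitting of $\lambda\mathcal K$ — is a priori singular as $\varphi\to0$. The heart of the proof is therefore to show that these objects extend to \emph{real-analytic} functions of the three quantities $t$, $\varphi$ and $\varphi\log\varphi$ on a full neighbourhood of the central value. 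I expect this confluence analysis to be the main obstacle; everything afterwards is a rerun of the earlier arguments.

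First I would set up the analytic structure near $\varphi=0$. Expand $\Phi(z)=\Id+\sum_{n\ge1}t^n\Phi_n(z)$, where $\Phi_n(z)$ is a sum over words $(j_1,\dots,j_n)\in\{1,2,3\}^n$ of $x_{j_1}\cdots x_{j_n}\,\mathfrak m_{j_1}\cdots\mathfrak m_{j_n}$ times the iterated integral $\int_0^z\omega_{j_1}\cdots\omega_{j_n}$. On the one hand, the path to $z=\ii$ stays uniformly away from all four punctures, so $\mathcal Q=\Phi(\ii)$, $\mathfrak q$, and the maps $\mathcal F_2,\mathcal H_2$ are analytic in $(t,\varphi)$ near $\varphi=0$, with no logarithms. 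On the other hand, the path to $z=1$ runs into the collision of $p_1,p_4$; here one uses that near $z=1$ one has $\omega_2\sim\tfrac{2(z-1)\,dz}{(z-1)^2+\varphi^2}$ (since the numerator $z^2-1$ of $\omega_2$ has a simple zero there), whose primitive is $\log((z-1)^2+\varphi^2)$ and thus contributes exactly one factor $\log\varphi$, whereas $\omega_1,\omega_3\sim\tfrac{c(\varphi)\,dz}{(z-1)^2+\varphi^2}$ with $c(\varphi)=O(\varphi)$ (their numerators are nonzero at $z=\pm1$, and they carry prefactors $\sin(2\varphi),\sin(\varphi)$), whose primitives are of $\arctan$-type and hence bounded and log-free. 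The crucial structural point is that $\omega_2$ appears in the potential only with coefficient $x_2$, whose central value $\cv x_2=-\tfrac12\sin(\varphi)(\lambda^{-1}+\lambda)$ is $O(\varphi)$ and which the equations $\mathcal F_2=\mathcal H_2=0$ — being log-free, as they live at $z=\ii$ — force to remain $O(\varphi)$. Consequently a word with $k$ letters equal to $2$ contributes a term of the form $(\text{analytic in }t,\varphi)\times(\varphi\log\varphi)^k$; the usual geometric estimates on iterated integrals give uniform convergence of the $t$-series for $\varphi\in[0,\tfrac\pi4]$, and summing it shows that $\mathfrak p$, and with it $\mathcal F_1$ and $\mathcal H_1$, is real-analytic in $(t,\varphi,\varphi\log\varphi)$ (while $\mathfrak q,\mathcal F_2,\mathcal H_2$ are already analytic in $(t,\varphi)$, and $\mathcal K$, $\mathcal K_{-1}$, $\wc{\mathcal K}$ depend polynomially on $x$).

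With this in hand, I would introduce $u:=\varphi\log\varphi$ as an \emph{independent} variable, so that all maps in Problem~\eqref{monodromy-problem3} become genuine real-analytic functions of $(t,x,\theta,\varphi,u)$ near $(0,\cv x(\varphi=0),\tfrac\pi2,0,0)$. At $\varphi=0$ one has $\omega_1=\omega_3\equiv0$ and $\cv x_2(\varphi=0)=0$, whence $\eta_{t,\cv x(0),0}\equiv0$, $\Phi\equiv\Id$, $\mathfrak p=\mathfrak q=0$ and $\mathcal K(\cv x(0))=1$ for every $t$: thus $(t,\cv x(0),\tfrac\pi2)$ already solves Problem~\eqref{monodromy-problem3} at $\varphi=0$, for all $t$. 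Moreover the partial differentials computed in the proof of Proposition~\ref{prop:IFT} — namely $d\mathcal F_i^+=2\pi\,dy^+$, $d\mathcal H_i=2\pi\cos(\varphi)\,d\theta+2\pi\,dy(\ii)$ (resp.\ with $\sin(\varphi)$), and $d\mathcal K=\ii(\lambda^{-1}-\lambda)\,dy_1+(\lambda^{-1}+\lambda)\,d\theta$ — are at the central value independent of $\varphi$ (or extend continuously to $\varphi=0$) and remain isomorphisms there. Running the implicit function theorem in $(t,\varphi,u)$, with $\varphi$ ranging over a neighbourhood of the compact interval $[0,\tfrac\pi4]$ (uniform invertibility and analyticity being clear for $\varphi$ bounded away from $0$ and provided near $\varphi=0$ by the previous step), yields a $\varphi$-independent $T>0$ and a unique solution $(x(t,\varphi,u),\theta(t,\varphi,u))$, analytic there. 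Specializing $u=\varphi\log\varphi$ — which tends to $0$ as $\varphi\to0^+$, hence stays in the neighbourhood — gives the solution $(x(t,\varphi),\theta(t,\varphi))$ of Problem~\eqref{monodromy-problem3}, analytic in $(t,\varphi,\varphi\log\varphi)$, with $x(t,0)=\cv x(\varphi=0)$ for all $t$ by uniqueness.

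It remains to draw the two consequences. Since $\psi(t)=t\sqrt{\mathcal K(x(t,\varphi))}$ satisfies $\psi'(0)=1$ uniformly in $\varphi$ (because $\mathcal K(\cv x(\varphi))\equiv1$), there is $g_0\in\N$ with $\tfrac1{2g+2}<\psi(T)$ for all $g\ge g_0$; for such $g$ put $t=\psi^{-1}(\tfrac1{2g+2})$, so that $t\sqrt{\mathcal K}=\tfrac1{2g+2}<\tfrac14$ once $g\ge1$, the bound~\eqref{eq:hypothesisK} holds, and Propositions~\ref{prop:monodromy-reduction} and~\ref{prop:building} apply for every $\varphi\in(0,\tfrac\pi4)$, producing $f_{g,\varphi}$. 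Finally, fixing such a $g$ and letting $\varphi\to0$, one has $t\to\tfrac1{2g+2}$ and $x(t,\varphi)\to\cv x(\varphi=0)$, so $\eta_{t,x(t,\varphi)}\to0$ on $\Sigma$ while near each $\wt p_j$ the lifted potential keeps a pole with residue $\to\tfrac12\cv A_j(\varphi=0)\,\tfrac{dw}{w}$; the extended frame, its Iwasawa factor and the Sym--Bobenko formula then converge, and the limit image is identified exactly as in the proof of Point~(6) of Proposition~\ref{prop:building}, the two great spheres $\{\sin(\varphi)\,x_3\pm\cos(\varphi)\,x_4=0\}$ there degenerating, as $\varphi\to0$, to the single great sphere $\{x_4=0\}$, covered twice.
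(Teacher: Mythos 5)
Your overall strategy is the same as the paper's (solve the $z=\ii$ equations first, then show the $z=1$ data extends analytically in $\varphi$ and $\varphi\log\varphi$, then rerun the implicit function theorem uniformly on $[0,\tfrac\pi4]$), but the central analytic step is asserted rather than proved, and in the form you state it, it is false. You claim that after the heuristic on the local behaviour of $\omega_2$ versus $\omega_1,\omega_3$, ``all maps in Problem \eqref{monodromy-problem3} become genuine real-analytic functions of $(t,x,\theta,\varphi,u)$'' with $x$ a free variable. This cannot be: at $\varphi=0$ the potential is $2t\left(\begin{smallmatrix}0&x_2\\x_2&0\end{smallmatrix}\right)\bigl(\tfrac{dz}{z-1}-\tfrac{dz}{z+1}\bigr)$, so $\mathcal P=\Phi(1)$ diverges logarithmically whenever $x_2\neq 0$; hence $\mathfrak p$, $\mathcal F_1$, $\mathcal H_1$ do not even extend continuously in $\varphi$ for free $x$. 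The compensation you invoke ($x_2=O(\varphi)$) is only available \emph{after} eliminating $y_2$ by a first application of the implicit function theorem to $\mathcal F_2=\mathcal H_2=0$ (so that $x_2\equiv 0$ at $\varphi=0$ for all $(t,y_1,y_3)$), and then the analyticity statement must be proved for the reduced maps in $(t,y_1,y_3,\theta,\varphi,\varphi\log\varphi)$. This two-step structure is exactly how the paper proceeds, and your single IFT in $(t,x,\theta,\varphi,u)$ skips it.

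Even granting the reduction, your key structural claim --- that a word with $k$ letters equal to $2$ contributes $(\text{analytic in }t,\varphi)\times(\varphi\log\varphi)^k$, with uniform convergence of the $t$-series --- is precisely the hard content of the proposition and is not a routine consequence of ``the usual geometric estimates''. One must show that every iterated integral admits an expansion in monomials $\varphi^a(\log\varphi)^b$ with $b\le a$ (no bare or excess logarithms), with bounds strong enough to sum over all words and all orders in $t$ to a genuinely two-variable analytic function; boundedness of each term is not analyticity in $(\varphi,\varphi\log\varphi)$. The paper obtains this not term by term but by decomposing the path from $0$ to $1$ into $\Gamma_0\cdot\Gamma_{1,\varphi}\cdot\Gamma_{2,\varphi}$, pulling back by explicit diffeomorphisms adapted to the colliding punctures $p_1,p_4\to 1$, and invoking the confluence result Theorem \ref{theorem:philogphi} (Theorem 5 of \cite{nodes}), which in turn requires complexifying $\varphi$ (writing $\varphi=e^{\omega}$ and using periodicity in $\omega$) --- none of which has a substitute in your sketch. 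Your treatment of the two consequences (uniform $g_0$ via $\psi'(0)=1$, and the $\varphi\to 0$ limit identified through the $w$-coordinate near $\wt p_j$ and Lemma \ref{lemma:Phi-infty}) is in the right spirit and parallels the paper, but it too relies on the analytic extension (and on $U_{t,0}=\Id$) established in the step that is missing.
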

In this section, we use an additional index $\varphi$ to denote the dependence of objects on the parameter $\varphi$, e.g.,  $\eta_{t,x,\varphi}$ and $\Phi_{t,x,\varphi}$.
Since the partial differentials in the proof of Proposition \ref{prop:IFT} do not depend on $\varphi$, we only need to show that  the maps $\mathcal F_1,$ $\mathcal F_2$, $\mathcal H_1,$ and $\mathcal H_2$ remain smooth enough to apply a version of the implicit function theorem at $\varphi = 0$. For $\varphi\rightarrow 0$ the limit Riemann surface is given by $$\Sigma_0 = \C P^1\setminus\{\pm  1\}.$$
Hence the quantities corresponding to $\mathcal Q$, i.e., $\mathcal F_2$ and $\mathcal H_2,$ remain well-defined and depend smoothly on $\varphi$ in the $\varphi \rightarrow 0$ limit.
To be explicit, we have at $\varphi=0$:
\begin{equation}
\label{eq:eta-phi0}
\eta_{t, x,0} = t(A_1+A_4)\frac{dz}{z-1}+ t(A_2+A_3)\frac{dz}{z+1}
=2t\matrix{0&x_2\\x_2&0}\left(\frac{dz}{z-1}-\frac{dz}{z+1}\right)
\end{equation}
\begin{equation*}
\begin{split}
\Phi_{t,x,0}&=\exp\left(2t\begin{pmatrix}0&x_2\\x_2&0\end{pmatrix}\log\left(\frac{1-z}{1+z}\right)\right)\\
\mathcal Q(t,x,0)&=\Phi_{t,x,0}(z=\ii)=
\matrix{\cos(\pi t x_2)&-\ii\sin(\pi tx_2)\\-\ii\sin(\pi tx_2)&\cos(\pi tx_2)}\\
\mathfrak q(t,x,0)&=\sin(2\pi  t x_2).
\end{split}
\end{equation*}
Applying the implicit function theorem to the equations $\mathcal F_2=0$ and $\mathcal H_2=0$ uniquely determines the parameter $y_2\in \mathcal W_\R^{\geq0}$
as a smooth function of $(t,\varphi)$ in a neighborhood of $(0,0)$ and
the remaining parameters $(y_1,y_3,\theta)$ (where $x_i=\cv{x}_i+y_i$ as in the proof of Proposition \ref{prop:IFT}).
Moreover, at $\varphi=0$, the solution is $y_2=0$.
Since also $\cv{x}_2=0$ at $\varphi=0$, we have
$x_2=0$ at $\varphi=0$ for all $(t,y_1,y_3)$.
Dealing with the limits of the remaining equations is more difficult, as the limit potential has a pole at $z=1$.
\subsection{The asymptotic of $\mathcal P$} \mbox{}
In this section, we assume that $y_2$ has been determined by solving the equations
$\mathcal F_2=0$ and $\mathcal H_2=0$ and we denote
$\eta_{t,\varphi}$ the resulting potential, $\Phi_{t,\varphi}$ the corresponding solution
and $\mathcal P(t,\varphi)=\Phi_{t,\varphi}(z=1)$, not writing the dependence on the remaining parameters $y_1$ and $y_3$.
It turns out that $\mathcal P=\Phi(z=1)$ does not extend smoothly at $\varphi=0$, but
rather extends as an analytic function of $\varphi$ and $\varphi\log\varphi$, in the following sense:
\begin{definition}\cite{nodes}
 Let $f(x)$ be a function of the real variable $x\geq 0$. We say that f is an analytic function of
$x$ and $x\log x$ if there exists a real analytic function of two variables $g(x,y)$ defined in a neighborhood of $(0,0)$
in $\R^2$ such that 
$$f(x) = g(x,x\log x) \quad  \text{ for } x > 0 \quad \text{ and } \quad f(0) = g(0,0).$$
 \end{definition}
 \begin{remark}
 The terminology used in \cite{nodes} is that of a smooth function of $x$ and $x\log x$.
 Here we need that $g$ is analytic, which ensures that it is unique. Indeed, if a real analytic function
 $g$ satisfies $g(x,x\log x)=0$ for $x>0$, then $g=0$. This is false in the $C^{\infty}$ class.
 \end{remark}

Let $\Gamma$ denote the straight line from $z=0$ to $z=1.$
Fix some positive numbers $0<\varepsilon<\varepsilon_0<1$.
To study the behaviour of $\mathcal P(t,\varphi)$ for $\varphi\to 0^+$, we assume
$\varphi<\varepsilon^2$ and subdivide the curve $\Gamma$ into  
\[\Gamma=\Gamma_0\cdot\Gamma_{1,\varphi}\cdot\Gamma_{2,\varphi}\]
with
\begin{equation}
\begin{split}
\Gamma_0&\colon  s\mapsto (1-\varepsilon)s\qquad\text{from $0$ to $1-\varepsilon$}\\
\Gamma_{1,\varphi}&\colon  s\mapsto (1-\varepsilon)(1-s)+(1-\tfrac{\varphi}{\varepsilon})s\qquad\text{from $1-\varepsilon$ to $1-\tfrac{\varphi}{\varepsilon}$}\\
\Gamma_{2,\varphi}&\colon s\mapsto 1+\tfrac{\varphi}{\varepsilon}(s-1)
\qquad\text{from $1-\tfrac{\varphi}{\varepsilon}$ to $1$.}
\end{split}
\end{equation}
For a flat connection $d+\eta$ and a curve $\gamma:[0,1]\to\Sigma$, we denote $\Pi(\eta,\gamma)$ the principal solution of $\eta$ along $\gamma$ (see \cite{taylor}), namely the value at $s=1$ of the solution of
$$\begin{cases}
Y'(s)=Y(s)\eta(\gamma(s))\gamma'(s)\\
Y(0)=\Id\end{cases}.$$

Then
$$\mathcal P(t,\varphi)=\Pi(\eta_{t,\varphi},\Gamma)
=\Pi(\eta_{t,\varphi},\Gamma_0)\,
\Pi(\eta_{t,\varphi},\Gamma_{1,\varphi})\,
\Pi(\eta_{t,\varphi},\Gamma_{2,\varphi})
.$$

The principal solution along $\Gamma_0$ is clearly an analytic function of $\varphi$ in a neighborhood of $0$, since the path $\Gamma_0$ is fixed and
$\eta_{t,\varphi}$ depends analytically on $\varphi$ on $\Gamma_0$.
Moreover, at $\varphi=0$ we have $\eta_{t,0}=0$ so
\begin{equation}
\label{eq:principal-Gamma0}
\Pi(\eta_{t,0},\Gamma_0)=\Id.
\end{equation}
It is more delicate for the paths $\Gamma_{1,\varphi}$ and $\Gamma_{2,\varphi}$:
we will see that the principal solution along $\Gamma_{1,\varphi}$ extends as an analytic function of $\varphi$ and $\varphi\log\varphi$ at $\varphi=0$, while the principal solution along $\Gamma_{2,\varphi}$
is an analytic function of $\varphi$.

\subsection{Principal solution along $\Gamma_{2,\varphi}$}$\;$\\
\label{section:principal-Gamma2}
To analyse the $\varphi\to 0$
limit of the principal solution along $\Gamma_{2,\varphi}$ we consider for $\varphi>0$ the diffeomorphism
\[\psi_\varphi\colon D(1,1)\to D(1,\tfrac{2\varphi}{\varepsilon}); \quad w\mapsto 1+\tfrac{2\varphi}{\varepsilon}(w-1),\]
where $D(1, R)$ is the disc of center $1$ and radius $R$.

Then
\[\wh{\Gamma}_2:=(\psi_\varphi)^{-1}\circ\Gamma_{2,\varphi}\colon [0,1] \longrightarrow D(1,1); \quad \,s\longmapsto  \tfrac{1}{2}(1+s) \]
is independent of $\varphi$.
The pullback potential $\wh{\eta}_{t,\varphi}=\psi_{\varphi}^*\eta_{t,\varphi}$
has simple poles at $\psi_{\varphi}^{-1}(p_j)$ for $j= 1, ...,  4$.

We have
\begin{equation*}
\begin{split}
&\lim_{\varphi\to 0}\psi_{\varphi}^{-1}(p_1(\varphi))=1+\tfrac{\varepsilon}{2}\ii,\quad
\lim_{\varphi\to 0}\psi_{\varphi}^{-1}(p_4(\varphi))=1-\tfrac{\varepsilon}{2}\ii,\\
&\lim_{\varphi\to 0}\psi_{\varphi}^{-1}(p_2(\varphi))=
\lim_{\varphi\to 0}\psi_{\varphi}^{-1}(p_3(\varphi))=\infty.
\end{split}
\end{equation*}
So $\wh{\eta}_{t,\varphi}$ extends analytically to $\varphi=0$ with simple poles at
$1\pm\frac{\varepsilon}{2}\ii$. At $\varphi=0$, we have $x_2=0$, and thus 
$A_4=-A_1$ and
\begin{equation}
\label{eq:limit-eta}
\wh{\eta}_{t,0}=tA_1\left(\frac{dw}{w-1-\tfrac{\varepsilon}{2}\ii}
-\frac{dw}{w-1+\tfrac{\varepsilon}{2}\ii}\right).
\end{equation}
Hence
$\Pi(\eta_{t,\varphi},\Gamma_{2,\varphi})=\Pi(\wh{\eta}_{t,\varphi},\wh{\Gamma}_2)$
extends analytically to $\varphi=0$ and
\begin{equation}
\label{eq:principal-Gamma2}
\Pi(\eta_{t,\varphi},\Gamma_{2,\varphi})\mid_{\varphi=0}=
\exp\left(t A_1\int_{1/2}^1 \wh{\eta}_{t,0}\right)
=\exp\left(\pi\ii t A_1-tA_1 \log\left(\frac{1+\varepsilon\ii}{1-\varepsilon\ii}\right)\right)\end{equation}
where we take  principal value of the logarithm on $\C\setminus\R^+$.

\subsection{Principal solution along $\Gamma_{1,\varphi}$}\label{subseubgamma0} $\;$\\
We apply \cite[Theorem 5]{nodes} which we restate here as Theorem \ref{theorem:philogphi} with adjusted notations.
To use this result, it is necessary to view $\varphi$ as a complex number.

\begin{remark}
For $\varphi\in\C$ in a neighborhood of $0$, the potential $\eta_{t,x,\varphi}$
is well defined and depends holomorphically on $\varphi$. The poles
$\pm e^{\pm\ii\varphi}$ no longer lie on the unit circle breaking the symmetries.
Nevertheless, the complexified equations $\mathcal F_2=0$ and $\mathcal H_2=0$ can be solved using the implicit function theorem. The solution $y_2$ then lies in $\mathcal W^{\geq 0}$ instead of $\mathcal W^{\geq 0}_{\R}$ and depends holomorphically on $\varphi$.
(See Section \ref{section:complexification} for the complexification of the equations).
\end{remark}

For principal solution along $\Gamma_{1,\varphi}$, consider $\varphi \in D(0,\varepsilon_0^2)\subset\C$ and the diffeomorphism 
$$\psi_\varphi \colon \mathcal A_\varphi \longrightarrow \mathcal A_\varphi, \quad z \longmapsto 1+ \frac{\varphi}{z-1},$$
where
$\mathcal A_{\varphi}$ is the annulus
\[\mathcal A_\varphi=\{z\in\C\mid  \tfrac{|\varphi|}{\varepsilon_0}<|z-1|<\varepsilon_0\}.\] 
Furthermore, consider the change of parameter $\varphi= e^\omega$ with $\Re(\omega)<2\log\varepsilon_0$ and let $\beta_\omega$ denote the spiral curve from $z= 1-\varepsilon$ to $z = 1-\tfrac{\varphi}{\varepsilon}$ defined by
\[\beta_\omega \colon [0,1] \rightarrow \mathcal A_\varphi, \quad \beta_\omega(s) =1 -\varepsilon^{1-2s}e^{s \omega}.\]
Note that
\[\psi_{\varphi}\circ\beta_\omega(s)=\beta_{\omega}(1-s),\]
and for real $\omega$, the path $\beta_{\omega}$ is homotopic to
$\Gamma_{1,\varphi}$.
Also $\beta_{\omega+2\pi\ii}$ is homotopic to
$\gamma\cdot \beta_{\omega}$ where
\[\gamma  \colon [0,1] \rightarrow \mathcal A_\varphi; \; \gamma(s)=1- \varepsilon e^{2 \pi \ii s}
\]
parametrizes the circle of radius $\varepsilon$ around $z=1$.

 \begin{theorem}[Theorem 5 in \cite{nodes}]
\label{theorem:philogphi}
With the notations introduced above
consider a family of DPW potentials $\eta_\varphi$ in $\mathcal A_{\varphi}$
depending holomorphically on $\varphi\in D(0,\varepsilon_0^2)$.
Let $\wh \eta_{\varphi} := \psi_\varphi^* \eta_{\varphi}$. Assume that there exists $\sl(2, \C)$-valued 1-forms $\eta_0$ and $\wh \eta_0$ holomorphic in the disk $D(1,\varepsilon_0)$ such that 
$$\lim_{\varphi \rightarrow 0} \eta_{\varphi} = \eta_{0} \quad \text{and} \quad  \lim_{\varphi \rightarrow 0} \wh \eta_{\varphi} = \wh \eta_0$$
on compact subsets of the punctured disk $D^*(1,\varepsilon_0).$
Then we have for $|\varphi|$ small enough:
\begin{enumerate}
\item The function $\wt F$ defined by
$$\wt F(\omega)=\Pi(\eta_{t,\varphi},\gamma)^{ - \frac{\omega}{2\pi i} }\Pi(\eta_{t,\varphi},\beta_\omega )$$
satisfies $\wt F(\omega + 2\pi\ii) = \wt F(\omega)$ and therefore descends to a holomorphic function $F$ on $D(0,\varepsilon_0^2)$ by  $F(e^\omega):=\wt F(\omega)$.
\item The function $F$ extends holomorphically  to  $\varphi= 0$ with 
$$F(0) = \Pi(\eta_0 ,1-\varepsilon,1)\Pi(\wh \eta_0 ,1,1-\varepsilon),$$
where $\Pi(\eta_0, 1-\varepsilon, 1 )$ denotes the principal solution of $\eta_0$ along 
the straight line from $1-\varepsilon$ to $1$.
\item Consequently, for $\varphi> 0$, the function $\Pi(\eta_\varphi,\beta_\omega)$ extends to an analytic function of $\varphi$ and $\varphi\log\varphi$ with value $F(0)$ at $\varphi = 0$. 
\end{enumerate}
\end{theorem}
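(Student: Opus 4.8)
The plan is to dissect the spiral $\beta_\omega$ at its ``waist'' and use the involution $\psi_\varphi$ to trade its inner half against its outer half, reducing everything to principal solutions of the regular limit potentials. First I would prove the periodicity in part (1): for $|\varphi|$ small the circle $\gamma$ lies in $\mathcal A_\varphi$, where $d+\eta_\varphi$ is flat (a holomorphic $(1,0)$-form), so principal solutions are homotopy invariant rel endpoints; since $\beta_{\omega+2\pi\ii}\simeq\gamma\cdot\beta_\omega$, concatenation gives $\Pi(\eta_\varphi,\beta_{\omega+2\pi\ii})=\Pi(\eta_\varphi,\gamma)\Pi(\eta_\varphi,\beta_\omega)$, whence $\wt F(\omega+2\pi\ii)=\Pi(\eta_\varphi,\gamma)^{-\frac{\omega}{2\pi\ii}-1}\Pi(\eta_\varphi,\gamma)\Pi(\eta_\varphi,\beta_\omega)=\wt F(\omega)$. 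The branch of $\log\Pi(\eta_\varphi,\gamma)$ needed for the power is the principal one, available because $\Pi(\eta_\varphi,\gamma)\to\Pi(\eta_0,\gamma)=\Id$ (the last equality since $d+\eta_0$ is flat on the simply connected disk $D(1,\varepsilon_0)$). Hence $\wt F$ is $2\pi\ii$-periodic and holomorphic in $\omega$, so it descends to a holomorphic $F$ on $D^*(0,\varepsilon_0^2)$.

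Next I would cut $\beta_\omega=\beta_\omega|_{[0,1/2]}\cdot\beta_\omega|_{[1/2,1]}$ at the waist point $\beta_\omega(\tfrac12)=1-e^{\omega/2}$, which lies at distance $|\varphi|^{1/2}$ from $1$. Using $\psi_\varphi\circ\beta_\omega(s)=\beta_\omega(1-s)$ and the change-of-variable identity $\Pi(\eta,\zeta)=\Pi(\psi_\varphi^*\eta,\psi_\varphi^{-1}\!\circ\zeta)$ (recall $\psi_\varphi$ is an involution), the second half equals $\Pi(\wh\eta_\varphi,\beta_\omega|_{[0,1/2]})$ traversed backwards, so
\[
\Pi(\eta_\varphi,\beta_\omega)=\Pi(\eta_\varphi,\beta_\omega|_{[0,1/2]})\cdot\Pi(\wh\eta_\varphi,\overline{\beta_\omega|_{[0,1/2]}}),
\]
both half-spirals lying in the thin annulus $\{|\varphi|^{1/2}\le|z-1|\le\varepsilon\}$.

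I would then take $\varphi\to0$ with $\omega$ restricted to the fundamental strip $|\Im\omega|\le\pi$, on which $\beta_\omega|_{[0,1/2]}$ winds at most a quarter turn and, after the reparametrisation $\sigma=-s\omega$, converges locally uniformly to the radial segment $\sigma\mapsto 1-\varepsilon e^{-\sigma}$ from $1-\varepsilon$ to $1$. Splitting this half-spiral at a fixed small radius $\delta$: on the outer piece $\eta_\varphi\to\eta_0$ uniformly, so its principal solution tends to $\Pi(\eta_0,1-\varepsilon,1-\delta)$; on the inner piece $\{|\varphi|^{1/2}\le|z-1|\le\delta\}$ one bounds $\|\Pi-\Id\|\le\exp\!\big(\int\|\eta_\varphi\|\big)-1$ and controls $\int\|\eta_\varphi\|$ via the Laurent expansion of $\eta_\varphi$ at $z=1$: its residue $c_{-1}(\varphi)$ tends to $0$ (it converges to $\Res_{z=1}\eta_0=0$) and, being holomorphic in $\varphi$, is $O(\varphi)$, hence contributes $O(|\log\varphi|\,\varphi)=o_\varphi(1)$; the holomorphic part contributes $O(\delta)$; and the deeper coefficients $c_{-m-2}(\varphi)$, which are $O(\varphi^{m+1})$ precisely because $\wh\eta_0$ too is holomorphic at $z=1$, contribute $O(\varphi^{(m+1)/2})$. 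So the inner factor is $\Id+O(\delta)+o_\varphi(1)$; letting $\varphi\to0$ and then $\delta\to0$, and using holomorphy of $\eta_0$ on $D(1,\varepsilon_0)$, we get $\Pi(\eta_\varphi,\beta_\omega|_{[0,1/2]})\to\Pi(\eta_0,1-\varepsilon,1)$ uniformly on the strip. The mirror argument gives $\Pi(\wh\eta_\varphi,\overline{\beta_\omega|_{[0,1/2]}})\to\Pi(\wh\eta_0,1,1-\varepsilon)$, and $\Pi(\eta_\varphi,\gamma)^{-\frac{\omega}{2\pi\ii}}=\exp\!\big(-\tfrac{\omega}{2\pi\ii}\log\Pi(\eta_\varphi,\gamma)\big)\to\Id$ since the exponent is $O(\varphi\log\varphi)$. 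Hence $\wt F(\omega)\to\Pi(\eta_0,1-\varepsilon,1)\Pi(\wh\eta_0,1,1-\varepsilon)$ on the strip, so by Riemann's removable singularity theorem $F$ extends holomorphically to $\varphi=0$ with this value; this is parts (1) and (2).

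Finally, part (3) is formal: $\Pi(\eta_\varphi,\beta_\omega)=\Pi(\eta_\varphi,\gamma)^{\frac{\log\varphi}{2\pi\ii}}F(\varphi)$, and writing $\log\Pi(\eta_\varphi,\gamma)=\varphi\,d(\varphi)$ with $d$ holomorphic, the prefactor equals $\exp\!\big(\tfrac{1}{2\pi\ii}(\varphi\log\varphi)\,d(\varphi)\big)$, an analytic function of $\varphi$ and $\varphi\log\varphi$ equal to $\Id$ at $\varphi=0$; multiplied by the analytic-in-$\varphi$ function $F$, it is an analytic function of $\varphi$ and $\varphi\log\varphi$ with value $F(0)$ at $\varphi=0$. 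The step I expect to be the real obstacle is the inner-spiral estimate above: one must make the degeneration of $\eta_\varphi$ near $z=1$ quantitative enough that the singular part of its Laurent tail, integrated over a path of length $\sim|\log\varphi|$, still tends to zero, and uniformly over the fundamental strip in $\omega$; everything else is bookkeeping with principal solutions and the removable singularity theorem.
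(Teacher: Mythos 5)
A preliminary remark on the comparison itself: the paper does not prove this statement — it is Theorem 5 of \cite{nodes}, restated with adjusted notation and used as a black box — so there is no in-paper proof to measure your argument against. Judged on its own terms, your proof is correct, and it very plausibly reconstructs the argument behind the citation: the shape of the statement (periodicity and descent of $\wt F$, removability of the singularity at $\varphi=0$ with value $\Pi(\eta_0,1-\varepsilon,1)\,\Pi(\wh\eta_0,1,1-\varepsilon)$, and the $\varphi\log\varphi$ consequence via $\log\Pi(\eta_\varphi,\gamma)=\varphi\,d(\varphi)$) is exactly what your splitting of $\beta_\omega$ at the waist $s=\tfrac12$ combined with the involution $\psi_\varphi$ produces, and the hypothesis that \emph{both} $\eta_\varphi$ and $\wh\eta_\varphi$ have holomorphic limits is precisely what your two half-spiral limits consume. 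The genuinely delicate step — the inner estimate — is handled by the right mechanism: the Laurent identity $\wh c_m(\varphi)=-c_{-m-2}(\varphi)\varphi^{-m-1}$ forces $c_{-1}(\varphi)=O(\varphi)$ (so the residue term beats the $O(|\log\varphi|)$ total mass of $d\log(z-1)$ along the spiral) and $c_{-m-2}(\varphi)=O(\varphi^{m+1})$ for the deeper tail, which is what makes the inner factor $\Id+O(\delta)+o_\varphi(1)$ uniformly on the strip $|\Im\omega|\le\pi$.

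Three places to tighten, none of which is a gap in the idea. (i) Summing the deep tail over $m$ needs uniformity: Cauchy estimates on a fixed circle $|z-1|=r$, legitimate because $\wh\eta_\varphi\to\wh\eta_0$ uniformly there, give $|c_{-m-2}(\varphi)|\le C\,r^{-m}|\varphi|^{m+1}$, and since $|z-1|\ge|\varphi|^{1/2}$ on the inner piece the series sums to $O(|\varphi|^{1/2})$. (ii) In the outer-piece limit, the half-spiral exits the radius-$\delta$ circle at argument $\pi+s_\delta\,\Im\omega$ rather than exactly at $1-\delta$ (here $s_\delta$ is the parameter where $|\beta_\omega(s)-1|=\delta$); close this with a short arc of $|z-1|=\delta$ of angular extent $O(s_\delta)\to0$, whose principal solution tends to $\Id$, before invoking uniform convergence of $\eta_\varphi$ on the compact annulus $\delta\le|z-1|\le\varepsilon$. (iii) The matrix power $\Pi(\eta_\varphi,\gamma)^{-\frac{\omega}{2\pi\ii}}$ and the factorization $\log\Pi(\eta_\varphi,\gamma)=\varphi\,d(\varphi)$ require $\Pi(\eta_\varphi,\gamma)$ to stay in a neighborhood of $\Id$ where the principal logarithm is defined and depends holomorphically on $\varphi$; this follows, as you say, from $\Pi(\eta_0,\gamma)=\Id$ (holomorphy of $\eta_0$ on the full disk plus homotopy invariance), and it is exactly where the proviso ``for $|\varphi|$ small enough'' enters — together with one removable-singularity step to know $\Pi(\eta_\varphi,\gamma)$ is holomorphic at $\varphi=0$ before factoring out $\varphi$. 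With these points made explicit, your outline is complete and uniform over the fundamental strip, which is all the Riemann removable singularity argument needs.
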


Returning to our problem, we have $|1-p_1|\simeq |\varphi|$ for small $\varphi$ so
$\mathcal A_{\varphi}$ does contain neither $p_1$, nor the other singularities $p_2$, $p_3$ or $p_4$.
Hence $\eta_{t,\varphi}$ is holomorphic in $\mathcal A_{\varphi}$.
By Equation \eqref{eq:eta-phi0} and recalling that $x_2=0$ at $\varphi=0$,
we have
$$\lim_{\varphi\to 0}\eta_{t,\varphi}=0.$$
Since $\psi_{\varphi}$ is involutive, the pullback potential $\wh{\eta}_{t,\varphi}=\psi_{\varphi}^*\eta_{t,\varphi}$ has poles at
$\psi_{\varphi}(p_j)$ for $j = 1, ..., 4$ and
\begin{align*}
&\lim_{\varphi\to 0} \psi_{\varphi}(p_1(\varphi))=1-\ii,\quad
\lim_{\varphi\to 0} \psi_{\varphi}(p_4(\varphi))=1+\ii\\
&\lim_{\varphi \to 0} \psi_{\varphi}(p_2(\varphi))=\lim_{\varphi\to 0} \psi_{\varphi}(p_3(\varphi))=1.\end{align*}
Hence, using again that $x_2=0$ at $\varphi=0$ which implies $A_4=-A_1$ and $A_3=-A_2$ we obtain
$$\lim_{\varphi\to 0}\wh{\eta}_{t,\varphi}
=t A_1\left(\frac{dz}{z-1+\ii}-\frac{dz}{z-1-\ii}\right).$$
Consequently, the limit $\wh{\eta}_{t,0}$
is holomorphic on $D(1,\varepsilon_0)$.
By Point (3) of Theorem \ref{theorem:philogphi}, $\Pi(\eta_{t,\varphi},\Gamma_{1,\varphi})$ extends to an analytic function of $\varphi$ and $\varphi\log\varphi$, with value
\begin{equation}
\label{eq:principal-Gamma1}
\Pi(\eta_{t,\varphi},\Gamma_{1,\varphi})\mid_{\varphi=0}=
\exp\left(t A_1\int_{1}^{1-\varepsilon}\wh{\eta}_{t,0}\right)
=\exp\left(t A_1\log\left(\frac{\varepsilon-\ii}{\varepsilon+\ii}\right)-\pi\ii t A_1\right)
\end{equation}
at $\varphi=0$, 
where we again choose the principal value of the logarithm $\C\setminus\R^+$.
\subsection{Conclusion}
We have proved that $\mathcal P$ extends as an analytic function of $\varphi$
and $\varphi\log\varphi$ at $\varphi=0$.
Moreover, collecting the results of \eqref{eq:principal-Gamma0},
\eqref{eq:principal-Gamma2} and \eqref{eq:principal-Gamma1}, we have at 
$\varphi=0$
\begin{equation}
\label{eq:P-varphi0}
\mathcal P\mid_{\varphi=0}=\exp(\pi \ii t A_1).
\end{equation}
Hence the functions $\mathcal F_1$ and $\mathcal H_1$ extend as analytic functions of 
$t$, $\varphi$, $\phi=\varphi\log\varphi$ and the remaining parameters
$y_1$, $y_2$ and $\theta$. 
We solve the equations
$\mathcal F_1=0$, $\mathcal H_1=0$ and $\mathcal K$ constant using the implicit function theorem to determine $(y_1,y_3,\theta)$ as analytic functions of
$(t,\varphi,\phi)$ in a neighborhood of $(0,0,0)$ and then specialize to $\phi=\varphi\log\varphi$.
This proves that the solution is an analytic function of $t$, $\varphi$
and $\varphi\log\varphi$ and yields a solution of the Monodromy Problem \eqref{monodromy-problem3} for every $\varphi $ in the compact interval $[0, \tfrac{\pi}{4}]$ and $t$ in a uniform interval $(-T,T)$.
Moreover, by Equation \eqref{eq:P-varphi0},
the solution at $\varphi=0$ is
$x(t,0)=\cv{x}(0)$ and $\theta(t)=\frac{\pi}{2}$ for all $t$, because $\cv{A}_1$ is hermitian on the unit circle and diagonal at $\lambda=\pm \ii$.
\subsection{Convergence to a doubly covered sphere}
With the notations of Section \ref{section:principal-Gamma2}, let
$\wh{\Phi}_{t,\varphi}=\psi_{\varphi}^*\wt{\Phi}_{t,\varphi}$.
At $\varphi=0$, the unitarizer satisfies $U_{t,0}=\Id$ for all $t$ by Theorem \ref{unitarizer}.

Since
$\psi_{\varphi}(1)=1$, we have by Equation \eqref{eq:P-varphi0}
$$\wh{\Phi}_{t,0}(1)=\exp\left(\pi \ii t \cv{A}_1\right).$$
Consider the change of variable
$$v=\frac{w-1-\tfrac{\varepsilon}{2}\ii}{w-1+\tfrac{\varepsilon}{2}\ii}\,.$$
By Equation \eqref{eq:limit-eta},
$$\wh{\eta}_{t,0}(w)=t\cv{A}_1\frac{dw}{w}.$$
Hence
$$\wh{\Phi}_{t,0}(w)=\exp\left(t\cv{A}_1\log(w)\right).$$
At $\varphi=0$, we have $\mathcal K(t,0)=1$ so $t=\frac{1}{2(g+1)}$.
Let $\pi:\C P^1\to\C P^1$ be the branched covering defined by $\pi(w)=w^{g+1}$. Then
$\pi^*\wh{\Phi}_{t,0}$ is precisely given by Lemma \ref{lemma:Phi-infty}.
(Note that here, $g$ is fixed and $\varphi\to 0$, whereas in Lemma \ref{lemma:Phi-infty}, $\varphi$ was fixed and $g\to\infty$.)
We have seen in Lemma \ref{lemma:Phi-infty} that the corresponding immersion
parametrizes the great sphere $\{x_4=0\}\subset S^3$ (since here $\varphi=0$).
Unfolding the various changes of variables, this means that a certain small neighborhood of
$z=1$ converges to this great sphere.
Using the $\delta$ symmetry, the symmetric neighborhood of $z=-1$ converges to the same great sphere. This concludes the proof of Proposition \ref{prop:uniformT}.

\section{Expansion of \texorpdfstring{$\mathfrak p$}{\textbackslash{}frak p} and \texorpdfstring{$\mathfrak q$}{\textbackslash{}frak q} in term of iterated integrals}\label{iterated}
\subsection{Iterated integrals}
Let $\alpha_1,\cdots,\alpha_n$ be closed 1-forms on a Riemann surface $\Sigma$ and
$\gamma:[0,1]\to \Sigma$ be a piecewise smooth path.
The iterated integral of $\alpha_1,\cdots,\alpha_n$ along $\gamma$ is defined by (\cite{chen})
$$\int_{\gamma}\alpha_1\cdots\alpha_n:=\int_{0<t_1<\cdots<t_n<1}\alpha_1(\gamma_1(t_1))(\gamma'_1(t_1))dt_1\cdots\alpha_n(\gamma_n(t_n))(\gamma'_n(t_n))dt_n.$$
If $\mathcal U\subset\C$ is a star-shaped domain with respect to the origin, we define $\int_0^z \alpha_1\cdots\alpha_n$ as the iterated
integral along the straight segment from $0$ to $z$.
Using the change of variable $t_i=s_i t_n$ for $1\leq i\leq n-1$, it satisfies the recursive property
$$\int_0^z \alpha_1\cdots\alpha_n=\int_{w=0}^z\left(\int_0^w\alpha_1\cdots\alpha_{n-1}\right)\alpha_n(w)$$
which may also be used as its definition.
In our case, $\mathcal U$ is the complex plane minus the radial rays from $p_i$ to $\infty$ as defined in Section \ref{section:half-trace-coord} and we define
for $z\in \mathcal U$
$$\Omega_{i_1,\cdots,i_n}(z)=\int_0^z \omega_{i_1}\cdots\omega_{i_n}.$$
We call $n$ the depth of the $\Omega$-value $\Omega_{i_1,\cdots,i_n}$.
\subsection{A pattern for the expansion of $\mathfrak p$ and $\mathfrak q$} Let $\Phi_{t, x}$ be the extended frame with $\Phi_{t,x}(z=0) = \Id$ and $\Phi_{t,x}^{i,j}$ denote its entries.
Define $X_{t,x}=(X_{t,x}^1,X_{t,x}^2,X_{t,x}^3):\mathcal U\to\mathcal W^3$ by
$$X_{t,x}^1=\Phi_{t,x}^{11}\Phi_{t,x}^{21}-\Phi_{t,x}^{12}\Phi_{t,x}^{22}$$
$$X_{t,x}^2=\ii(\Phi_{t,x}^{11}\Phi_{t,x}^{21}+\Phi_{t,x}^{12}\Phi_{t,x}^{22})$$
$$X_{t,x}^3=\Phi_{t,x}^{11}\Phi_{t,x}^{22}+\Phi_{t,x}^{12}\Phi_{t,x}^{21}.$$
We are interested in
$$\mathfrak p(t,x)=X_{t,x}^1(1)\quad\text{ and }\quad \mathfrak q(t,x)=X_{t,x}^2(\ii).$$
It is quite remarkable that $X_{t,x}$ satisfies a first order ODE system.
Indeed,  a direct computation using
$$d\Phi_{t,x}=\Phi_{t,x}\,t\matrix{ x_1\omega_1& x_2\omega_2+ \ii x_3\omega_3\\x_2\omega_2-\ii x_3\omega_3&-x_1\omega_1}$$
gives
\begin{equation}
\label{inducedODEspin}
dX_{t,x}=X_{t,x}\,t\alpha_x
\end{equation}
with
$$\alpha_x=2 \ii \left(\begin{array}{ccc}0 &x_1 \omega_1&x_3\omega_3\\
-x_1 \omega_1 & 0 &- x_2 \omega_2\\
- x_3 \omega_3 & x_2 \omega_2& 0\end{array}\right).$$
Moreover, at $z=0$, we have $\Phi_{t,x}(0)=I_3$ so $X_{t,x}(0)=(0,0,1)$.
\begin{remark}
The ODE \eqref{inducedODEspin} is in fact induced by the spin covering $\mathrm{SL}(2,\C)\to\mathrm{SO}(3,\C).$ To be more explicit, 
consider the complex inner product
\[\langle A,B \rangle:=-\frac{1}{2}\tr(AB)\]
on the complex 3-dimensional vector space $\mathfrak{sl}(2,\C),$ and its orthonormal basis
\[m_1=\begin{pmatrix}0&\ii\\\ii&0\end{pmatrix},\quad 
m_2=\begin{pmatrix}0&-1\\1&0\end{pmatrix},\quad 
m_3=\begin{pmatrix}-\ii&0\\0&\ii\end{pmatrix}.\]
The spin covering is given by the adjoint representation  
\[\begin{array}{ll}\text{Ad}\colon &\mathrm{SL}(2,\C)\longrightarrow \mathrm{SO}(3,\C)\\
&g\longmapsto (A\mapsto gAg^{-1}).\end{array}\]
With respect to the basis $(m_1,m_2,m_3)$,
\[\Phi=\begin{pmatrix}\Phi_{11}&\Phi_{12} \\\Phi_{21}&\Phi_{22}\end{pmatrix}\]
is mapped to
\[\text{Ad}(\Phi)=\begin{pmatrix}
\tfrac{1}{2}(\Phi_{11}^2-\Phi_{12}^2-\Phi_{21}^2+\Phi_{22}^2)
&\tfrac{\ii}{2}(\Phi_{11}^2+\Phi_{12}^2-\Phi_{21}^2-\Phi_{22}^2)
&\Phi_{11}\Phi_{12}-\Phi_{21}\Phi_{22}\\
-\tfrac{\ii}{2}(\Phi_{11}^2-\Phi_{12}^2+\Phi_{21}^2-\Phi_{22}^2)
&\tfrac{1}{2}(\Phi_{11}^2+\Phi_{12}^2+\Phi_{21}^2+\Phi_{22}^2)
&-\ii(\Phi_{11}\Phi_{12}+\Phi_{21}\Phi_{22})\\
\Phi_{11}\Phi_{21}-\Phi_{12}\Phi_{22}
&\ii(\Phi_{11}\Phi_{21}+\Phi_{12}\Phi_{22})
&\Phi_{11}\Phi_{22}+\Phi_{12}\Phi_{21} \\
\end{pmatrix},\]
and the last row of $\text{Ad}\Phi$ is in fact $X_{t,x}$.
If $\Phi$ solve the differential equation $d\Phi=\Phi\eta,$ then
\[d\text{Ad}(\Phi)=\text{Ad}(\Phi) [\eta,\cdot],\] which gives \eqref{inducedODEspin} with respect to the basis $(m_1,m_2,m_3)$.
\end{remark}

We define
\[M_1=\left(\begin{array}{ccc} 0 & 2\ii & 0\\-2\ii & 0 &0\\0&0&0\end{array}\right),\quad
M_2=\left(\begin{array}{ccc} 0&0&0\\0&0&-2\ii\\0&2\ii&0\end{array}\right)\quad\text{ and }\quad
M_3=\left(\begin{array}{ccc} 0&0&2\ii\\0&0&0\\-2\ii&0&0\end{array}\right)\]
such that
$$\alpha_x=\sum_{i=1}^3 x_i M_i\omega_i.$$
\begin{proposition}
\label{prop:expandpq}
The analytic functions $\mathfrak p$ and $\mathfrak q$ can be expanded as
$$\mathfrak p(t,x)=\sum_{n=1}^{\infty} \sum_{i_1,\cdots,i_n} t^n x_{i_1}\cdots x_{i_n}(M_{i_1}\cdots M_{i_n})_{31}\Omega_{i_1,\cdots,i_n}(1)$$
$$\mathfrak q(t,x)=\sum_{n=1}^{\infty} \sum_{i_1,\cdots,i_n} t^n x_{i_1}\cdots x_{i_n}(M_{i_1}\cdots M_{i_n})_{32}\Omega_{i_1,\cdots,i_n}(i)$$
where $(M_{i_1}\cdots M_{i_n})_{ij}$ means the $(ij)$-the entry of the matrix $(M_{i_1}\cdots M_{i_n})$ and the summation is over all $(i_1,\cdots,i_n)\in\{1,2,3\}^n$.
\end{proposition}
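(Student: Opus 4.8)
\emph{Proof proposal.} The plan is to solve the linear ODE system \eqref{inducedODEspin} by Picard iteration, which produces exactly the claimed Chen-type iterated-integral series. First I would record that, since $\mathcal U$ is star-shaped with respect to the origin and the $\omega_i$ are holomorphic on $\mathcal U$, the Cauchy problem $dX_{t,x}=X_{t,x}\,t\alpha_x$ with $X_{t,x}(0)=(0,0,1)$ has a unique holomorphic (row-vector valued) solution on $\mathcal U$, and since $1,\ii\in\mathcal U$ the evaluations $\mathfrak p(t,x)=X_{t,x}^1(1)$ and $\mathfrak q(t,x)=X_{t,x}^2(\ii)$ are precisely the quantities to be expanded. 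Write $\Theta:=t\alpha_x=t\sum_{i=1}^3 x_i M_i\,\omega_i$, where the $x_i\in\mathcal W$ are scalars and the $M_i$ are the constant matrices displayed above, so that the integral form of the ODE reads $X_{t,x}(z)=(0,0,1)+\int_0^z X_{t,x}(w)\,\Theta(w)$.

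Iterating this identity gives the Chen series
\[
X_{t,x}(z)=(0,0,1)\sum_{n=0}^\infty\int_0^z\underbrace{\Theta\cdots\Theta}_{n}\,,
\]
where $\int_0^z\Theta\cdots\Theta$ denotes the iterated integral of Section \ref{iterated}. Convergence (in $\mathcal W$, locally uniformly in $z\in\mathcal U$, for $t$ near $0$) follows from the standard estimate $\bigl|\int_0^z\omega_{i_1}\cdots\omega_{i_n}\bigr|\le (CL)^n/n!$, with $L$ the length of the segment $[0,z]$ and $C=\sup_{[0,z]}\max_i|\omega_i|$, together with $\|x_i\|<\infty$ and $\|M_{i_1}\cdots M_{i_n}\|\le\|M\|^n$; this absolute convergence also legitimises the term-by-term rearrangement. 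Since neither the $x_i$ nor the $M_i$ depend on $z$, multilinearity of the iterated integral and its recursive property yield, for each $n\ge1$,
\[
\int_0^z\underbrace{\Theta\cdots\Theta}_{n}=t^n\sum_{(i_1,\dots,i_n)\in\{1,2,3\}^n} x_{i_1}\cdots x_{i_n}\,M_{i_1}\cdots M_{i_n}\,\Omega_{i_1,\dots,i_n}(z)\,,
\]
the scalar factors $\Omega_{i_1,\dots,i_k}(w)$ commuting past the matrices at each step of the induction.

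Finally, left-multiplying by the row vector $(0,0,1)$ extracts the third row of each matrix product, so
\[
X_{t,x}^j(z)=\delta_{j3}+\sum_{n=1}^\infty t^n\sum_{(i_1,\dots,i_n)} x_{i_1}\cdots x_{i_n}\,(M_{i_1}\cdots M_{i_n})_{3j}\,\Omega_{i_1,\dots,i_n}(z)\,,
\]
and specialising $(j,z)=(1,1)$ gives the stated formula for $\mathfrak p$ while $(j,z)=(2,\ii)$ gives the one for $\mathfrak q$ (the $n=0$ term dropping out in both cases, since $\delta_{13}=\delta_{23}=0$). There is no serious obstacle here: the only point requiring a little care is the interchange of summation and integration in the multilinear expansion, which is secured by the convergence estimate above; the remainder is the textbook Picard/Chen argument applied to \eqref{inducedODEspin}.
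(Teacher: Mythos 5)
Your proof is correct and follows essentially the same route as the paper: both expand the solution of the ODE system \eqref{inducedODEspin} with initial value $(0,0,1)$ as a power series in $t$ whose coefficients are the iterated integrals $\Omega_{i_1,\cdots,i_n}$, and then read off $\mathfrak p$ and $\mathfrak q$ from the third row of the matrix products evaluated at $z=1$ and $z=\ii$. The only cosmetic difference is that the paper obtains the coefficients by computing $\partial_t^n$ at $t=0$ inductively via the Leibniz rule and the recursive definition of iterated integrals, whereas you generate the same Chen series by Picard iteration and justify convergence with the factorial bound --- an equivalent mechanism.
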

\begin{proof}
Let $Y_{t,x}:\mathcal U\to\Lambda SL(2,\C)$ be the solution of the Cauchy Problem
$$\begin{cases}
d_z Y_{t,x}= Y_{t,x}\, t\alpha_x\\
Y_{t,x}(z=0)=I_3.\end{cases}$$
By Leibniz formula we have for $n\geq 1$,
$$  d\frac{\partial^n Y_{t,x}}{\partial t^n}\mid_{t=0}=
\frac{\partial^n (t Y_{t,x} \alpha_x)}{\partial t^n}\mid_{t=0}=
n\frac{\partial^{n-1} Y_{t,x}}{\partial t^{n-1}}\mid_{t=0}\alpha_x.$$
Hence integrating from $0$ to $z$ yields
$$\frac{\partial^n Y_{t,x}(z)}{\partial t^n}\mid_{t=0}=
n \int_0^z \frac{\partial^{n-1} Y_{t,x}}{\partial t^{n-1}}\mid_{t=0}\alpha_x.$$
Using the recursive definition of iterated integrals we obtain through induction
\begin{eqnarray*}
\frac{\partial^n Y_{t,x}(z)}{\partial t^n}\mid_{t=0}&=&n! \int_0^z (\alpha_x)^n\\
&=&n!\int_0^z\sum_{i_1,\cdots,i_n} x_{i_1}\cdots x_{i_n}M_{i_1}\cdots M_{i_n}\omega_{i_1}\cdots \omega_{i_n}\\
&=&n!\sum_{i_1,\cdots,i_n} x_{i_1}\cdots x_{i_n}M_{i_1}\cdots M_{i_n}\Omega_{i_1,\cdots,i_n}(z).
\end{eqnarray*}
The proposition then follows from $\mathfrak p(t,x)=Y_{t,x}^{31}(1)$ and $\mathfrak q(t,x)=Y_{t,x}^{32}(\ii)$.
\end{proof}
The following proposition characterizes the $\Omega$-values which appear in
the expansions of $\mathfrak p$ and $\mathfrak q$ in Proposition \ref{prop:expandpq}.
\begin{proposition}
\label{prop:pattern}
Consider the following graph:
$$\xymatrix{
&e_2\ar@{-}[ld]_2\ar@{-}[rd]^1\\
e_3 \ar@{-}[rr]^3&&e_1}$$
Then
\begin{enumerate}
\item $(M_{i_1}\cdots M_{i_n})_{31}\neq 0$ if and only if $(i_1,\cdots,i_n)$ labels the edges of a path from $e_3$ to
$e_1$.
\item $(M_{i_1}\cdots M_{i_n})_{32}\neq 0$ if and only if $(i_1,\cdots,i_n)$ labels the edges of a path from $e_3$ to $e_2$.
\item All $\Omega$-values which appear in the expansion of $\mathfrak p$ and $\mathfrak q$ are purely imaginary complex numbers.
\end{enumerate}
\end{proposition}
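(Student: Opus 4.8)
The plan is to prove (1) and (2) by directly unwinding the matrix products, and then to deduce (3) from a parity count combined with the explicit shape of the forms $\omega_1,\omega_2,\omega_3$ on the segments $[0,1]$ and $[0,\ii]$.

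First, for (1) and (2), I would note that each $M_k$ is supported on the single edge of the triangle labelled $k$: its only nonzero entries are $(M_k)_{uv}$ and $(M_k)_{vu}$, equal to $\pm 2\ii$, where $\{e_u,e_v\}$ is that edge (so $M_1$ is supported on $\{e_1,e_2\}$, $M_2$ on $\{e_2,e_3\}$ and $M_3$ on $\{e_1,e_3\}$). Expanding
\[(M_{i_1}\cdots M_{i_n})_{ab}=\sum_{v_1,\dots,v_{n-1}}(M_{i_1})_{a v_1}(M_{i_2})_{v_1 v_2}\cdots(M_{i_n})_{v_{n-1} b},\]
a summand is nonzero exactly when $e_a,e_{v_1},\dots,e_{v_{n-1}},e_b$ is a walk in the triangle whose $j$-th edge carries the label $i_j$. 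The point is that there is no cancellation: since the three edges carry distinct labels and each vertex lies on exactly two edges, from any vertex there is at most one edge with a prescribed label, so fixing $e_a$ and the label sequence $(i_1,\dots,i_n)$ forces all intermediate vertices, and the sum has at most one term. Hence $(M_{i_1}\cdots M_{i_n})_{ab}\neq 0$ if and only if $(i_1,\dots,i_n)$ labels a walk from $e_a$ to $e_b$; specialising to $(a,b)=(3,1)$ gives (1) and to $(a,b)=(3,2)$ gives (2).

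For (3), I would first record the elementary parity fact: in any walk in the triangle, the number of traversed edges incident to a fixed vertex $e$ has the same parity as the number of endpoints of the walk equal to $e$. Indeed, traversing an edge incident to $e$ toggles the status ``currently at $e$'', while traversing the opposite edge does not, so the parity of the count of $e$-incident edges equals the parity of the change of this status between the two endpoints of the walk. Applied with $e=e_1$, whose incident edges are labelled $1$ and $3$, this shows a walk from $e_3$ to $e_1$ uses an odd number of labels from $\{1,3\}$; applied with $e=e_2$, whose incident edges are labelled $1$ and $2$, it shows a walk from $e_3$ to $e_2$ uses an odd number of labels from $\{1,2\}$.

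Finally I would combine this with the reality structure of the $\omega_j$. On $[0,1]$ the denominator $z^4-2\cos(2\varphi)z^2+1$ is real and strictly positive, and inspection of the numerators shows that $\omega_2$ restricts to a real $1$-form there while $\omega_1$ and $\omega_3$ restrict to $\ii$ times a real $1$-form; hence $\Omega_{i_1,\dots,i_n}(1)=\ii^m\cdot(\text{real})$, where $m$ is the number of indices $i_j\in\{1,3\}$, which by (1) and the parity fact is odd for every multi-index occurring in the expansion of $\mathfrak p$, so $\Omega_{i_1,\dots,i_n}(1)\in\ii\R$. Parametrising $[0,\ii]$ by $z=\ii s$ one finds instead that $\omega_1$ and $\omega_2$ restrict to $\ii$ times a real form in $s$ while $\omega_3$ restricts to a real form, so $\Omega_{i_1,\dots,i_n}(\ii)=\ii^{m'}\cdot(\text{real})$ with $m'$ the number of $i_j\in\{1,2\}$, which by (2) is odd for every multi-index occurring in $\mathfrak q$. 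The computations themselves are routine; the only place to be careful is that the reality pattern of $\omega_1,\omega_2,\omega_3$ is permuted between the two segments, so one must match the $\mathfrak p$-case with the pair $\{1,3\}$ and the $\mathfrak q$-case with the pair $\{1,2\}$.
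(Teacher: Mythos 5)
Your proposal is correct and follows essentially the same route as the paper: identifying each $M_k$ with the edge labelled $k$ (your no-cancellation expansion of the product is just an unrolled version of the paper's induction on the row vectors $e_3 M_{i_1}\cdots M_{i_n}$), and then combining the odd count of labels in $\{1,3\}$ (resp. $\{1,2\}$) with the reality pattern of $\omega_1,\omega_2,\omega_3$ on $[0,1]$ (resp. $[0,\ii]$). The only difference is that you explicitly justify the parity fact about walks in the triangle, which the paper simply asserts.
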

For example, the only $\Omega$-values of depth at most 4 appearing in the expansion of $\mathfrak p$ are (integrated from $z=0$ to $z=1$):
$$
\Omega_{3},\quad
\Omega_{2,1},\quad
\Omega_{2,2,3},\quad
\Omega_{3,1,1},\quad
\Omega_{3,3,3},\quad
\Omega_{2,1,1,1},\quad
\Omega_{2,2,2,1},\quad
\Omega_{2,1,3,3},\quad
\Omega_{3,1,2,3},\quad
\Omega_{3,3,2,1},$$
and for $\mathfrak q$ they are  (integrated from $z=0$ to $z=\ii$):
$$\Omega_{2},\quad
\Omega_{3,1},\quad
\Omega_{2,1,1},\quad
\Omega_{3,3,2},\quad
\Omega_{2,2,2},\quad
\Omega_{2,1,3,2},\quad
\Omega_{2,2,3,1},\quad
\Omega_{3,1,2,2},\quad
\Omega_{3,1,1,1},\quad
\Omega_{3,3,3,1}.$$
\begin{proof}
 Let $(e_1,e_2,e_3)$ be the canonical basis of $\C^3$ (seen as line vectors).
From
$$\begin{array}{lllll}
e_1 M_1=2\ii e_2 && e_1 M_2=0 &&e_1 M_3=2\ii e_3\\
e_2 M_1=-2\ii e_1&&e_2 M_2=-2\ii e_3&& e_2 M_3=0\\
e_3 M_1=0&&e_3 M_2=2\ii e_2 && e_3 M_3=-2\ii e_1
\end{array}$$
we see that $e_i M_j$ is non-zero 
if and only if there is an edge labelled $j$ adjacent to $e_i$, in which case $e_i M_j$ is $\pm 2\ii$ times its endpoint. So by induction
$e_3 M_{i_1}\cdots M_{i_n}$ is nonzero if and only if $(i_1,\cdots,i_n)$ labels a path 
starting at $e_3$ on the graph, in which case it is $\pm (2\ii)^n$ times its endpoint.
Points 1 and 2 follow.
\medskip

Regarding Point 3, we have on the segment $[0,1]$ that $\omega_1\in \ii\R$,
$\omega_2\in \R$ and $\omega_3\in \ii\R$. All paths from $e_3$ to $e_1$ must have an odd number of edges that are labelled with 1 or 3. Therefore, all corresponding $\Omega$-values at $z=1$ are purely imaginary.
In the same way, we have on the segment $[0,\ii]$ that $\omega_1\in \ii\R$, 
$\omega_2\in \ii\R$ and $\omega_3\in\R$. All paths from $e_3$ to $e_2$ have an odd number of edges that are labelled with 1 or 2. Therefore, all corresponding $\Omega$-values at $z=\ii$ must also be purely imaginary.
\end{proof}

\section{First order derivatives} \label{sec:1storder}
Let $(x(t),\theta(t))$ be the solution of the Monodromy Problem \eqref{monodromy-problem3}.
In this section, we compute the first order derivatives $x'$, $\theta'$ with respect to $t$ at $t=0$.
From this, we obtain first order estimates for the mean curvature and Willmore energy of $f_{g,\varphi}$.
\subsection{First order derivatives of the parameters}
\begin{proposition}
\label{prop:derivatives}
The first order derivatives of the parameters at $t=0$ are given by
\begin{align*}
x_1'&=\frac{1}{\pi}\sin(\varphi)\cos(\varphi)(\Omega_{2,1}(1)+\Omega_{3,1}(\ii))(\lambda^2+1)\\
x_2'&=-\frac{\ii}{\pi}\cos(\varphi)\Omega_{3,1}(\ii)(\lambda^2+1)+\frac{2\ii}{\pi}\sin^2(\varphi)\cos(\varphi)(\Omega_{2,1}(1)+\Omega_{3,1}(\ii))\\
x_3'&=-\frac{\ii}{\pi}\sin(\varphi)\Omega_{2,1}(1) (\lambda^2+1)+\frac{2 \ii}{\pi}\cos^2(\varphi)\sin(\varphi)(\Omega_{2,1}(1)+\Omega_{3,1}(\ii))\\
\theta'&=\frac{-2\ii}{\pi}\sin(\varphi)\cos(\varphi)(\Omega_{2,1}(1)+\Omega_{3,1}(\ii))\,.
\end{align*}
\end{proposition}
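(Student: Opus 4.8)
The plan is to differentiate the three conditions of the Monodromy Problem \eqref{monodromy-problem3} with respect to $t$ at $t=0$ and to solve the resulting linear system for $x'(0)$ and $\theta'(0)$. The essential input is Proposition \ref{prop:expandpq}, which gives
\[
\wh{\mathfrak p}(t,x)=2\pi x_3+t\,\mathfrak p_2(x)+O(t^2),\qquad
\wh{\mathfrak q}(t,x)=2\pi x_2+t\,\mathfrak q_2(x)+O(t^2),
\]
with $\mathfrak p_2(x)=\sum_{i_1,i_2}x_{i_1}x_{i_2}(M_{i_1}M_{i_2})_{31}\Omega_{i_1,i_2}(1)$ and analogously for $\mathfrak q_2$. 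By Proposition \ref{prop:pattern} the only depth-$2$ term in $\mathfrak p_2$ comes from the path $e_3\to e_2\to e_1$, hence from $\Omega_{2,1}(1)$, and the only one in $\mathfrak q_2$ from $e_3\to e_1\to e_2$, hence from $\Omega_{3,1}(\ii)$; evaluating at the central value (using $e_3M_2M_1=4e_1$ and $e_3M_3M_1=4e_2$) gives
\[
\mathfrak p_2(\cv x)=-\ii\sin(\varphi)\,\Omega_{2,1}(1)\,(\lambda^{-2}-\lambda^2),\qquad
\mathfrak q_2(\cv x)=-\ii\cos(\varphi)\,\Omega_{3,1}(\ii)\,(\lambda^{-2}-\lambda^2),
\]
both having \emph{real} scalar coefficient, since these $\Omega$-values are purely imaginary (Proposition \ref{prop:pattern}(3)). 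Writing $x_i(t)=\cv x_i+y_i(t)$ with $y_i(0)=0$ (Proposition \ref{prop:central}), we have $x_1'(0)=y_1'(0)\in\ii\mathcal W^{\geq0}_{\R}$ and $x_2'(0),x_3'(0)\in\mathcal W^{\geq0}_{\R}$.

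First I would differentiate $\mathcal F_1=\wh{\mathfrak p}-\wh{\mathfrak p}^{*}=0$ along the solution at $t=0$: by the chain rule this says that $2\pi x_3'(0)+\mathfrak p_2(\cv x)$ is $*$-invariant. As $x_3'(0)$ carries only non-negative powers of $\lambda$ while $\mathfrak p_2(\cv x)$ only $\lambda^{\pm2}$, comparing Laurent coefficients forces $x_3'(0)=a_0+a_2\lambda^2$ with $a_2=-\tfrac{\ii}{\pi}\sin(\varphi)\Omega_{2,1}(1)$, leaving $a_0\in\R$ free; the identical argument for $\mathcal F_2=0$ gives $x_2'(0)=c_0+c_2\lambda^2$ with $c_2=-\tfrac{\ii}{\pi}\cos(\varphi)\Omega_{3,1}(\ii)$. (That $x_i'(0)$ is even in $\lambda$ can also be read off from the parity $x(-t)(-\lambda)=-x(t)(\lambda)$ of Proposition \ref{prop:symmetries}.)

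Next I would exploit condition (ii), that $\mathcal K(x(t))=x_1^2+x_2^2+x_3^2$ be $\lambda$-constant: differentiating at $t=0$ makes $\cv x_1 x_1'(0)+\cv x_2 x_2'(0)+\cv x_3 x_3'(0)$ $\lambda$-constant, and reading off the coefficients of $\lambda^{k}$ for $k\neq 0$ (using that the Fourier coefficients of $x_1'(0)$ decay) forces $x_1'(0)=\ii b_0(1+\lambda^2)$ with
\[
b_0=\sin(\varphi)c_2+\cos(\varphi)a_2=-\tfrac{\ii}{\pi}\sin(\varphi)\cos(\varphi)\bigl(\Omega_{2,1}(1)+\Omega_{3,1}(\ii)\bigr),
\]
which is the asserted formula for $x_1'$, together with the single relation $b_0+\sin(\varphi)c_0+\cos(\varphi)a_0=0$ among the remaining unknowns $a_0,c_0$. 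Finally, differentiating $\mathcal H_1=0$ and $\mathcal H_2=0$ at $t=0$, and using $\cv\theta=\tfrac{\pi}{2}$, $\cv x_2(\ii)=\cv x_3(\ii)=0$ and that $\lambda^{-2}-\lambda^2$ vanishes at $\lambda=\ii$ (so $\mathfrak p_2(\cv x)$ and $\mathfrak q_2(\cv x)$ contribute nothing), yields $a_0-a_2+\cos(\varphi)\theta'(0)=0$ and $c_0-c_2+\sin(\varphi)\theta'(0)=0$. Substituting these into the relation just obtained and using $\sin(\varphi)c_2+\cos(\varphi)a_2=b_0$ collapses everything to $\theta'(0)=2b_0$, the stated formula for $\theta'$; back-substitution then returns $a_0$ and $c_0$, hence the formulas for $x_2'$ and $x_3'$.

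I expect the one genuinely delicate point to be the bookkeeping in the first paragraph: one must see that the first $t$-derivative of $\mathfrak p(t,x(t))$ picks up, besides $2\pi x_3'(0)$, exactly the central-value second-order coefficient $\mathfrak p_2(\cv x)$ of Proposition \ref{prop:expandpq}, and that it is precisely this $\lambda^{\pm2}$-term that pins down the $\lambda^2$-parts of $x_2'(0)$ and $x_3'(0)$ — without it the equations $\mathcal F_i=0$ would only say that $x_i'(0)$ is $*$-invariant and carry no content. Everything after that is the linear algebra above.
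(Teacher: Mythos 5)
Your proposal is correct and follows essentially the same route as the paper's proof: it differentiates $\mathcal F_i=0$, $\mathcal H_i=0$ and the $\lambda$-constancy of $\mathcal K$ at $t=0$, feeds in the depth-2 terms $4\Omega_{2,1}(1)\cv{x}_1\cv{x}_2$ and $4\Omega_{3,1}(\ii)\cv{x}_1\cv{x}_3$ from Propositions \ref{prop:expandpq}--\ref{prop:pattern}, and solves the same linear system, arriving at the same intermediate relations (your $a_0-a_2+\cos(\varphi)\theta'=0$, $c_0-c_2+\sin(\varphi)\theta'=0$ and $b_0+S_0=0$ are exactly \eqref{eq:deriv1}--\eqref{eq:deriv3}). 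The only cosmetic difference is that you extract $x_1'$ and the relation \eqref{eq:deriv3} by matching Fourier coefficients of $\mathcal K'$ and invoking decay, where the paper multiplies by $\lambda$ and divides by $\lambda^2-1$; both are valid and equivalent.
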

We will compute the required $\Omega$-values in Section \ref{section:depth2integrals}.
\begin{proof}

Define
$$\begin{cases}
\wt{\mathfrak p}(t)=\wh{\mathfrak p}(t,x(t))=\frac{1}{t}\mathfrak p(t,x(t))\\
\wt{\mathfrak q}(t)=\wh{\mathfrak q}(t,x(t))\\
\mathcal H_1(t)=\wh{\mathfrak p}(t,x(t))(\lambda=e^{\ii\theta(t)})\\
\mathcal H_2(t)=\wh{\mathfrak q}(t,x(t))(\lambda=e^{\ii\theta(t)})\\
\mathcal K(t)=x_1(t)^2+x_2(t)^2+x_3(t)^2.\end{cases}$$
By Proposition \ref{prop:pattern}, the first order expansion of $\widehat{\mathfrak p}$ and $\widehat{\mathfrak q}$ are
$$\widehat{\mathfrak p}(t,x)=2\pi x_3+ 4 t\, \Omega_{2, 1}(1) x_1 x_2 +O(t^2),\qquad
\widehat{\mathfrak q}(t,x)=2\pi x_2 + 4 t\, \Omega_{3, 1}(\ii) x_1 x_3+O(t^2).$$
Hence at $t=0$,
$$\wt{\mathfrak p}'=2\pi x_3'+4 \,\Omega_{2,1}(1)\cv{x}_1\cv{x}_2,\qquad
\wt{\mathfrak q}'=2\pi x_2'+4 \,\Omega_{3,1}(\ii)\cv{x}_1\cv{x}_3.$$
Since $x_3'=y_3'\in\mathcal W^{\geq 0}$, this gives
\begin{eqnarray*}
(\wt{\mathfrak p}'-\wt{\mathfrak p}'^*)^+&=&2\pi (x_3')^+ + 4(\Omega_{2,1}(1)\cv{x}_1\cv{x_2}-\overline{\Omega_{2,1}(1)}\cv{x}_1^*\cv{x}_2^*)^+\\
0&=&2\pi (x_3')^+ +2\ii\sin(\varphi)\Omega_{2,1}(1)\lambda^2.\end{eqnarray*}
Hence
$$(x_3')^+=\frac{-\ii}{\pi}\sin(\varphi)\Omega_{2,1}(1)\lambda^2$$
and in the same way,
$$(x_2')^+=\frac{-\ii}{\pi}\cos(\varphi)\Omega_{3,1}(\ii)\lambda^2.$$
Since $\mathcal H_1(t)$ for all $t$, we have $\mathcal H_1'=0$, so
\begin{eqnarray}
0&=&\wt{\mathfrak p}'(\lambda=\ii)+\frac{\partial \wt{\mathfrak p}}{\partial\lambda}\mid_{\lambda=\ii} (-\theta')\nonumber\\
&=&2\pi \left((x'_3)^0+(x'_3)^+(\ii)\right)+4\Omega_{2,1}(1)\cv{x}_1(\ii)\cv{x}_2(\ii)
+2\pi\cos(\varphi)\theta'\nonumber\\
&=&2\pi (x_3')^0+2\pi \cos(\varphi)\theta'+2\ii\sin(\varphi)\Omega_{2,1}(1).
\label{eq:deriv1}
\end{eqnarray}
In the same way, the equation $\mathcal H_2'=0$ gives
\begin{equation}
\label{eq:deriv2}
2\pi (x_2')^0+2\pi \sin(\varphi)\theta'+2\ii\cos(\varphi)\Omega_{3,1}(\ii)=0.
\end{equation}
Finally, we compute at $t=0$
\begin{eqnarray*}
\lambda\mathcal K'&=&\sum_{j=1}^32\lambda\cv{x}_jx_j'\\
&=&\ii(1-\lambda^2)x_1'-(\lambda^2+1)\sin(\varphi)\left((x_2')^0+(x_2')^+\right)-(\lambda^2+1)\cos(\varphi)\left((x_3')^0+(x_3')^+\right)\\
&=&\ii(1-\lambda^2)x_1'-(\lambda^2+1)\sin(\varphi)(x_2')^0-(\lambda^2+1)\cos(\varphi)(x_3')^0\\
&&+\frac{\ii}{\pi}(\lambda^2+1)\lambda^2\sin(\varphi)\cos(\varphi)(\Omega_{2,1}+\Omega_{3,1}).
\end{eqnarray*}
Since $\mathcal K$ is constant (with respect to $\lambda$), the remainder of the division of $\lambda\mathcal K'$ by $(\lambda^2-1)$ gives
$$\lambda\mathcal K'=-2\sin(\varphi)(x'_2)^0-2\cos(\varphi)(x'_3)^0+\frac{2\ii}{\pi}\sin(\varphi)\cos(\varphi)(\Omega_{2,1}(1)+\Omega_{3,1}(\ii)).$$
Hence $\mathcal K'=0$ (in agreement with Proposition \ref{prop:symmetries}) and
\begin{equation}
\label{eq:deriv3}
\sin(\varphi)(x'_2)^0+\cos(\varphi)(x'_3)^0=\frac{\ii}{\pi}\sin(\varphi)\cos(\varphi)(\Omega_{2,1}+\Omega_{3,1})\,.
\end{equation}
Solving the system of equations \eqref{eq:deriv1}, \eqref{eq:deriv2}, \eqref{eq:deriv3} determine $\theta'$, $(x'_2)^0$
and $(x'_3)^0$ to be as stated in Proposition \ref{prop:derivatives}.
Finally, the quotient of $\lambda\mathcal K'$ by $(\lambda^2-1)$ gives
$$0=-\ii x'_1-\sin(\varphi)(x'_2)^0-\cos(\varphi)(x'_3)^0+\frac{\ii}{\pi}\sin(\varphi)\cos(\varphi)(\Omega_{2,1}(1)+\Omega_{3,1}(\ii))(\lambda^2+2)$$
which together with equation \eqref{eq:deriv3} determines $x'_1$.
\end{proof}
\subsection{Mean curvature}
In this section and the next one, we need to emphasize the dependence of objects on the angle $\varphi$
so we write $x(t,\varphi)$ and $\theta(t,\varphi)$.
Let $H_{g,\varphi}$ be the mean curvature of $f_{g,\varphi}$. From Proposition \ref{prop:symmetries}  we have $H_{g,\varphi}=-H_{g,\tfrac{\pi}{2}-\varphi}$.
\begin{proposition}
\label{prop:H}
For $g \gg1 $ the mean curvature $H_{g,\varphi}$ is strictly positive for all 
$\varphi\in (0, \frac{\pi}{4})$.
\end{proposition}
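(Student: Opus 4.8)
The plan is to read off the sign of $H_{g,\varphi}$ from its first-order expansion in $t$, and then obtain the uniformity near the endpoints $\varphi=0,\tfrac\pi4$ from a finer analysis.

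By Proposition~\ref{prop:building}, $H_{g,\varphi}=\operatorname{cotan}(\theta(t,\varphi))$ with $t=\psi^{-1}\!\bigl(\tfrac1{2g+2}\bigr)>0$, and by Proposition~\ref{prop:uniformT} this $t\to0$ as $g\to\infty$, uniformly in $\varphi\in[0,\tfrac\pi4]$. Since $\theta(0,\varphi)=\tfrac\pi2$ for all $\varphi$ (Propositions~\ref{prop:central} and~\ref{prop:uniformT}), for $g$ large $\theta(t,\varphi)$ stays in $(0,\pi)$, where $\operatorname{cotan}$ is strictly decreasing and vanishes at $\tfrac\pi2$; hence $H_{g,\varphi}>0$ is equivalent to $\theta(t,\varphi)<\tfrac\pi2$. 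Expanding in $t$ (allowed by the analyticity of Section~\ref{limitvarphi}) gives $H_{g,\varphi}=-t\,\theta'(0,\varphi)+O(t^2)$, so the statement reduces to $\theta'(0,\varphi)<0$ on $(0,\tfrac\pi4)$, together with uniformity in $\varphi$.

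For the sign, Proposition~\ref{prop:derivatives} gives $\theta'(0,\varphi)=-\tfrac{2\ii}{\pi}\sin\varphi\cos\varphi\bigl(\Omega_{2,1}(1)+\Omega_{3,1}(\ii)\bigr)$, which is real by Proposition~\ref{prop:pattern}(3) and equals $\tfrac2\pi\sin\varphi\cos\varphi\,\Im\bigl(\Omega_{2,1}(1)+\Omega_{3,1}(\ii)\bigr)$. So the first step is to compute the depth-$2$ values $\Omega_{2,1}(1)$ and $\Omega_{3,1}(\ii)$ explicitly -- this is carried out in Section~\ref{section:depth2integrals} -- and to verify $\Im\bigl(\Omega_{2,1}(1)+\Omega_{3,1}(\ii)\bigr)<0$ for all $\varphi\in(0,\tfrac\pi4)$; since $\sin\varphi\cos\varphi>0$ there, this yields $\theta'(0,\varphi)<0$. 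As the $O(t^2)$ remainder is bounded uniformly in $\varphi$, on any compact subinterval $[\delta,\tfrac\pi4-\delta]$ the linear term dominates once $g$ is large, giving $H_{g,\varphi}>0$ there.

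The remaining, and main, difficulty is uniformity as $\varphi\to\tfrac\pi4$ or $\varphi\to0$, where $\theta'(0,\varphi)$ degenerates. Near $\tfrac\pi4$: the two parts of Proposition~\ref{prop:symmetries} combine to $\theta(t,\varphi)+\theta(t,\tfrac\pi2-\varphi)=\pi$, so $\theta(t,\varphi)-\tfrac\pi2$, hence also $H_{g,\varphi}$, is odd in $\varphi-\tfrac\pi4$; thus $H_{g,\varphi}=(\tfrac\pi4-\varphi)\,h_1(g)+O\bigl((\tfrac\pi4-\varphi)^3\bigr)$ with cubic term bounded uniformly in $g$, and $h_1(g)=t\,\partial_\varphi\theta'(0,\tfrac\pi4)+O(t^2)$; the explicit formula of Section~\ref{section:depth2integrals} shows $\partial_\varphi\theta'(0,\tfrac\pi4)=\tfrac1\pi\tfrac{d}{d\varphi}\Im\bigl(\Omega_{2,1}(1)+\Omega_{3,1}(\ii)\bigr)\big|_{\varphi=\pi/4}>0$ (consistent with $\Im(\Omega_{2,1}(1)+\Omega_{3,1}(\ii))$ being negative on $(0,\tfrac\pi4)$ and vanishing at $\tfrac\pi4$), so $h_1(g)>0$ and $H_{g,\varphi}>0$ on $(\tfrac\pi4-\varepsilon_0,\tfrac\pi4)$ for all large $g$. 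Near $0$: by Section~\ref{limitvarphi}, $\theta(t,\varphi)$ is analytic in $t,\varphi,\varphi\log\varphi$ and $\theta-\tfrac\pi2$ vanishes on $\{t=0\}$ and on $\{\varphi=0\}$, so $\theta(t,\varphi)-\tfrac\pi2=t\varphi\bigl[A(t,\varphi,\varphi\log\varphi)+\log\varphi\,B(t,\varphi,\varphi\log\varphi)\bigr]$ with $A,B$ analytic; matching with $\theta'(0,\varphi)$ identifies $A(0,0,0)$ and $B(0,0,0)$ in terms of the $\varphi\to0$ asymptotics of $\Omega_{2,1}(1)+\Omega_{3,1}(\ii)$ computed in Section~\ref{section:depth2integrals}, and one reads off that the bracket is strictly negative for $(t,\varphi)$ near $(0,0)$ with $\varphi>0$, hence $H_{g,\varphi}>0$ on a uniform right-neighbourhood of $0$. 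Combining the three ranges produces a single $g_0$ valid for all $\varphi\in(0,\tfrac\pi4)$. The crux is exactly this endpoint uniformity -- the inequality $\theta'(0,\varphi)<0$ itself being just the explicit depth-$2$ integral of Section~\ref{section:depth2integrals} -- which is why the symmetry of Proposition~\ref{prop:symmetries} and the $\varphi\log\varphi$-analyticity of Section~\ref{limitvarphi} are indispensable.
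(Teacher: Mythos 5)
Your overall strategy is the same as the paper's: reduce to the sign of $\theta(t,\varphi)-\tfrac\pi2$, note $\partial_t\theta(0,\varphi)=2\sin(2\varphi)\log(\tan\varphi)<0$ on $(0,\tfrac\pi4)$ (your $\Omega$-value formula reproduces exactly this via Proposition \ref{prop:depth2-integrals}), handle compact subintervals by the first-order term, and treat the two endpoints separately, using the symmetry of Proposition \ref{prop:symmetries} at $\tfrac\pi4$ and the $\varphi\log\varphi$-analyticity of Section \ref{limitvarphi} at $0$. Your treatment near $\varphi=0$ is essentially the paper's: factoring $\theta-\tfrac\pi2=t\varphi\bigl[A+\log\varphi\,B\bigr]$ with $B(0,0,0)=4$ is a correct way to get a $g$-independent right-neighbourhood of $0$.

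The gap is in the step near $\varphi=\tfrac\pi4$. You expand $H_{g,\varphi}=(\tfrac\pi4-\varphi)h_1(g)+O\bigl((\tfrac\pi4-\varphi)^3\bigr)$ with the cubic remainder ``bounded uniformly in $g$'' and $h_1(g)=4t+O(t^2)$. These two facts together do \emph{not} give $H_{g,\varphi}>0$ on a fixed interval $(\tfrac\pi4-\varepsilon_0,\tfrac\pi4)$ for all large $g$: with $|R_g(\varphi)|\le C(\tfrac\pi4-\varphi)^3$ and $C$ independent of $g$, positivity of the sum forces $(\tfrac\pi4-\varphi)^2<h_1(g)/C\asymp t\to0$, so the neighbourhood you control shrinks like $g^{-1/2}$ and cannot be patched with the compact-interior range to produce a single $g_0$. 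The statement is salvageable only because the remainder in fact carries an extra factor of $t$ (since $H=-\tan\bigl(t\,\wh\theta\,\bigr)$ with $\wh\theta=\tfrac1t(\theta-\tfrac\pi2)$ analytic and vanishing at $\varphi=\tfrac\pi4$), but you neither state nor use this. The paper's device is precisely to divide by $t$ first and Taylor-expand $\wh\theta$ in $\varphi$ at $\tfrac\pi4$: then the linear coefficient is $4+O(t)$, bounded below away from zero, the (even just quadratic) remainder is uniform in $t$, and one gets a $t$-independent neighbourhood of $\tfrac\pi4$ — no oddness or cubic-remainder claim is needed. Replacing your expansion of $H$ by an expansion of $\wh\theta$ (or of $H/t$) closes the gap; as written, the near-$\tfrac\pi4$ uniformity, which you correctly identify as the crux, is not established. (A minor further remark: your formula $\partial_\varphi\theta'(0,\tfrac\pi4)=\tfrac1\pi\tfrac{d}{d\varphi}\Im\bigl(\Omega_{2,1}(1)+\Omega_{3,1}(\ii)\bigr)\big|_{\pi/4}$ omits the derivative of the prefactor $\sin\varphi\cos\varphi$; it is nonetheless correct because $\Omega_{2,1}(1)+\Omega_{3,1}(\ii)$ vanishes at $\varphi=\tfrac\pi4$, and the value is $4$.)
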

\begin{proof}
We have
$$H_{g,\varphi}=\operatorname{cotan}(\theta(t,\varphi))\quad\text{with}\quad
t\sqrt{\mathcal K(t,\varphi)}=s=\frac{1}{2g+2}.$$ 
We want to find a uniform $\varepsilon>0$ such that for
$$\forall t\in(0,\varepsilon),\quad\forall\varphi\in(0,\frac{\pi}{4}),\quad 0<\theta(t,\varphi)<\frac{\pi}{2}.$$ 
By Propositions \ref{prop:derivatives} and \ref{prop:depth2-integrals}, we have
$$\frac{\partial\theta}{\partial t}(0,\varphi)=2\sin(2\varphi)\log(\tan(\varphi))$$
which is negative for $0<\varphi<\frac{\pi}{4}$.
So the existence of $\varepsilon$ is ensured for $\varphi$ lying  in any proper subinterval of $(0,\frac{\pi}{4})$.
To study $\theta$ near $\varphi=0$ and $\varphi=\frac{\pi}{4}$, 
define
$$\wh{\theta}(t,\varphi)=\frac{1}{t}\left(\theta(t,\varphi)-\frac{\pi}{2}\right)$$
which extends analytically to $t=0$ with
$$\wh{\theta}(0,\varphi)=2\sin(2\varphi)\log(\tan(\varphi)).$$
We have $\frac{\partial \wh{\theta}}{\partial\varphi}(0,\frac{\pi}{4})=4$.
Since $\wh{\theta}(t,\frac{\pi}{4})=0$ for all $t$,
a first order Taylor expansion of $\wh{\theta}$ at $(t,\frac{\pi}{4})$ gives
$$\wh{\theta}(t,\varphi)=(4+O(t))(\varphi-\tfrac{\pi}{4})+O((\varphi-\tfrac{\pi}{4})^2)$$
where the second $O$ is uniform with respect to $t$,
so $\wh{\theta}(t,\varphi)<0$ for $t$ small enough and $\varphi<\frac{\pi}{4}$ close
enough to $\frac{\pi}{4}$.

By Proposition \ref{prop:uniformT}, $\wh{\theta}$ extends as an analytic function of $\varphi$ and
$\varphi\log(\varphi)$ in a neighborhood of $\varphi=0 \in \C$. Thus there exists an analytic function $\Theta(t,\varphi,\phi)$ such that
$$\wh{\theta}(t,\varphi)=\Theta(t,\varphi,\varphi\log(\varphi)).$$
At $t=0$, we can rewrite
$$\wh{\theta}(0,\varphi)=2\sin(2\varphi)\log\left(\frac{h(\varphi)}{\cos(\varphi)}\right)+4h(2\varphi)\varphi\log(\varphi)$$
with $h(x)=\frac{\sin(x)}{x}$, which extends analytically to $x=0$.
Hence by uniqueness of $\Theta$,
$$\Theta(0,\varphi,\phi)=2\sin(2\varphi)\log\left(\frac{h(\varphi)}{\cos(\varphi)}\right)+4h(2\varphi)\phi.$$

We have
$\frac{\partial\Theta}{\partial\varphi}(0,0,0)=0$ and
$\frac{\partial\Theta}{\partial\phi}(0,0,0)=4$.
Since $\Theta(t,0,0)=0$ for all $t$, a first order Taylor expansion at $(t,0,0)$ gives
$$\Theta(t,\varphi,\phi)=O(t)\varphi+(4+O(t))\phi+O(\varphi^2+\phi^2).$$
We substitute $\phi=\varphi\log(\varphi)$ and obtain
$$\wh\theta(t,\varphi)=(4+O(t))\varphi\log(\varphi)+O((\varphi\log(\varphi))^2)$$
where the second $O$ is uniform with respect to $t$.
Hence $\wh\theta(t,\varphi)<0$ for $t$ and $\varphi>0$ small enough.
\end{proof}

\subsection{Willmore energy and Area}$\;$\\
Recall that $t$ is related to $s=\frac{1}{2g+2}$ by
$t\sqrt{\mathcal K(t,\varphi)}=s.$
Since $\mathcal K'=0$, we have
$$\mathcal K(t,\varphi)=1+O(t^2)$$
 so $s\sim t$. By Point (2) of Proposition \ref{prop:building}, Propositions \ref{prop:derivatives} and \ref{prop:depth2-integrals}, we obtain
\begin{eqnarray*}
\mathcal W(f_{g,\varphi)}&=&
8\pi\left(1-\cos(\varphi)(x'_2)^0 s+\sin(\varphi)(x'_3)^0 s+O(s^2)\right)\\
&=&8\pi\left(1+s\cos^2(\varphi)\tfrac{\ii}{\pi}\Omega_{3,1}(\ii)-s\sin^2(\varphi)\tfrac{\ii}{\pi}\Omega_{2,1}(1)+O(s^2)\right)\\
&=&8\pi\left(1+2 s\cos^2(\varphi)\log(\cos(\varphi))+2s\sin^2(\varphi)\log(\sin(\varphi))+O(s^2)\right).
\end{eqnarray*}
In particular, in the minimal case $\varphi=\frac{\pi}{4}$
\begin{equation}\label{eq:lawarea}\mathcal W(f_{g,\pi/4})=\operatorname{Area}(\xi_{1,g})=8\pi\left(1-\ln(2) s+O(s^2)\right)\end{equation}
which we already obtained in \cite{HHT}. 
\begin{proposition}\label{prop:willmore}
For $g$ large enough, the immersions $f_{g,\varphi}$ satisfies  $\mathcal W(f_{g,\varphi})<8\pi$ for all $\varphi\in(0,\frac{\pi}{2})$.
As a consequence, the surfaces $f_{g,\varphi}$ are embedded by the
Li-Yau estimate \cite{LiYau}.
\end{proposition}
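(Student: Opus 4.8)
The plan is to reduce the claim to a positivity statement and then run the same three‑regime argument as in the proof of Proposition~\ref{prop:H}. By Point~(2) of Proposition~\ref{prop:building} we have $\mathcal W(f_{g,\varphi})=8\pi\bigl(1-t\,\widehat E(t,\varphi)\bigr)$, where $t=t(g,\varphi)$ is fixed by $t\sqrt{\mathcal K(x(t,\varphi))}=\tfrac1{2g+2}$ and $\widehat E(t,\varphi):=\tfrac1t\,\mathcal K(x(t,\varphi))^{-1/2}\bigl(\cos(\varphi)\,x_2^0(t,\varphi)-\sin(\varphi)\,x_3^0(t,\varphi)\bigr)$, so it suffices to prove $\widehat E(t,\varphi)>0$ for $t>0$ below a bound independent of $\varphi$, and by the $\varphi\leftrightarrow\tfrac\pi2-\varphi$ symmetry of Proposition~\ref{prop:symmetries} one may restrict to $\varphi\in(0,\tfrac\pi4]$. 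Three structural inputs are available: first, $\widehat E$ is analytic in $t$ with $\widehat E(0,\varphi)=\mathcal W_1(\varphi)=-2\bigl[\cos^2(\varphi)\log\cos(\varphi)+\sin^2(\varphi)\log\sin(\varphi)\bigr]$, which is strictly positive on $(0,\tfrac\pi2)$ — this is exactly the first‑order expansion computed above; second, $\widehat E$ is even in $t$, since $\mathcal K(x(t,\varphi))$ is even in $t$ and $x_i^0(-t,\varphi)=-x_i^0(t,\varphi)$ by the $t$-parity of Proposition~\ref{prop:symmetries}; third, by Proposition~\ref{prop:uniformT}, $\widehat E$ extends analytically in $t$, $\varphi$ and $\varphi\log\varphi$ near $\varphi=0$, while at $\varphi=0$ the parameters are frozen at the central value $\cv x(0)$, whence $t\,\widehat E(t,0)\equiv 0$.

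On any compact subinterval $[\delta,\tfrac\pi4]$ the function $\mathcal W_1$ has a positive minimum and $\widehat E$ is continuous, so $\widehat E(t,\varphi)\ge\tfrac12\min_{[\delta,\pi/4]}\mathcal W_1>0$ once $|t|$ is below a $\delta$‑dependent threshold. The delicate, and I expect hardest, part is the degenerating regime $\varphi\to0^+$, where $\mathcal W_1(\varphi)\to0$ and one cannot simply invoke positivity of the leading term. Here the plan is: since $\cv x_2^0\equiv\cv x_3^0\equiv0$ and (from Proposition~\ref{prop:uniformT}) $y_2(t,0)=0$ with $y_2$ analytic in $(t,\varphi)$ while $\sin(\varphi)\,y_3^0(t,\varphi)$ is $\varphi$ times a function analytic in $t,\varphi,\varphi\log\varphi$, one gets a factorisation $\widehat E(t,\varphi)=\varphi\,\tilde{\mathcal E}(t,\varphi,\varphi\log\varphi)$ with $\tilde{\mathcal E}$ analytic near $(0,0,0)$ and even in $t$. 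Substituting $\log\sin\varphi=\log\varphi+\log h(\varphi)$, $h(x)=\tfrac{\sin x}{x}$, into $\mathcal W_1$ shows $\tilde{\mathcal E}(0,\varphi,\phi)=\varphi-2h(\varphi)^2\phi+O(\varphi^3)$, so that $\tilde{\mathcal E}(t,\varphi,\varphi\log\varphi)=\sigma(t)-2\varphi\log\varphi\,(1+o(1))+O(\varphi)$ where $\sigma(t):=\tilde{\mathcal E}(t,0,0)$ is even in $t$ and vanishes at $t=0$. Since $-2\varphi\log\varphi>0$ dominates the $O(\varphi)$ remainder as $\varphi\to0^+$, the positivity of $\widehat E$ near $\varphi=0$ comes down to the single point $\sigma(t)\ge0$, i.e. to a lower bound on the rate at which the Willmore deficit opens up as $\varphi$ leaves the degenerate configuration; this is the technical heart, to be obtained either from the second‑order term in the $t$-expansion (computable by the paper's algorithm) or, more cleanly, as a consequence of the monotonicity part of Theorem~\ref{MT2} once that is in place. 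Combining the two regimes yields $\varepsilon>0$ with $\widehat E(t,\varphi)>0$ for all $t\in(0,\varepsilon)$ and $\varphi\in(0,\tfrac\pi4]$, hence by symmetry for all $\varphi\in(0,\tfrac\pi2)$.

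To conclude, note that $\psi(t)=t\sqrt{\mathcal K(x(t,\varphi))}=t+O(t^3)$ uniformly in $\varphi$ (because $\mathcal K=1+O(t^2)$ with $\mathcal K'=0$, uniformly by Proposition~\ref{prop:uniformT}), so $t(g,\varphi)=\psi^{-1}\bigl(\tfrac1{2g+2}\bigr)\to0$ uniformly in $\varphi$ as $g\to\infty$; thus there is $g_0$ with $t(g,\varphi)<\varepsilon$ for all $g\ge g_0$ and all $\varphi$, and for such $g$ we obtain $\mathcal W(f_{g,\varphi})=8\pi\bigl(1-t(g,\varphi)\,\widehat E(t(g,\varphi),\varphi)\bigr)<8\pi$. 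Finally, the $8\pi$-threshold in the Li--Yau estimate \cite{LiYau} forces each such $f_{g,\varphi}$ to be embedded.
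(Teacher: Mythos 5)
Your reduction to the positivity of $\widehat E(t,\varphi)$, the symmetry reduction to $\varphi\in(0,\tfrac\pi4]$, the compact-subinterval argument, and the uniformity of $t(g,\varphi)\to0$ are all fine, and indeed the paper remarks that such a first-order-derivative argument (analogous to Proposition \ref{prop:H}) is possible. But as written your proof has a genuine gap exactly at the point you flag: in the degenerate regime $\varphi\to0^+$ everything hinges on the sign of $\sigma(t)=\tilde{\mathcal E}(t,0,0)$ (equivalently, of the $t$-dependent coefficients in the decomposition $\widehat E=\varphi A(t,\varphi,\phi)+\phi B(t,\varphi,\phi)$ coming from Proposition \ref{prop:uniformT}), and you do not prove it. Your first fallback, ``the second-order term in the $t$-expansion,'' is not carried out, and your second fallback is circular: the monotonicity of $\mathcal W(f_{g,\varphi})$ in $\varphi$ asserted in Theorem \ref{MT2} is not an independent input --- in the paper it is established precisely inside the proof of this proposition, and the proposition itself feeds into Theorem \ref{MT2}. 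In addition, the claimed factorisation $\widehat E(t,\varphi)=\varphi\,\tilde{\mathcal E}(t,\varphi,\varphi\log\varphi)$ with $\tilde{\mathcal E}$ analytic is not justified: analyticity in $(t,\varphi,\varphi\log\varphi)$ together with vanishing at $\varphi=\phi=0$ only yields $\widehat E=\varphi A+\phi B$, and your assertion that the $y_2^0$-contribution is $\varphi$ times an analytic function of $(t,\varphi,\phi)$ would need a separate argument; without it your formula for $\tilde{\mathcal E}(0,\varphi,\phi)$ and the isolation of the single quantity $\sigma(t)$ do not follow.

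For comparison, the paper avoids this delicate $\varphi\to0$ analysis altogether: for each fixed large $g$ it uses that CMC immersions are constrained Willmore critical points with Lagrange multiplier $\tfrac12 HQ$, that the projection of the family to Teichm\"uller space pairs non-degenerately with the Hopf differential, and hence that $\varphi\mapsto\mathcal W(f_{g,\varphi})$ is strictly monotonic wherever $H\neq0$, which Proposition \ref{prop:H} guarantees on $(0,\tfrac\pi4)$. Combining this with the endpoint values --- $\mathcal W\to8\pi$ as $\varphi\to0$ (Proposition \ref{prop:uniformT}) and $\mathcal W(f_{g,\pi/4})=\operatorname{Area}(\xi_{1,g})<8\pi$ from \eqref{eq:lawarea} --- gives $\mathcal W(f_{g,\varphi})<8\pi$ for all $\varphi$. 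If you want to keep your expansion-based route, you must actually establish the sign of $\sigma(t)$ (e.g.\ by controlling the relevant coefficient uniformly in $\varphi$ near $0$, in the spirit of the two-variable Taylor argument in the proof of Proposition \ref{prop:H}), rather than deferring it.
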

\begin{proof} 
Fix $g\gg1$ such that the complete family of CMC surfaces $f_{g,\varphi}$ of genus $g$ exists.
We claim that the Willmore energy $\mathcal W(f_{g,\varphi})$ is strictly monotonic for $\varphi\in(0,\tfrac{\pi}{4}).$
Note that $\mathcal W(f_{g,\varphi})=\mathcal W(f_{g,\pi/2-\varphi})$ by Proposition \ref{prop:symmetries}.
Then the proposition follows, since the Willmore energy for the Lawson surfaces $\xi_{1,g} = f_{g,\pi/4}$ is strictly below $8\pi$ and the Willmore energy at $\varphi=0$ is $8\pi$.

It is well-known that CMC surfaces are critical points of the 
Willmore functional under conformal variations \cite{BPP}.
The cotangent space to the Teichm\"uller space at the Riemann surface $M$ can be identified with the space of holomorphic quadratic differentials $H^0(M,K^2)$.
The Lagrange-multiplier (see \cite[Corollary 16 and Remark 4]{BPP}) for the Willmore functional is given by $\tfrac{1}{2}HQ$,
as the Hopf differential $Q$ is holomorphic for CMC surfaces. Let $\Pi$ denote the projection from the space of immersions to the Teichm\"uller space and consider the image of the map $\Pi(f_{g,\varphi})$ which is a real $1-$dimensional submanifold, as $\varphi$ determines the conformal structure of the surface. For the symmetric surfaces the space of  holomorphic and symmetric quadratic differentials  is real 2 dimensional. Moreover, it is well known that $\Pi$  fails to be submersive at isothermic, and hence at CMC, surfaces. Therefore, the pairing between
$Q$ and $\frac{\partial}{\partial\varphi}$ is non-degenerate and  the Willmore functional is monotonic along the
family of CMC surfaces as long as $H\neq0.$ Thus, the result follows from Proposition \ref{prop:H}.
\end{proof}
\begin{remark}
Proposition \ref{prop:willmore} can also be proven   analogously to Proposition  \ref{prop:H} using the first order derivatives.
\end{remark}

\begin{corollary}
For all $\varphi \in (0, \tfrac{\pi}{2})$ and $g \geq g_0$, there exists a
 constrained Willmore minimizer in the conformal class of $f_{g,\varphi}$.\end{corollary}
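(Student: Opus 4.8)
The plan is to deduce the statement directly from Proposition~\ref{prop:willmore} together with the existence-and-regularity theorem for constrained Willmore minimizers of Kuwert--Sch\"atzle \cite{KuwertSchatzle}, in exactly the spirit of the Corollary following Theorem~\ref{MT2}. First I would fix $g\geq g_0$ and $\varphi\in(0,\tfrac{\pi}{2})$ and observe that, by Theorem~\ref{MT} (concretely Proposition~\ref{prop:building}), the surface $f_{g,\varphi}\colon M_{g,\varphi}\to\S^3$ is a smooth conformal immersion, hence an admissible competitor in the constrained Willmore problem for the Riemann surface $M_{g,\varphi}$. Consequently the constrained Willmore infimum
\[
\mathcal W_{\mathrm{conf}}(M_{g,\varphi}):=\inf\bigl\{\mathcal W(f)\ \big|\ f\colon M_{g,\varphi}\to\S^3\ \text{a conformal immersion}\bigr\}
\]
is bounded above by $\mathcal W(f_{g,\varphi})$, which is strictly below $8\pi$ by Proposition~\ref{prop:willmore}.

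The second step is to invoke \cite{KuwertSchatzle}: on a closed surface of genus $\geq 1$, if the constrained Willmore infimum in a fixed conformal class lies strictly below the threshold $8\pi$ (more precisely below $\min\{8\pi,\omega_g\}$, where $\omega_g>8\pi$ is the relevant connected-sum threshold), then neither a minimizing sequence nor the underlying conformal structure can degenerate, and the infimum is attained by a conformal immersion $f_\star\colon M_{g,\varphi}\to\S^3$ with $\mathcal W(f_\star)=\mathcal W_{\mathrm{conf}}(M_{g,\varphi})<8\pi$. Since $\mathcal W(f_\star)<8\pi$, the Li--Yau inequality \cite{LiYau} then forces $f_\star$ to be an embedding (in particular unbranched and smooth), which is the assertion of the corollary.

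I do not expect any genuine obstacle here: all the hard analysis --- ruling out energy concentration, bubbling of spheres, and degeneration of the conformal type, which is precisely what the $8\pi$ barrier controls --- is already carried out in \cite{KuwertSchatzle}, and the only input demanded from the present paper is the strict inequality $\mathcal W(f_{g,\varphi})<8\pi$, supplied by Proposition~\ref{prop:willmore}. The single point to state carefully is that the minimization runs over conformal immersions inducing the fixed conformal structure of $M_{g,\varphi}$ (equivalently, over maps representing the same point of Teichm\"uller space), so that $f_{g,\varphi}$ genuinely belongs to the competitor class and provides the required sub-$8\pi$ bound; once this is in place the corollary is immediate.
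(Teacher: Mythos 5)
Your argument is correct and is essentially the paper's own proof: both use the strict bound $\mathcal W(f_{g,\varphi})<8\pi$ from Proposition~\ref{prop:willmore} to show the constrained Willmore infimum in the conformal class of $f_{g,\varphi}$ is below $8\pi$, and then invoke the existence result of \cite{KuwertSchatzle} for minimizers with prescribed conformal class. The additional Li--Yau embeddedness remark is fine but not required for this statement, which only asserts existence of the minimizer.
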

\begin{proof}
This follows from Proposition \ref{prop:willmore} and \cite{KuwertSchatzle}, which gives the existence of Willmore energy minimizer with prescribed conformal class 
provided that the infimum energy in the conformal class is below $8 \pi.$
\end{proof}
\subsection{Computation of the depth-2 integrals}
\label{section:depth2integrals}
\begin{proposition}
\label{prop:depth2-integrals}
For $\varphi \in (0, \tfrac{\pi}{2})$ we have 
$$\Omega_{2,1}(1)=2\pi \ii\log(\sin(\varphi))\quad\text{ and }\quad
\Omega_{3,1}(\ii)=-2\pi\ii \log(\cos(\varphi)).$$
\end{proposition}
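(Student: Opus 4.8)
The plan is to convert these depth‑$2$ iterated integrals over a segment into ordinary contour integrals of holomorphic $1$-forms around a closed loop in $\C P^1$, and to evaluate the latter by the residue theorem. Write $\omega_j=d\Omega_j$ with the $\Omega_j$ the explicit antiderivatives of \eqref{eq:Omega1}--\eqref{eq:OmegaI}, recall from Section \ref{ssec:thepotential} the symmetries $\delta(z)=-z$, $\tau(z)=1/z$, which act by $\delta^*\omega_1=\omega_1$, $\delta^*\omega_2=-\omega_2$, $\delta^*\omega_3=-\omega_3$, $\tau^*\omega_1=-\omega_1$, $\tau^*\omega_2=\omega_2$, $\tau^*\omega_3=-\omega_3$, and record the partial fraction identities
\[
\omega_1+\omega_2=2\,d\log\tfrac{z-p_1}{z-p_2},\qquad
\omega_2-\omega_1=2\,d\log\tfrac{z-p_4}{z-p_3},\qquad
\omega_1+\omega_3=2\,d\log\tfrac{z-p_1}{z-p_4},\qquad
\omega_3-\omega_1=2\,d\log\tfrac{z-p_2}{z-p_3}.
\]
For $\varphi\in(0,\tfrac\pi2)$ the punctures $p_1,p_2$ lie in $\{\Im z>0\}$ and $p_3,p_4$ in $\{\Im z<0\}$, while $p_1,p_4$ lie in $\{\Re z>0\}$ and $p_2,p_3$ in $\{\Re z<0\}$.

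\emph{Step 1: reduce each integral to a loop integral.} Let $\gamma_+$ be the path from $0$ to $\infty$ along the positive real axis, and let $\Gamma=\widehat\R\subset\C P^1$ be the real circle (the boundary of $\{\Im z>0\}$, oriented so as to enclose $p_1,p_2$ positively). Closing $\gamma_+$ up with its $\delta$-image into $\Gamma$ and applying the residue theorem together with $\delta^*\omega_2=-\omega_2$ gives $\int_{\gamma_+}\omega_2=0$ (the residues of $\omega_2$ at $p_1,p_2$ cancel). Since $\tau$ carries $[0,1]$ onto a reparametrisation of the reversed segment $[1,\infty]$, the composition rule, the reversal and pull-back rules for iterated integrals, the identities $\tau^*\omega_1=-\omega_1$, $\tau^*\omega_2=\omega_2$ and the shuffle relation on $[0,1]$ combine to $\int_{\gamma_+}\omega_2\omega_1=2\,\Omega_{2,1}(1)$; closing $\gamma_+$ up to $\Gamma$ once more, now using $\int_{\gamma_+}\omega_2=0$, $\delta^*\omega_1=\omega_1$, $\delta^*\omega_2=-\omega_2$ and the shuffle relation, yields $\int_\Gamma\omega_2\omega_1=2\int_{\gamma_+}\omega_2\omega_1=4\,\Omega_{2,1}(1)$. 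The same scheme with $\tau$ replaced by $\tau_0=\delta\circ\tau\colon z\mapsto-1/z$ (which fixes $\pm\ii$ and satisfies $\tau_0^*\omega_1=-\omega_1$, $\tau_0^*\omega_3=\omega_3$), the real axis replaced by the imaginary axis, and using $\int_0^{\ii\infty}\omega_3=0$, gives $\int_{\Gamma'}\omega_3\omega_1=4\,\Omega_{3,1}(\ii)$, where $\Gamma'=\ii\widehat\R$ is the imaginary circle (the boundary of $\{\Re z<0\}$), enclosing $p_2,p_3$ positively.

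\emph{Step 2: evaluate the loop integrals by residues.} For $\Omega_{2,1}(1)$ put $P=\omega_1+\omega_2$ (poles $p_1,p_2$, inside $\Gamma$) and $N=\omega_2-\omega_1$ (poles $p_3,p_4$, outside $\Gamma$), so $\omega_1=\tfrac12(P-N)$, $\omega_2=\tfrac12(P+N)$; each has vanishing total residue inside $\Gamma$, so $\int_\Gamma P=\int_\Gamma N=0$, and the shuffle relations $\int_\Gamma\beta\beta=\tfrac12(\int_\Gamma\beta)^2$ and $\int_\Gamma PN+\int_\Gamma NP=(\int_\Gamma P)(\int_\Gamma N)$ collapse the expansion of $\int_\Gamma\omega_2\omega_1$ to $-\tfrac12\int_\Gamma PN$. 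Since $p_3,p_4\notin\overline{\{\Im z>0\}}$, the branch $\nu=2\log\frac{z-p_4}{z-p_3}$ is single-valued and holomorphic near $\overline{\{\Im z>0\}}$ and $N=d\nu$; integrating by parts around the closed loop $\Gamma$ (using $\int_\Gamma P=0$) gives $\int_\Gamma PN=-\oint_\Gamma\nu\,P$, and the residue theorem gives $\oint_\Gamma\nu\,P=4\pi\ii\,(\nu(p_1)-\nu(p_2))$. A direct substitution of the $p_j$ shows $\frac{(p_1-p_4)(p_2-p_3)}{(p_1-p_3)(p_2-p_4)}=\sin^2\varphi$, hence $\nu(p_1)-\nu(p_2)=2\log(\sin^2\varphi)=4\log\sin\varphi$, the absence of a stray multiple of $2\pi\ii$ following since $\Omega_{2,1}(1)$ is purely imaginary by Proposition \ref{prop:pattern}(3) (equivalently, evaluate $\nu$ along the horizontal segment from $p_2$ to $p_1$, on which $\arg\frac{z-p_4}{z-p_3}$ never leaves $(0,\pi)$). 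Collecting the factors, $\Omega_{2,1}(1)=\tfrac14\int_\Gamma\omega_2\omega_1=\tfrac{\pi\ii}{2}\,(\nu(p_1)-\nu(p_2))=2\pi\ii\log\sin\varphi$.

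For $\Omega_{3,1}(\ii)$ one runs the identical argument on $\Gamma'$ with $P'=\omega_1+\omega_3$ (poles $p_1,p_4$, outside $\Gamma'$), $N'=\omega_3-\omega_1$ (poles $p_2,p_3$, inside $\Gamma'$), $\omega_3=\tfrac12(P'+N')$, $\omega_1=\tfrac12(P'-N')$; again $\int_{\Gamma'}P'=\int_{\Gamma'}N'=0$ and $\int_{\Gamma'}\omega_3\omega_1=-\tfrac12\int_{\Gamma'}P'N'$. Now $P'=dp'$ with $p'=2\log\frac{z-p_1}{z-p_4}$ single-valued and holomorphic near $\overline{\{\Re z<0\}}$, so integration by parts and the residue theorem give $\int_{\Gamma'}P'N'=\oint_{\Gamma'}p'\,N'=4\pi\ii\,(p'(p_2)-p'(p_3))$; since $\frac{(p_2-p_1)(p_3-p_4)}{(p_2-p_4)(p_3-p_1)}=\cos^2\varphi$ and $\Omega_{3,1}(\ii)$ is purely imaginary, $p'(p_2)-p'(p_3)=2\log(\cos^2\varphi)=4\log\cos\varphi$, whence $\Omega_{3,1}(\ii)=\tfrac14\int_{\Gamma'}\omega_3\omega_1=-\tfrac{\pi\ii}{2}\,(p'(p_2)-p'(p_3))=-2\pi\ii\log\cos\varphi$.

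\emph{Main obstacle.} The genuinely delicate points are the bookkeeping of orientations and of which puncture lies in which half-plane (so that the residue theorem and the holomorphy of $\nu$, $p'$ are invoked correctly), and ruling out an extra $2\pi\ii k$ in $\nu(p_1)-\nu(p_2)$ and $p'(p_2)-p'(p_3)$; the latter is handled cleanly by Proposition \ref{prop:pattern}(3), or by choosing an explicit path between the two punctures on which the relevant logarithm stays in a fixed strip.
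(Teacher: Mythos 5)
Your argument is correct: I checked the symmetry reductions ($\tau^*\omega_1=-\omega_1$, $\tau^*\omega_2=\omega_2$, etc., the vanishing of $\int_0^{\infty}\omega_2$ and $\int_0^{\ii\infty}\omega_3$, and the composition/reversal/shuffle bookkeeping giving $\int_\Gamma\omega_2\omega_1=4\,\Omega_{2,1}(1)$ and $\int_{\Gamma'}\omega_3\omega_1=4\,\Omega_{3,1}(\ii)$), the residue evaluations (residues $\pm2$ of $P$, $N'$, the sign difference between $\int_\Gamma PN=-\oint_\Gamma \nu P$ and $\int_{\Gamma'}P'N'=+\oint_{\Gamma'}p'N'$), the cross-ratio computations $\sin^2\varphi$ and $\cos^2\varphi$, and the two ways you rule out a stray $2\pi\ii$ (the purely-imaginary property from Proposition \ref{prop:pattern} is indeed non-circular, and the horizontal segment from $p_2$ to $p_1$ does keep $\arg\tfrac{z-p_4}{z-p_3}$ in $(0,\pi)$). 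The overall strategy — symmetrize the depth-2 value to an iterated integral over a closed contour and evaluate it by residues — is the same as the paper's, but your execution differs in several genuine ways. The paper integrates over the boundary of the first quadrant (enclosing only $p_1$), evaluates it once by the $\tau$- and $(-1/z)$-symmetries (producing the cross term $\Omega_2(\ii)\Omega_1(\ii)$ and an explicit $2\pi\ii$ monodromy shift of $\Omega_2$) and once by subtracting $\tfrac{dz}{z-p_1}$ to make single-valued primitives, integrating by parts and keeping all boundary terms; the real parts $-4\pi\varphi$ then cancel between the two evaluations, and $\Omega_{3,1}(\ii)$ is obtained for free from the $\iota(z)=\ii z$, $\varphi\mapsto\tfrac{\pi}{2}-\varphi$ symmetry. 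You instead work on the half-plane boundaries, split $\omega_1,\omega_2$ (resp.\ $\omega_1,\omega_3$) into the exact logarithmic forms $P,N$ whose poles are separated by the contour, and use the loop shuffle identities to collapse everything to a single mixed term $\int PN$, which is then a one-line residue computation; the price is the branch-constant ambiguity, which you dispose of cleanly, and you treat the two values by parallel arguments rather than by symmetry. Your route is arguably tidier (no explicit $\varphi$-dependent real parts to cancel, no subtracted pole), while the paper's is more self-contained at the level of elementary logarithm evaluations and exploits the $\varphi\leftrightarrow\tfrac{\pi}{2}-\varphi$ symmetry to halve the work; either proof is acceptable.
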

\begin{proof}
Let $\gamma$ be the closed curve given by the composition of the real half-line from $0$ to $+\infty$ with the imaginary half-line from $+\ii\infty$ to $0$.
We compute the iterated integral $\int_{\gamma}\omega_2\omega_1$ in two different ways: using the symmetries and by applying the Residue Theorem.
Recall that for $\tau(z)=\frac{1}{z}$ we have the symmetries
$\tau^*\omega_1=-\omega_1$ and $\tau^*\omega_2=\omega_2$.
Since $\tau(1)=1$ we have $\tau^*\Omega_2=\Omega_2$.
Then using the change of variable rule we obtain
\[\int_1^{+\infty}\Omega_2\omega_1=-\int_0^1\tau^*(\Omega_2\omega_1)
=\int_0^1\Omega_2\omega_1.\]
Hence
\[\int_0^{+\infty}\Omega_2\omega_1=
2\int_0^1\Omega_2\omega_1=2\,\Omega_{2,1}(1).\]
Let $\nu(z)=\frac{-1}{z}$. We have the symmetries
$\nu^*\omega_1=-\omega_1$ and $\nu^*\omega_2=-\omega_2$.
Since $\nu(\ii)=\ii$, we have $\nu^*\Omega_2=2\Omega_2(\ii)-\Omega_2$.
Then
\[\int_{\ii}^{\ii \infty}\Omega_2\omega_1=
-\int_0^{\ii}(2\Omega_2(\ii)-\Omega_2)(-\omega_1)=2\,\Omega_2(\ii)\Omega_1(\ii)-\int_0^{\ii}\Omega_2\omega_1.\]
Hence
\[\int_0^{\ii\infty}\Omega_2\omega_1=2\,\Omega_2(\ii)\Omega_1(\ii).\]
Note that in these computations, $\Omega_2$
denotes the primitive of $\omega_2$ on the simply connected domain $\mathcal U$.
On the other hand, the analytic continuation of $\Omega_2$ along $\gamma$ is equal to $\Omega_2+2\pi\ii$
on the second segment from $\ii\infty$ to $0$. Hence using Equation \eqref{eq:OmegaI}
\begin{eqnarray*}
\int_{\gamma}\omega_2\omega_1&=&\int_0^{\infty}\Omega_2\omega_1-\int_0^{\ii\infty}
(\Omega_2+2\pi \ii)\omega_1\\
&=&2\,\Omega_{2,1}(1)-2\Omega_2(\ii)\Omega_1(\ii)-4\pi\ii\Omega_1(\ii)\\
&=&2\,\Omega_{2,1}(1)-4\pi\varphi.
\end{eqnarray*}

Next we compute the integral using the Residue Theorem.
Define for $j=1,2$
$$\wt\omega_j=\omega_j-\frac{dz}{z-p_1}\quad\text{ and }\quad
\wt\Omega_j=\int_0^z\wt\omega_j.$$
Then $\wt\Omega_j$ is a well-defined holomorphic function in the quadrant bounded
by $\gamma$. By the Residue Theorem and integration by parts
\begin{eqnarray*}
\int_{\gamma}\omega_2\omega_1&=&
\int_{\gamma}\left(\wt\Omega_2+\log\left(1-\tfrac{z}{p_1}\right)\right)\left(\wt\omega_1+\tfrac{dz}{z-p_1}\right)\\
&=&\int_{\gamma}\wt\Omega_2\tfrac{dz}{z-p_1}+\int_{\gamma}\log\left(1-\tfrac{z}{p_1}\right)\wt\Omega_1'+\int_{\gamma}\log\left(1-\tfrac{z}{p_1}\right)\tfrac{dz}{z-p_1}\\
&=&2\pi\ii \,\wt\Omega_2(p_1)+\left[\log\left(1-\tfrac{z}{p_1}\right)\wt\Omega_1\right]_{\gamma(0)}^{\gamma(1)}-\int_{\gamma}\wt\Omega_1\tfrac{dz}{z-p_1}+\left[\tfrac{1}{2}\left(\log\left(1-\tfrac{z}{p_1}\right)\right)^2\right]_{\gamma(0)}^{\gamma(1)}\\
&=&2\pi\ii(\wt\Omega_2(p_1)-\wt\Omega_1(p_1))-2\pi^2\\
&=&4\pi\ii\log\left(\frac{1-\tfrac{p_1}{p_4}}{1-\tfrac{p_1}{p_3}}\right)-2\pi^2\\
&=&4\pi\ii\log(\sin(\varphi))-4\pi\varphi.
\end{eqnarray*}

Comparing the two expressions we obtain
$$\Omega_{2,1}(1)=2\pi\ii\log(\sin(\varphi)).$$
The second $\Omega$-value can be deduced by symmetry.
As in the proof of Proposition \ref{prop:symmetries} let $\widefrown{\varphi}=\frac{\pi}{2}-\varphi$ and $\iota(z)=\ii z$. Equation \eqref{eq:iota*omega} then yields
$$\int_0^{\ii}\omega_{3,\widefrown{\varphi}}\,\omega_{1,\widefrown{\varphi}}=\int_0^1\iota^*(\omega_{3,\widefrown{\varphi}}\,\omega_{1,\widefrown{\varphi}})
=-\int_0^1\omega_{2,\varphi}\omega_{1,\varphi}.$$
Hence
$$\Omega_{3,1}(\ii)(\tfrac{\pi}{2}-\varphi)=-\Omega_{2,1}(1)(\varphi).$$
\end{proof}
\section{Higher order derivatives}\label{sec:higheroder}
In this section, we present an algorithm to compute the derivatives of the parameters with respect to $t$ to any order. In the most symmetric case $\varphi=\pi/4$, the algorithm is simple enough that the derivatives of order 2 and 3 can be computed by hand. This gives the expansion of the area of Lawson minimal surfaces to order 3 in term of $\Omega$-values of depth at most 4.
\subsection{Iterative algorithm to compute the derivatives}
\label{section:algorithm}
We expand the solution $x(t),\theta(t)$ and $\mathcal K(t)$, which are analytic in $t$, as
$$x_j(t)=\sum_{n=0}^{\infty} x_{j,n} t^n,\qquad
\theta(t)=\sum_{n=0}^{\infty} \theta_n t^n,\qquad
\mathcal K(t)=\sum_{n=0}^{\infty}\mathcal K_n t^n$$
so $x_j^{(n)}=n! \,x_{j,n}$, but it will be more convenient to use the coefficients $x_{j,n}$ than the derivative
$x_j^{(n)}$.
We also use the following notation : the index $[k]$ denotes the expansion to order $k$ in $t$, so for example
$$x_{j,[k]}=\sum_{\ell=0}^k x_{j,\ell}t^{\ell}.$$
\begin{proposition}
\label{prop:algorithm}
For $n\geq 1$, the terms $x_{1,n}$, $x_{2,n}$ and $x_{3,n}$ are polynomials in $\lambda$ of degree at most $n+1$.
Moreover, their coefficients, as well as the ones of $\theta_n$ and $\mathcal K_n$, can be expressed as functions of $\varphi$ and $\Omega$-values of depth at most $(n+1)$ if $n$ is odd and $n$ if $n$ is even.
\end{proposition}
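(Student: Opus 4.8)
\emph{Plan of proof.} We argue by induction on $n$, the base case $n=1$ being Proposition \ref{prop:derivatives} together with the evaluations in Proposition \ref{prop:depth2-integrals}. Recall from the proof of Proposition \ref{prop:IFT} that we write $x_j=\cv{x}_j+y_j$ with $y_j\in\mathcal W^{\geq 0}_{\R}$ (and $y_1\in\ii\mathcal W^{\geq 0}_{\R}$), so for $n\geq 1$ the coefficient $x_{j,n}=y_{j,n}$ has no negative powers of $\lambda$; only $\cv{x}_j=x_{j,0}$ carries a $\lambda^{-1}$ term, its $\lambda$-powers lying in $\{-1,+1\}$. By Proposition \ref{prop:expandpq} we may write $\widehat{\mathfrak p}(t,x)=\sum_{k\geq 0}t^kP_k(x)$ and $\widehat{\mathfrak q}(t,x)=\sum_{k\geq 0}t^kQ_k(x)$, where $P_k$ (resp.\ $Q_k$) is homogeneous of degree $k+1$ in $(x_1,x_2,x_3)$ whose coefficients are $\mathbb Q$-linear combinations of the depth-$(k+1)$ values $\Omega_{i_1\cdots i_{k+1}}(1)$ (resp.\ $\Omega_{i_1\cdots i_{k+1}}(\ii)$), and $P_0(x)=2\pi x_3$, $Q_0(x)=2\pi x_2$ by Proposition \ref{prop:pq}.

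\emph{Step 1: the bound $n+1$.} Assume the assertion for all orders $<n$. Substituting $x_j(t)=\sum_mx_{j,m}t^m$ into $\widehat{\mathfrak p}(t,x(t))$ and collecting the coefficient of $t^n$, the term coming from $P_k$ is a sum of products of $k+1$ coefficients $x_{i_\ell,m_\ell}$ with $\sum_\ell m_\ell=n-k$; for $k\geq 1$ this forces every $m_\ell\leq n-1$, so by the inductive hypothesis these contributions are built only from $\varphi$ and $\Omega$-values of depth $\leq n+1$ — depth exactly $n+1$ occurring only for $k=n$, i.e.\ through $P_n(\cv{x})$ — and, using the inductive bound $\deg_\lambda x_{j,m}\leq m+1$ for $m\geq 1$ together with the $\lambda^{\pm 1}$ range of $\cv{x}_j$, every such monomial has $\lambda$-degrees inside $[-(n+1),n+1]$. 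Separating the $k=0$ term gives
\[
\bigl[\widehat{\mathfrak p}(t,x(t))\bigr]_{t^n}=2\pi x_{3,n}+R_n^{\mathfrak p},\qquad
\bigl[\widehat{\mathfrak q}(t,x(t))\bigr]_{t^n}=2\pi x_{2,n}+R_n^{\mathfrak q},
\]
with $R_n^{\mathfrak p},R_n^{\mathfrak q}$ known Laurent polynomials of the stated type. One now solves Problem \eqref{monodromy-problem3} at order $t^n$ exactly as at first order: the equation $\mathcal F_1=0$, taking strictly positive parts as in Proposition \ref{prop:symmetryFGK}, determines $(x_{3,n})^+$ from $R_n^{\mathfrak p}$; then $\mathcal H_1=0$ (expanded around $\theta=\cv\theta=\tfrac\pi2$), $\mathcal H_2=0$ and the constancy of $\mathcal K$ at order $t^n$ (treated by the same division by $\lambda^2-1$ as in Proposition \ref{prop:derivatives}) form the linear system — with the invertible coefficient matrix of Proposition \ref{prop:IFT} — that yields $(x_{3,n})^0$, $(x_{2,n})^0$, $\theta_n$, $x_{1,n}$ and $\mathcal K_n$, with right-hand sides in $\varphi$ and $\Omega$-values of depth $\leq n+1$, and with $\deg_\lambda x_{1,n}\leq n+1$ falling out of the same computation. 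This proves the proposition with the bound $n+1$ for every $n$.

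\emph{Step 2: the improvement to depth $n$ when $n$ is even.} By Proposition \ref{prop:symmetries}(1), $x_j(-t)(-\lambda)=-x_j(t)(\lambda)$, hence $x_{j,n}(-\lambda)=(-1)^{n+1}x_{j,n}(\lambda)$, so for $n$ even each $x_{j,n}$ is odd in $\lambda$; in particular $(x_{j,n})^0=0$. The only $\Omega$-values of depth $n+1$ that can enter $x_{3,n}$ (resp.\ $x_{2,n}$) arrive through the homogeneous term $P_n(\cv{x})$ (resp.\ $Q_n(\cv{x})$). For $n$ even, $n+1$ is odd, so $P_n(\cv{x})$ is a sum of products of $n+1$ of the functions $\cv{x}_j$, each of which is odd in $\lambda$ and real on the unit circle; moreover, by Proposition \ref{prop:pattern}, the accompanying scalar $(M_{i_1}\cdots M_{i_{n+1}})_{31}\,\Omega_{i_1\cdots i_{n+1}}(1)$ is a product of two purely imaginary numbers ($(M_{i_1}\cdots M_{i_{n+1}})_{31}=\pm(2\ii)^{n+1}$ and $\Omega_{i_1\cdots i_{n+1}}(1)$), hence real. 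Since $\bar{\mathfrak p}=\mathfrak p$ at the central value by Proposition \ref{prop:pq}, $P_n(\cv{x})$ also has real $\lambda$-coefficients; being real on the unit circle as well, it is invariant under $\lambda\mapsto\lambda^{-1}$, so $P_n(\cv{x})=\sum_{k\ \text{odd}}c_k(\lambda^k+\lambda^{-k})$ with $c_k\in\R$. Such a Laurent polynomial is $*$-invariant and vanishes at $\lambda=\pm\ii$. Hence $P_n(\cv{x})$ contributes nothing to $(x_{3,n})^+$ (which is governed by the $*$-anti-invariant part of $R_n^{\mathfrak p}$) and does not affect $(x_{3,n})^0$ either (which is $0$); the identical argument applies to $Q_n(\cv{x})$ and $x_{2,n}$. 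Therefore $x_{1,n},x_{2,n},x_{3,n}$, and then $\theta_n$ and $\mathcal K_n$ through the $\mathcal K$-equation, are expressible using $\Omega$-values of depth $\leq n$ only, completing the induction.

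\emph{Where the difficulty lies.} The two pieces of bookkeeping in Step 1 — that no coefficient $x_{j,m}$ with $m\geq n$ re-enters $R_n^{\mathfrak p}$ or $R_n^{\mathfrak q}$, and that the $\lambda$-degrees stay inside $[-(n+1),n+1]$ — are routine, following from the recursion for iterated integrals and the inductive degree bounds. The genuinely delicate point is Step 2: one must recognize that the entire top-depth contribution is carried by the single homogeneous term $P_n(\cv{x})$, and then match three distinct parities — the $\lambda\mapsto-\lambda$ symmetry of $x_{j,n}$ forced by the $t$-reversal symmetry, the parity of $P_n(\cv{x})$ as a product of central values, and the purely imaginary nature of the relevant $\Omega$-values from Proposition \ref{prop:pattern} — to see that $P_n(\cv{x})$ is simultaneously $*$-invariant and zero at the Sym point $\lambda=\ii$, so that it drops out of every equation determining the $n$-th order data when $n$ is even.
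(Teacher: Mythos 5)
Your proposal is correct and follows essentially the same route as the paper: an induction on $n$ running the same iterative algorithm (isolating $2\pi x_{3,n}$, resp.\ $2\pi x_{2,n}$, from the ``lower'' contributions of Proposition \ref{prop:expandpq}, solving via the positive part of $\mathcal F_1$, the equations $\mathcal H_1=\mathcal H_2=0$, and division of $\lambda\mathcal K$ by $\lambda^2-1$), with the same identification of the unique depth-$(n+1)$ term as the homogeneous central-value term. Your Step 2 merely repackages the paper's cancellation — the paper shows each top-depth term is $*$-invariant directly (real scalar times $*$-invariant $\cv{x}_j$'s) and disposes of $x_{j,n}^0$, $\theta_n$ for even $n$ by the parity remark, while you reach the same $*$-invariance via reality of coefficients plus reality on the circle and add the vanishing at $\lambda=\pm\ii$ — so the content is the same.
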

\begin{proof}
The proof describes an algorithm to compute $x_{j,n}$, $\theta_n$ and $\mathcal K_n$ for all $n$.
 Fix $n\geq 1$ and assume that $x_{j,k}$ and $\theta_k$ have already been computed for all
$k<n$ and satisfy the conclusion of Proposition \ref{prop:algorithm}.
The method to compute $x_{j,n}$ and $\theta_n$ is the same as in the proof of Proposition \ref{prop:derivatives}
(where $n=1$), except that there are more lower order terms.
As in the proof of Proposition \ref{prop:derivatives} we denote by
$$\wt{\mathfrak p}(t)=\wh{\mathfrak p}(t,x(t))=t^{-1}\mathfrak p(t,x(t)).$$
By Proposition \ref{prop:expandpq} we can write
\begin{equation}
\label{eq:expandp}\wt{\mathfrak p}(t)=2\pi x_3(t)+\wt{\mathfrak p}_{\low}(t) + o(t^n)
\end{equation}
with
$$\wt{\mathfrak p}_{\low}(t)=\sum_{k=1}^n \sum_{i_1,\cdots,i_{k+1}}t^k  x_{i_1,[n-k]}\cdots x_{i_{k+1},[n-k]}(M_{i_1}\cdots M_{i_{k+1}})_{31} \Omega_{i_1,\cdots,i_{k+1}}(1).$$
Here the index ``lower" denotes a quantity which only depends on terms of order $<n$, i.e., terms satisfying the induction hypothesis. Note that the coefficient of $t^n$ in $t^k x_{i_1,[n-k]}\cdots x_{i_{k+1},[n-k]}$ is
$$\sum_{j_1+\cdots+j_{k+1}=n-k} x_{i_1,j_1}\cdots x_{i_{k+1},j_{k+1}}.$$
Each term in this sum is a Laurent polynomial in $\lambda, $ where 
the negative degree is at most $k$ (because $x_{j,0}=\cv{x}_j$ are Laurent polynomials with negative degree one).
By the induction hypothesis, the positive degree is at most
$$\sum_{\ell=1}^{k+1} (j_{\ell}+1)=(n-k)+(k+1)=n+1.$$
Hence the coefficient of $t^n$ in $\wt{\mathfrak p}$ is
$$\wt{\mathfrak p}_n=2\pi x_{3,n}+\wt{\mathfrak p}_{\low,n}$$
where $\wt{\mathfrak p}_{\low,n}$ is a Laurent polynomial in $\lambda$ of degree at most $n+1$.
This gives
$$0=(\wt{\mathfrak p}_n-\wt{\mathfrak p}_n^*)^+
=2\pi x_{3,n}^+ +(\wt{\mathfrak p}_{\low,n}-\wt{\mathfrak p}_{\low,n}^*)^+$$
which determines
$$x_{3,n}^+=\frac{-1}{2\pi}(\wt{\mathfrak p}_{\low,n}-\wt{\mathfrak p}_{\low,n}^*)^+$$
This is a (true) polynomial of degree at most $(n+1)$.
Moreover, by the induction hypothesis, all $\Omega$-values in $\wt{\mathfrak p}_{\low,n}$ have depth
at most $n+1$, and the only terms of depth $n+1$ are of the form
\begin{equation}
\label{eq:term}
\cv{x}_{i_1}\cdots \cv{x}_{i_{n+1}}(M_{i_1}\cdots M_{i_{n+1}})_{31}\Omega_{i_1,\cdots,i_{n+1}}(1).
\end{equation}
Now $(M_{i_1}\cdots M_{i_{n+1}})_{31}$ is either zero or $\pm (2\ii)^{n+1}$. If $n$ is even, this is an imaginary number. By Proposition \ref{prop:pattern}, $\Omega_{i_1,\cdots,i_{n+1}}(1)$ is also imaginary.
Moreover, $\cv{x}_j=\cv{x}_j^*$ for $j=1,2,3$. So each term of the form \eqref{eq:term}
is invariant under $*$ so disappears in
$x_{3,n}^+$ if $n$ is even. In the same way,
$$x_{2,n}^+=\frac{-1}{2\pi}(\wt{\mathfrak q}_{\low,n}-\wt{\mathfrak q}_{\low,n}^*)^+$$
with
$$\wt{\mathfrak q}_{\low}(t)=\sum_{k=1}^n \sum_{i_1,\cdots,i_{k+1}}t^k  x_{i_1,[n-k]}\cdots x_{i_{k+1},[n-k]}(M_{i_1}\cdots M_{i_{k+1}})_{32} \Omega_{i_1,\cdots,i_{k+1}}(\ii).$$
We expand $\mathcal H_1(t)$ to order $n$ as follows.
First of all,
\begin{eqnarray*}
\cv{x}_3(e^{\ii\theta(t)})&=&-\cos(\varphi)\cos(\theta(t))\\
&=&-\cos(\varphi)(\cos(\theta_{[n-1]})\cos(\theta_n t^n)-\sin(\theta_{[n-1]})\sin(\theta_n t^n))+o(t^n)\\
&=&-\cos(\varphi)(\cos(\theta_{[n-1]})-\theta_n t^n)+o(t^n).
\end{eqnarray*}
By Equation \eqref{eq:expandp} evaluated at $\lambda=e^{i\theta(t)}$:
\begin{eqnarray*}\mathcal H_1(t)&=&2\pi x_3(e^{\ii\theta(t)})+\wt{\mathfrak p}_{\low}(e^{\ii\theta(t)})+o(t^n)\\
&=&2\pi\sum_{k=0}^n x_{3,k}(e^{\ii\theta_{[n-k]}})t^k+\wt{\mathfrak p}_{\low}(e^{\ii\theta_{[n-1]}})+o(t^n)\\
&=&2\pi\cos(\varphi)\theta_n t^n + 2\pi x_{3,n}^0 t^n+\mathcal H_{1,\low}(t)+o(t^n)
\end{eqnarray*}
where $\mathcal H_{1,\low}$ contains all the terms which are already known under the induction hypothesis:
$$\mathcal H_{1,\low}(t)=
-2\pi\cos(\varphi)\cos(\theta_{[n-1]})+2\pi\sum_{k=1}^{n-1}x_{3,k}(e^{\ii\theta_{[n-k]}})t^k
+2\pi x_{3,n}^+(\ii)t^n+\wt{\mathfrak p}_{\low}(e^{\ii\theta_{[n-1]}}).$$
We expand the function $\mathcal H_{1,\low}(t)$ in series to order $n$ in $t$ and denote
$\mathcal H_{1,\low,n}$ the coefficient of $t^n$.
(To get an idea of the complexity of this step, remember that $\theta_{[k]}$ denotes the expansion of $\theta(t)$ to order $k$ in
$t$, and $x_{3,k}$ is a polynomial of degree at most $k+1$ in $\lambda$.)
Since $\mathcal H_1(t)=0$ for all $t$, we obtain, by looking at the coefficient of $t^n$, the equation
\begin{equation}
\label{eq:H1n}
2\pi \cos(\varphi)\theta_n+ 2\pi x_{3,n}^0+\mathcal H_{1,\low,n}=0.
\end{equation}
In the same way, expanding $\mathcal H_2(t)$ gives the equation
\begin{equation}
\label{eq:H2n}
2\pi \sin(\varphi)\theta_n+ 2\pi x_{2,n}^0+\mathcal H_{2,\low,n}=0
\end{equation}
with
$$\mathcal H_{2,\low}(t)=
-2\pi\sin(\varphi)\cos(\theta_{[n-1]})+2\pi\sum_{k=1}^{n-1}x_{2,k}(e^{\ii\theta_{[n-k]}})t^k 
+2\pi x_{2,n}^+(\ii)t^n+\wt{\mathfrak q}_{\low}(e^{\ii\theta_{[n-1]}}).$$
The coefficient of $t^n$ in $\mathcal K(t)$ is
\begin{eqnarray*}
\mathcal K_n&=&\sum_{j=1}^3 \sum_{k=0}^{n}x_{j,k}x_{j,n-k}\\
&=& 2\cv{x}_1 x_{1,n}+ 2\cv{x}_2 x_{2,n}^0+2\cv{x}_{3}x_{3,n}^0+
\mathcal K_{\low,n}
\end{eqnarray*}
where $\mathcal K_{\low,n}$ contains again all the terms which are already known:
$$\mathcal K_{\low,n}=2\cv{x}_2 x_{2,n}^+ + 2 \cv{x}_3 x_{3,n}^+  +\sum_{j=1}^3 \sum_{k=1}^{n-1}x_{j,k}x_{j,n-k}.$$
Multiplying by $\lambda$, we obtain the equation
\begin{equation}
\label{eq:Kn}
\lambda{\mathcal K}_n=\ii(1-\lambda^2)x_{1,n}-\sin(\varphi)(\lambda^2+1)x_{2,n}^0-\cos(\varphi)(\lambda^2+1)x_{3,n}^0+\lambda\mathcal K_{\low,n}.
\end{equation}
Observe that $\mathcal K(t)$ is constant in $\lambda$, so  $\mathcal K_n$ is a constant.
Using the induction hypothesis, it is not hard to see that
$\lambda\mathcal K_{\low,n}$ is a (true) polynomial of degree at most $n+3$.
Let $Q_n$, $R_n$ be the quotient and remainder of the division of $\lambda\mathcal K_{\low,n}$
by $(\lambda^2-1)$. The quotient and remainder of the division of \eqref{eq:Kn}
by $(\lambda^2-1)$ give us respectively the equations
\begin{equation}
\label{eq:quotient}
-\ii x_{1,n}-\sin(\varphi)x_{2,n}^0-\cos(\varphi)x_{3,n}^0+Q_n=0
\end{equation}
\begin{equation}
\label{eq:remainder}
-2\sin(\varphi)x_{2,n}^0-2\cos(\varphi)x_{3,n}^0+R_n=\lambda{\mathcal K}_n.
\end{equation}
Equation \eqref{eq:remainder} at $\lambda=0$ gives
\begin{equation}
\label{eq:remainder0}
\sin(\varphi)x_{2,n}^0+\cos(\varphi)x_{3,n}^0=\tfrac{1}{2}R_n(0).
\end{equation}
Equations \eqref{eq:quotient} and \eqref{eq:remainder0} give
$$x_{1,n}=\tfrac{\ii}{2} R_n(0)-\ii Q_n$$
which is a polynomial of degree at most $n+1$.
The equations \eqref{eq:H1n}, \eqref{eq:H2n} and \eqref{eq:remainder0} determine
$\theta_n$, $x_{2,n}^0$ and $x_{3,n}^0$. In particular
$$\theta_n=-\frac{\sin(\varphi)}{2\pi}\mathcal H_{2,\low,n}-\frac{\cos(\varphi)}{2\pi}\mathcal H_{1,\low,n}-\frac{R_n(0)}{2}.$$
Finally, equation \eqref{eq:remainder} determines the constant $\mathcal K_n$ as the coefficient of $\lambda$ in $R_n$.
\end{proof}
\begin{remark}
By Proposition \ref{prop:symmetries}, we have for all $n\geq 1$ and $j=1,2,3$
$$x_{j,n}(-\lambda)=(-1)^{n+1}x_{j,n}(\lambda)\quad\text{ and }\quad
\theta_n=(-1)^{n+1}\theta_n.$$
Hence if $n$ is even, $x_{j,n}^0=0$ and $\theta_n=0$. So the even steps of the algorithm are much simpler as we can avoid computing $\mathcal H_{1,\low}$ and $\mathcal H_{2,\low}$ to determine $x_{2,n}^0$, $x_{3,n}^0$ and $\theta_n$.
Also, the even order terms of the expansion of the Willmore energy and the mean curvature are zero for the same reason. 
\end{remark}
\subsection{Derivatives of order 2 and 3 in the minimal case}\label{mincasereduce}
The algorithm presented in Section \ref{section:algorithm} is much simpler in the case $\varphi=\tfrac{\pi}{4}$.
Indeed, we then have by Proposition \ref{prop:symmetries}
$$x_1(-t)=x_1(t)\quad\text{ and }\quad
x_2(-t)=x_3(t).$$
Hence $x_{1,n}=0$ if $n$ is odd and $x_{3,n}=(-1)^n x_{2,n}$, so there are less computations to do.
Moreover, $\theta(t)=\pi/2$ for all $t$ so $\theta_n=0$ for $n\geq 1$ and the formula for $x_{3,n}^0$ simplifies to
\begin{equation*}
\begin{split}
\mathcal H_{1,\low,n}&=
2\pi x_{3,n}^+(i)+\wt{\mathfrak p}_{\low,n}(i)\\
x_{3,n}^0&=\frac{-1}{2\pi}\mathcal H_{1,\low,n}.
\end{split}
\end{equation*}
\begin{proposition}
\label{prop:derivatives-minimal}
If $\varphi=\pi/4$, then
\begin{align*}
x_{1,1}&=x_{1,2}=x_{1,3}=\mathcal K_1=\mathcal K_2=\mathcal K_3=0\\
x_{3,1}&=-x_{2,1}=\tfrac{-\ii}{\pi\sqrt{2}}\Omega_{2,1}(\lambda^2+1)=\tfrac{-\log(2)}{\sqrt{2}}(\lambda^2+1)\\
x_{3,2}&=x_{2,2}=\tfrac{-1}{\sqrt{2}\pi^2}\Omega_{2,1}^2(\lambda^3+\lambda)
=\tfrac{\log(2)^2}{\sqrt{2}}(\lambda^3+\lambda)\\
x_{3,3}&=-x_{2,3}=\tfrac{-\ii\sqrt{2}}{2\pi^3}\Omega_{21}^3(\lambda^4-1)
-\tfrac{\sqrt{2}}{2\pi^2}\Omega_{2,1}\Omega_{3,1,1}(\lambda^4-2\lambda^2-3)\\
&-\tfrac{\sqrt{2}}{4\pi^2}\Omega_{2,1}(\Omega_{2,2,3}-3\Omega_{3,3,3})(\lambda^2+1)^2
+\tfrac{\ii\sqrt{2}}{2\pi}\Omega_{2,1,1,1}(\lambda^4-2\lambda^2-3)\\
&-\tfrac{\ii\sqrt{2}}{4\pi}(\Omega_{2,2,2,1}-\Omega_{3,1,2,3}+\Omega_{2,1,3,3}+\Omega_{3,3,2,1})(\lambda^2+1)^2
\end{align*}
where all $\Omega$ integrals are evaluated at $z=1$.
\end{proposition}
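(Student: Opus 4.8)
The plan is to run the iterative algorithm from the proof of Proposition~\ref{prop:algorithm} for $n=1,2,3$, exploiting the extra symmetries available at $\varphi=\tfrac{\pi}{4}$ that were recorded at the start of Section~\ref{mincasereduce}. Before computing anything I would dispose of the vanishing statements by parity. Since $\theta(t)\equiv\tfrac{\pi}{2}$ in the minimal case, $\theta_n=0$ for all $n\ge 1$; since $x_1(-t)=x_1(t)$ the series $x_1$ is even in $t$, so $x_{1,1}=x_{1,3}=0$; and since $x_2(-t)=x_3(t)$ the function $\mathcal K=x_1^2+x_2^2+x_3^2$ is even in $t$, so $\mathcal K_1=\mathcal K_3=0$. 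Moreover $x_{3,n}=(-1)^n x_{2,n}$, so it suffices to compute the $x_{3,n}$, the $x_{2,n}$ following by a sign. The remaining vanishing statements $x_{1,2}=\mathcal K_2=0$ will fall out of the $n=2$ step.

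For $n=1$ this is just Proposition~\ref{prop:derivatives} specialized to $\varphi=\tfrac{\pi}{4}$: Proposition~\ref{prop:depth2-integrals} gives $\Omega_{2,1}(1)=-\pi\ii\log 2$ and $\Omega_{3,1}(\ii)=\pi\ii\log 2$, so $\Omega_{2,1}(1)+\Omega_{3,1}(\ii)=0$, which kills $x_1'$, $\theta'$, and the second summand of each of $x_2',x_3'$, leaving $x_{3,1}=-x_{2,1}=\tfrac{-\ii}{\pi\sqrt{2}}\Omega_{2,1}(1)(\lambda^2+1)=\tfrac{-\log 2}{\sqrt{2}}(\lambda^2+1)$. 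For $n=2$ I would expand $\wt{\mathfrak p}(t,x(t))$ to order $t^2$ using Proposition~\ref{prop:expandpq}: by Proposition~\ref{prop:pattern} the only depth-$2$ $\Omega$-value is $\Omega_{2,1}$, with coefficient involving $\cv{x}_1 x_{2,1}$ (not $x_{1,1}$ or $x_{1,2}$, which vanish), while the depth-$3$ contributions are $*$-invariant — being $\cv{x}_{i_1}\cv{x}_{i_2}\cv{x}_{i_3}$ times the product of the imaginary matrix entry $(M_{i_1}M_{i_2}M_{i_3})_{31}\in 2\ii\Z$ and an imaginary $\Omega$-value — so they drop out of $(\wt{\mathfrak p}_{\low,2}-\wt{\mathfrak p}_{\low,2}^*)^+$. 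This yields $x_{3,2}^+$, hence $x_{3,2}=x_{3,2}^+=\tfrac{-1}{\sqrt{2}\pi^2}\Omega_{2,1}^2(\lambda^3+\lambda)$ since $x_{3,2}^0=0$ at an even step. Substituting this (and the central values of Proposition~\ref{prop:central}) into $\mathcal K_{\low,2}=2\cv{x}_2 x_{2,2}^+ + 2\cv{x}_3 x_{3,2}^+ + \sum_j x_{j,1}^2$ shows $\mathcal K_{\low,2}=0$; dividing $\lambda\mathcal K_2=\ii(1-\lambda^2)x_{1,2}+\lambda\mathcal K_{\low,2}$ by $\lambda^2-1$ then forces $x_{1,2}=0$ and $\mathcal K_2=0$, because $x_{1,2}$ is an odd (Proposition~\ref{prop:symmetries}) polynomial in $\lambda$ of degree $\le 3$ with non-negative powers.

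The $n=3$ step is the substantial one, and proceeds exactly as the $n=1$ case but with more lower-order terms. Expanding $\wt{\mathfrak p}(t,x(t))$ to order $t^3$ brings in the depth-$4$ iterated integrals $\Omega_{2,1,1,1},\Omega_{2,2,2,1},\Omega_{2,1,3,3},\Omega_{3,1,2,3},\Omega_{3,3,2,1}$ — the depth-$4$ paths from $e_3$ to $e_1$ in Proposition~\ref{prop:pattern} — together with depth-$2$ and depth-$3$ terms evaluated on the already-known $x_{\cdot,1}$ and $x_{\cdot,2}$; unlike at an even step, the depth-$4$ terms now survive $*$-antisymmetrization since $(M_{i_1}\cdots M_{i_4})_{31}=\pm(2\ii)^4$ is real. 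From the positive part of $\wt{\mathfrak p}_{\low,3}-\wt{\mathfrak p}_{\low,3}^*$ one reads off $x_{3,3}^+$; the simplified identity $\mathcal H_{1,\low,3}=2\pi x_{3,3}^+(\ii)+\wt{\mathfrak p}_{\low,3}(\ii)$ (valid because $\theta_3=0$) gives $x_{3,3}^0$; and $x_{1,3}=\mathcal K_3=0$ follow as before. Collecting the pieces, inserting the central values, and — wherever a $z=\ii$ integral would appear — converting it to a $z=1$ integral via the $\iota$-symmetry \eqref{eq:iota*omega} (legitimate since $\tfrac{\pi}{2}-\tfrac{\pi}{4}=\tfrac{\pi}{4}$) produces the stated closed form for $x_{3,3}$, with $x_{2,3}=-x_{3,3}$. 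The main obstacle is purely computational: correctly enumerating the admissible index tuples and the associated entries of the matrix products, tracking the degrees of the Laurent polynomials in $\lambda$, and keeping the signs straight through the $*$-involution and the positive/constant projections — routine but error-prone bookkeeping that one would want to cross-check with a symbolic computation.
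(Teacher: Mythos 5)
Your plan is correct and follows essentially the same route as the paper: the parity/symmetry reductions recorded at the start of Section \ref{mincasereduce}, then the algorithm of Section \ref{section:algorithm} run for $n=1,2,3$ using the expansion of Proposition \ref{prop:expandpq}, including the non-parity cancellation $\mathcal K_{\low,2}=0$ that forces $x_{1,2}=\mathcal K_2=0$ and the simplified $\mathcal H_1$-identity (valid since $\theta\equiv\tfrac{\pi}{2}$) determining $x_{3,3}^0$. The only differences are cosmetic: the paper obtains the $n=1$ step by running the algorithm directly rather than specializing Proposition \ref{prop:derivatives}, and it writes out the order-3 bookkeeping explicitly, where your appeal to the $\iota$-symmetry is unnecessary because the $\mathfrak p$-expansion only ever involves $\Omega$-values at $z=1$.
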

\begin{proof} We follow the algorithm described in Section \ref{section:algorithm}.
Proposition \ref{prop:expandpq} gives
\begin{eqnarray*}
\lefteqn{\widehat{\mathfrak p}(t,x)=2\pi x_3+4t\,\Omega_{2,1} x_1 x_2
-8\ii t^2\,\left(\Omega_{3,1,1}x_1^2x_3+\Omega_{2,2,3}x_2^2x_3+\Omega_{3,3,3}x_3^3\right)}\\
&&+16t^3\,\left(\Omega_{2,1,1,1}x_1^3x_2+\Omega_{2,2,2,1}x_1x_2^3-\Omega_{3,1,2,3}x_1x_2x_3^2+\Omega_{2,1,3,3}x_1x_2x_3^2+\Omega_{3,3,2,1}x_1x_2x_3^2\right)\\
&&+O(t^4)\,.
\end{eqnarray*}
\subsubsection{First step ($n=1$)}
$$\wt{\mathfrak p}_{\low,1}=4\, \Omega_{2,1}\cv{x}_1\cv{x}_2=\frac{-\ii}{\sqrt{2}}\Omega_{2,1}(\lambda^{-2}-\lambda^2),\qquad x_{3,1}^+=\frac{-\ii}{\pi\sqrt{2}}\Omega_{2,1}\lambda^2$$
$$\mathcal H_{1,\low,1}=\frac{2\pi \ii}{\pi\sqrt{2}} \Omega_{2,1},\qquad
x_{3,1}^0=\frac{-\ii}{\pi\sqrt{2}}\Omega_{2,1}.$$
\subsubsection{Second step ($n=2$)}
\begin{align*}
\wt{\mathfrak p}_{\low,2}&=4\,\Omega_{2,1}(\cv{x}_1 x_{2,1} + \cv{x}_2 x_{1,1})
-8\ii(\Omega_{3,1,1}\cv{x}_1^2\cv{x}_3
+\Omega_{2,2,3}\cv{x}_2^2\cv{x}_3+\Omega_{3,3,3}\cv{x}_3^3)\\
&=\frac{\sqrt{2}}{\pi}\Omega_{2,1}^2(\lambda^3-\lambda^{-1})
-8\ii(\Omega_{3,1,1}\cv{x}_1^2\cv{x}_3
-\Omega_{2,2,3}\cv{x}_2^2\cv{x}_3-\Omega_{3,3,3}\cv{x}_3^3)\\
x_{3,2}&=x_{3,2}^+=\frac{-\sqrt{2}}{2\pi^2}\Omega_{2,1}^2(\lambda^3+\lambda)\\
\mathcal K_{\low,2}&=4\cv{x}_3 x_{3,2}+\sum_{j=1}^3 (x_{j,1})^2\\
&=\frac{-4}{2\sqrt{2}}(\lambda^{-1}+\lambda)\frac{-1}{\sqrt{2}\pi^2}\Omega_{2,1}^2(\lambda^3+\lambda)
+\frac{-2}{2\pi^2}\Omega_{2,1}^2(\lambda^2+1)^2\\
&=0\,.
\end{align*}
Hence $x_{1,2}=0$ and $\mathcal K_2=0$. Note this is a  cancellation which does not follow directly from parity.
\subsubsection{Third step ($n=3$)}
\begin{eqnarray*}
\lefteqn{\wt{\mathfrak p}_{\low,3}=4\,\Omega_{2,1}\cv{x}_1 x_{2,2}
-8\ii(\Omega_{3,1,1}\cv{x}_1^2x_{3,1}+\Omega_{2,2,3}(\cv{x}_2^2x_{3,1}+2\cv{x}_2\cv{x}_3x_{2,1})
+3\,\Omega_{3,3,3}\cv{x}_3^2 x_{3,1})}\\
&&+16\,\Omega_{2,1,1,1}\cv{x}_1^3\cv{x}_2+16\,\Omega_{2,2,2,1}\cv{x}_1\cv{x}_2^3
+16\,(-\Omega_{3,1,2,3}+\Omega_{2,1,3,3}+\Omega_{3,3,2,1})\cv{x}_1\cv{x}_2\cv{x}_3^2\\
&=& \tfrac{\ii \sqrt{2}}{\pi^2}\Omega_{2,1}^3(\lambda^4-1)
+\tfrac{\sqrt{2}}{\pi}\Omega_{2,1}\Omega_{3,1,1}(\lambda^4-\lambda^2-1+\lambda^{-2})\\
&&+\tfrac{\sqrt{2}}{2\pi}\Omega_{2,1}(\Omega_{2,2,3}-3\Omega_{3,3,3})(\lambda^4+3\lambda^2+3+\lambda^{-2})
+\tfrac{\ii\sqrt{2}}{2}\Omega_{2,1,1,1}(-\lambda^4+2\lambda^2-2\lambda^{-2}+\lambda^{-4})\\
&&-\tfrac{\ii\sqrt{2}}{4}(\Omega_{2,2,2,1}-\Omega_{3,1,2,3}+\Omega_{2,1,3,3}+\Omega_{3,3,2,1})(\lambda^4+2\lambda^2-2\lambda^{-2}-\lambda^{-4}).
\end{eqnarray*}
Hence
\begin{eqnarray*}
x_{3,3}^+&=&\tfrac{-\ii\sqrt{2}}{2\pi^3}\Omega_{21}^3\,\lambda^4
-\tfrac{\sqrt{2}}{2\pi^2}\Omega_{2,1}\Omega_{3,1,1}(\lambda^4-2\lambda^2-1)\\
&&-\tfrac{\sqrt{2}}{4\pi^2}\Omega_{2,1}(\Omega_{2,2,3}-3\Omega_{3,3,3})(\lambda^4+2\lambda^2+3)
+\tfrac{\ii\sqrt{2}}{2\pi}\Omega_{2,1,1,1}(\lambda^4-2\lambda^2)\\
&&-\tfrac{\ii\sqrt{2}}{4\pi}(\Omega_{2,2,2,1}-\Omega_{3,1,2,3}+\Omega_{2,1,3,3}+\Omega_{3,3,2,1})(\lambda^4+2\lambda^2).
\end{eqnarray*}
Moreover, $\wt{\mathfrak p}_{\low,3}(\ii)=0$ so $x_{3,3}(\ii)=0$ and this determines $x_{3,3}$ as in Proposition \ref{prop:derivatives-minimal}.
\end{proof}
\begin{corollary}
\label{cor:alpha3}
$$\text{Area}(\xi_{1,g})=8\pi(1-\alpha_1 s -\alpha_3 s^3)+ O(s^5)$$
with $s=\frac{1}{2g+2}$,
$\alpha_1=\log(2)$
and
$$\alpha_3=\tfrac{-\ii}{\pi^3}\Omega_{2,1}^3
+\tfrac{1}{2\pi^2}\Omega_{2,1}(\Omega_{2,2,3}-6\Omega_{3,1,1}-3\Omega_{3,3,3})
+\tfrac{\ii}{2\pi}(6\Omega_{2,1,1,1}+\Omega_{2,2,2,1}-\Omega_{3,1,2,3}+\Omega_{2,1,3,3}+\Omega_{3,3,2,1}).$$
\end{corollary}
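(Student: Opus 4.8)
The plan is to obtain the corollary by inserting the expansions of Proposition \ref{prop:derivatives-minimal} into the area formula of Point~(2) of Proposition \ref{prop:building}, specialised to $\varphi=\tfrac{\pi}{4}$. Since $f_{g,\pi/4}$ is minimal we have $\mathcal W(f_{g,\pi/4})=\operatorname{Area}(\xi_{1,g})$, and with $\cos(\tfrac{\pi}{4})=\sin(\tfrac{\pi}{4})=\tfrac{1}{\sqrt{2}}$ the formula of Point~(2) reads
\[
\operatorname{Area}(\xi_{1,g})=8\pi\left(1-\tfrac{1}{\sqrt{2}}\,\mathcal K(x(t))^{-1/2}\bigl(x_2^0(t)-x_3^0(t)\bigr)\right),
\]
where $x_i^0(t)=\sum_{n\geq 0}x_{i,n}^0\,t^n$ is the $\lambda^0$-component of $x_i(t)$ and $t$ is tied to $s=\tfrac{1}{2g+2}$ by $t\sqrt{\mathcal K(x(t))}=s$. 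By Proposition \ref{prop:derivatives-minimal} we have $\mathcal K_1=\mathcal K_2=\mathcal K_3=0$, so $\mathcal K(x(t))=1+O(t^4)$, whence $t=s+O(s^5)$ and $\mathcal K(x(t))^{-1/2}=1+O(s^4)$. Since $\cv x_2$ and $\cv x_3$ have no $\lambda^0$-term, the series $x_2^0(t)-x_3^0(t)$ starts at order $t$, and combining these remarks gives
\[
\operatorname{Area}(\xi_{1,g})=8\pi\left(1-\tfrac{1}{\sqrt{2}}\sum_{n=1}^{3}\bigl(x_{2,n}^0-x_{3,n}^0\bigr)s^n\right)+O(s^5).
\]

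It then remains to read off the constant-in-$\lambda$ parts of $x_{2,n}$ and $x_{3,n}$ for $n=1,2,3$ directly from Proposition \ref{prop:derivatives-minimal}. For $n=1$, $x_{3,1}=-x_{2,1}=\tfrac{-\log(2)}{\sqrt{2}}(\lambda^2+1)$ gives $x_{2,1}^0-x_{3,1}^0=\sqrt{2}\log(2)$, so $\alpha_1=\tfrac{1}{\sqrt{2}}\cdot\sqrt{2}\log(2)=\log(2)$. For $n=2$, $x_{3,2}=x_{2,2}$ is a multiple of $\lambda^3+\lambda$ and hence has no $\lambda^0$-term, so the $s^2$-coefficient vanishes (consistent with the parity remark after Proposition \ref{prop:algorithm}). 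For $n=3$, $x_{3,3}=-x_{2,3}$ forces $x_{2,3}^0-x_{3,3}^0=-2x_{3,3}^0$, so $\alpha_3=-\sqrt{2}\,x_{3,3}^0$.

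The last step is to extract $x_{3,3}^0$ from the displayed formula for $x_{3,3}$, using $[\lambda^0](\lambda^4-1)=-1$, $[\lambda^0](\lambda^4-2\lambda^2-3)=-3$ and $[\lambda^0](\lambda^2+1)^2=1$, and then to multiply by $-\sqrt{2}$; merging the $\Omega_{2,1}\Omega_{3,1,1}$-term into the coefficient of $\tfrac{1}{2\pi^2}\Omega_{2,1}$ and the $\Omega_{2,1,1,1}$-term into the coefficient of $\tfrac{\ii}{2\pi}$ produces exactly the stated expression for $\alpha_3$. I do not anticipate a genuine obstacle: all the analytic content is carried by Proposition \ref{prop:building}(2) and Proposition \ref{prop:derivatives-minimal}, and the only point that deserves care is checking that the change of variable $t\leftrightarrow s$ leaves the coefficients of $s^n$ unchanged for $n\leq 3$, which holds precisely because of the cancellations $\mathcal K_1=\mathcal K_2=\mathcal K_3=0$.
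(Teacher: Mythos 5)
Your proposal is correct and follows essentially the same route as the paper: the paper's proof of Corollary \ref{cor:alpha3} likewise combines Point~(2) of Proposition \ref{prop:building} (with $\mathcal W=\operatorname{Area}$ at $\varphi=\tfrac{\pi}{4}$), the vanishing $\mathcal K(t)=1+O(t^4)$ to justify $t=s+O(s^5)$, and the evaluation of $x_{3,1}$ and $x_{3,3}$ from Proposition \ref{prop:derivatives-minimal} at $\lambda=0$. Your bookkeeping via $x_{2,n}^0-x_{3,n}^0=-2x_{3,n}^0$ for odd $n$ and the resulting $\alpha_3=-\sqrt{2}\,x_{3,3}^0$ reproduces the stated formula exactly.
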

\begin{proof}
Since $\mathcal K(t)=1+O(t^4)$, we have $t=s+O(s^5)$.
By Proposition \ref{prop:building}
$$\text{Area}(\xi_{1,g})=8\pi\big(1+\sqrt{2}(x_{3,1}^0 s +x_{3,3}^0 s^3)\big)+O(t^5).$$
Evaluating $x_{3,1}$ and $x_{3,3}$ at $\lambda=0$ gives the result.
\end{proof}
\subsection{Implementation}
\label{section:implementation}
In the general case of $\varphi\neq\pi/4$, the computation of the order 3 derivatives is extremely tedious.
The algorithm has been implemented in Mathematica. We give the output of the algorithm in Appendix \ref{appendix-order3}.

The algorithm can also be used to compute the coefficients $\alpha_n$ numerically.
We have been able to do so up to $\alpha_{21}$. The results are presented in Appendix \ref{appendix:numalpha}.
In general, the algorithm expresses the coefficient $\alpha_n$ in terms of $\Omega$-values of depth up to $n+1$. This raises the question of how to compute these $\Omega$-values numerically.

As iterated integrals, they can be computed using a variant of the Simpson method. This is how the numerical value of $\alpha_3$ was first computed and the number $\frac{9}{4}\zeta(3)$ was recognized. The precision is however not very good. Each $\Omega$-value of depth $n$ can be expressed quite naturally as a sum of $4^n$ Multiple Polylogarithms (in fact only $3^n$, see Appendix \ref{appendix:nummpl}).
Multiple Polylogarithms are implemented in most computer algebra systems and this allows for the computation of the coefficients $\alpha_n$ with arbitrary precision.
The coefficient $\alpha_3$ was computed with 500 digits precision, confirming the value
$\frac{9}{4}\zeta(3)$, before we found a mathematical proof. On the other hand, this does not permit computing $\Omega$-values of large depth $n$, as the number $3^n$ of MPLs gets too large.

In the minimal case $\varphi=\frac{\pi}{4}$, each $\Omega$-value which appears in the output of the algorithm (characterized by Proposition \ref{prop:pattern}) can in fact be expressed as a single alternating multiple zeta value. This is not trivial and is the subject of Section \ref{section:multizetas}. This observation permits the efficient computation of each required $\Omega$-value and is how computing the coefficients up to $\alpha_{21}$ could be realized.
\begin{remark}
One issue is that the algorithm computes the derivatives of the parameters with respect to $t$ so expresses the area of Lawson surface $\xi_{1,g}$ as a series in $t$ (whose relation to $g$ is not explicit), whereas we want a series in $s=\frac{1}{2g+2}$.
We solve this problem as follows: since the algorithm also computes $\mathcal K(t)$ as a series in $t$, we substitute 
$s= t\sqrt{\mathcal K(t)}$ in the series \eqref{eq:area-series} and identify with the series in $t$ computed by the algorithm to obtain a triangular system of equations determining the coefficients $\alpha_k$.
\end{remark}

\section{\texorpdfstring{$\Omega$}{Omega}-values as alternating multiple zeta values}
\label{section:multizetas}
In the minimal case $\varphi=\frac{\pi}{4}$, we can express the $\Omega$-values which appear
in the expansion of $\mathfrak p$ in Proposition \ref{prop:expandpq} as alternating multiple zeta values.
We start with  recalling some definitions and notations.

\begin{definition}[Alternating multiple zeta values]
	Let \( n_1,\ldots,n_d \in \mathbb{Z}_{>0} \) be positive integers, and let \( \eps_1,\ldots,\eps_n \in \{ \pm 1 \} \).  The \emph{alternating multiple zeta value} (alternating MZV) with indices \( n_1,\ldots,n_d \) and signs \( \eps_1,\ldots,\eps_d \) is defined as follows,
	\[
		\zeta(n_1,\ldots,n_d; \eps_1,\ldots,\eps_d) \coloneqq \sum_{0 < k_1 < \cdots < k_d} \frac{\eps_1^{k_1} \cdots \eps_d^{k_d}}{k_1^{n_1} \cdots k_d^{n_d}} \,.
	\]
	The \emph{weight} of \( \zeta(n_1,\ldots,n_d; \eps_1,\ldots,\eps_d) \) is given by \( w = n_1 + \cdots + n_d \), the sum of the indices, and the \emph{depth} thereof is \( d \), the number of indices.  (Note that this becomes the definition of the multiple polylogarithm \( \Li_{n_1,\ldots,n_d}(\eps_1,\ldots,\eps_d) \), if we allow arbitrary \( \eps_i \) with \( |\eps_i| \leq 1 \), and the condition \( (n_d,\eps_d) \neq (1,1) \) for convergence.)
\end{definition}

The alternating MZV \( \zeta(n_1,\ldots,n_d; \eps_1,\ldots,\eps_d) \) converges if and only if \( (n_d,\eps_d) \neq (1,1) \).
It will be convenient to simplify the notation for \( \zeta(n_1,\ldots,n_d; \eps_1,\ldots,\eps_d) \) as follows.  When \( \eps_i = -1 \), write \( \overline{n_i} \) as the corresponding index, otherwise \( \eps_i = 1 \), write the index \( n_i \) as usual.  For example
\[
	\zeta(n_1,\overline{n_2},n_3,\overline{n_4},\overline{n_5}) = \zeta(n_1,n_1,n_3,n_4,n_5; 1, -1, 1, -1, -1) \,.
\]

One can represent an alternating MZV as an iterated integral, over the following family of differential forms 
\[
	\eta_0(t) = \frac{\mathrm{d}{t}}{t}  \,,  \quad	\eta_1(t) = \frac{\mathrm{d}{t}}{t-1}  \,,  \quad \eta_{-1}(t) = \frac{\mathrm{d}{t}}{t-(-1)}  \,.
\]
By termwise integration of the resulting geometric series expansion of \( \eta_{\pm 1}(t_i) \) in the iterated integral (see \cite[Theorem 2.2]{goncharovMPL}), one has
\begin{equation}\label{eqn:mzvtoint}
	\begin{aligned}
	& \zeta(n_1,\cdots,n_d; \eps_1, \ldots, \eps_d) \\
	& = (-1)^d \int_{0}^1 \eta_{\eps_1 \cdots \eps_d} \, \eta_0^{n_1-1} \, \eta_{\eps_2 \cdots \eps_d} \, \eta_0^{n_2-1} \cdots \eta_{\eps_d} \,  \eta_0^{n_d - 1} \,,
\end{aligned}
\end{equation}
along the straight-line path \( 0 \to 1 \).
The goal of this section is to establish the following proposition, evaluating each \( \Omega \)-value which appears in the expansion of $\mathfrak{p}$ (Proposition \ref{prop:expandpq}), as \( \ii \pi \) times a single alternating MZV (expressed in the iterated integral form), with indices in \( \{\overline{1}, 2, \overline{2}\} \).
In this section, all $\Omega$-values are evaluated at $z=1$.

\begin{proposition}\label{prop:omegaeval}
	Consider \( \Omega_{i_1,\ldots, i_n} \), such that \( (i_1,\ldots,i_n) \) forms a path from \( e_3 \) to \( e_1 \) in the graph of Proposition \ref{prop:pattern}.  Let the path be \( e_{v_0} = e_3 \to e_{v_1} \to e_{v_2} \to \cdots \to e_{v_{n-1}} \to e_1 = e_{v_n} \), then
	\[
		\Omega_{i_1,\ldots, i_n} = (-1)^{\#\{ i_j = 1\}} \ii\pi \cdot \int_{f(v_0)=0}^{f(v_n)=1} \eta_{f(v_1)} \cdots  \eta_{f(v_{n-1})} \,,
	\]
	where \( f(1) = 1 \), \( f(2) = -1 \), \( f(3) = 0 \).
\end{proposition}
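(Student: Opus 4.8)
## Proof proposal

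The plan is to reduce the evaluation of each $\Omega$-value to an iterated integral of the three differential forms $\eta_0,\eta_1,\eta_{-1}$ along the straight line $0\to 1$, and then recognize that integral as $\ii\pi$ times a single alternating MZV via the formula \eqref{eqn:mzvtoint}. The starting point is the explicit description of the forms $\omega_j$ at $\varphi=\tfrac{\pi}{4}$: since $\cos(2\varphi)=0$, the denominator $z^4-2\cos(2\varphi)z^2+1$ becomes simply $z^4+1$, and a partial fraction decomposition over the fourth roots of $-1$ (equivalently, over the eighth roots of unity) expresses $\omega_1,\omega_2,\omega_3$ as rational combinations of $\tfrac{dz}{z-\zeta}$ for $\zeta \in \{e^{\pm\ii\pi/4},e^{\pm 3\ii\pi/4}\}$. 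The first step is therefore to carry out this decomposition carefully and record the coefficients.

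The second, and crucial, step is a \emph{change of variables}. The forms $\eta_0,\eta_1,\eta_{-1}$ in Proposition \ref{prop:omegaeval} have poles at $0,1,-1$, not at eighth roots of unity, so a Möbius (or power) substitution is needed to move the singularities into position. The natural candidate is $z \mapsto u$ with $z^2 = \frac{something}{something}$, or more likely the substitution suggested by the structure of the problem: since the path is the segment $[0,1]$ in the $z$-plane and the branch points sit at $e^{\pm\ii\pi/4}$, one wants a map sending $0\mapsto 0$, $1\mapsto 1$, and the four poles $e^{\pm\ii\pi/4}, e^{\pm 3\ii\pi/4}$ to the set $\{1,-1,\infty,\ldots\}$ in a way compatible with the $f$-labelling $f(1)=1, f(2)=-1, f(3)=0$. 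I expect the right substitution to be something like $u = \frac{2z}{1+z^2}$ or a closely related rational map of degree $2$ (consistent with the fact that $\omega_3$, the form associated to the vertex $e_3$ fixed by the recursion, is the one whose primitive $\Omega_3(1) = \ii\pi$ already carries the factor $\ii\pi$). Under such a substitution one checks that $\omega_1,\omega_2,\omega_3$ pull back (up to sign) to $\eta_0,\eta_{-1},\eta_1$ respectively — matching the claimed $f(3)=0$, $f(2)=-1$, $f(1)=1$ — and that the segment $[0,1]$ maps to $[0,1]$. Verifying this identification of forms, including getting all the signs right so that the prefactor comes out to exactly $(-1)^{\#\{i_j=1\}}$, is where the bulk of the careful computation lies.

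Once the dictionary $\omega_{i}\leftrightarrow \eta_{f(i)}$ is established on the level of one-forms, the statement about $\Omega_{i_1,\ldots,i_n}$ follows by functoriality of iterated integrals under pullback: $\int_0^1 \omega_{i_1}\cdots\omega_{i_n} = \pm \int_0^1 \eta_{f(i_1)}\cdots \eta_{f(i_n)}$, with the sign being the product of the individual signs, i.e. $(-1)^{\#\{i_j=1\}}$ since only $\omega_1\mapsto -\eta_1$ contributes a sign (one must confirm $\omega_2 \mapsto +\eta_{-1}$ and $\omega_3 \mapsto +\eta_1$... or whichever signs actually occur — this needs to be pinned down). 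The endpoints $f(v_0)=f(3)=0$ and $f(v_n)=f(1)=1$ are exactly the endpoints $0$ and $1$ of the integration path, so no regularization issues arise at the boundary \emph{provided} the path-endpoint forms are admissible; here one should note that $v_0 = 3$ and $v_n = 1$ are forced by Proposition \ref{prop:pattern}, and $f(3)=0$, $f(1)=1$ means the first form is $\eta_0$-free at $0$ only if $v_1\neq 3$, which again follows from the graph structure (no loops). Finally, comparing with \eqref{eqn:mzvtoint} shows the resulting integral $\int_0^1 \eta_{f(v_1)}\cdots\eta_{f(v_{n-1})}$ — which has word length $n-1$, hence weight $n-1$, but after restoring the $\ii\pi$ from the outer $\omega_3\leftrightarrow$ boundary contribution has total weight $n$ — is (up to $\ii\pi$) an alternating MZV with indices in $\{\overline 1, 2, \overline 2\}$, the restriction on indices coming from the fact that the graph paths never traverse two consecutive edges with the same $\eta_0$-label, bounding the exponents of $\eta_0$ in any admissible word by $1$.

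The main obstacle I anticipate is identifying the correct change of variables and then tracking signs and the precise form of the prefactor through it — in particular making sure that the half-integer powers or square roots that naturally appear (the branch points are eighth roots of unity, and $\omega_3$'s primitive is $\ii\pi$, suggesting a degree-$2$ map) do not obstruct the identification, and that the pullback of each $\omega_i$ is \emph{exactly} $\pm\eta_{f(i)}$ with no leftover rational factor. A secondary subtlety is justifying the manipulation of iterated integrals when the path passes near the branch points $e^{\pm\ii\pi/4}$: since the segment $[0,1]$ stays strictly inside the unit disk and away from those points, all the relevant $\Omega_{i_1,\ldots,i_n}(z)$ are genuinely holomorphic along $[0,1)$, and convergence at $z=1$ is controlled because the word necessarily \emph{ends} at the vertex $e_1$, i.e. its last letter is not the one producing a logarithmic divergence at $1$ — this matches the MZV admissibility condition $(n_d,\eps_d)\neq(1,1)$ and should be spelled out.
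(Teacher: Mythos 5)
There is a genuine gap: the key mechanism you propose --- a single change of variables $u=g(z)$ under which each $\omega_i$ pulls back \emph{exactly} to $\pm\eta_{f(i)}$, followed by functoriality of iterated integrals --- does not exist, and the paper's proof is built precisely around the fact that it cannot. If $g^*\eta_0=\pm\omega_3$, $g^*\eta_{-1}=\pm\omega_2$, $g^*\eta_1=\pm\omega_1$ held simultaneously, then $g$, $g+1$, $g-1$ would each have to be (constants times) the explicit rational functions $\exp(\pm\Omega_3)$, $\exp(\pm\Omega_2)$, $\exp(\pm\Omega_1)$, an overdetermined system with no solution; moreover $\omega_3$ is holomorphic at $z=0$ while $\eta_0$ has a pole at $u=0$, so no map with $g(0)=0$ can realize even the single identification $f(3)=0$. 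What actually happens in the paper is that, after the M\"obius map $r$ of \eqref{eqn:moeb}, each $\widetilde{\omega_i}$ is a sum of \emph{three} logarithmic forms, and the correct dictionary identifies $\widetilde{\omega_1},\widetilde{\omega_2},\widetilde{\omega_3}$ (up to sign) with the \emph{algebraic} forms $F^{\half,\half}_{1,-1}$, $F^{\half,\half}_{0,1}$, $F^{\half,\half}_{0,-1}$ --- i.e.\ $\tfrac{dt}{\sqrt{(t-1)(t+1)}}$ etc.\ --- pulled back along the degree-two rational parametrization of the genus-zero curve $\mathcal C$. The passage from these square-root forms to the logarithmic forms $\eta_{f(v_j)}$ is \emph{not} a pullback identity at the level of 1-forms; it is the Hirose--Sato invariance of iterated beta integrals under the upper-parameter shift $\alpha:\half\to 0$, combined with Lemma \ref{lem:beta0} computing the $\alpha\to0^+$ limit, and with the normalization $\int_0^{-1}F^{\half,\half}_{0,-1}=\ii\pi$ supplying the prefactor. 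This nontrivial input is exactly what your argument is missing, and the graph-path condition enters essentially here (consecutive beta forms must share a pole), not merely through which words occur.

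A symptom that the proposed route cannot reproduce the statement is the shape mismatch you partially noticed: naive functoriality would yield an iterated integral of $n$ forms indexed by the \emph{edge} labels $f(i_j)$, whereas the proposition asserts an integral of $n-1$ forms indexed by the intermediate \emph{vertices} $f(v_1),\ldots,f(v_{n-1})$, times $\ii\pi$. In the paper this drop from $n$ to $n-1$ and the $\ii\pi$ both come out of the beta-integral formalism (the $n$ forms involve $n+1$ points; in the $\alpha\to0$ limit the first and last become the endpoints of integration, and the normalizing denominator produces $\ii\pi$); your appeal to a ``boundary contribution restoring the $\ii\pi$'' is a hand-wave covering exactly the step that needs the Hirose--Sato theorem. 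The surrounding observations in your write-up (the sign $(-1)^{\#\{i_j=1\}}$ coming from the form attached to edges labelled $1$, the $\{\overline 1,2,\overline 2\}$ restriction from the no-loops property of the graph, convergence at the endpoint because the path ends at $e_1$) are all consistent with the paper, but they sit on top of a reduction that is not available as stated.
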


To more clearly illustrate this proposition, we give the following example.  Further examples can be found in Eq.~\eqref{eqn:wt4omegas} below, in the context of computing \( \alpha_3 \).

\begin{example}
	Consider the \( \Omega \)-value,
	\[
		\Omega_{312222132133}
	\]
	whose indices \( (i_j)_{j=1}^{12} = (3,1,2,2,2,2,1,3,2,1,3,3) \) corresponds to the path
	\[
		e_3 \xrightarrow{3}
		e_1 \xrightarrow{1}
		e_2 \xrightarrow{2}
		e_3 \xrightarrow{2}
		e_2 \xrightarrow{2}
		e_3 \xrightarrow{2}
		e_2 \xrightarrow{1}
		e_1 \xrightarrow{3}
		e_3 \xrightarrow{2}
		e_2 \xrightarrow{1}
		e_1 \xrightarrow{3}
		e_3 \xrightarrow{3}
		e_1  \,.
	\]
	The vertices \( e_{v_j} \) of this path are given by \( (v_j)_{j=0}^{12} = (3,1,2,3,2,3,2,1,3,2,1,3,1) \).

	 Applying \( f \) as defined above, gives \vspace{-0.5em}
	\[
		(f(v_j))_{j=0}^{12} = (\underbrace{0}_{\mathclap{\text{lower bound}}},\overbrace{1,-1,0,-1,0,-1,1,0,-1,1,0}^{\mathclap{\text{differential forms}}},\underbrace{1}_{\mathclap{\text{upper bound}}})
	\]
	as the lower bound, differential form indices, and upper bound appearing in the iterated integral expression, respectively.  We count also that \( \#\{i_j=1\} = 3 \).  So we obtain
	\begin{align*}
		\Omega_{312222132133} &= (-1)^3 \cdot \ii\pi \cdot \int_0^1 \eta_1 \, \eta_{-1} \,\eta_0 \,\eta_{-1} \,\eta_0 \,\eta_{-1} \,\eta_1 \,\eta_0 \,\eta_{-1} \,\eta_1 \,\eta_0
	\end{align*}
	After converting from the iterated integral to alternating MZV form via Eq.~\eqref{eqn:mzvtoint}, with depth $d=7$ equal to the number of non-zero forms in the iterated integral, we obtain
	\[
		\Omega_{312222132133} = \ii\pi \cdot \zeta(\overline{1},2,2,\overline{1},\overline{2},\overline{1},2) \,.
	\]
\end{example}

\begin{remark}
	Since \( v_i \neq v_{i+1} \), as the graph of Proposition \ref{prop:pattern} has no loops, the string \( f(v_1),\ldots,f(v_{n-1}) \) starts with a non-zero entry, ends with an entry \( \neq 1 \), and never contains two consecutive 0's, 1's or $-1$'s.  Hence when in alternating MZV form, the arguments are exactly from the set \( \{\overline{1}, 2, \overline{2}\} \) (which correspond respectively to the following sequences inside the integral: \( \overline{1} \leftrightarrow (\eta_{\pm1}) \eta_{\mp1} \), \( 2 \leftrightarrow (\eta_{\pm1} \eta_0) \eta_{\pm 1} \) and \( \overline{2} \leftrightarrow (\eta_{\pm1} \eta_0) \eta_{\mp1} \), where the sequence in brackets is the part which gives the size of the index, and the next form determines whether or not that index is barred).  This hence establishes the structural result involving \( \{\overline{1}, 2, \overline{2} \}\) indices given in Theorem \ref{thm1}.
\end{remark}

As a starting point, consider the M\"obius transformation r, which maps \( (0, -1, \infty, 1) \) to \( (p_1, p_2, p_3, p_4) \) respectively. Explicitly, this is given by
\begin{equation}\label{eqn:moeb}
	z = r(w) = p_3 \frac{w+\ii}{w-\ii} \,.	
\end{equation}
We compute that
\begin{align*}
	\widetilde{\omega_1} \coloneqq r^\ast \omega_1 &= \Big(  {-} \frac{1}{w-(-1)} + \frac{1}{w} - \frac{1}{w-1} \Big) \mathrm{d}w \\
	\widetilde{\omega_2} \coloneqq r^\ast \omega_1 &= \Big( {-} \frac{1}{w-(-1)} + \frac{1}{w} + \frac{1}{w-1} \Big) \mathrm{d}w \\
	\widetilde{\omega_3} \coloneqq r^\ast \omega_1 &= \Big(  \phantom{{+}} \frac{1}{w-(-1)} + \frac{1}{w} - \frac{1}{w-1} \Big) \mathrm{d}w \,,
\end{align*}
and observe that \( 0 = r(-\ii), 1 = r(-1 + \sqrt{2}) \).  So we also have the iterated integral expression
\begin{align}
\label{eqn:omega:3n}
	\Omega_{i_1,\ldots, i_n} = \int_{-\ii}^{-1+\sqrt{2}} \widetilde{\omega_{i_1}}(w) \cdots \widetilde{\omega_{i_n}}(w)  
\end{align}
along the straight-line path from \( -\ii \) to \( -1 + \sqrt{2} \).  (The pullback of the straight-line path \( 0 \to 1 \) is homotopic, relative to the end points, to this straight-line path from \( -\ii \) to \( -1 + \sqrt{2} \), and by the homotopy invariance of these iterated integrals, we may deform the original path to the straight-line path.)

\paragraph{\bf Iterated Beta Integrals:}  We must now consider the iterated beta integrals developed by Hirose and Sato \cite{hsBetaIntegral}.  Introduce the differential form
\[
	F_{x,y}^{\alpha,\beta}(t) \coloneqq \frac{(x-y)^{\alpha-\beta} \mathrm{d}t}{(t-x)^\alpha (t - y)^{1-\beta}} \,,
\]
and the associated \emph{iterated beta integral}, with \( x_i \neq x_{i+1} \),
\[
	\beta_\gamma\Big( 
	\genfrac{.}{.}{0pt}{0}{\alpha_0}{x_0} \Big| 
	\genfrac{.}{.}{0pt}{0}{\alpha_1}{x_1} \Big| 
	\cdots \Big|
	\genfrac{.}{.}{0pt}{0}{\alpha_{n-1}}{x_{n-1}} \Big| 
	\genfrac{.}{.}{0pt}{0}{\alpha_n}{x_n} 
	\Big)
	\coloneqq \frac{
	\int_{\gamma} F_{x_0,x_1}^{\alpha_0,\alpha_1}\, F_{x_1,x_2}^{\alpha_1,\alpha_2}\, \cdots \, F_{x_{n-1},x_n}^{\alpha_{n-1},\alpha_n} 
	}{
		\int_\gamma F_{x_0,x_n}^{\alpha_0, \alpha_{n}}
	}
	\,,
\]
along some fixed path \( \gamma \) from \( x_0 \) to \( x_n \).  A useful observation is that the adjacent 1-forms \( F_{x_i,x_{i+1}}^{\alpha_i,\alpha_{i+1}} \) and \( F_{x_{i+1},x_{i+2}}^{\alpha_{i+1},\alpha_{i+2}} \) share the pole \( x_{i+1} \); this plays a key role for Hirose and Sato, when they derive a formula for the total differential of the iterated beta integral.

One of their main theorems on iterated beta integrals is the following invariance in the upper parameter.

\begin{theorem}[Hirose and Sato, \cite{hsBetaIntegral}, in preparation]
	The iterated beta integral is invariant under translation of all upper parameters by a constant \( c \),
	\[
			{\beta}_\gamma\Big( 
		\genfrac{.}{.}{0pt}{0}{\alpha_0}{x_0} \Big| 
		\genfrac{.}{.}{0pt}{0}{\alpha_1}{x_1} \Big| 
		\cdots \Big|
		\genfrac{.}{.}{0pt}{0}{\alpha_{n-1}}{x_{n-1}} \Big| 
		\genfrac{.}{.}{0pt}{0}{\alpha_n}{x_n} 
		\Big) =
			{\beta}_\gamma\Big( 
		\genfrac{.}{.}{0pt}{0}{\alpha_0 + c}{x_0} \Big| 
		\genfrac{.}{.}{0pt}{0}{\alpha_1 + c}{x_1} \Big| 
		\cdots \Big|
		\genfrac{.}{.}{0pt}{0}{\alpha_{n-1}+c}{x_{n-1}} \Big| 
		\genfrac{.}{.}{0pt}{0}{\alpha_n+c}{x_n} 
		\Big) \,.
	\]

	\begin{proof}[Proof idea {\rm (for details, see Hirose and Sato \cite{hsBetaIntegral})}]
		Goncharov showed \cite[Theorem 2.1]{goncharovMPL} that the iterated integral
		\[
			\int_{x_0}^{x_{n+1}} \eta_{x_1}  \cdots  \eta_{x_n}
		\]
		has a total differential of the form
		\[
			\mathrm{d} \int_{x_0}^{x_{n+1}} \eta_{x_1} \cdots \eta_{x_n} = \sum_{i=0}^n \Big( \int_{x_0}^{x_{n+1}} \eta_{x_1} \cdots  \widehat{\eta_{x_i}} \cdots \eta_{x_n} \Big)  \cdot \mathrm{d} \log\Big(\frac{x_i - x_{i+1}}{x_i - x_{i-1}}\Big) \,,
		\]
		where \( \widehat{\eta_{x_i}} \) denotes that \( \eta_{x_i} \) is omitted from the integrand.  This is a direct calculation by noting that after differentiating with respect to \( x_i \) the integral \( \int_{t_{i-1} \leq t_i \leq t_i+1} \eta_{x_i}(t_i) \) (somewhere inside the iterated integral) becomes
		\[
			\int_{t_{i-1} \leq t_i \leq t_{i+1}} \frac{\mathrm{d}t_i}{(t_i-x_i)^2} = \frac{1}{t_{i-1} - x_i} - \frac{1}{t_{i+1} -  x_i} \,.
		\]
		Then by applying the partial fractions identities
		\begin{align*}
			& \frac{\mathrm{d}t_{i-1}}{(t_{i-1} - x_{i-1})(t_{i-1} - x_i)} = \frac{1}{x_{i-1} - x_i} \Big( \frac{\mathrm{d}t_{i-1}}{t_{i-1} - x_{i-1}} - \frac{\mathrm{d}t_{i-1}}{t_{i-1} - x_i} \Big) \\
			& \frac{\mathrm{d}t_{i+1}}{(t_{i+1} - x_{i+1})(t_{i+1} - x_i)} = \frac{1}{x_{i+1} - x_i} \Big( \frac{\mathrm{d}t_{i+1}}{t_{i+1} - x_{i+1}} - \frac{\mathrm{d}t_{i+1}}{t_{i+1} - x_i} \Big) \,,
		\end{align*}
		one can carry out the remaining integrations (over \( t_1,\ldots,\widehat{t_i},\ldots,t_n \)), and obtain the above formula for the total differential.
		
		For iterated beta integrals, the key property that adjacent 1-forms \( F_{x_i,x_{i+1}}^{\alpha_i,\alpha_{i+1}} \) and \( F_{x_{i+1},x_{i+2}}^{\alpha_{i+1},\alpha_{i+2}}  \) share the pole \( x_i \) allows Hirose and Sato to derive a similar Goncharov-type formula for the total differential in this case.  Namely, if \( \alpha_i \) are fixed, then:
		\[
			\mathrm{d} \beta_\gamma\Big( 
			\genfrac{.}{.}{0pt}{0}{\alpha_0}{x_0} \Big| 
			\genfrac{.}{.}{0pt}{0}{\alpha_1}{x_1} \Big| 
			\cdots \Big|
			\genfrac{.}{.}{0pt}{0}{\alpha_{n-1}}{x_{n-1}} \Big| 
			\genfrac{.}{.}{0pt}{0}{\alpha_n}{x_n} 
			\Big) =
			\sum_{i=1}^n \beta_\gamma\Big( 
			\genfrac{.}{.}{0pt}{0}{\alpha_0}{x_0} \Big| 
			\genfrac{.}{.}{0pt}{0}{\alpha_1}{x_1} \Big| 
			\cdots \Big|
			\widehat{\genfrac{.}{.}{0pt}{0}{\alpha_i}{x_i}}
			\Big| \cdots \Big|
			\genfrac{.}{.}{0pt}{0}{\alpha_{n-1}}{x_{n-1}} \Big| 
			\genfrac{.}{.}{0pt}{0}{\alpha_n}{x_n} 
			\Big) \mathrm{d} \beta\Big( 
			\genfrac{.}{.}{0pt}{0}{\alpha_{i-1}}{x_{i-1}} \Big| 
			\genfrac{.}{.}{0pt}{0}{\alpha_i}{x_i} \Big| 
			\genfrac{.}{.}{0pt}{0}{\alpha_{i+1}}{x_{i+1}} 
			\Big) \,,
		\]
		where the case \( n = 2 \), can be given in terms of the usual \( \mathrm{d}\log \) directly
		\[
			\mathrm{d} \beta_\gamma\Big( 
			\genfrac{.}{.}{0pt}{0}{\alpha_{0}}{x_{0}} \Big| 
			\genfrac{.}{.}{0pt}{0}{\alpha_1}{x_1} \Big| 
			\genfrac{.}{.}{0pt}{0}{\alpha_{2}}{x_{2}} 
			\Big) = | x_0 - x_1 |^{\alpha_0 - \alpha_1} | x_1 - x_2 |^{\alpha_1 - \alpha_2} | x_2 - x_0 |^{\alpha_2 - \alpha_0}  \mathrm{d}\log\Big( \frac{z_1 - z_2 }{ z_1 - z_0 } \Big) \,.
		\]
		(In particular, this is independent of the path, hence the missing subscript \( \gamma \) on \( \mathrm{d}\beta \) on the right-hand side of the total differential above.)
		
		Supposing that \( \beta_\gamma\big( \genfrac{.}{.}{0pt}{1}{\alpha_{0}}{x_{0}} \big| \cdots \big|
		\genfrac{.}{.}{0pt}{1}{\alpha_m}{x_m} \big) \) is invariant under translation of \( \alpha_i \mapsto \alpha_i + c \)  for \( m < n \), one sees from the differential equation, that
		\begin{equation}\label{eqn:betadiff}
			\beta_\gamma\Big( \genfrac{.}{.}{0pt}{0}{\alpha_{0}+c}{x_{0}} \Big| \cdots \Big|
			\genfrac{.}{.}{0pt}{0}{\alpha_n+c}{x_n} \Big) - \beta_\gamma\Big( \genfrac{.}{.}{0pt}{0}{\alpha_{0}}{x_{0}} \Big| \cdots \Big|
			\genfrac{.}{.}{0pt}{0}{\alpha_n}{x_n} \Big) = \text{constant} \,,
		\end{equation}
		as a function of the \( x_i \).  They then consider a fixed \( 1 \leq i \leq n-1 \), and depending on whether \( \alpha_{i-1},\alpha_i,\alpha_{i+1} \) satisfy one of three conditions \( \Re(1 - \alpha_{i-1} + \alpha_{i+1} ) > 0, \Re(\alpha_{i-1}- \alpha_i) > 0 , \Re(\alpha_i - \alpha_{i+1}) > 0 \) (depending only on differences), show that \( \beta_\gamma\big( \genfrac{.}{.}{0pt}{1}{\alpha_{0}(+c)}{x_{0}} \big| \cdots \big|
		\genfrac{.}{.}{0pt}{1}{\alpha_m(+c)}{x_m} \big) \to 0 \) with the limit \( x_i \to x_{i-1} , x_i \to x_{i+1} \) or \( x_i \to \infty \) respectively.  (Analytic continuation then gives this for all \( \alpha_i \).)  So both terms in Equation \eqref{eqn:betadiff} would go to 0, and the constant of integration vanishes, which inductively establishes the invariance for all \( n \).
	\end{proof}
	
\end{theorem}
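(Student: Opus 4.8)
The plan is to argue by induction on the length $n$ (the number of upper parameters), keeping the $\alpha_i$ fixed as complex constants and treating the poles $x_0,\dots,x_n$ as the variables, and to remove at the end any restriction on the $\alpha_i$ by analytic continuation. For $n=1$ the numerator and denominator in the definition of $\beta_\gamma$ coincide, so $\beta_\gamma\bigl(\genfrac{.}{.}{0pt}{0}{\alpha_0}{x_0}\,\big|\,\genfrac{.}{.}{0pt}{0}{\alpha_1}{x_1}\bigr)\equiv 1$ and the claim is trivial. For $n=2$ one uses the explicit total differential
\[ \mathrm d\,\beta_\gamma\!\Bigl(\genfrac{.}{.}{0pt}{0}{\alpha_0}{x_0}\,\Big|\,\genfrac{.}{.}{0pt}{0}{\alpha_1}{x_1}\,\Big|\,\genfrac{.}{.}{0pt}{0}{\alpha_2}{x_2}\Bigr)=|x_0-x_1|^{\alpha_0-\alpha_1}\,|x_1-x_2|^{\alpha_1-\alpha_2}\,|x_2-x_0|^{\alpha_2-\alpha_0}\;\mathrm d\log\!\Bigl(\tfrac{x_1-x_2}{x_1-x_0}\Bigr), \]
in which both the prefactor and the $\mathrm d\log$ depend on the $\alpha_i$ only through the differences $\alpha_i-\alpha_j$; since these are unchanged by $\alpha_i\mapsto\alpha_i+c$, the difference of the shifted and unshifted integrals has vanishing differential in the $x_i$, and the resulting constant is pinned to zero exactly as in the induction step below. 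This base case also shows that the $1$-form $\mathrm d\beta$ attached to a length-$2$ integral is itself invariant under $\alpha_\bullet\mapsto\alpha_\bullet+c$, which is needed later.

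\textbf{The total differential in length $n$.} The structural heart of the proof is a Goncharov-type formula for the total differential of $\beta_\gamma$ in the $x_i$. I would begin from Goncharov's identity for ordinary iterated integrals, in which differentiating the single inner factor $\int\eta_{x_i}(t_i)=\tfrac1{t_{i-1}-x_i}-\tfrac1{t_{i+1}-x_i}$ and applying two partial-fraction identities to carry out the remaining integrations gives
\[ \mathrm d\!\int_{x_0}^{x_{n+1}}\!\eta_{x_1}\cdots\eta_{x_n}=\sum_{i=1}^{n}\Bigl(\int_{x_0}^{x_{n+1}}\!\eta_{x_1}\cdots\widehat{\eta_{x_i}}\cdots\eta_{x_n}\Bigr)\,\mathrm d\log\!\Bigl(\tfrac{x_i-x_{i+1}}{x_i-x_{i-1}}\Bigr), \]
with the convention that the list $x_0,\dots,x_{n+1}$ contains the two endpoints. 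The same differentiate-then-partial-fraction manipulation goes through for $\beta_\gamma$ precisely because consecutive one-forms $F^{\alpha_{i-1},\alpha_i}_{x_{i-1},x_i}$ and $F^{\alpha_i,\alpha_{i+1}}_{x_i,x_{i+1}}$ in the integrand share the pole $x_i$; tracking the normalising denominator $\int_\gamma F^{\alpha_0,\alpha_n}_{x_0,x_n}$ throughout, one obtains
\[ \mathrm d\,\beta_\gamma\!\Bigl(\genfrac{.}{.}{0pt}{0}{\alpha_0}{x_0}\,\Big|\,\cdots\,\Big|\,\genfrac{.}{.}{0pt}{0}{\alpha_n}{x_n}\Bigr)=\sum_{i=1}^{n}\beta_\gamma\!\Bigl(\genfrac{.}{.}{0pt}{0}{\alpha_0}{x_0}\,\Big|\,\cdots\,\Big|\,\widehat{\genfrac{.}{.}{0pt}{0}{\alpha_i}{x_i}}\,\Big|\,\cdots\,\Big|\,\genfrac{.}{.}{0pt}{0}{\alpha_n}{x_n}\Bigr)\;\mathrm d\,\beta\!\Bigl(\genfrac{.}{.}{0pt}{0}{\alpha_{i-1}}{x_{i-1}}\,\Big|\,\genfrac{.}{.}{0pt}{0}{\alpha_i}{x_i}\,\Big|\,\genfrac{.}{.}{0pt}{0}{\alpha_{i+1}}{x_{i+1}}\Bigr), \]
the hat indicating a deleted column and the $\mathrm d\beta$ factors being those of the base case.

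\textbf{Induction step and vanishing of the constant.} Assume the invariance for all lengths $<n$. In the displayed formula, every summand on the right is the product of a shorter iterated beta integral — invariant under $\alpha_\bullet\mapsto\alpha_\bullet+c$ by the inductive hypothesis — with the differential of a length-$2$ integral, invariant by the base case. Hence $\beta_\gamma(\text{shifted})-\beta_\gamma(\text{unshifted})$ has vanishing total differential, so it is locally constant in the $x_i$, equal on each connected chart to a constant $K=K(c;\alpha_0,\dots,\alpha_n)$. To see $K=0$, fix any $i$ with $1\le i\le n-1$; since
\[ (1-\alpha_{i-1}+\alpha_{i+1})+(\alpha_{i-1}-\alpha_i)+(\alpha_i-\alpha_{i+1})=1, \]
at least one of $\Re(1-\alpha_{i-1}+\alpha_{i+1})$, $\Re(\alpha_{i-1}-\alpha_i)$, $\Re(\alpha_i-\alpha_{i+1})$ is positive. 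Degenerating the configuration accordingly — $x_i\to\infty$, $x_i\to x_{i-1}$, or $x_i\to x_{i+1}$ — the two colliding one-forms $F^{\alpha_{i-1},\alpha_i}_{x_{i-1},x_i}\,F^{\alpha_i,\alpha_{i+1}}_{x_i,x_{i+1}}$ carry a prefactor that is a power of $x_i$, of $x_{i-1}-x_i$, or of $x_i-x_{i+1}$ whose exponent has that positive real part, and a uniform bound on the remaining integration variables forces $\beta_\gamma\to 0$; the same limit sends the shifted integral to $0$, so $K=0$. Finally, both sides of the asserted identity are meromorphic in $(\alpha_0,\dots,\alpha_n)$, so the equality, established on the nonempty open set cut out by the chosen sign condition, extends to all $\alpha_i$, closing the induction.

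\textbf{Main obstacle.} The delicate point is the derivation of the Goncharov-type total differential for $\beta_\gamma$: one must justify differentiating under the conditionally convergent, path-dependent iterated-integral sign, carry the normalising denominator $\int_\gamma F^{\alpha_0,\alpha_n}_{x_0,x_n}$ correctly through the partial-fraction reduction, and check that the shared-pole structure genuinely collapses the derivative of a length-$n$ beta integral into length-$(n-1)$ integrals times length-$2$ differentials. The second point requiring care is the degeneration estimate in the vanishing step, which needs uniform control of the iterated beta integral as the configuration approaches the boundary of moduli space — precisely where the three sign conditions on the $\alpha$-differences originate.
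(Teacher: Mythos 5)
Your proposal is correct and follows essentially the same route as the paper's proof sketch: the Goncharov-type total differential for iterated beta integrals (reduced to length-$2$ differentials via the shared-pole/partial-fraction computation), induction on $n$ showing the shifted-minus-unshifted difference is locally constant in the $x_i$, and killing that constant by degenerating $x_i\to x_{i-1}$, $x_i\to x_{i+1}$ or $x_i\to\infty$ under a positivity condition on the real parts of $\alpha$-differences, followed by analytic continuation in the $\alpha_i$. Your added observation that $(1-\alpha_{i-1}+\alpha_{i+1})+(\alpha_{i-1}-\alpha_i)+(\alpha_i-\alpha_{i+1})=1$, so at least one of the three conditions always holds, is a small tidy refinement of the same argument rather than a different approach.
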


By setting \( \alpha,\beta = 0\), we have
\[
	F_{x,y}^{0,0}(t) = \frac{\mathrm{d}t}{t-y} = \eta_y(t) \,.
\]
The following Lemma establishes the behaviour of the iterated beta integral as \( \alpha \to 0^+ \).

\begin{lemma}\label{lem:beta0}
	Assume that \( x_0 \neq x_n \).  As \( \alpha \to 0^+ \), we have
	\[
		\lim_{\alpha \to 0^+} {\beta}_\gamma\Big( 
		\genfrac{.}{.}{0pt}{0}{\alpha}{x_0} \Big| 
		\genfrac{.}{.}{0pt}{0}{\alpha}{x_1} \Big| 
		\cdots \Big|
		\genfrac{.}{.}{0pt}{0}{\alpha}{x_{n-1}} \Big| 
		\genfrac{.}{.}{0pt}{0}{\alpha}{x_n}  
		\Big) 
		=
		 \int_{x_0}^{x_{n}} \eta_{x_1} \cdots \eta_{x_{n-1}}
	\]

\begin{proof}
Recall that one can multiply two iterated integrals \( \int_{a}^{b} \) by considering how the integration indices \( a < t_1 < \cdots < t_n < b \) and \( a < s_1 < \cdots < s_m < b \) are interleaved (measure zero sets \( s_i = t_j \) can be neglected).  This gives rise to the shuffle-product of iterated integrals. Using this shuffle-product, we have
\begin{align*}
	& {\beta}_\gamma\Big( 
	\genfrac{.}{.}{0pt}{0}{\alpha}{x_0} \Big| 
	\genfrac{.}{.}{0pt}{0}{\alpha}{x_1} \Big| 
	\cdots \Big|
	\genfrac{.}{.}{0pt}{0}{\alpha}{x_{n-1}} \Big| 
	\genfrac{.}{.}{0pt}{0}{\alpha}{x_n}  
	\Big) \\
	& = \frac{\int_{x_0}^{x_{n}}  F_{x_0,x_1}^{\alpha,\alpha} \, F_{x_1,x_2}^{\alpha,\alpha} \cdots  F_{x_{n-1},x_n}^{\alpha,\alpha}}{\int_{x_0}^{x_{n}} F_{x_0,x_n}^{\alpha,\alpha} } \\[1ex]
	& =  \frac{
		\begin{aligned}[t]
		& \textstyle\int_{x_0}^{x_{n}} F_{x_0,x_1}^{\alpha,\alpha} \, F_{x_1,x_2}^{\alpha,\alpha} \cdots  F_{x_{n-2},x_{n-1}}^{\alpha,\alpha} \cdot \textstyle\int_{x_0}^{x_n} F_{x_{n-1},x_n}^{\alpha,\alpha} \\[-0.5ex]
		& \hspace{5em} - \textstyle\sum_{i=0}^{n-2} \textstyle\int_{x_0}^{x_n} F_{x_0,x_1}^{\alpha,\alpha} \cdots 
		F_{x_{i-1},x_i}^{\alpha,\alpha} \, F_{x_{n-1}, x_n}^{\alpha,\alpha} \, F_{x_{i}, x_{i+1}}^{\alpha,\alpha} \cdots
		 F_{x_{n-2},x_{n-1}}^{\alpha,\alpha}
	\end{aligned}
}{\int_{x_0}^{x_n} F^{\alpha,\alpha}_{x_0,x_n} }
\end{align*}
The summation in the numerator is finite (as \( \alpha \to 0^+ \)), as the last differential form goes to \( \eta_{x_{n-1}} \) which has no pole at \( x_n \) and the logarithmic singularity in the \( t_i \) leads to a convergent integral \( \int_0^1 \log(t) \mathrm{d}{t} \).  So in the limit, this will be dominated by the denominator \( \int_{x_0}^{x_n} F^{\alpha,\alpha}_{x_0,x_n} \to \infty \), as \( \alpha \to 0^+ \). 

On the other hand,
\[
	\lim_{\alpha\to0^+} \frac{ \int_{x_0}^{x_n} F_{x_{n-1},x_n}^{\alpha,\alpha}}{\int_{x_0}^{x_n} F_{x_{0},x_n}^{\alpha,\alpha}} 
	= \lim_{\alpha\to0^+} 
	 \frac{\int_{x_0}^{x_{n-1}} F_{x_{n-1},x_n}^{\alpha,\alpha} + \int_{x_{n-1}}^{x_n} F_{x_{n-1},x_n}^{\alpha,\alpha}}{\int_{x_0}^{x_n} F_{x_{0},x_n}^{\alpha,\alpha}}
= 1 \,.
\]
This holds because \( x_{n} \neq x_0 \) means \( \int_{x_0}^{x_{n-1}} F_{x_{n-1},x_n}^{\alpha,\alpha} \to  \int_{x_0}^{x_{n-1}} \eta_{x_n} \), which is finite, and
\begin{align*}
	\int_{x_{n-1}}^{x_{n}}  F_{x_{n-1},x_n}^{\alpha,\alpha}
	&= \int_{x_{n-1}}^{x_{n}} \frac{\mathrm{d}t}{(t-x_{n-1})^{\alpha} (t - x_n)^{1-\alpha}} = \int_{0}^1 \frac{\mathrm{d}t}{t^\alpha(t-1)^{1-\alpha}}
	= \int_{x_0}^{x_n} F_{x_0,x_n}^{\alpha,\alpha}
	 \\
	&= -\ii\pi - \pi \cot(a \pi) \,
\end{align*}

so both the numerator and denominator go to infinity at the same rate.  We therefore obtain
\[
	\lim_{\alpha\to0^+} {\beta}_\gamma\Big( 
	\genfrac{.}{.}{0pt}{0}{\alpha}{x_0} \Big| 
	\genfrac{.}{.}{0pt}{0}{\alpha}{x_1} \Big| 
	\cdots \Big|
	\genfrac{.}{.}{0pt}{0}{\alpha}{x_{n-1}} \Big| 
	\genfrac{.}{.}{0pt}{0}{\alpha}{x_n}  
	\Big)
	 = \int_{x_0}^{x_n}  \eta_{x_1} \cdots \eta_{x_{n-1}} \,,
\]
as claimed.
\end{proof}
\end{lemma}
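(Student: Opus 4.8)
The plan is to isolate, in both the numerator and the denominator of the iterated beta integral, the single source of blow‑up as \( \alpha\to 0^+ \) — the endpoint pole at \( x_n \) — and cancel it, leaving the advertised iterated \( \eta \)-integral. Write \( N(\alpha)=\int_\gamma F^{\alpha,\alpha}_{x_0,x_1}\cdots F^{\alpha,\alpha}_{x_{n-1},x_n} \) and \( D(\alpha)=\int_\gamma F^{\alpha,\alpha}_{x_0,x_n} \), so that the left‑hand side of the Lemma is \( N(\alpha)/D(\alpha) \). Two elementary observations set the stage. First, with all parameters equal to \( \alpha \) the building block is \( F^{\alpha,\alpha}_{x_i,x_{i+1}}(t)=\frac{dt}{(t-x_i)^\alpha(t-x_{i+1})^{1-\alpha}} \), which tends pointwise (for \( t \) off the \( x_j \)) to \( \eta_{x_{i+1}}(t) \). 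Second, the affine substitution \( t\mapsto p+(q-p)u \) shows that \( \int_p^q F^{\alpha,\alpha}_{p,q} \) is the \emph{same} number for every \( p\neq q \), namely \( \int_0^1\frac{du}{u^\alpha(u-1)^{1-\alpha}}=-\ii\pi-\pi\cot(\alpha\pi) \), which diverges like \( -1/\alpha \); thus \( D(\alpha)\to\infty \), and only the behaviour of \( F^{\alpha,\alpha} \) near the right endpoint feeds this divergence.

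Next I would extract that divergent scalar from \( N(\alpha) \) using the shuffle product. Shuffling the single form \( F^{\alpha,\alpha}_{x_{n-1},x_n} \) into the word \( F^{\alpha,\alpha}_{x_0,x_1}\cdots F^{\alpha,\alpha}_{x_{n-2},x_{n-1}} \) gives
\[
\Big(\int_\gamma F^{\alpha,\alpha}_{x_0,x_1}\cdots F^{\alpha,\alpha}_{x_{n-2},x_{n-1}}\Big)\Big(\int_\gamma F^{\alpha,\alpha}_{x_{n-1},x_n}\Big)=N(\alpha)+\Sigma(\alpha),
\]
where \( \Sigma(\alpha) \) collects the \( n-1 \) shuffles in which \( F^{\alpha,\alpha}_{x_{n-1},x_n} \) lands in an \emph{interior} slot. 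Abbreviate \( A(\alpha):=\int_\gamma F^{\alpha,\alpha}_{x_0,x_1}\cdots F^{\alpha,\alpha}_{x_{n-2},x_{n-1}} \) and \( B(\alpha):=\int_\gamma F^{\alpha,\alpha}_{x_{n-1},x_n} \), so \( N(\alpha)=A(\alpha)B(\alpha)-\Sigma(\alpha) \). I would then establish three limits: (a) \( A(\alpha)\to\int_\gamma\eta_{x_1}\cdots\eta_{x_{n-1}}=\int_{x_0}^{x_n}\eta_{x_1}\cdots\eta_{x_{n-1}} \), a finite number because the leading form \( \eta_{x_1} \) is regular at \( x_0 \) (as \( x_1\neq x_0 \)) and the trailing form \( \eta_{x_{n-1}} \) is regular at \( x_n \) (as \( x_{n-1}\neq x_n \)); (b) \( \Sigma(\alpha) \) stays bounded as \( \alpha\to0^+ \), since in each of its summands the form carrying the would‑be‑divergent pole at \( x_n \) is no longer last, so the factor \( (t-x_n)^{\alpha-1} \) is integrated against a preceding primitive vanishing at \( x_n \) (equivalently, integration by parts replaces it by a convergent \( \int\log \)); and (c) \( B(\alpha)=\int_{x_0}^{x_{n-1}}F^{\alpha,\alpha}_{x_{n-1},x_n}+\int_{x_{n-1}}^{x_n}F^{\alpha,\alpha}_{x_{n-1},x_n} \), whose first term converges to the finite \( \int_{x_0}^{x_{n-1}}\eta_{x_n} \) (legitimate since \( x_n\notin\{x_0,x_{n-1}\} \)) and whose second term equals \( D(\alpha) \) by the affine substitution of the first paragraph; hence \( B(\alpha)/D(\alpha)\to1 \).

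Combining these, \( N(\alpha)/D(\alpha)=A(\alpha)\cdot\frac{B(\alpha)}{D(\alpha)}-\frac{\Sigma(\alpha)}{D(\alpha)} \), and letting \( \alpha\to0^+ \) yields \( \big(\int_{x_0}^{x_n}\eta_{x_1}\cdots\eta_{x_{n-1}}\big)\cdot1-0 \), which is the claim.

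The genuine work lies in (a) and (b): justifying that the limit \( \alpha\to0^+ \) may be taken inside the iterated integrals. Off the poles \( x_j \) the integrands converge boundedly, so only the local behaviour near the poles meeting \( \gamma \) matters; under the standing conventions (adjacency \( x_i\neq x_{i+1} \), and, as in the application, \( \gamma \) a straight segment whose only incident poles are its endpoints) these are endpoint poles only, and the required uniform‑in‑\( \alpha \) integrable majorant is produced by integrating the innermost variables first and using that the preceding primitive vanishes at that endpoint to the order needed to tame \( (t-x)^{\alpha-1} \) — the mechanism the text summarises by noting that \( \int_0^1\log(t)\,dt \) is finite. Everything else is bookkeeping with the shuffle product and the affine change of variables.
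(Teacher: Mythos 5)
Your proposal is correct and follows essentially the same route as the paper: shuffle the last form out of the word to write the numerator as $A(\alpha)B(\alpha)-\Sigma(\alpha)$, observe that the interior-shuffle terms $\Sigma(\alpha)$ stay bounded while the denominator diverges, and show $B(\alpha)/D(\alpha)\to 1$ by splitting the path at $x_{n-1}$ and using the affine invariance of $\int F^{\alpha,\alpha}$. The only difference is that you spell out more explicitly the dominated-convergence justification for taking $\alpha\to 0^+$ inside the iterated integrals, which the paper leaves implicit.
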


We want to consider the case \( x_i \in \{ 0, \pm 1 \} \), and the result of the invariance of iterated beta integrals here.

Introduce the curve
\[
	\mathcal{C} = \{ (u,v,t) \in \mathbb{C}^3 \mid u^2 = t(t-1), v^2 = t(t+1) \} \,,
\]
then a computation via Riemann-Roch shows that \( \mathcal{C} \) has genus 0, and so it is rationally parametrisable.  In particular, this is given by taking
\begin{align*}
	t &= g(\xi) = \frac{(1 + \xi^2)^2}{4\xi (\xi^2-1)}
	\\
 u &= \sqrt{t(t-1)} = \frac{\left(\xi ^2+1\right) \left(\xi ^2-2 \xi -1\right)}{4 \xi (\xi^2 -1)}
\\
 v &= \sqrt{t(t+1)} = \frac{\left(\xi ^2+1\right) \left(\xi ^2+2 \xi -1\right)}{4 \xi (\xi^2 -1) }
 .\end{align*}

Then we have
\begin{align*}
	F_{0,1}^{\half,\half} &= \frac{\mathrm{d}t}{\sqrt{t(t-1)}} = \frac{\mathrm{d}t}{u} \\
	F_{0,-1}^{\half,\half} &= \frac{\mathrm{d}t}{\sqrt{t(t+1)}} = \frac{\mathrm{d}t}{v} \\
	F_{1,-1}^{\half,\half} &= \frac{\mathrm{d}t}{\sqrt{(t-1)(t+1)}} = \frac{t \mathrm{d}t}{u v}
\end{align*}

 and we can compute the pullbacks of the above differential forms to be
\begin{align*}
	g^\ast F_{0,1}^{\half,\half} &= \widetilde{\omega_2}(\xi) \\
	g^\ast F_{0,-1}^{\half,\half} &= \widetilde{\omega_3}(\xi) \\
	g^\ast F_{1,-1}^{\half,\half} &= -\widetilde{\omega_1}(\xi)\,.
\end{align*}

Moreover, we note \( g(-\ii) = 0 \), and \( g(-1 + \sqrt{2}) = -1 \).  Therefore, if \( i_1 \to \cdots \to i_n \) is a path from \( e_{v_0} = e_3 \to e_{v_1} \to \cdots \to e_{v_{n-1}} \to e_{v_n} = e_{1} \), we can check that, with \( \phi(3) = 0, \phi(2) = 1, \phi(1) = -1 \), one has
 \begin{align*} 
F^{\half,\half}_{0,1} = F^{\half,\half}_{\phi(3),\phi(2)} = F^{\half,\half}_{\phi(2),\phi(3)} &\leftrightarrow \text{edges labelled 2 between \( e_2 \) and \( e_3 \)} \\
F^{\half,\half}_{0,-1} = F^{\half,\half}_{\phi(3),\phi(1)} = F^{\half,\half}_{\phi(1),\phi(3)} &\leftrightarrow \text{edges labelled 3 between \( e_1 \) and \( e_3 \)} \\
F^{\half,\half}_{-1,1} = F^{\half,\half}_{\phi(1),\phi(2)} = F^{\half,\half}_{\phi(2),\phi(1)} &\leftrightarrow \text{edges labelled 1 between \( e_1 \) and \( e_2 \)} \\
\end{align*}
as \( F \) is symmetric in the lower arguments \( x, y \) when \( \alpha = \beta = \half \), and each edge can be traversed in both directions.
So
\begin{align*}
	\Omega_{i_1,\ldots, i_n} &= \int_{-\ii}^{-1 + \sqrt{2}} \widetilde{\omega_{i_1}} \cdots \widetilde{\omega_{i_n}} \\
	&= (-1)^{\# \{ i_j = 1 \}} \int_{0 = \phi(v_0)}^{-1=\phi(v_n)} F^{\half, \half}_{\phi({v_0}),\phi({v_1})} \, F^{\half, \half}_{\phi({v_1}),\phi({v_2})} \cdots F^{\half, \half}_{\phi({v_{n-1}}),\phi({v_n})}  \\
	&= (-1)^{\# \{ i_j = 1 \}} \int_0^{-1} F^{\half,\half}_{0,-1} 
	\cdot {\beta}_\gamma\Big( 
\genfrac{.}{.}{0pt}{0}{\tfrac{1}{2}}{\phi(v_0)} \Big| 
\genfrac{.}{.}{0pt}{0}{\tfrac{1}{2}}{\phi(v_1)} \Big| 
\cdots \Big|
\genfrac{.}{.}{0pt}{0}{\tfrac{1}{2}}{\phi(v_{n-1})} \Big| 
\genfrac{.}{.}{0pt}{0}{\tfrac{1}{2}}{\phi(v_n)}  
\Big)\,.
\end{align*}
This has written the \( \Omega \)-value of interest, as an iterated beta integral (up to the normalisation factor above).  However, one can check directly
\[
	\int_0^{-1} F^{\half,\half}_{0,-1} = \int_{0}^{-1} \frac{\mathrm{d}t}{\sqrt{t(t+1)}} = \ii \pi \,.
\]
Now since the iterated beta integral is invariant under shifting the upper parameters by a constant, we subtract \( \tfrac{1}{2} \) everywhere, and apply Lemma \ref{lem:beta0}, to find
\begin{align*}
	\Omega_{i_1,\ldots,i_n} &= (-1)^{\# \{ i_j = 1 \}} \ii \pi \cdot {\beta}_\gamma\Big( 
	\genfrac{.}{.}{0pt}{0}{0}{\phi(v_0)} \Big| 
	\genfrac{.}{.}{0pt}{0}{0}{\phi(v_1)} \Big| 
	\cdots \Big|
	\genfrac{.}{.}{0pt}{0}{0}{\phi(v_{n-1})} \Big| 
	\genfrac{.}{.}{0pt}{0}{0}{\phi(v_n)}  
	\Big) \\
	&=  (-1)^{\# \{ i_j = 1 \}} \ii \pi \int_{\phi(v_0)}^{\phi(v_{n})} \eta_{\phi(v_1)} \cdots \eta_{\phi(v_{n-1})}\,.
\end{align*}
As a final step, we want to change variables in order to set the upper bound equal to \( 1 \), instead of \( -1 = \phi(v_n) = \phi(1) \).  Make the change of variables \( t \mapsto -t \) in the iterated integral, then \( \eta_{x}(-t) = \tfrac{\mathrm{d}(-t)}{(-t) - x} = \tfrac{\mathrm{d}t}{t + x} = \eta_{-x}(t) \), and the bounds change from \( \phi(v_0) = 0 \) and \( \phi(v_n) = -1 \) to \( -\phi(v_0) = 0 \) and \( -\phi(v_n) = 1 \).  So
\begin{align*}
\Omega_{i_1,\ldots,i_n} 
&=  (-1)^{\# \{ i_j = 1 \}} \ii \pi \cdot \int_{-\phi(v_0)}^{-\phi(v_n)} \eta_{-\phi(v_1)} \cdots \eta_{-\phi(v_{n-1})} \,.
\end{align*}
If we therefore take \( f(j) = -\phi(j) \), namely \( f(1) = 1, f(2) = -1 , f(3) = 0 \), we have obtained the result of Proposition \ref{prop:omegaeval}. \hfill \qed
 
\paragraph{\bf The required special values for \( \alpha_3 \).}

In order to evaluate \( \alpha_3 \), we require the evaluations for all valid \( \Omega \)-values of weight \( \leq 4 \), namely
\[
	\Omega_3,\,  \Omega_{21}, \,\Omega_{311},\, \Omega_{223},\, \Omega_{333}, \,\Omega_{2111}, \,\Omega_{2221},\, \Omega_{3123},\, \Omega_{2133},\, \Omega_{3321} \,.
\]
From Proposition~\ref{prop:omegaeval}, we find
\begin{equation}\label{eqn:wt4omegas}
\begin{aligned}
	\Omega_3 &= \ii \pi \cdot 1 \\
	\Omega_{21} &= -\ii \pi \cdot \textstyle\int_0^1 \eta_{-1}  = \ii \pi \cdot \zeta(\overline{1}) \,, \\
	\Omega_{311} &= \ii \pi \cdot \textstyle\int_0^1 \eta_{1}\,\eta_{-1} = \ii \pi \cdot \zeta(\overline{1}, \overline{1}) \,, \\
	\Omega_{223} &= \ii \pi \cdot \textstyle\int_0^1\eta_{-1}\, \eta_{0} = - \ii \pi \cdot \zeta(\overline{2}) \,, \\
	\Omega_{333} &= \ii \pi \cdot \textstyle\int_0^1 \eta_{1}\, \eta_{0} = - \ii \pi \cdot \zeta(2) \,, \\
	\Omega_{2111} &= -\ii \pi \cdot \textstyle\int_0^1 \eta_{-1}\, \eta_{1}\, \eta_{-1} = \ii \pi \cdot \zeta(\overline{1}, \overline{1}, \overline{1}) \,, \\
	\Omega_{2221} &= -\ii \pi \cdot \textstyle\int_0^1 \eta_{-1}\, \eta_{0}\, \eta_{-1} = - \ii \pi \cdot \zeta(2, \overline{1}) \,, \\
	\Omega_{3123} &= -\ii \pi \cdot \textstyle\int_0^1 \eta_{1}\, \eta_{-1}\, \eta_{0} = - \ii \pi \cdot \zeta(\overline{1}, \overline{2}) \,, \\
	\Omega_{2133} &= -\ii \pi \cdot \textstyle\int_0^1 \eta_{-1}\, \eta_{1}\, \eta_{0} = -\ii \pi \cdot \zeta(\overline{1}, 2) \,, \\
	\Omega_{3321} &= -\ii \pi \cdot \textstyle\int_0^1 \eta_{1}\, \eta_{0}\, \eta_{-1} = - \ii \pi \cdot \zeta(\overline{2}, \overline{1})\,.
\end{aligned}
\end{equation}

Using well-known evaluations, and standard relations amongst alternating MZV's, we can express all of the above values in terms of \( \pi, \log(2) \) and \( \zeta(3) \).  For completeness, we establish these evaluations directly here, using elementary means.  (In particular, we will avoid using the extended double-shuffle relations, which requires a more detailed discussion on regularisation to rigorously define \cite{IKZ}.)

Firstly,
\[
	\zeta(\overline{1}) = \sum_{n=1}^\infty \frac{(-1)^{k}}{k} = -\log(2)
\]
is well-known and elementary.  It is also well known that
\[
	\zeta(2) = \sum_{k=1}^\infty \frac{1}{k^2} = \frac{\pi^2}{6} \,;
\]
on the other hand \( \zeta(3) \) must (apparently) be left as it is.  Then directly from the series definition, we have the stuffle-product structure
\[
	\zeta(a)\zeta(b) = \zeta(a,b) + \zeta(b,a) + \zeta(a \oplus b) \,,
\]
where with barred entries \( a \oplus b \) adds the values, and counts the number of bars modulo 2, i.e. \( 5 \oplus 3 = 8, 5 \oplus \overline{3} = \overline{5} \oplus 3 = \overline{8} \) and \( \overline{5} \oplus \overline{3} = 8\).  So \(
	\zeta(\overline{1})^2 = 2 \zeta(\overline{1}, \overline{1}) + \zeta(2) \,,
\)
which gives
\[
	\zeta(\overline{1}, \overline{1}) = \frac{1}{2}\log(2)^2 - \frac{1}{2} \zeta(2) \,.
\]

Next, by splitting into the odd and even terms, we have that
\[
	\zeta(\overline{2}) = \sum_{k=1}^\infty \frac{(-1)^k}{k^2} = \sum_{k=1}^\infty \frac{-1}{k^2} + 2 \sum_{k=1}^\infty \frac{1}{(2k)^2} = -(1 - 2^{1-2}) \zeta(2) = -\frac{\pi^2}{12}
\]
More generally \( \zeta(\overline{k}) = -(1 - 2^{1-k}) \zeta(k) \), a version of the so-called level 2 distribution relations in depth 1.

Now, we can consider
\begin{align*}
	\zeta(\overline{1})^3 &= 6\zeta(\overline{1},\overline{1},\overline{1}) + 3\big(\zeta(\overline{1},2) +  \zeta(2,\overline{1})\big) + \zeta(\overline{3}) \\
	&= 6\zeta(\overline{1},\overline{1},\overline{1}) + 3\zeta(\overline{1})\zeta(2) - 2\zeta(\overline{3}) \,.
\end{align*}
Whence
\[
	\zeta(\overline{1},\overline{1},\overline{1}) = -\frac{1}{6} \log^3(2) + \frac{\pi^2}{12} \log(2) - \frac{1}{4} \zeta(3) \,.
\]
This approach works more generally to evaluate any MZV of the form \( \zeta(\overbrace{a, a, \ldots, a}^{\text{$n$ times}}) \) as a polynomial in \( \zeta(a), \zeta(a \oplus a), \ldots, \zeta(\underbrace{a \oplus \cdots \oplus a}_{\text{$n$ times}}) \).

Now, we derive the remaining evaluations by comparing the stuffle-product of alternating MZV's (multiplying their sum representation), with the shuffle-product (multiplying their integral representation).  Consider
\begin{align*}
	 \zeta(\overline{1})\zeta(\overline{2}) & \overset{\mathrm{st}}{=} \zeta(\overline{1},\overline{2}) + \zeta(\overline{2},\overline{1}) + \zeta(3) \\[1ex]
	 \zeta(\overline{1})\zeta(2) &= \int_0^1 \eta_{-1} \int_0^1 \eta_1 \, \eta_0 
	 \overset{\mathrm{sh}}{=} \int_0^1 \eta_{-1} \, \eta_1 \, \eta_0 
	+ \int_0^1 \eta_1 \, \eta_{-1} \, \eta_0 
	+ \int_0^1 \eta_{1} \, \eta_0 \, \eta_{-1} \\
	& = \zeta(\overline{1},2) + \zeta(\overline{1},\overline{2}) + \zeta(\overline{2}, \overline{1})\,.
\end{align*}
Then subtracting one from the other implies
\begin{align*}
	\zeta(\overline{1},2) & = \zeta(\overline{1})\zeta(2) - \zeta(\overline{1})\zeta(\overline{2}) + \zeta(3)  
	= -\frac{\pi^2}{4} \log(2) + \zeta(3) \,.
\end{align*}
From this evaluation and the stuffle-product \( \zeta(\overline{1})\zeta(2) = \zeta(\overline{1},2) + \zeta(2,\overline{1}) + \zeta(\overline{3}) \), we immediately obtain
\[
	\zeta(2,\overline{1}) = \frac{\pi^2}{12} \log(2) - \frac{1}{4} \zeta(3) \,.
\]
Then consider
\begin{align*}
	\zeta(\overline{2}) \zeta(\overline{1}) & = \int_0^1 \eta_{-1}\int _0^1 \eta_{-1} \, \eta_0  
	 \overset{\mathrm{sh}}{=} 2 \int_0^1 \eta_{-1}\, \eta_{-1} \, \eta_0 
	+ \int_0^1 \eta_{-1} \, \eta_0 \, \eta{-1}  \\
	& = 2\zeta(1,\overline{2}) + \zeta(2,\overline{1})\,.
\end{align*}
With the evaluation of \( \zeta(2,\overline{1}) \) above, this implies
\[
	\zeta(1,\overline{2}) = \frac{1}{8} \zeta(3) \,.
\]
From the iterated integral representation, we can change variables via \( t \mapsto 1-t \) (and then reverse the path) to obtain
\[
	\zeta(1,2) = \int_0^1 \eta_1 \, \eta_1 \, \eta_0  = \int_1^0 \eta_0 \, \eta_0 \, \eta_1  = - \int_0^1 \eta_1 \, \eta_0 \, \eta _0  = \zeta(3) \,.
\]
By considering the level 2 distribution relation in depth 2
\begin{align*}
	\zeta(1,2) + \zeta(\overline{1},2) + \zeta(1,\overline{2}) + \zeta(\overline{1},\overline{2}) &= \sum_{k_1 < k_2} \frac{(1 + (-1)^{k_1})(1 + (-1)^{k_2})}{k_1 k_2^2} \\
	& = \sum_{k_1 < k_2} \frac{4}{(2k_1) (2k_2)^2} = \frac{1}{2} \zeta(1,2) \,,
\end{align*}
we can then obtain
\[
	\zeta(\overline{1},\overline{2}) = \frac{\pi^2}{4} \log(2) - \frac{13}{8} \zeta(3) \,.
\]
Finally from the stuffle-product \( \zeta(\overline{1})\zeta(\overline{2}) = \zeta(\overline{1},\overline{2}) + \zeta(\overline{2},\overline{1}) + \zeta(3) \), we obtain
\[
	\zeta(\overline{2},\overline{1}) = -\frac{\pi^2}{6}\log(2) + \frac{5}{8} \zeta(3) \,.
\]
This has established all of the necessary evaluations; summarising we obtain the following \( \Omega \)-values
\begin{equation}\label{eqn:wt3omega}
\begin{alignedat}{2}
\Omega _3 & =\ii \pi \,, &         \Omega _{2,1,1,1}&=-\frac{\ii \pi }{4} \zeta (3)-\frac{\ii \pi }{6}  \log ^3(2)+\frac{ \ii \pi ^3}{12} \log (2) \,, \\
\Omega _{2,1} & =-\ii \pi  \log (2) \,, &            \Omega _{2,2,2,1}&=\frac{\ii \pi  }{4}\zeta (3)-\frac{\ii \pi ^3}{12}  \log (2) \,, \\
\Omega _{3,1,1} &=\frac{\ii \pi}{2} \log ^2(2)-\frac{\ii \pi ^3}{12} \,, \hspace{4em} &         \Omega _{3,1,2,3}&=\frac{13 \ii \pi  }{8}\zeta(3)-\frac{\ii \pi ^3}{4}  \log (2) \,, \\
\Omega _{2,2,3} &=\frac{\ii\pi ^3}{12} \,,  &          \Omega _{2,1,3,3}&=\frac{\ii \pi ^3}{4} \log (2)-\ii \pi  \zeta (3) \,, \\
\Omega _{3,3,3} &=-\frac{\ii \pi ^3}{6} \,, &    \Omega _{3,3,2,1}&=\frac{\ii \pi ^3 }{6} \log (2)-\frac{5 \ii \pi  }{8}\zeta (3)\,.
\end{alignedat}
\end{equation}
By substitution of the above results in the formula for $\alpha_3$ in Corollary \ref{cor:alpha3}, we obtain
$$\alpha_3 = \frac{9}{4} \zeta(3).$$
Using a computer, we can calculate further coefficients $\alpha_k$, both formally and numerically.
We give the output of these computations in Appendix \ref{appendix:numalpha}.
\begin{remark}
A more complicated proof that \( \alpha_3 = \frac{9}{4} \zeta(3) \), which does not rely on the currently in-preparation work of Hirose and Sato \cite{hsBetaIntegral}, is available upon request.
\end{remark}

\def\even{\mbox{\scriptsize even}}
\def\odd{\mbox{\scriptsize odd}}
\def\Lip{\operatorname{Lip}}
\def\weight{\varrho}
\def\triplenorm{|\hspace{-0.3mm}|\hspace{-0.3mm}|}
\def\bigtriplenorm{\big|\hspace{-0.5mm}\big|\hspace{-0.5mm}\big|}
\section{Quantitative Implicit Function Theorem} \label{sec:quantativeimplicitfunction}
In this section we show how to estimate the convergence radius of the Taylor series of the DPW potential $\eta_{t, x(t)}$ at $t=0$ for $\varphi = \pi/4$. This also gives an estimate on the convergence radius for the series \eqref{eq:area-series} giving the area of Lawson minimal surfaces.
To do this, we complexify the problem and rewrite the Monodromy Problem as a system of holomorphic equations in the complex parameters $(t,x)$. Then we estimate the radius $T$ such that the solution $x(t)$ obtained from the implicit function theorem exists for all $t\in D(0,T)$. By holomorphicity, the series expansion of $x(t)$ at $t=0$ will have convergence radius at least $T$. There are various Quantitative Implicit Function results which estimate how large $T$ can be (see for example Kantorovitch theory or Smale alpha-theory in chapter 3 of \cite{dedieu}).
However, we obtain much better results by working barehand with the Contraction Mapping Principle.
The estimate we obtain relies on the numerical estimation (with proven error bounds) of a large number of $\Omega$-values.
\subsection{Parity ansatz}
For $u\in\mathcal W$, we define
$$u_{\even}(\lambda)=\tfrac{1}{2}(u(\lambda)+u(-\lambda))$$
$$u_{\odd}(\lambda)=\tfrac{1}{2}(u(\lambda)-u(-\lambda))$$
and denote $\mathcal W_{\even}$ the subspace of even functions.
The notations are combined in the obvious way (for example $\mathcal W^{\geq 0}_{\R,\even}$
denotes the space of even functions with real coefficients which extend holomorphically to the unit disk).

From now on, we fix $\varphi=\pi/4$.
By the last point of Proposition \ref{prop:symmetries}, the solution to Problem \eqref{monodromy-problem3}
has the following parity with respect to $\lambda$:
\begin{align*}
x_1(\lambda)&=-x_1(-\lambda)\\
x_2(\lambda)&=-x_3(-\lambda).
\end{align*}
We impose these symmetries a priori and define
\begin{align*}
x_1&=\cv{x}_1+\lambda \ii u_1\\
x_2&=\cv{x}_2-u_2+\lambda u_3\\
x_3&=\cv{x}_3+ u_2+\lambda u_3
\end{align*}
where $u_1,u_2,u_3$ are parameters in $\mathcal W^{\geq 0}_{\R,\even}$.
We denote $u=(u_1,u_2,u_3)$. With a slight abuse of notations, we denote the potential by
$\eta_{t,u}$ instead of $\eta_{t,x(u)}$ and likewise we use $\mathfrak p(t,u)$ instead of $\mathfrak p(t,x(u))$.
A computation gives
\begin{equation}
\label{eq:K}
\mathcal K(u)=x_1^2+x_2^2+x_3^2=1+(\lambda^2-1)u_1-\sqrt{2}(\lambda^2+1)u_3
-\lambda^2 u_1^2+2 u_2^2+2\lambda^2u_3^2,
\end{equation}
so $\mathcal K(u)\in\mathcal W^{\geq 0}_{\even}$ is even.
\subsection{Reformulation of the monodromy problem}
\label{section:quantitative-reformulation}
We define the operator
$$\mathcal D:\mathcal W^{\geq 0}_{\even}\to\mathcal W^{\geq 0}_{\even}\qquad
\mathcal D(u)(\lambda)=\frac{u(\lambda)-u(1)}{\lambda^2-1}.$$
Using Proposition \ref{Pro:decomposition} and the isometry
$\mathcal W_{\rho^2}^{\geq 0}\to\mathcal W_{\rho,\even}^{\geq 0}$ defined by
$u\mapsto (\lambda\mapsto u(\lambda^2))$, $\mathcal D$ is a bounded operator with norm
$\|\mathcal D\|=\frac{1}{\rho^2-1}$.
We define $\mathcal F=(\mathcal F_j)_{1\leq j\leq 4}$ by
\begin{equation}
\label{eq:monodromy-problem4}
\begin{cases}
\mathcal F_1(t,u)=\mathcal D(\mathcal K(u))\\
\mathcal F_2(t,u)=(\wh{\mathfrak p}(t,u)-\wh{\mathfrak p}(t,u)^*)^+_{\even}\\
\mathcal F_3(t,u)=(\wh{\mathfrak p}(t,u)-\wh{\mathfrak p}(t,u)^*)_{\odd}^+\\
\mathcal F_4(t,u)=\wh{\mathfrak p}_{\even}(\lambda=\ii).\end{cases}
\end{equation}
\begin{proposition}
Problem \eqref{monodromy-problem3} is equivalent to $\mathcal F(t,u)=0$
\end{proposition}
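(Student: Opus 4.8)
The plan is to match the three conditions of Problem~\eqref{monodromy-problem3} — under the standing ansatz $x=x(u)$ and $\theta=\tfrac\pi2$, which by Proposition~\ref{prop:symmetries} loses no solutions in the minimal case — against the four components of $\mathcal F$: condition (ii) against $\mathcal F_1$, condition (i) against the pair $\{\mathcal F_2,\mathcal F_3\}$, and condition (iii) against $\mathcal F_4$. The one genuine task is to eliminate every occurrence of $\mathfrak q$ in favour of $\mathfrak p$, so that the $\mathfrak q$-equations hidden in (i) and (iii) become redundant.

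First I would record what the ansatz buys us. A direct check shows that the parametrisation of $x_1,x_2,x_3$ in terms of $u\in(\mathcal W^{\geq0}_{\R,\even})^3$ forces the parities $x_1=-x_1^{\dagger}$, $x_2=-x_3^{\dagger}$, $x_3=-x_2^{\dagger}$ (with $u^{\dagger}(\lambda)=u(-\lambda)$) together with the reality relations $\overline{x_1}=-x_1$, $\overline{x_2}=x_2$, $\overline{x_3}=x_3$. The parities are exactly the hypotheses of the lemma proving Point (4) of Proposition~\ref{prop:building}, so Equation~\eqref{eq:iota*eta} gives $\iota^{\ast}\eta_{t,u}=S^{-1}\eta_{t,u}^{\dagger}S$, hence $\iota^{\ast}\Phi_{t,u}=S^{-1}\Phi_{t,u}^{\dagger}S$, and evaluation at $z=1$ yields $\mathcal Q(t,u)=S^{-1}\mathcal P(t,u)^{\dagger}S$. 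Feeding this into the formulas of Proposition~\ref{prop:pq}(1) produces the key identity $\wh{\mathfrak q}(t,u)(\lambda)=-\wh{\mathfrak p}(t,u)(-\lambda)$, valid for all $(t,u)$, which turns every statement about $\mathfrak q$ into one about $\mathfrak p$; and the reality relations give, via Proposition~\ref{prop:pq}(2), that $\overline{\wh{\mathfrak p}}=\wh{\mathfrak p}$, which I will need when evaluating at $\lambda=\ii$.

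With these in hand the three conditions can be handled one at a time. For (ii): by Equation~\eqref{eq:K} one has $\mathcal K(u)\in\mathcal W^{\geq0}_{\even}$, so $\mathcal D(\mathcal K(u))$ is well defined and vanishes exactly when $\mathcal K(u)$ is independent of $\lambda$; hence (ii) $\Leftrightarrow\mathcal F_1(t,u)=0$. For (i): the identity above gives $\wh{\mathfrak q}-\wh{\mathfrak q}^{\ast}=-(\wh{\mathfrak p}-\wh{\mathfrak p}^{\ast})^{\dagger}$, so condition (i) reduces to $\wh{\mathfrak p}-\wh{\mathfrak p}^{\ast}=0$; by Proposition~\ref{prop:symmetryFGK} this is equivalent to $(\wh{\mathfrak p}-\wh{\mathfrak p}^{\ast})^{+}=0$, i.e.\ to the vanishing of both its even and odd parts, which is $\mathcal F_2(t,u)=\mathcal F_3(t,u)=0$. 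For (iii): once (i) holds, $\wh{\mathfrak p}$ is real on $\S^1$ so $\wh{\mathfrak p}(\pm\ii)\in\R$, and combined with $\overline{\wh{\mathfrak p}}=\wh{\mathfrak p}$ this gives $\wh{\mathfrak p}(-\ii)=\overline{\wh{\mathfrak p}(\ii)}=\wh{\mathfrak p}(\ii)$; therefore $\mathcal F_4(t,u)=\wh{\mathfrak p}_{\even}(\ii)=\wh{\mathfrak p}(\ii)=\mathcal H_1(t,u,\tfrac\pi2)=-\wh{\mathfrak p}(-\ii)=-\wh{\mathfrak q}(\ii)=-\mathcal H_2(t,u,\tfrac\pi2)$, so (iii) $\Leftrightarrow\mathcal F_4(t,u)=0$. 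Putting the three equivalences together gives Problem~\eqref{monodromy-problem3} $\Leftrightarrow\mathcal F(t,u)=0$; the fact that the last equivalence presupposes (i) is harmless since throughout we are comparing the two complete systems.

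I expect the main obstacle to be the derivation of $\wh{\mathfrak q}=-\wh{\mathfrak p}^{\dagger}$ — that is, verifying that the $\iota$-symmetry $\iota^{\ast}\eta=S^{-1}\eta^{\dagger}S$ genuinely holds for every parameter $u$ satisfying the ansatz, not merely for solutions — and, relatedly, the careful bookkeeping at the Sym-point $\lambda=\ii$, where one must invoke the reality $\overline{\wh{\mathfrak p}}=\wh{\mathfrak p}$ to see that the single scalar equation $\mathcal F_4=0$ encodes both $\mathcal H_1=0$ and $\mathcal H_2=0$.
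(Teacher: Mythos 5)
Your proposal is correct and follows essentially the same route as the paper: $\mathcal F_1=0$ iff $\mathcal K$ is constant, $\mathcal F_2=\mathcal F_3=0$ iff $\wh{\mathfrak p}=\wh{\mathfrak p}^*$ via Proposition \ref{prop:symmetryFGK}, the reality and star symmetries give $\wh{\mathfrak p}_{\odd}(\ii)=0$ so $\mathcal F_4=\wh{\mathfrak p}(\ii)$, and the $\mathfrak q$-equations are eliminated through the identity $\wh{\mathfrak q}(t,u)(\lambda)=-\wh{\mathfrak p}(t,u)(-\lambda)$, which the paper obtains from the proof of Point 2 of Proposition \ref{prop:symmetries} and you rederive (equivalently) from the $\iota$-symmetry of Equation \eqref{eq:iota*eta}, correctly noting it holds for all $u$ in the ansatz. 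Only blemish: in your chain for (iii) the links $\mathcal H_1=-\wh{\mathfrak p}(-\ii)$ and $-\wh{\mathfrak p}(-\ii)=-\wh{\mathfrak q}(\ii)$ each carry a sign slip (it should read $\mathcal H_1=\wh{\mathfrak p}(-\ii)=-\wh{\mathfrak q}(\ii)=-\mathcal H_2$), but the two errors cancel and the conclusion $\mathcal F_4=0\Leftrightarrow\mathcal H_1=\mathcal H_2=0$ stands.
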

\begin{proof}
Clearly, Problem \eqref{monodromy-problem3} implies $\mathcal F(t,u)=0$. For the converse, assume that $\mathcal F(t,u)=0$.
Then $\mathcal F_1=0$ implies that $\mathcal K(u)$ is constant while
$\mathcal F_2=\mathcal F_3=0$ implies that $(\wh{\mathfrak p}-\wh{\mathfrak p}^*)^+=0$.
Since $\mathfrak p=\mathfrak p^*=\overline{\mathfrak p}$
$$\wh{\mathfrak p}(\ii)=\wh{\mathfrak p}^*(\ii)=\overline{\wh{\mathfrak p}(\ii)}=\wh{\mathfrak p}(-\ii)$$
so $\wh{\mathfrak p}_{\odd}(\ii)=0$. Hence $\mathcal F_4=0$ implies that 
$\wh{\mathfrak p}(\ii)=0$.
By the proof of Point 2 of Proposition \ref{prop:symmetries}, we have for $\varphi=\pi/4$
$$\mathfrak q(t,u)(\lambda)=-\mathfrak p(t,u)(-\lambda)$$
and the corresponding statements for $\mathfrak q$ follow.
\end{proof}
\subsection{Complexification}
\label{section:complexification}
We redefine the star operator on $\mathcal W$ as the holomorphic extension of the star operator on
$\mathcal W_{\R}$, namely
$$u^*(\lambda)=\sum_{k\in\Z} u_k\lambda^{-k},$$
without the complex conjugation of $u_k$.
With that in mind, \eqref{eq:monodromy-problem4} defines a holomorphic map, which we denote the same by the same letter
$$\mathcal F:\C\times(\mathcal W^{\geq 0}_{\even}) ^3\to (\mathcal W)^{\geq 0}_{\even}\times(\mathcal W)^{>0}_{\even}\times(\mathcal W)^{>0}_{\odd}\times\C.$$
\begin{remark} The complexified equation $\mathcal F(t,u)=0$ does not have any geometrical meaning, if $t$ is not real. In particular, it does not imply that the monodromy is unitary, so there is no corresponding minimal immersion into $\mathrm{SU}(2)$.\end{remark}
\subsection{Reformulation as a fixed point problem}
Let $E=(\mathcal W^{\geq 0}_{\even})^3$
and consider the partial differential
$$L=d_u \mathcal F(0,0):E\longrightarrow
(\mathcal W^{\geq 0}_{\even})\times (\mathcal W^{>0}_{\even})
\times (\mathcal W^{>0}_{\odd})\times\C$$
given by
$$L=\left(
du_1-\sqrt{2}\,\mathcal D\big((\lambda^2+1)du_3\big),\;
2\pi du_2^+,\;
2\pi \lambda du_3,\;
2\pi du_2(\ii)\right).$$
The operator $L$ is an isomorphism whose inverse is given by
$$L^{-1}(v_1,v_2,v_3,v_4)=
\left(
v_1+\tfrac{\sqrt{2}}{2\pi}\mathcal D\big((\lambda^{-1}+\lambda)v_3\big),\;
\tfrac{1}{2\pi}(v_2-v_2(\ii)+v_4),\;
\tfrac{1}{2\pi\lambda} v_3
\right).$$
Following the proof of the implicit function theorem, we consider the map
$$\begin{array}{lcl}\mathcal G&:&\C\times E\to E\\
&&(t,u)\mapsto L^{-1}\mathcal F(t,u)-u\end{array}$$
so $\mathcal F(t,u)=0$ is equivalent to $-\mathcal G(t,u)=u$.
The components of $\mathcal G$ are given by
\begin{align*}
\mathcal G_1(t,u)&=\mathcal D\left[\mathcal K(u)+\tfrac{\sqrt{2}}{2\pi}(\lambda^{-1}+\lambda)(\wh{\mathfrak p}(t,u)-\wh{\mathfrak p}(t,u)^*)^+_{\odd}\right]-u_1\\
\mathcal G_2(t,u)&=\frac{1}{2\pi}\left[(\wh{\mathfrak p}(t,u)-\wh{\mathfrak p}(t,u)^*)^+_{\even}-(\wh{\mathfrak p}(t,u)-\wh{\mathfrak p}(t,u)^*)^+_{\even}(\lambda=\ii)+\wh{\mathfrak p}(t,u)_{\even}(\lambda=\ii)\right]-u_2\\
\mathcal G_3(t,u)&=\frac{1}{2\pi\lambda}\big(\wh{\mathfrak p}(t,u)-\wh{\mathfrak p}(t,u)^*)^+_{\odd}-u_3.
\end{align*}
Our goal is to find $T$ and $R=(R_1,R_2,R_3)$
such that for $t\in\C$, $|t|\leq T$, the map $u\mapsto \mathcal G(t,u)$ preserves the box
$$B_R=\{u\in E\mid \|u_j\|_{\rho}\leq R_j, 1\leq j\leq 3\}$$
and is contracting (for a norm yet to defined on $E$, see Section \ref{section:Lipschitz}).
The proof of the implicit function theorem ensures the existence of such $T>0$ and $R>0$. We want to investigate how large $T$ can be. For this we need to estimate $\|\mathcal G_i\|_{\rho}$ and their Lipschitz constants.
\subsection{Estimating $\mathcal G$}
\label{section:estimationG}
In this section, we assume that $|t|\leq T$ and $\|u_i\|_{\rho}\leq R_i$ for $i=1,2,3$ and
give estimates of $\|\mathcal G_i\|_{\rho}$ in terms of $T$, $R$, $\rho$.
Fix some integer $n\geq 1$.
By Proposition \ref{prop:expandpq}, the order $n-1$ expansion of $\wh{\mathfrak p}=t^{-1}\mathfrak p$ is
$$\wh{\mathfrak p}(t,u)=\sum_{k=0}^{n-1}\sum_{i_1,\cdots,i_{k+1}} t^k x_{i_1}(u)\cdots x_{i_{k+1}}(u)
(M_{i_1}\cdots M_{i_{k+1}})_{31}\,\Omega_{i_1,\cdots,i_{k+1}}(1)
+\mathcal R_n(t,u)$$
with $\mathcal R_n(t,u)=O(|t|^n).$
Replacing each $x_i$ by its expression in term of $u$, we obtain an expression of the form
$$\wh{\mathfrak p}(t,u)=\sum_{k=0}^{n-1} \sum_{|\alpha|\leq k+1} a_{k,\alpha} t^k u^{\alpha}+\mathcal R_n(t,u)$$
where $\alpha=(\alpha_1,\alpha_2,\alpha_3)\in\N^3$.
We use the following standard notations for multi-indices:
$$u^\alpha=u_1^{\alpha_1}u_2^{\alpha_2} u_3^{\alpha_3}$$
$$|\alpha|=\alpha_1+\alpha_2+\alpha_3$$
$$R^{\alpha}=R_1^{\alpha_1}R_2^{\alpha_2} R_3^{\alpha_3}.$$
The coefficients $a_{k,\alpha}$ are Laurent polynomials of degree at most $k+1$ in $\lambda$ whose coefficients can be computed in terms of $\Omega$-values.
In the same way, we rewrite \eqref{eq:K} as
$$\mathcal K(u)=\sum_{k,\alpha}b_{k,\alpha} t^k u^\alpha$$
where of course $b_{k,\alpha}=0$ if $k>0$ or $\alpha>2$, but this notation is consistent with the notation for $\wh{\mathfrak p}$. 
Moreover, later on we will add a $t$-dependent correcting term to each $x_i$ and $\mathcal K$ will depend on $t$, so it is better to do the estimate in this general form.
We write
$$\mathcal G_i(t,u)=\sum_{k,\alpha} \mathcal G_i^{k,\alpha}+\mathcal G_i^{\mathcal R}$$
where $\mathcal G_i^{k,\alpha}$ and  $\mathcal  G_i^{\mathcal R}$ denote respectively the contributions of the terms $t^k u^\alpha$ and $\mathcal R_n$ to $\mathcal G_i$, where $\mathcal G_i^{k, \alpha}$ are given by
\begin{align*}
\mathcal G_1^{k,\alpha}&=t^k\mathcal D\left[b_{k,\alpha}u^\alpha+\tfrac{\sqrt{2}}{2\pi}(\lambda^{-1}+\lambda)(a_{k,\alpha}u^{\alpha}-(a_{k,\alpha}u^{\alpha})^*)^+_{\odd}\right]\\
\mathcal G_2^{k,\alpha}&=\frac{t^k}{2\pi}\left[\big(a_{k,\alpha}u^{\alpha}-(a_{k,\alpha}u^{\alpha})^*\big)^+_{\even}-\big(a_{k,\alpha}u^{\alpha}-(a_{k,\alpha} u^{\alpha})^*\big)^+_{\even}(\lambda=\ii)+(a_{k,\alpha}u^{\alpha})_{\even}(\lambda=\ii)\right]\\
\mathcal G_3^{k,\alpha}&=\frac{t^k}{2\pi\lambda}\big(a_{k,\alpha}u^{\alpha}-(a_{k,\alpha}u^{\alpha})^*\big)^+_{\odd}
\end{align*}
to which one must subtract $u_1$ from $\mathcal G_1^{0,(1,0,0)}$,
$u_2$ from $\mathcal G_2^{0,(0,1,0)}$ and 
$u_3$ from $\mathcal G_3^{0,(0,0,1)}$.
We estimate each term depending on $T$, $R$ and the coefficients $a_{k,\alpha}$ and $b_{k,\alpha}$.
\begin{enumerate}
\item By definition of $\mathcal G_i$, the linear terms with $k=0$ and $|\alpha|=1$ vanish.
\item The terms with $\alpha=0$ are evaluated explicitly:
\begin{align*}
\|\mathcal G_1^{k,0}\|_{\rho}&=T^k\big\|\mathcal D\left[b_{k,0}+\tfrac{\sqrt{2}}{2\pi}(\lambda^{-1}+\lambda)(a_{k,0}-a_{k,0}^*)^+_{\odd}\right]\big\|_{\rho}\\
\|\mathcal G_2^{k,0}\|_{\rho}&= \frac{T^k}{2\pi}\left\|(a_{k,0}-a_{k,0}^*)^+_{\even}-(a_{k,0}-a_{k,0}^*)^+_{\even}(\ii)+(a_{k,0})_{\even}(\ii)\right\|_{\rho}\\
\|\mathcal G_3^{k,0}\|_{\rho}&=\frac{T^k}{2\pi\rho}\left\|(a_{k,0}-a_{k,0}^*)^+_{\odd}\right\|_{\rho}
\end{align*}
\item We estimate the remaining terms the best we can, using the following elementary facts:
$$u\in\mathcal W\Rightarrow \| (u-u^*)^+\|_{\rho}\leq \|u\|_{\rho}$$
$$u\in\mathcal W^{>0}\Rightarrow \|\lambda^{-1}u\|_{\rho}\leq \rho^{-1}\|u\|_{\rho}$$
$$u\in\mathcal W^{>0}_{\even}\Rightarrow
|u(\ii)|\leq \rho^{-2}\|u\|_{\rho}.$$
Estimating $\mathcal G_3^{k,\alpha}$ is straightforward:
$$\|\mathcal G_3^{k,\alpha}\|_{\rho}
\leq\frac{T^k}{2\pi\rho}\|(a_{k,\alpha} u^{\alpha})_{\odd}\|_{\rho}
\leq \frac{ T^k R^{\alpha}}{2\pi\rho}\|a_{k,\alpha,\odd}\|_{\rho}.$$
For $\mathcal G_2^{k,\alpha}$, we can take advantage of some cancellations.
 Write
$$f(\lambda)=(a_{k,\alpha}u^{\alpha})_{\even}=\sum_{\ell\in\Z}f_{2\ell}\lambda^{2\ell}.$$
Then
$$(f-f^*)^+=
\sum_{\ell\geq 1}(f_{2\ell}-f_{-2\ell})\lambda^{2\ell}$$
$$(f-f^*)^+(\ii)=
\sum_{\ell\geq 1}(f_{2\ell}-f_{-2\ell})\ii^{2\ell}$$
$$f(\ii)=f_0+\sum_{\ell\geq 1}(f_{2\ell}+f_{-2\ell})\ii^{2\ell}$$
$$\mathcal G_2^{k,\alpha}=\frac{t^k}{2\pi}\big(f_0+\sum_{\ell\geq 1}(f_{2\ell}-f_{-2\ell})\lambda^{2\ell}+2\sum_{\ell\geq 1}f_{-2\ell}\,\ii^{2\ell}\big)$$
and finally
$$\|\mathcal G_2^{k,\alpha}\|_{\rho}\leq \frac{T^k}{2\pi}\left(\|f\|_{\rho}+2\rho^{-2}\|f^-\|_{\rho}\right)
\leq \frac{T^k R^\alpha}{2\pi}\left(\|a_{k,\alpha,\even}\|_{\rho}+2 \rho^{-2}\|a_{k,\alpha,\even}^-\|_{\rho}\right).$$
To estimate $\mathcal G_1^{k,\alpha}$, we decompose
$$a_{\alpha,k,\odd}=a_{k,\alpha,\odd}^+ + a_{k,\alpha,\odd}^-$$
and observe that $(a_{k,\alpha,\odd}^+ u^{\alpha})^{*+}=0$ so
\begin{eqnarray*}
\mathcal G_1^{k,\alpha}&=&t^k\mathcal D\left[
\left(b_{k,\alpha}+\tfrac{\sqrt{2}}{2\pi}(\lambda^{-1}+\lambda)a_{k,\alpha,\odd}^+\right)u^{\alpha}\right]\\
&+&t^k\mathcal D\left[\tfrac{\sqrt{2}}{2\pi} (\lambda^{-1}+\lambda)\left(a_{k,\alpha,\odd}^- u^{\alpha}-(a_{k,\alpha,\odd}^- u^{\alpha})^*\right)^+\right]
\end{eqnarray*}
and we can take advantage of possible cancellations in the first term.
This gives the estimate
$$\|\mathcal G_1^{k,\alpha}\|_{\rho}\leq \frac{T^k R^\alpha}{\rho^2-1}\left(
\big\|b_{k,\alpha}+\tfrac{\sqrt{2}}{2\pi}(\lambda^{-1}+\lambda)a^+_{k,\alpha,\odd}\big\|_{\rho}
+\tfrac{\sqrt{2}}{2\pi}(\rho^{-1}+\rho)\|a_{k,\alpha,\odd}^-\|_{\rho}\right).$$
\end{enumerate}
\subsection{Estimating the Lipschitz constants}
\label{section:Lipschitz}
Let $\weight=(\weight_1,\weight_2,\weight_3)\in(0,\infty)^3$ be positive weights.
We define a weighted norm on $E$ by
$$\|u\|_{\weight,\rho}=\max_{1\leq i\leq 3}\weight_i^{-1}\|u_i\|_{\rho}.$$
Then the Lipschitz constant of the map $\mathcal G$ on the box $B_R$ is
$$\Lip(\mathcal G)=\max_{u,v\in B_R\atop |t|\leq T}\frac{\|\mathcal G(t,u)-\mathcal G(t,v)\|_{\weight,\rho}}
{\|u-v\|_{\weight,\rho}}
=\max_{1\leq i\leq 3}\weight_i^{-1}\Lip(\mathcal G_i)$$
where 
$$\Lip(\mathcal G_i)=\sup_{u,v\in B_R\atop |t|\leq T}\frac{\|\mathcal G_i(t,u)-\mathcal G_j(t,v)\|_{\rho}}{\|u-v\|_{\weight,\rho}}.$$
The map $\mathcal G$ is contracting (with respect to $u\in B_R$) if $\Lip(\mathcal G_i)<\weight_j$ for $1\leq i\leq 3$.
The weights are additional parameters which help to have a contracting map.
In this section we explain how to estimate the Lipschitz constant of each term $\mathcal G_i^{k,\alpha}$.
The differential of $u\mapsto u^{\alpha}=u_1^{\alpha_1}u_2^{\alpha_2}u_3^{\alpha_3}$ applied to $v\in E$ is
$$ d(u^{\alpha})\cdot v=\sum_{i=1}^3 \alpha_i u_i^{\alpha_i-1} v_i\prod_{j\neq i} u_j^{\alpha_j}.$$
Taking norms on both sides and using $\|v_i\|\leq \weight_i\|v\|_{\weight,\rho}$, we obtain for $u\in B_R$:
$$\|d(u^{\alpha})\cdot v\|_{\weight,\rho}\leq \|v\|_{\weight,\rho}\sum_{i=1}^3 \alpha_i R_i^{\alpha_i-1} \weight_i \prod_{j\neq i}R_j^{\alpha_j}=\|v\|_{\weight,\rho}\,d(R^{\alpha})\cdot \weight.$$
By the mean value inequality, for $u,v\in B_R$
$$\|u^{\alpha}-v^{\alpha}\|_{\weight,\rho}\leq \|u-v\|_{\weight,\rho}\,d(R^{\alpha})\cdot \weight.$$
By the results of Section \ref{section:estimationG}, since each term $\mathcal G_i^{k,\alpha}$ is linear in $u^{\alpha}$:
\begin{align*}
\Lip(\mathcal G_1^{k,\alpha})&\leq \frac{T^k}{\rho^2-1}\left[
\big\|b_{k,\alpha}+\tfrac{\sqrt{2}}{2\pi}(\lambda^{-1}+\lambda)a^+_{k,\alpha,\odd}\big\|_{\rho}
+\tfrac{\sqrt{2}}{2\pi}(\rho^{-1}+\rho)\|a_{k,\alpha,\odd}^-\|_{\rho}\right]\,d(R^{\alpha})\cdot \weight\\
\Lip(\mathcal G_2^{k,\alpha})&\leq \frac{T^k}{2\pi}\left[\|a_{k,\alpha,\even}\|+2 \rho^{-2}\|a_{k,\alpha,\even}^-\|\right]d(R^{\alpha})\cdot\weight\\
\Lip(\mathcal G_3^{k,\alpha})&\leq \frac{T^k}{2\pi\rho}\|a_{k,\alpha,\odd}\|_{\rho}\,d(R^{\alpha})\cdot \weight.
\end{align*}
In other words, $\Lip(\mathcal G_i^{k,\alpha})$ is obtained by taking the differential of the estimate of
$\mathcal G_i^{k,\alpha}$ with respect to $R$ applied to the vector $\weight$.
\subsection{Estimating the remainder terms $\mathcal G_i^{\mathcal R}$}
\label{section:remainder}
We first estimate the remainder $\mathcal R_n$ of $\mathfrak p$ using Gronwall inequality.
\begin{proposition} There exists constants $C_0(R,\rho)$ and $C_1(n,R,\rho)$,
$C_2(n,R,\rho,\weight)$ and $C_3(\rho,\weight)$ such that for $\|u_i\|_{\rho}\leq R_i$
and $|t|\leq T$: 
\label{prop:gronwall}
$$\|\mathcal R_n(t,u)\|_{\rho}\leq e^{C_0 \,T}C_1\,T^n $$
and
$$\Lip(\mathcal R_n)\leq e^{C_0\,T}C_2 T^n + e^{2 C_0\, T}C_1C_3 T^{n+1}.$$
\end{proposition}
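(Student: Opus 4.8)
The statement to prove is Proposition \ref{prop:gronwall}, the estimate on the remainder term \(\mathcal R_n(t,u)\) of \(\wh{\mathfrak p}(t,u)\) and on its Lipschitz constant. The strategy is the standard Picard–Gronwall comparison, carried out inside the Banach algebra \(\mathcal W_\rho\) (or rather on the vector-valued frame \(\Phi_{t,u}\in\Lambda SL(2,\C)_\rho\)), combined with Taylor's theorem with integral remainder in the variable \(t\). First I would recall that \(\mathcal R_n\) is by definition the order-\(n\) Taylor remainder of the analytic function \(t\mapsto\wh{\mathfrak p}(t,u)=t^{-1}\mathfrak p(t,u)\), so \(\mathcal R_n(t,u)=\frac{t^n}{(n-1)!}\int_0^1(1-\tau)^{n-1}\partial_t^n\wh{\mathfrak p}(\tau t,u)\,d\tau\); it therefore suffices to bound \(\|\partial_t^n\wh{\mathfrak p}(s,u)\|_\rho\) uniformly for \(|s|\le T\) and \(\|u_i\|_\rho\le R_i\), and for the Lipschitz statement to bound \(\|\partial_t^n\partial_u\wh{\mathfrak p}(s,u)\cdot v\|_\rho\) similarly. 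By Proposition \ref{prop:pq} and the ODE \(d_\Sigma\Phi_{t,u}=\Phi_{t,u}\eta_{t,u}\), \(\mathfrak p\) is a polynomial in the entries of \(\mathcal P=\Phi_{t,u}(z=1)\), so everything reduces to estimating \(\Phi_{t,u}\) and its \(t\)-derivatives along the straight segment \([0,1]\).

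The core step is the Gronwall bound on \(\Phi_{t,u}\). Writing \(\eta_{t,u}=t\,\theta_u\) with \(\theta_u=\sum_j x_j(u)\mathfrak m_j\omega_j\), integrating \(d\Phi=t\,\Phi\,\theta_u\) from \(0\) to \(z\) along \([0,1]\) gives \(\|\Phi_{t,u}(z)\|\le 1+|t|\int_0^z\|\Phi_{t,u}\|\,\|\theta_u\|\le \exp\!\big(|t|\,L(R)\big)\) where \(L(R)=\int_0^1\|\theta_u\|\) is finite and bounded in terms of \(R,\rho\) using \(\|x_j(u)\|_\rho\le |\cv x_j|+R_j\) and the convergence of \(\int_0^1\omega_j\) (the forms \(\omega_j\) have no pole on \([0,1]\) since the \(p_j\) lie off the real segment for \(\varphi=\pi/4\)). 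This yields the constant \(C_0(R,\rho)\) appearing in the statement. Differentiating the integral equation \(n\) times in \(t\) via Leibniz (as in the proof of Proposition \ref{prop:expandpq}), \(\partial_t^n\Phi_{t,u}\) satisfies an inhomogeneous linear ODE whose inhomogeneity involves \(\partial_t^{k}\Phi_{t,u}\) for \(k<n\) times \(\theta_u\); an induction on \(n\) together with Gronwall gives \(\|\partial_t^n\Phi_{t,u}(1)\|\le n!\,L(R)^n e^{|t|L(R)}\le e^{C_0 T}\,\tilde C_1(n,R,\rho)\). Combining with the Taylor remainder formula above and absorbing the \(t^n/(n-1)!\) factor produces \(\|\mathcal R_n(t,u)\|_\rho\le e^{C_0 T}C_1 T^n\). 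For the Lipschitz estimate one differentiates once more, now in \(u\): \(\partial_u\Phi_{t,u}\cdot v\) solves a linear ODE with inhomogeneity \(\Phi_{t,u}\,t\,(\partial_u\theta_u\cdot v)\), so its norm is \(O(|t|)\) times \(e^{2C_0 T}\) times \(\|v\|_{\weight,\rho}\) times a constant; feeding this into the \(n\)-fold \(t\)-differentiated system gives the two-term bound \(e^{C_0 T}C_2 T^n + e^{2C_0 T}C_1 C_3 T^{n+1}\), the first term being the contribution where the \(u\)-derivative hits a term already counted in \(\mathcal R_n\)'s Taylor coefficients and the second where it produces the extra power of \(t\) from the explicit \(\eta_{t,u}=t(\cdots)\) prefactor. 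Passing from \(\Phi\) to \(\mathfrak p\) and then to \(\mathcal R_n\) of \(\wh{\mathfrak p}\) is routine since these are polynomial/analytic operations on a Banach algebra, contributing only further multiplicative constants depending on \(R,\rho\).

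**Main obstacle.** The delicate point is purely bookkeeping: one must track exactly how the constants depend on \(n\) (to make the later convergence-radius estimate, Theorem \ref{thm2}, effective) and must make sure the weighted norm \(\|\cdot\|_{\weight,\rho}\) is used consistently when passing derivatives through \(u\), so that the factors \(d(R^\alpha)\cdot\weight\) from Section \ref{section:Lipschitz} reappear correctly. There is also a minor subtlety that \(\mathcal D\) and the \((\,\cdot\,)^{*+}\) projections are bounded operators on \(\mathcal W_\rho\) with explicit norms (\(\|\mathcal D\|=\frac1{\rho^2-1}\), etc.), which must be invoked when the estimate on \(\wh{\mathfrak p}\) is transported to the components \(\mathcal G_i\); but since \(\mathcal R_n\) is only the \(\wh{\mathfrak p}\)-remainder and the \(\mathcal G_i^{\mathcal R}\) are obtained from it by these bounded operations, this just multiplies \(C_1,C_2,C_3\) by harmless factors. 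I expect no conceptual difficulty — the whole proposition is a quantitative incarnation of the fact that solutions of linear ODEs depend analytically on parameters, with Gronwall supplying the explicit exponential bound.
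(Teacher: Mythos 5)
Your overall architecture (Taylor remainder in the Banach space $\mathcal W_{\rho}$, a Gronwall bound giving $C_0$ from $\int_0^1$ of the norm of the connection form, then a $u$-differentiation for the Lipschitz part) is aligned with the paper, but the step you lean on for the crucial quantitative input is not justified as described, and the naive version of it fails to give constants of the asserted shape. You claim that "induction on $n$ together with Gronwall" yields $\|\partial_t^n\Phi_{t,u}(1)\|\leq n!\,L(R)^n e^{|t|L(R)}$. Differentiating $d\Phi=\Phi\, t\theta_u$ $n$ times gives $dW_n=W_n\,t\theta_u+nW_{n-1}\theta_u$ with $W_n=\partial_t^n\Phi$, and each application of Gronwall (or of variation of constants, which also brings in $\|\Phi^{-1}\|\leq e^{|t|L}$) multiplies the bound by a further exponential factor; the induction therefore produces $e^{c_n|t|L}$ with $c_n$ growing in $n$, not a single $e^{|t|L}$. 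That matters: with an $n$-dependent exponent you cannot arrange the statement's dependency structure ($C_0=C_0(R,\rho)$ independent of $n$, and $C_1,C_2$ independent of $T$), since $e^{c_nC_0T}$ can be absorbed neither into $C_0$ nor into $C_1$. The bound you want is in fact true, but it requires a different mechanism: either use that the $t$-Taylor coefficients of the frame are iterated integrals bounded by $L^k/k!$ (so the differentiated series is summable with a single exponential), or — as the paper does — avoid repeated differentiation altogether by writing one exact integral equation for the remainder itself, $Z_{t,x}(z)=t\int_0^z Z_{t,x}\alpha_x+t^{n+1}\int_0^z(\alpha_x)^{n+1}$, and applying Gronwall once, which yields $\|e_3Z_{t,x}\|_{\rho}\leq e^{|t|\int_0^1\triplenorm\alpha_x\triplenorm_{\rho}}\,|t|^{n+1}\int_0^1\|e_3(\alpha_x)^{n+1}\|_{\rho}$ and hence the stated $e^{C_0T}C_1T^n$ after dividing by $t$. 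The same device, applied to $\partial_{u_i}Z$, is what produces the precise two-term Lipschitz bound (one term from $t\int Z\,\partial_{u_i}\alpha_x$, using the bound on $Z$ already obtained, hence $e^{2C_0T}C_1C_3T^{n+1}$ with $C_3$ independent of $R$ because $\partial\alpha_x/\partial u_i$ does not depend on $u$; the other from $t^{n+1}\int\partial_{u_i}(\alpha_x)^{n+1}$, giving $e^{C_0T}C_2T^n$); your attribution of the two terms is not the correct decomposition.

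A secondary but relevant difference: the paper runs the argument on the $\mathrm{SO}(3)$-valued system $Y$ of Proposition \ref{prop:expandpq}, for which $\mathfrak p=Y^{31}(1)$ is a single entry and $\mathcal R_n=t^{-1}Z^{31}(1)$, rather than on the $\mathrm{SL}(2)$ frame $\Phi$, in which $\mathfrak p$ is quadratic; besides avoiding the Leibniz doubling of exponentials when you pass from $\Phi$ to $\mathfrak p$, this makes $C_1$ and $C_2$ explicit finite sums of $|\Omega_{i_1,\dots,i_{n+1}}(1)|$ (computable via multiple polylogarithms, using that the $\omega_j$ have constant argument on $[0,1]$). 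For the literal statement your cruder constants would suffice once the exponent issue is repaired, but the explicit $\Omega$-value form is what the quantitative results of Section \ref{sec:quantativeimplicitfunction} actually consume.
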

The proof explains how to compute the constants.
\begin{proof}
We equip the space of line vectors $\mathcal M_{1,3}(\mathcal W)$ with the sup norm:
$$\|X\|_{\rho}=\max_{1\leq j\leq 3}\|X_j\|_{\rho}$$
and $\mathcal M_3(\mathcal W)$
with the associated matrix norm
$$\triplenorm A\triplenorm_{\rho}=\max_{1\leq j\leq 3}\sum_{i=1}^3 \|A_{ij}\|_{\rho}$$
so that $\|XA\|_{\rho}\leq \|X\|_{\rho}\triplenorm A\triplenorm_{\rho}$.
With the notations of the proof of Proposition \ref{prop:expandpq}, let
$$Z_{t,x}(z)=Y_{t,x}(z)-\sum_{k=0}^n \frac{t^k}{k!}Y_{t,x}^{(k)}(z)
=Y_{t,x}(z)-\sum_{k=0}^n t^k \int_0^z(\alpha_x)^k.$$
We have
$$\mathcal R_n(t,u)=t^{-1}Z_{t,x(u)}^{31}(1).$$
For $z\in [0,1]$
\begin{eqnarray*}
d Z_{t,x}(z)&=&Y_{t,x}(z) t\alpha_x(z)-\left(\sum_{k=1}^n t^k\int_0^z (\alpha_x)^{k-1}\right)\alpha_x(z)\\
&=&t\left(Y_{t,x}(z)-\sum_{k=0}^{n-1} t^k \int_0^z (\alpha_x)^k\right)\alpha_x(z)\\
&=&t\left(Z_{t,x}(z)+t^n \int_0^z(\alpha_x)^n\right)\alpha_x(z).
\end{eqnarray*}
Integrating and using the recursive definition of iterated integrals gives
\begin{equation}
\label{eq:Ztx}
Z_{t,x}(z)=t\int_0^z Z_{t,x}\alpha_x+t^{n+1} \int_0^z (\alpha_x)^{n+1}.
\end{equation}
Multiplying on the left by $e_3=(0,0,1)$ and taking norms:
$$\|e_3 Z_{t,x}(z)\|_{\rho}\leq |t|\int_0^z \|e_3 Z_{t,x}\|_{\rho}\,\triplenorm \alpha_x\triplenorm_{\rho}+|t|^{n+1} \int_0^z \|e_3(\alpha_x)^{n+1}\|_{\rho}.$$
By Gronwall inequality, for all $z\in[0,1]$
\begin{equation}
\label{eq:gronwall}
\|e_3 Z_{t,x}(z)\|_{\rho}\leq 
\exp\left(|t|\int_0^1\triplenorm \alpha_x\triplenorm_{\rho}\right)
|t|^{n+1}\int_0^1\big\|e_3 (\alpha_x)^{n+1}\big\|_{\rho}
.
\end{equation}
We need to estimate the two integrals.
For $u\in B_R$, we have the straightforward estimate
\begin{equation}
\label{eq:estimate-xi}
\|x_i(u)\|_{\rho}\leq c_i(R,\rho)\quad \text{ with } \quad c_1=\rho+\rho R_1\quad\text{ and }\quad
c_2=c_3=\tfrac{\rho}{\sqrt{2}}+R_2+\rho R_3.
\end{equation}
For $z\in[0,1]$, we have
$$|\omega_1|=\frac{4z}{1+z^4},\quad
|\omega_2|=\frac{2 \sqrt{2}(1-z^2)}{1+z^4} \; \leq \;
|\omega_3|=\frac{2\sqrt{2}(1+z^2)}{1+z^4}.$$
Hence
$$\triplenorm\alpha_x\triplenorm_{\rho}=2\max\{
\|x_1\|_{\rho}\,|\omega_1|,\|x_2\|_{\rho}\,|\omega_2|\}+
2\|x_3\|_{\rho}\,|\omega_3|
\leq 2\max\{
c_1|\omega_1|,c_2|\omega_2|\}+
2c_3|\omega_3|.
$$
We have
$$c_1|\omega_1|<c_2|\omega_2|\;\Leftrightarrow\;
z<z_0=\frac{\sqrt{c_1^2+2 c_2^2}-c_1}{\sqrt{2}c_2}.$$
Hence
\begin{eqnarray*}
\int_0^1 \triplenorm\alpha_x\triplenorm_{\rho}
&\leq &2 c_2\int_0^{z_0}|\omega_2|+2 c_1\int_{z_0}^{1}|\omega_1|+2c_3\int_0^1 |\omega_3|\\
&=&2 c_2\log\left(\frac{z_0^2+\sqrt{2}z_0+1}{z_0^2-\sqrt{2}z_0+1}\right)
+c_1(\pi-4\arctan(z_0^2))+2\pi c_3=C_0(\rho,R).
\end{eqnarray*}
The other (iterated) integral is estimated as follows:
$$\int_0^z\|e_3 (\alpha_x)^{n+1}\|_{\rho}\leq\sum_{i_1,\cdots,i_{n+1}}
\|x_{i_1}\cdots x_{i_{n+1}}\|_{\rho}
\,\|e_3 M_{i_1}\cdots M_{i_{n+1}}\|\,\int_0^1|\omega_{i_1}\cdots\omega_{i_{n+1}}|.$$
Observe that $\omega_1$, $\omega_2$ and $\omega_3$ are real or imaginary but have constant argument on $[0,1]$, so
\begin{equation}
\label{eq:absOmega}\int_0^1 |\omega_{i_1}\cdots\omega_{i_{n+1}}|=
\big|\int_0^1\omega_{i_1}\cdots\omega_{i_{n+1}}\big|
=|\Omega_{i_1,\cdots,i_{n+1}}(1)|.\end{equation}
For $u\in B_R$, we can expand $x_{i_1}\cdots x_{i_{n+1}}$ as a polynomial in $u_1$, $u_2$, $u_3$
and estimate its norm in function of $\rho,R$. This gives an inequality of the form
$$\max_{z\in[0,1]}\big\|e_3 \int_0^z (\alpha_x)^{n+1}\big\|_{\rho}\leq C_1(n,R,\rho).$$
\begin{remark}
\label{remark:numAbsOmega}
Computing $C_1$ requires the numerical evaluation
of $\Omega_{i_1,\cdots,i_{n+1}}$ where $(i_1,\cdots,i_{n+1})$ labels the edges on the graph of Proposition \ref{prop:pattern} from the vertex $e_3$ to any other vertices,
not just $e_1$ as in Section \ref{section:multizetas}. These $\Omega$-values can in general not be expressed in term of MZVs. We compute them using Multiple Polylogarithms instead (see Appendix \ref{appendix:nummpl}).

\end{remark}
By \eqref{eq:gronwall}, we obtain
$$\|\mathcal R_n\|_{\rho}\leq \|t^{-1} e_3 Z_{t,x}(1)\|_{\rho}\leq 
e^{C_0\,T} C_1 T^n.$$
To prove the second point of Proposition \ref{prop:gronwall}, differentiate Equation \eqref{eq:Ztx}
with respect to $u_i$, take norms and use Gronwall inequality again:
$$\frac{\partial Z_{t,x}(z)}{\partial u_i}=
t\int_0^z \frac{\partial Z_{t,x}}{\partial u_i}\alpha_x
+t\int_0^z Z_{t,x}\frac{\partial \alpha_x}{\partial u_i}
+t^{n+1} \int_0^z\frac{\partial}{\partial u_i} (\alpha_x)^{n+1}$$
$$\big\|e_3 \frac{\partial Z_{t,x}(z)}{\partial u_i}\big\|_{\rho}\leq 
|t|\int_0^z \big\|e_3 \frac{\partial Z_{t,x}}{\partial u_i}\big\|_{\rho}\,\triplenorm \alpha_x\triplenorm_{\rho}
+|t|\int_0^z \|e_3 Z_{t,x}\|_{\rho}\bigtriplenorm\frac{\partial \alpha_x}{\partial u_i}\bigtriplenorm_{\rho}
+|t|^{n+1}\int_0^z\big\|e_3 \frac{\partial}{\partial u_i} (\alpha_x)^{n+1}\big\|_{\rho}$$
$$\big\|e_3 \frac{\partial Z_{t,x}(1)}{\partial u_i}\big\|_{\rho}\leq 
e^{C_0 |t|}\left(
|t|\int_0^1 \|e_3 Z_{t,x}\|_{\rho}\bigtriplenorm\frac{\partial \alpha_x}{\partial u_i}\bigtriplenorm_{\rho}
+|t|^{n+1}\int_0^1\big\|e_3 \frac{\partial}{\partial u_i} (\alpha_x)^{n+1}\big\|_{\rho}\right).$$
We need to estimate the two integrals. By Inequality \eqref{eq:gronwall}
$$\int_0^z \|e_3 Z_{t,x}\|_{\rho}\bigtriplenorm\frac{\partial \alpha_x}{\partial u_i}\bigtriplenorm_{\rho}
\leq e^{C_0 |t|} C_1|t|^{n+1}\int_0^1\bigtriplenorm\frac{\partial \alpha_x}{\partial u_i}\bigtriplenorm_{\rho}.$$
We compute
\begin{align*}
\bigtriplenorm\frac{\partial \alpha_x}{\partial u_1}\bigtriplenorm_{\rho}
&=\int_0^1 2\rho|\omega_1|=2\rho|\Omega_1(1)|=\pi\rho=C_{3,1}\\
\bigtriplenorm\frac{\partial \alpha_x}{\partial u_2}\bigtriplenorm_{\rho}
&=\int_0^1 2(|\omega_2|+|\omega_3|)=2(|\Omega_2(1)|+|\Omega_3(1)|)=2\pi+4\log(\sqrt{2}+1)=C_{3,2}\\
\bigtriplenorm\frac{\partial \alpha_x}{\partial u_3}\bigtriplenorm_{\rho}
&=\int_0^1 2\rho(|\omega_2|+|\omega_3|)=2\pi\rho+4\rho\log(\sqrt{2}+1)=C_{3,3}.
\end{align*}
The other (iterated) integral is estimated as before:
$$\int_0^1\big\|e_3 \frac{\partial}{\partial u_i} (\alpha_x)^{n+1}\big\|_{\rho}
\leq\sum_{i_1,\cdots,i_{n+1}}\bigtriplenorm\frac{\partial}{\partial u_i}(x_{i_1}\cdots x_{i_{n+1})}\bigtriplenorm_{\rho}\,\|e_3M_{i_1}\cdots M_{i_{n+1}}\|\;|\omega_{i_1}\cdots\omega_{i_{n+1}}|.$$
Each term $x_{i_1}\cdots x_{i_{n+1}}$ is estimated by expanding it as a polynomial in $u$, differentiating with respect to
$u_i$ and then taking its norm. Using the inequality \eqref{eq:absOmega}, this gives an estimate of the form
$$\int_0^1\big\|e_3 \frac{\partial}{\partial u_i} (\alpha_x)^{n+1}\big\|_{\rho}
\leq C_{2,i}(n,R,\rho)$$
where each $C_{2,i}$ can be computed by estimating $\Omega$-values of
depth $n+1$ (see Remark \ref{remark:numAbsOmega}).
By the mean value inequality, for $u,v\in B_R$:
\begin{eqnarray*}
\|\mathcal R_n(u)-\mathcal R_n(v)\|_{\rho}&\leq &|t|^{-1}\sum_{i=1}^3
\big\|e_3 \frac{\partial Z_{t,x}(z)}{\partial u_i}\big\|_{\rho}\|u_i-v_i\|_{\rho}\\
&\leq &\sum_{i=1}^3 |t|^{-1}e^{C_0 |t|}\left(
|t|e^{C_0 |t|} C_1 |t|^{n+1}C_{3,i}\weight_i+|t|^{n+1} C_{2,i}\weight_i\right)
\|u-v\|_{\weight,\rho}
\end{eqnarray*}
which proves Proposition \ref{prop:gronwall} with
$$C_2=\sum_{i=1}^3 C_{2,i}\weight_i\quad\text{and}\quad
C_3=\sum_{i=1}^3 C_{3,i}\weight_i.$$
\end{proof}
\begin{proposition}
For $\|u_i\|\leq R_i$ and $|t|\leq R$, we have for $1\leq i\leq 3$:
$$\|\mathcal G_i^{\mathcal R}(t,u)\|_{\rho}\leq C^{\mathcal R}_i e^{C_0 \,T}C_1\,T^n $$
and
$$\Lip(\mathcal G_i^{\mathcal R})\leq C^{\mathcal R}_i (e^{C_0\,T}C_2 T^n + e^{2 C_0\, T}C_1C_3 T^{n+1})$$
with
$$C^{\mathcal R}_1=\frac{\sqrt{2}(\rho^{-1}+\rho)}{2\pi(\rho^2-1)},\quad
C^{\mathcal R}_2=\frac{(1+2\rho^{-2})}{2\pi}\quad\text{and}\quad
C^{\mathcal R}_3=\frac{1}{2\pi\rho}.$$
\end{proposition}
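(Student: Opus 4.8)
The plan is to exploit that \( \mathcal G_i^{\mathcal R} \) depends on \( u \) only through \( \mathcal R_n(t,u) \), and does so by a fixed bounded linear operator, so that the two estimates reduce immediately to the bounds on \( \|\mathcal R_n\|_\rho \) and \( \Lip(\mathcal R_n) \) already established in Proposition \ref{prop:gronwall}. First I would record the explicit form of the remainder contribution: writing \( \widehat{\mathfrak p}(t,u)=\sum_{k,\alpha}a_{k,\alpha}t^k u^\alpha+\mathcal R_n(t,u) \) and noting that \( \mathcal K(u) \) from \eqref{eq:K} together with the linear terms \( -u_j \) are exactly polynomial in \( u \) (hence contribute nothing beyond order \( n \)), one reads off from the formulas for \( \mathcal G_i \) in Section \ref{section:estimationG} that
\begin{align*}
\mathcal G_1^{\mathcal R}&=\tfrac{\sqrt{2}}{2\pi}\,\mathcal D\!\left[(\lambda^{-1}+\lambda)\big(\mathcal R_n-\mathcal R_n^*\big)^+_{\odd}\right],\\
\mathcal G_2^{\mathcal R}&=\tfrac{1}{2\pi}\left[\big(\mathcal R_n-\mathcal R_n^*\big)^+_{\even}-\big(\mathcal R_n-\mathcal R_n^*\big)^+_{\even}(\lambda=\ii)+(\mathcal R_n)_{\even}(\lambda=\ii)\right],\\
\mathcal G_3^{\mathcal R}&=\tfrac{1}{2\pi\lambda}\big(\mathcal R_n-\mathcal R_n^*\big)^+_{\odd}.
\end{align*}
In each case \( \mathcal G_i^{\mathcal R}=T_i(\mathcal R_n) \) for a fixed \( \C \)-linear operator \( T_i \) assembled from \( \mathcal D \), the \( * \)-involution, the projections onto \( \mathcal W^{>0} \) and onto even/odd parts, evaluation at \( \lambda=\ii \), and multiplication by \( \lambda^{\pm1} \).

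Next I would bound \( \|T_i\| \), reusing verbatim the elementary inequalities already collected in Section \ref{section:estimationG}: \( \|(u-u^*)^+\|_\rho\leq\|u\|_\rho \), \( \|\lambda^{-1}u\|_\rho\leq\rho^{-1}\|u\|_\rho \) for \( u\in\mathcal W^{>0} \), \( |u(\ii)|\leq\rho^{-2}\|u\|_\rho \) for \( u\in\mathcal W^{>0}_{\even} \), and \( \|\mathcal D\|=(\rho^2-1)^{-1} \). For \( \mathcal G_3^{\mathcal R} \) this is immediate and gives \( \|\mathcal G_3^{\mathcal R}\|_\rho\leq\tfrac{1}{2\pi\rho}\|\mathcal R_n\|_\rho \). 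For \( \mathcal G_1^{\mathcal R} \), bounding \( \mathcal D \), then the factor \( (\lambda^{-1}+\lambda) \) acting on the positive-degree function \( (\mathcal R_n-\mathcal R_n^*)^+_{\odd} \), then \( (\,\cdot-\cdot^*)^+ \), yields \( \|\mathcal G_1^{\mathcal R}\|_\rho\leq\tfrac{\sqrt2(\rho^{-1}+\rho)}{2\pi(\rho^2-1)}\|\mathcal R_n\|_\rho \). For \( \mathcal G_2^{\mathcal R} \) the naive triangle inequality is lossy, so I would repeat the cancellation argument used for \( \mathcal G_2^{k,\alpha} \): writing \( f=(\mathcal R_n)_{\even}=\sum_\ell f_{2\ell}\lambda^{2\ell} \), the three terms combine, with cancellation, to \( \tfrac{1}{2\pi}\big(f_0+\sum_{\ell\geq1}(f_{2\ell}-f_{-2\ell})\lambda^{2\ell}+2\sum_{\ell\geq1}f_{-2\ell}\,\ii^{2\ell}\big) \), whose \( \rho \)-norm is at most \( \tfrac{1}{2\pi}\big(\|f\|_\rho+2\rho^{-2}\|f^-\|_\rho\big)\leq\tfrac{1+2\rho^{-2}}{2\pi}\|\mathcal R_n\|_\rho \). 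This establishes \( \|\mathcal G_i^{\mathcal R}\|_\rho\leq C_i^{\mathcal R}\|\mathcal R_n\|_\rho \) with exactly the stated constants.

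Finally, since each \( T_i \) is linear, for \( u,v\in B_R \) one has \( \mathcal G_i^{\mathcal R}(t,u)-\mathcal G_i^{\mathcal R}(t,v)=T_i\big(\mathcal R_n(t,u)-\mathcal R_n(t,v)\big) \), hence \( \Lip(\mathcal G_i^{\mathcal R})\leq C_i^{\mathcal R}\,\Lip(\mathcal R_n) \). Substituting the two bounds \( \|\mathcal R_n\|_\rho\leq e^{C_0T}C_1T^n \) and \( \Lip(\mathcal R_n)\leq e^{C_0T}C_2T^n+e^{2C_0T}C_1C_3T^{n+1} \) from Proposition \ref{prop:gronwall} then gives the two displayed inequalities of the proposition. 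The only step that is not a pure substitution into machinery already built in Sections \ref{section:estimationG}--\ref{section:remainder} is the bookkeeping in the \( \mathcal G_2 \) bound, where the cancellation is needed to reach the constant \( (1+2\rho^{-2})/(2\pi) \) rather than a weaker \( (2+\rho^{-2})/(2\pi) \); I expect this to be the only place requiring any care.
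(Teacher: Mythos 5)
Your proposal is correct and follows essentially the same route as the paper: each $\mathcal G_i^{\mathcal R}$ is a fixed bounded linear operator applied to $\mathcal R_n$, its operator norm is bounded by the elementary inequalities of Section \ref{section:estimationG} (including the same cancellation used for $\mathcal G_2^{k,\alpha}$ to reach $(1+2\rho^{-2})/(2\pi)$), and the bounds on $\|\mathcal R_n\|_\rho$ and $\Lip(\mathcal R_n)$ from Proposition \ref{prop:gronwall} are then substituted. The paper writes this out only for $\mathcal G_1^{\mathcal R}$ and declares the other two similar, so your explicit treatment of $\mathcal G_2^{\mathcal R}$ is just a fuller version of the same argument.
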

\begin{proof}
We have
$$\mathcal G_1^{\mathcal R}(t,u)=\frac{\sqrt{2}}{2\pi}\mathcal D\left[(\lambda^{-1}+\lambda)(\mathcal R_n(t,u)-\mathcal R_n(t,u)^*)^+_{\odd}\right]$$
$$\|\mathcal G_1^{\mathcal R}(t,u)\|_{\rho}\leq\frac{\sqrt{2}}{2\pi}\frac{(\rho^{-1}+\rho)}{(\rho^2-1)}\|\mathcal R_n(t,u)\|_{\rho}$$
$$\|\mathcal G_1^{\mathcal R}(t,u)-\mathcal G_1^{\mathcal R}(t,v)\|_{\rho}
\leq \frac{\sqrt{2}}{2\pi}\frac{(\rho^{-1}+\rho)}{(\rho^2-1)}\|\mathcal R_n(t,u)-\mathcal R_n(t,v)\|_{\rho}.$$
The conclusion for $\mathcal G_1^{\mathcal R}$ follows from Proposition \ref{prop:gronwall}.
The proofs for $\mathcal G_2^{\mathcal R}$ and $\mathcal G_3^{\mathcal R}$ are similar.
\end{proof}
\subsection{Estimating the genus}
Assume that we are given some positive number $T,R$ such that for $t\in D(0,T)$, Problem \eqref{eq:monodromy-problem4} has a unique solution $u(t)\in B_R$.
Let
$$\psi(t)=t\sqrt{\mathcal K(u(t))}.$$
Recall that the Lawson surface $\xi_{1,g}$ is obtained by taking $t=\psi^{-1}(\frac{1}{2g+2})$.
Since $\mathcal K(0)=1$, it is clear that $\psi$ is a diffeomorphism in a neighborhood of $0$.
In this section, we give a quantitative estimate on the size of this neighborhood.
For this, we need to know how close to $1$ the number $\mathcal K(u(t))$ (which does not depend on $\lambda$) is.
Note that we do not have access to this number,
so we instead estimate $\mathcal K(u)$, evaluated at $\lambda=1$, for all $u\in B_R$.
(The choice to evaluate at $\lambda=1$ is rather arbitrary, but seems to give better results than $\lambda=0$.)
With the notations of Section \ref{section:estimationG}, we have
$$\mathcal K(u)(\lambda=1)=\sum_{k,\alpha} b_{k,\alpha}(1) t^k u^{\alpha}(1).$$
Since $b_{0,0}=1$, we have for $u\in B_R$
$$|\mathcal K(u)(\lambda=1)-1| \leq \sum_{(k,\alpha)\neq (0,0)} |b_{k,\alpha}(1)| \,T^k R^{\alpha}=C_{\mathcal K}(T,R).$$
\begin{proposition}
\label{prop:diffeo}
Assume that $C_{\mathcal K}<1$ and let $T'=T\sqrt{1-C_{\mathcal K}}$.
Then the function $\psi$ is biholomorphic from $\psi^{-1}(D(0,T'))$ to $D(0,T')$.
Furthermore, $\psi^{-1}$ is real on $(-T',T')$.
\end{proposition}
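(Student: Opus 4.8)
\textbf{Proof plan for Proposition \ref{prop:diffeo}.}

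The plan is to run a Rouch\'e/Schwarz–type argument showing that $\psi(t) = t\sqrt{\mathcal K(u(t))}$ is injective on the preimage $\psi^{-1}(D(0,T'))$, and then identify its image. First I would note that $u(t)$ is holomorphic on $D(0,T)$ (this is the conclusion of the fixed point argument of the previous subsections, applied with complex $t$), and that $\mathcal K(u(t))$ does not depend on $\lambda$ because $u(t)$ solves $\mathcal F_1(t,u)=\mathcal D(\mathcal K(u))=0$, hence $\mathcal K$ is constant in $\lambda$. Thus $t\mapsto\mathcal K(u(t))$ is a scalar holomorphic function on $D(0,T)$, and by the estimate preceding the Proposition, evaluating at $\lambda=1$ (which is legitimate precisely because the value is $\lambda$-independent), $|\mathcal K(u(t))-1|\leq C_{\mathcal K}(T,R)<1$ for all $t\in D(0,T)$. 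Fixing a holomorphic branch of the square root on the disk $D(1,C_{\mathcal K})\subset\{\Re>0\}$ with $\sqrt{1}=1$, the function $\psi$ is holomorphic on $D(0,T)$ with $\psi(0)=0$, $\psi'(0)=1$.

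Next I would prove injectivity of $\psi$ on the open set $V:=\psi^{-1}(D(0,T'))$. The cleanest route is to compare $\psi$ with the identity: write $\psi(t)=t\,h(t)$ with $h(t)=\sqrt{\mathcal K(u(t))}$, so $|h(t)-1|\leq 1-\sqrt{1-C_{\mathcal K}}=:c<1$ for all $t\in D(0,T)$ by the elementary inequality $|\sqrt z-1|\leq 1-\sqrt{1-C_{\mathcal K}}$ on $D(1,C_{\mathcal K})$ (convexity of the relevant branch; this is where the specific form $T'=T\sqrt{1-C_{\mathcal K}}$ enters). Then for $t_1,t_2\in D(0,T)$ with $t_1\neq t_2$ one estimates $|\psi(t_1)-\psi(t_2)-(t_1-t_2)| = |t_1 h(t_1)-t_2 h(t_2)-(t_1-t_2)|\leq |t_1||h(t_1)-1| + |t_2||h(t_2)-1| \leq c(|t_1|+|t_2|)$, which together with $|t_i|<T$ forces $\psi(t_1)\neq\psi(t_2)$ once $t_1,t_2$ lie in the sub-disk where this is $<|t_1-t_2|$; to make this rigorous one restricts to $t_i\in D(0,T')$ and uses the sharper bound $|h(t)-1|\leq c$ to get $|\psi(t_1)-\psi(t_2)|\geq(1-c)|t_1-t_2|>0$. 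This shows $\psi$ is injective on $D(0,T')$, hence on the (smaller) open set $V$, so $\psi|_V$ is a biholomorphism onto its image.

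It then remains to show $\psi(V)=D(0,T')$, i.e. that $\psi$ actually surjects onto the full disk of radius $T'$. For this I would use the open mapping theorem together with a boundary estimate: since $|h(t)-1|\leq c$, on the circle $|t|=T'$ we have $|\psi(t)|=|t|\,|h(t)|\geq T'(1-c)$, but more to the point, for $|t| = T/\sqrt{\cdot}$... the precise argument is that $\psi$ maps $D(0,T)$ into a region containing $D(0,T')$ by the minimum-modulus/Rouch\'e principle: on $\partial D(0,T)$ one has $|\psi(t)| = |t||h(t)| \geq T(1-c) = T\sqrt{1-C_{\mathcal K}} = T'$ wait — here I must be careful with which radius; the honest statement is that for $w\in D(0,T')$ the function $t\mapsto\psi(t)-w$ and $t\mapsto t-w$ differ by $t(h(t)-1)$ which has modulus $\leq cT$ on a suitable circle, and a Rouch\'e comparison on the circle of radius $r$ with $T'<r<T$ chosen so that $r-|w|>cr$ shows $\psi(t)=w$ has a (unique) solution in $D(0,r)$; letting $r\to T$ and using uniqueness/continuity places that solution in $V$. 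Finally, $\psi^{-1}$ is real on $(-T',T')$: by Proposition \ref{prop:symmetries}(1) (parity), $x(-t)(-\lambda)=-x(t)(\lambda)$, hence $\mathcal K(u(-t))=\mathcal K(u(t))$ and so $\psi(-t)=-\psi(t)$; moreover for real $t$ the solution $u(t)$ has real Fourier coefficients (it lies in $\mathcal W^{\geq0}_{\R,\even}$ by the real version of the implicit function theorem of Section \ref{section:IFT}), so $\mathcal K(u(t))$ is a positive real and $\psi(t)\in\R$; thus $\psi$ restricts to a real-analytic diffeomorphism of an interval around $0$, and its inverse is real on $(-T',T')$.

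\textbf{Main obstacle.} The delicate point is making the surjectivity step airtight: one needs to choose the comparison radius correctly so that Rouch\'e applies on a circle strictly inside $D(0,T)$ while still covering all of $D(0,T')$, and to check that the exponent $\tfrac12$ in $T'=T\sqrt{1-C_{\mathcal K}}$ is exactly what the bound $|h-1|\leq 1-\sqrt{1-C_{\mathcal K}}$ produces — i.e. that the worst case $\mathcal K=1-C_{\mathcal K}$ gives $|\psi(t)|=|t|\sqrt{1-C_{\mathcal K}}$, so the image of $D(0,T)$ is guaranteed to contain $D(0,T\sqrt{1-C_{\mathcal K}})$ but not necessarily more. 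Everything else (holomorphy of $u(t)$, $\lambda$-independence of $\mathcal K$, the parity and reality statements) is either already established earlier in the paper or a routine consequence of it.
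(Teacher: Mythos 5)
Your core argument is the paper's own proof: the bound $|\sqrt{\mathcal K(u(t))}-1|\leq 1-\sqrt{1-C_{\mathcal K}}$ (Lemma \ref{lemma:diffeo} with $a=\tfrac12$, after observing that $\mathcal K(u(t))$ is $\lambda$-independent and hence equal to its value at $\lambda=1$), followed by Rouch\'e's theorem comparing $\psi(t)-s$ with $t-s$; the paper runs Rouch\'e directly on the circle $|t|=T$, where $|\psi(t)-t|\leq T(1-\sqrt{1-C_{\mathcal K}})=T-T'<T-|s|\leq|t-s|$, so each $s\in D(0,T')$ has exactly one preimage in $D(0,T)$, which is simultaneously the existence and the uniqueness statement. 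Your variant with an intermediate radius $T'<r<T$ chosen so that $cr<r-|w|$ is fine (and, letting $r\to T$, also rules out extra preimages in $D(0,T)$), so this part of your plan is sound and essentially identical to the paper.

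Two points, however, do not hold as written. First, your separate injectivity step is both flawed and unnecessary: from $|\psi(t_1)-\psi(t_2)-(t_1-t_2)|\leq c(|t_1|+|t_2|)$ you cannot deduce $|\psi(t_1)-\psi(t_2)|\geq(1-c)|t_1-t_2|$, since for $t_1$ close to $t_2$ the error term $c(|t_1|+|t_2|)$ dwarfs $|t_1-t_2|$ (a genuine Lipschitz bound on $h(t)=\sqrt{\mathcal K(u(t))}$ would require Cauchy estimates on a smaller disk); moreover, injectivity on $D(0,T')$ would not transfer to $V=\psi^{-1}(D(0,T'))$, because $V$ is \emph{not} contained in $D(0,T')$ — one only knows $|\psi(t)|\geq(1-c)|t|$, hence $V\subset D(0,T)$. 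Fortunately the Rouch\'e count already gives uniqueness of the preimage in all of $D(0,T)$, so injectivity on $V$ comes for free and this step should simply be deleted. Second, the reality of $\psi^{-1}$ on $(-T',T')$ is asserted rather than proved: knowing that $\psi$ is real-valued on the real axis and a local diffeomorphism near $0$ does not yet show that every $s\in(-T',T')$ has its unique preimage on $\mathbb{R}$. The paper closes this by the reflection principle: $\psi$ real on $(-T,T)$ and holomorphic gives $\psi(\overline{t})=\overline{\psi(t)}$, hence $\psi^{-1}(\overline{s})=\overline{\psi^{-1}(s)}$ and $\psi^{-1}(s)\in\mathbb{R}$ for real $s$. (Alternatively, since $|\psi(\pm r)|\geq(1-c)r\to T'$ as $r\to T$, the intermediate value theorem shows the real segment covers $(-T',T')$, and the Rouch\'e uniqueness identifies these real points as the preimages.) With these two repairs your proposal coincides with the paper's proof.
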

\begin{proof}
Since $\mathcal K(u(t))$ is constant, it is equal to its value at $\lambda=1$, so
$$|\mathcal K(u(t))-1|\leq C_{\mathcal K}.$$

By Lemma \ref{lemma:diffeo} below with $a=\tfrac{1}{2}$ and $z=\mathcal K(u(t))-1$, we have for $t\in D(0,T)$
$$|\sqrt{\mathcal K(t)}-1|\leq 1-\sqrt{1-C_{\mathcal K}}.$$
For $s\in D(0,T')$, consider the functions
$$f_s(t)=\psi(t)-s\quad\text{and}\quad g_s(t)=t-s.$$
Then for $|t|=T$ :
$$|f_s(t)-g_s(t)|=|t|\,\big|\sqrt{\mathcal K(u(t))}-1\big|\leq T\big(1-\sqrt{1-C_{\mathcal K}}\big)=T-T'
<T-|s|\leq |g_s(t)|.$$
By Rouch\'e Theorem, $f_s$ and $g_s$ have the same number of zeros in $D(0,T)$,
and it is clear that $g_s$ has precisely one zero. Hence the equation $\psi(t)=s$ has a unique solution in $D(0,T)$ so $\psi$ is bijective from
$\psi^{-1}(D(0,T'))$ to $D(0,T')$ and since it is holomorphic, it is biholomorphic.
By symmetry, $\psi$ is real on $(-T,T)$, hence $\psi(\overline{t})=\overline{\psi(t)}$.
It follows that $\psi^{-1}(\overline{t})=\overline{\psi^{-1}(t)}$, so $\psi^{-1}$ is real on
$(-T',T')$.
\end{proof}
\begin{lemma} 
\label{lemma:diffeo}
For $z\in D(0,1)$ and $a\geq 0$,
$$|(1+z)^a-1|\leq \begin{cases}
1-(1-|z|)^a\quad\text{if $a\leq 1$}\\
(1+|z|)^a-1\quad\text{if $a\geq 1$}\end{cases}.$$
\end{lemma}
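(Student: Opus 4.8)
The statement to prove is Lemma \ref{lemma:diffeo}: for $z\in D(0,1)$ and $a\ge 0$, one has $|(1+z)^a-1|\le 1-(1-|z|)^a$ when $a\le 1$, and $|(1+z)^a-1|\le (1+|z|)^a-1$ when $a\ge 1$. The natural approach is to expand $(1+z)^a$ as a binomial power series and apply the triangle inequality, tracking the signs of the binomial coefficients, which behave differently in the two regimes $a\le 1$ and $a\ge 1$.

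\textbf{Step 1: the binomial series.} For $z\in D(0,1)$ and any real $a\ge 0$, write
\[
	(1+z)^a = \sum_{k=0}^\infty \binom{a}{k} z^k, \qquad \binom{a}{k} = \frac{a(a-1)\cdots(a-k+1)}{k!},
\]
which converges absolutely on $|z|<1$. Hence $(1+z)^a - 1 = \sum_{k=1}^\infty \binom{a}{k} z^k$, and by the triangle inequality
\[
	\bigl| (1+z)^a - 1 \bigr| \le \sum_{k=1}^\infty \Bigl| \binom{a}{k} \Bigr| \, |z|^k.
\]
So it suffices to identify the right-hand side with the claimed bound, which amounts to evaluating $\sum_{k=1}^\infty \bigl|\binom{a}{k}\bigr| r^k$ for $r=|z|\in[0,1)$ (the case $r=|z|$ possibly $=1$ being handled by a limit, or directly since both sides are continuous up to $r=1$ and the series for $a\le1$ still converges there by an alternating/monotone argument).

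\textbf{Step 2: sign analysis of $\binom{a}{k}$.} If $a\ge 1$ is not an integer, then for $k\ge 1$ the factors $a, a-1, \dots$ include $a-j$ for $0\le j\le k-1$; the sign of $\binom{a}{k}$ is $(-1)^{\#\{ j : 0\le j\le k-1,\ a-j<0\}}$, and one checks these are not all of one sign — so this regime needs care. The clean statement comes instead from: for $a\ge 1$, $|\binom{a}{k}| \le \binom{a}{k}$ is false in general, so I will not go through signs but rather use the cleaner fact that $\sum_k |\binom{a}{k}| r^k$ equals the value of a related function. Concretely, for $0\le a\le 1$ all the binomial coefficients $\binom{a}{k}$ with $k\ge 1$ satisfy $\binom{a}{k} = \tfrac{a}{k}\binom{a-1}{k-1}$, and since $0\le a-1+1 \le 1$ the product $(a-1)(a-2)\cdots(a-k+1)$ has sign $(-1)^{k-1}$; hence $\binom{a}{k} = (-1)^{k-1}|\binom{a}{k}|$ for all $k\ge 1$. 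Therefore
\[
	\sum_{k=1}^\infty \Bigl|\binom{a}{k}\Bigr| r^k = \sum_{k=1}^\infty (-1)^{k-1}\binom{a}{k} r^k \cdot (-1)^{k-1}\cdot(-1)^{k-1} = -\sum_{k=1}^\infty \binom{a}{k}(-r)^k = 1 - (1-r)^a,
\]
using the binomial series for $(1-r)^a = \sum_k \binom{a}{k}(-r)^k$. This gives the $a\le 1$ case. For $a\ge 1$, the simplest route is: write $a = n + b$ with $n=\lfloor a\rfloor\ge 1$ and $b\in[0,1)$, and use $(1+z)^a = (1+z)^n (1+z)^b$; then $|(1+z)^a-1|$ is bounded via the already-known $b$-case together with the elementary polynomial estimate $|(1+z)^n - 1|\le (1+|z|)^n-1$ (immediate from the triangle inequality on $\sum_{k=1}^n \binom{n}{k}z^k$ with all $\binom{n}{k}>0$), combined through the identity $(1+z)^a - 1 = \bigl((1+z)^n-1\bigr)(1+z)^b + \bigl((1+z)^b - 1\bigr)$ and a short induction / direct multiplication-of-dominating-series argument showing $\sum_k|\binom{a}{k}|r^k \le (1+r)^a - 1$.

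\textbf{Main obstacle.} The only genuinely delicate point is the $a\ge 1$ case: unlike $a\le 1$, the coefficients $\binom{a}{k}$ do not have a uniform sign pattern, so one cannot simply substitute $z\mapsto -r$ to collapse $\sum |\binom{a}{k}| r^k$ into a closed form. I expect to handle this by the dominated-series comparison: the absolute value of the coefficient of $r^k$ in $(1+r)^a$ (expanded as a power series, all coefficients positive since $a\ge 0$ — here $(1+r)^a$ genuinely has positive Taylor coefficients only when... in fact $\binom{a}{k}\ge 0$ precisely when $a\ge k-1$, so not all are positive) dominates $|\binom{a}{k}|$ term by term would be the ideal lemma, but since it can fail I will instead factor $a = n+b$ as above and multiply the two dominating series, which is clean and avoids any sign bookkeeping. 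Everything else is routine manipulation of absolutely convergent series and continuity in $r$ up to the boundary.
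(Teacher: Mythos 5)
Your treatment of the case $a\le 1$ is correct: for $0\le a\le 1$ the coefficients $\binom{a}{k}$, $k\ge1$, have sign $(-1)^{k-1}$, so the termwise triangle inequality on the binomial series collapses to $1-(1-|z|)^a$; this is a legitimate series-based alternative to the paper's argument. The case $a\ge 1$, however, has a genuine gap, and it cannot be repaired within the framework you set up. First, the ``ideal lemma'' $\sum_{k\ge1}\big|\binom{a}{k}\big|r^k\le (1+r)^a-1$ is not merely doubtful, it is false for every non-integer $a>1$ and $0<r<1$: some $\binom{a}{k}$ is negative (e.g.\ $\binom{3/2}{3}=-\tfrac{1}{16}$, and in general $\binom{a}{\lceil a\rceil+1}<0$), so $\sum_k\big|\binom{a}{k}\big|r^k-\big((1+r)^a-1\big)=2\sum_{k:\binom{a}{k}<0}\big|\binom{a}{k}\big|r^k>0$, meaning any purely termwise triangle-inequality bound already exceeds the target. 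Second, your fallback does not close either: writing $a=n+b$ and inserting $|(1+z)^n-1|\le(1+r)^n-1$, $|1+z|^b\le(1+r)^b$ and the already-proved $|(1+z)^b-1|\le 1-(1-r)^b$ into the identity $(1+z)^a-1=\big((1+z)^n-1\big)(1+z)^b+\big((1+z)^b-1\big)$ yields the bound $(1+r)^a-(1+r)^b-(1-r)^b+1$, which is $\le(1+r)^a-1$ only if $(1+r)^b+(1-r)^b\ge 2$; concavity of $t\mapsto t^b$ for $0<b<1$ gives exactly the opposite (strict) inequality, so your estimate is strictly worse than the claim whenever $b\in(0,1)$ and $r\in(0,1)$. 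Splitting the product the other way runs into the same obstruction.

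The paper avoids all of this with a two-line argument you may want to adopt: write $(1+z)^a-1=\int_0^1 a(1+sz)^{a-1}z\,ds$ and use $1-s|z|\le|1+sz|\le 1+s|z|$; since $t\mapsto t^{a-1}$ is increasing for $a\ge1$ and decreasing for $a\le1$, the integrand is dominated by $a(1+s|z|)^{a-1}|z|$ in the first case and by $a(1-s|z|)^{a-1}|z|$ in the second, and integrating gives both bounds at once. If you insist on a series proof for $a\ge1$, you would have to exploit cancellation among the terms of index $k\ge\lceil a\rceil+1$ rather than estimate termwise, since the termwise bound provably overshoots.
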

We use the lemma with $a>1$ in Section \ref{section:area}.
\begin{proof} Simply write
$$(1+z)^a-1=\int_0^1 a(1+sz)^{a-1}zds.$$
If $a-1\geq 0$, then
$$|(1+z)^a-1|\leq \int_0^1 a(1+s|z|)^{a-1}|z|ds=(1+|z|)^a-1.$$
If $a-1\leq 0$, then
$$|(1+z)^a-1|\leq \int_0^1 a(1-s|z|)^{a-1}|z|ds=1-(1-|z|)^a.$$
\end{proof}
\begin{corollary}
\label{corollary:diffeo}
Under the hypothesis of Proposition \ref{prop:diffeo},
The series \eqref{eq:area-series} converges for
$$g> \frac{1}{2 T\sqrt{1-C_{\mathcal K}}}-1 = \operatorname{genus}(T,R).$$
\end{corollary}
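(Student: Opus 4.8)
The plan is to deduce convergence of the series \eqref{eq:area-series} directly from the holomorphic dependence of the area on $t$ established in this section, combined with the biholomorphy of $\psi$ furnished by Proposition \ref{prop:diffeo}. First I would record that, in the minimal case $\varphi=\pi/4$ under consideration here, the area $\mathcal A(t):=\mathcal W(f_{g,\pi/4})$ given by Point~(2) of Proposition \ref{prop:building} is a holomorphic function of $t$ on $D(0,T)$. Indeed, by the contraction-mapping estimates of Sections \ref{section:estimationG}--\ref{section:remainder}, the solution $u(t)$ of Problem \eqref{eq:monodromy-problem4} lies in $B_R$ and depends holomorphically on $t\in D(0,T)$; since $\mathcal K(u(t))$ is constant in $\lambda$ and $\lvert\mathcal K(u(t))(\lambda=1)-1\rvert\le C_{\mathcal K}<1$, the factor $\mathcal K(u(t))^{-1/2}$ is well defined and holomorphic, and the remaining ingredients (the $\lambda^0$-coefficients of the holomorphic functions $x_2(t),x_3(t)\in\mathcal W$) are holomorphic in $t$ as well. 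Hence $\mathcal A$ is holomorphic on $D(0,T)$.

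Next I would invoke Proposition \ref{prop:diffeo}: with $T'=T\sqrt{1-C_{\mathcal K}}$, the map $\psi$ restricts to a biholomorphism of $\psi^{-1}(D(0,T'))$ onto $D(0,T')$, and the Rouch\'e argument in its proof shows $\psi^{-1}(D(0,T'))\subseteq D(0,T)$. Therefore $s\mapsto\mathcal A(\psi^{-1}(s))$ is holomorphic on the disk $D(0,T')$, and is real on $(-T',T')$ because $\psi^{-1}$ is real there. By construction of the coefficients $\alpha_k$ — the algorithm of Section \ref{section:algorithm} produces $\mathcal A$ as a power series in $t$, and the substitution $s=t\sqrt{\mathcal K(t)}$ described in the remark at the end of Section \ref{section:implementation} re-expands it as \eqref{eq:area-series} — this series is exactly the Taylor expansion of $\mathcal A\circ\psi^{-1}$ at $s=0$. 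Since a function holomorphic on $D(0,T')$ has Taylor series converging throughout $D(0,T')$, the series \eqref{eq:area-series} converges for $\lvert s\rvert<T'$.

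Finally, setting $s=\tfrac{1}{2g+2}$, the inequality $\lvert s\rvert<T'$ is equivalent to $2g+2>\tfrac{1}{T\sqrt{1-C_{\mathcal K}}}$, i.e. $g>\tfrac{1}{2T\sqrt{1-C_{\mathcal K}}}-1=\operatorname{genus}(T,R)$, which is the assertion. The one genuinely delicate point is the bookkeeping identification of \eqref{eq:area-series} with the Taylor series of $\mathcal A\circ\psi^{-1}$ at the origin (as opposed to a series only formally in $t$); everything else is a routine combination of Proposition \ref{prop:diffeo} with the holomorphic dependence already proved. In the application, $T$ and $R$ are taken to be any admissible pair produced by the estimates of Sections \ref{section:estimationG}--\ref{section:remainder}, and the numerically optimized choice yields the bound $g\ge 2.65404$ of Theorem \ref{thm2}.
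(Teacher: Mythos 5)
Your argument is correct and is essentially the paper's own (the corollary is stated without a separate proof precisely because it follows from Proposition \ref{prop:diffeo} in the way you describe, and the same identification of \eqref{eq:area-series} with the Taylor series of the holomorphic function $s\mapsto\mathcal A(\psi^{-1}(s))$ on $D(0,T')$ is made explicit in Section \ref{section:area}). The only cosmetic difference is that you additionally verify reality on $(-T',T')$, which is harmless but not needed for convergence.
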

\subsection{Results}
The results of Sections \ref{section:estimationG}, \ref{section:Lipschitz} and \ref{section:remainder}
 give us estimates of the following form,
for $|t|\leq T$ and $u\in B_R$:
$$\|\mathcal G_i(t,u)\|\leq C_{\mathcal G_i}(n,T,R,\rho)$$
$$Lip(G_i)\|\leq C_{\Lip_i}(n,T,R,\weight,\rho)$$
where $(T,R,\weight,\rho)$ are formal variables.
Fix some number $\kappa<1$. We want that
\begin{equation}
\label{eq:constraintG}
C_{\mathcal G_i}(n,T,R,\rho)\leq \kappa R_i\quad\text{for $1\leq i\leq 3$}
\end{equation}
so that $\mathcal G$ preserves the box $B_R$, and
\begin{equation}
\label{eq:constraintLip}
C_{\Lip_i}(n,T,R,\weight,\rho)\leq \kappa \weight_i\quad\text{for $1\leq i\leq 3$}
\end{equation}
so that $\mathcal G$ is contractible for the norm $\|\cdot\|_{\weight,\rho}$ in the box $B_R$.
We minimize the function $\operatorname{genus}(T,R)$ (defined in Corollary \ref{corollary:diffeo})
under the constraints
\eqref{eq:constraintG} and \eqref{eq:constraintLip}, with respect to the variables
$(T,R,w,\rho)$.
This gives the following results, depending on the chosen value of $n$ (which is the order at which we have expanded $\mathfrak p$):
$$\begin{array}{|c|c|}
\hline
n&\text{genus}\\
\hline
1 & 94.697\\
2 & 17.1829\\
3 &  9.39386\\
4 & 7.3087\\
5 & 6.91425\\
6 & 6.86426\\
\hline
\end{array}$$
\begin{remark} 
\begin{enumerate}
\item This is implemented in Mathematica.
\item We choose $\kappa=0.99999$ and use Mathematica \verb$Minimize$ function to minimize
the genus under the constraints \eqref{eq:constraintG} and \eqref{eq:constraintLip}. This gives us a set of values for the parameters
$(T,R,\weight,\rho)$.
We compute again $C_{\mathcal G_i}$ and $C_{\Lip_i}(n,T,R,\weight,\rho)$
for this value of the parameters, this time using interval-arithmetic, and check that the constraints
\eqref{eq:constraintG} and \eqref{eq:constraintLip} are satisfied (for a slightly larger $\kappa<1$).
We obtain an interval-arithmetic proof of the claimed results.
\item For $n=6$, the remainder terms $\mathcal G_i^{\mathcal R}$ are of order $10^{-4}$ so further increasing $n$ will not improve significantly the results.
\end{enumerate}
\end{remark}

\subsection{Further results using the derivatives of $x(t)$}
\label{section:correction-derivative}
Observe that if $x(t)$ is a solution of an equation of the form $f(t,x)=0$ then by the mean value inequality
$$f\big(t,\sum_{k=0}^N x^{(k)}(0)\frac{t^k}{k!}\big)=O(|t|^{N+1}).$$
We choose some integer $N< n$ and change the definition of $x$ to
\[x_1(t,u)=\cv{x}_1+\sum_{k=1}^N x_{1,k}t^k+\lambda u_1\]
\[x_j(t,u)=\cv{x}_j+(-1)^{j+1} u_2+\lambda u_3+\sum_{k=1}^N x_{j,k} t^k
\quad\text{ for $j=2,3$}\]
where the coefficients $x_{j,k}=x_j^{(k)}(0)/k!$ can be computed numerically
using the algorithm presented in Section \ref{section:algorithm}.
This kills all terms $t^k u^0$ in $\mathcal G_i$ for $k\leq N$.
The functions $\mathcal G_i$ are estimated in the same way. Note that
$\mathcal K$ now depends on $t$, which is why we took care to estimate $\mathcal G_1$
in this more general setup.
Also, we need to change the estimates of $\|x_j\|_{\rho}$ in Equation \eqref{eq:estimate-xi} ; the constants $c_j$ now also depend on $T$.
This gives the following results, with $N=n-1$:
$$\begin{array}{|c|c|c|}
\hline
n&\text{genus}\\
\hline
2 & 16.2129\\
3 &  7.97941\\
4 &  5.16451\\
5 &  4.01048 \\
6 &  3.46739\\
7 & 3.28229\\
\hline
\end{array}$$
\subsection{Quadratic corrections}
\label{section:correction-quadratic}
To make further progress, we may try to kill the quadratic terms in $\mathcal G$, namely terms of the form $t^k u^{\alpha}$ with $k+|\alpha|=2$ (and $|\alpha|\geq 1$ since we explain in Section \ref{section:correction-derivative} how to kill the terms with $\alpha=0$ by using derivatives).
By Proposition \ref{prop:expandpq}, the first order expansion of $\wt{\mathfrak p}$ is
$$\wh{\mathfrak p}(t,u)=
2\pi x_3 -4\pi \ii t\log(2) x_1 x_2+O(t^2).$$
Using the definition of $x_1$ and $x_2$, the quadratic term in $2 \ii t x_1 x_2$ is
$$\tfrac{1}{\sqrt{2}}(\lambda^2+1)tu_1
+ (\lambda^{-1}-\lambda)tu_2
+(\lambda^2-1)tu_3.$$
Therefore, adding
$$\frac{\log(2)}{\sqrt{2}}(\lambda^2+1)tu_1
+\log(2)(\lambda^2-1)tu_3$$
to $x_3$ (and subtracting it to $x_2$ to preserve parity) kills the terms $tu_1$ and $tu_3$ in $\wt{\mathfrak p}$. Since these are even terms, we gain in $\mathcal G_2$ and we do not loose in $\mathcal G_1$ nor $\mathcal G_3$, at least at the quadratic order.
In other words, we replace the former definitions of $x_2$ and $x_3$ by
$$x_j(t,u)=\cv{x}_j
+(-1)^{j+1} \left(u_2
+\frac{\log(2)}{\sqrt{2}}(\lambda^2+1)tu_1
+\log(2)(\lambda^2-1)tu_3\right)
+\lambda u_3
+\sum_{k=1}^N x_{j,k}t^k.$$
The constants $c_j$ in Equation \eqref{eq:estimate-xi} as well as the constants $C_{j,k}$ in Section
\ref{section:remainder} must be updated to take into account the new terms.
This gives the following results, still using $N=n-1$ derivatives:
$$\begin{array}{|c|c|c|}
\hline
n&\text{genus}\\
\hline
6 & 3.04143\\
7 & 2.81835\\
8 & 2.65404\\
\hline
\end{array}.$$
\begin{remark} We cannot do the same for the term $tu_2$. Indeed, we cannot add
$\log(2)\lambda^{-1}t u_2$ because we cannot add negative powers of $\lambda$ to $x_3$.
We could subtract $\log(2)\lambda t u_2$ to make $\mathcal G_3$ smaller. But this actually makes
$\mathcal G_1$ larger because we loose the benefit of a cancellation between $a_{1,(0,1,0),\odd}^+$ and $b_{1,(0,1,0)}$
in the evaluation of $\|\mathcal G_1^{1,(0,1,0)}\|_{\rho}$, and this is not compensated by the gain in $\mathcal G_3$.
\end{remark}
\begin{remark}
It is shown in \cite{HH3}
 that there exists for $s \in (0, \tfrac{1}{4}]$  a real analytic family of Fuchsian DPW potentials $\tilde \eta_s$ which gives rise to all Lawson surfaces $\xi_{1,g}$, $g\geq1$ for  $s=\tfrac{1}{2g+2}$ this. Up to the
 reparametrization $s=\psi(t)$, the potentials $\tilde\eta_s$ coincide with 
$\eta_{t,x(t)}$ from Proposition \ref{prop:building}, whenever the power series expansion of $\tilde\eta_{t,x(t)}$ at $t=0$ converges. 
This means $\eta_{s=\psi(t)}$ is the analytic continuation of $\eta_{t,x(t)}$. However, in order to compute the area of $\xi_{1,2}$ explicitly it would be important to know whether the power series of $\eta_{t,x(t)}$ at $t=0$ converges for all $t$ with $\psi(t)\in[0,\tfrac{1}{6}]$ corresponding to $g\geq 2.$ Though we cannot prove this at the moment, we do conjecture that the convergence radius should cover the
Lawson surface $\xi_{1,2}$ and the Clifford torus as well. In fact, when plugging in $g=1$ into the Taylor series \eqref{eq:area-series} of order 21 of the area function at $t=0$, we obtain $2 \pi^2,$ the area of the Clifford torus, up to an error of $10^{-3}.$
\end{remark}

\section{The area of Lawson minimal surfaces \texorpdfstring{$\xi_{1,g}$}{xi\_\{1,g\}}} \label{sec:monotonicity}
The goal in this section is to approximate the area of Lawson surfaces $\xi_{1,g}$ of genus $\geq 3$ using the series \eqref{eq:area-series} with explicit bounds on the error.
The coefficients $\alpha_k$ of \eqref{eq:area-series} have been computed up to $k=21$, see Section \ref{section:implementation} and Appendix \ref{appendix:numalpha} for the numerical values.
We estimate the error by estimating the remaining coefficients $\alpha_k$
for $k\geq 23$ using the Cauchy estimate from complex analysis.
\label{section:area}
\subsection{Estimating $|\alpha_k|$}
We use the setup of Section \ref{section:correction-derivative} respectively
\ref{section:correction-quadratic} with $N$ derivatives.
\begin{proposition}
\label{prop:cauchy} There exists a constant $C_{\mathcal A}(N)$ such that for
$k>N$,
$$|\alpha_k|\leq \frac{C_{\mathcal A}}{(T')^k}.$$
\end{proposition}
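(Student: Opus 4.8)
The plan is to deduce the bound on \(|\alpha_k|\) from the Cauchy estimate applied to a suitable holomorphic function of \(t\), using the quantitative existence domain furnished by Propositions \ref{prop:diffeo} and \ref{prop:cauchy}'s hypotheses. First I would fix the setup of Section \ref{section:correction-derivative} (resp. \ref{section:correction-quadratic}): with \(N\) derivatives incorporated into the ansatz, we have numbers \(T>0\), \(R>0\) such that for \(t\in D(0,T)\) the complexified Problem \eqref{eq:monodromy-problem4} has a unique solution \(u(t)\in B_R\), depending holomorphically on \(t\) by the holomorphic implicit function theorem / contraction mapping argument. Consequently \(\mathcal K(u(t))\) is a holomorphic function on \(D(0,T)\), and \(\psi(t)=t\sqrt{\mathcal K(u(t))}\) is holomorphic there (the square root is unproblematic since \(|\mathcal K(u(t))-1|\leq C_{\mathcal K}<1\)). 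By Proposition \ref{prop:diffeo}, \(\psi\) is biholomorphic from \(\psi^{-1}(D(0,T'))\) onto \(D(0,T')\) where \(T'=T\sqrt{1-C_{\mathcal K}}\), and \(\psi^{-1}\) is real on \((-T',T')\).

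Next I would write the area as a function of \(s\). By Point (2) of Proposition \ref{prop:building}, the area (equivalently Willmore energy at \(\varphi=\pi/4\)) of \(f_{g,\pi/4}\) is an explicit expression \(\mathcal A(t) = 8\pi\big(1 + \sqrt 2\, x_3^0(t)\,\mathcal K(u(t))^{-1/2}\big)\) — more precisely \(8\pi\big(1 - \mathcal K^{-1/2}(\cos(\varphi)x_2^0-\sin(\varphi)x_3^0)\big)\) specialized to \(\varphi=\pi/4\) — which is a holomorphic function of \(t\) on \(D(0,T)\), since \(x_2^0(t), x_3^0(t)\) are holomorphic (coefficients of the holomorphic \(u(t)\) plus the fixed Laurent polynomial data) and \(\mathcal K(u(t))\) is holomorphic and bounded away from \(0\). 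Composing with \(\psi^{-1}\), the function
\[
A(s) := \mathcal A\big(\psi^{-1}(s)\big) = 8\pi\Big(1 - \sum_{\substack{k\geq 1\\ k\ \mathrm{odd}}} \alpha_k s^k\Big)
\]
is holomorphic on \(D(0,T')\), and its Taylor coefficients at \(s=0\) are exactly (up to the normalizing \(8\pi\)) the \(\alpha_k\). Now the Cauchy estimate applies: for any \(0<r<T'\),
\[
8\pi\,|\alpha_k| \;=\; \Big|\tfrac{1}{2\pi\ii}\oint_{|s|=r} \tfrac{A(s)}{s^{k+1}}\,ds\Big| \;\leq\; \frac{\sup_{|s|=r}|A(s)|}{r^k}.
\]
Letting \(r\to T'\) and bounding \(\sup_{|s|<T'}|A(s)|\) by a constant — this is where one needs that \(\mathcal A(t)\) stays bounded on \(\psi^{-1}(D(0,T'))\subset D(0,T)\), which follows from \(u(t)\in B_R\), \(|\mathcal K-1|\leq C_{\mathcal K}<1\), and the explicit form of \(\mathcal A\) — one obtains \(|\alpha_k| \leq C_{\mathcal A}/(T')^k\) with \(C_{\mathcal A} = C_{\mathcal A}(N)\) an explicit constant depending only on the quantitative data \((N, T, R, \rho, C_{\mathcal K})\). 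For \(k\leq N\) there is nothing to prove (these coefficients are computed directly), so the bound is stated for \(k>N\).

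The main obstacle, and the step requiring care, is getting a clean explicit bound on \(M := \sup_{|s|<T'}|A(s)|\) in terms of the data \((N,T,R,\rho)\), rather than an abstract finiteness statement: one must track that \(\|x_2^0(t)\|_\rho, \|x_3^0(t)\|_\rho\) (or rather their values, since \(x_j^0\) are the \(\lambda^0\)-coefficients) are controlled by \(R_2, R_3\) and the fixed central data \(\cv x\), and that \(\mathcal K(u(t))^{-1/2}\) is bounded by \((1-C_{\mathcal K})^{-1/2}\) using Lemma \ref{lemma:diffeo} with \(a=-\tfrac12\). A secondary subtlety is that the series for the area is naturally in \(t\), and one must pass to the variable \(s\); but since \(\psi^{-1}\) maps \(D(0,T')\) into \(D(0,T)\) where everything is already controlled, this composition is harmless — the only place \(\psi^{-1}\) enters the final bound is through the radius \(T'\) in the denominator. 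The rest is the routine Cauchy integral estimate, so no genuine difficulty remains beyond the bookkeeping of constants, which the proof would carry out by reusing the estimates of Sections \ref{section:estimationG} and \ref{section:remainder}.
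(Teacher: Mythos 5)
Your argument is correct for the statement as written, and its skeleton (holomorphy of the solution in \(t\) on \(D(0,T)\), Proposition \ref{prop:diffeo} to pass to the variable \(s\) on \(D(0,T')\), then the Cauchy estimate) is the same as the paper's. The genuine difference is \emph{which} function you feed into the Cauchy estimate. You bound \(\sup_{D(0,T')}|A(s)|\) for the full area function \(A(s)=8\pi\,\mathcal A(s)\); the paper instead applies the estimate to \(h(s)=\mathcal A(s)-1+\sqrt{2}\sum_{k=1}^{N}x_{2,k}^{0}s^{k}\), which differs from \(\mathcal A\) by a degree-\(N\) polynomial (hence has the same derivatives of order \(k>N\) --- this, not ``nothing to prove for \(k\le N\)'', is why the restriction \(k>N\) appears), and whose sup norm is small: after substituting the explicit formula for \(\mathcal A\), the constant term and the dominant low-order contributions cancel, leaving only \(u_j^{0}(t)\)-terms of size \(R_j\) and remainders \(|\sqrt{\mathcal K(t)}\,s^{k}-t^{k}|\le T^{k}\bigl((1+C_{\mathcal K})^{(k+1)/2}-1\bigr)\) controlled by Lemma \ref{lemma:diffeo}. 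Your route proves existence of \emph{some} \(C_{\mathcal A}(N)\), but the resulting constant carries the ``\(1\)'' and the full \(\sum_{k\le N}|x_{2,k}^{0}|T^{k}\) contributions and is orders of magnitude larger; since \(C_{\mathcal A}\) is used quantitatively in Section \ref{section:area} (error bounds on the areas for \(3\le g\le 10\)) and in the proof of Proposition \ref{prop:monotonicity}, the paper's subtraction trick is the point of the proposition, not an optimization. Two small repairs to your write-up: Lemma \ref{lemma:diffeo} is stated only for \(a\ge 0\), so it does not cover \(a=-\tfrac12\); bound \(|\mathcal K(u(t))^{-1/2}|\le(1-C_{\mathcal K})^{-1/2}\) directly from \(|\mathcal K-1|\le C_{\mathcal K}<1\) (the lemma with \(a=(k+1)/2\ge 1\) is what the paper needs, for the terms \(\mathcal K^{(k+1)/2}-1\)). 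Also, to make the constant explicit you should, as you indicate, bound the \(\lambda^{0}\)-coefficients by \(|u_j^{0}(t)|\le\|u_j\|_{\rho}\le R_j\) and insert the known \(|x_{2,k}^{0}|\), which is exactly the bookkeeping the paper's \(C_{\mathcal A}\) records.
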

The proof explains how to compute the constant $C_{\mathcal A}$.
\begin{proof}
By Proposition \ref{prop:building}, the area of the Lawson surface $\xi_{1,g}$ is equal to
$8\pi\mathcal A(s)$ with $s=\frac{1}{2g+2}$ and
$$\mathcal A(s)=1-\frac{1}{\sqrt{2\mathcal K(t)}}(x_2^0(t)-x_3^0(t))\quad\text{with}\quad t\sqrt{\mathcal K(t)}=s.$$
In the setup of Section \ref{section:correction-quadratic} (just remove the $\log(2)$ terms in the setup of
Section \ref{section:correction-derivative})
$$\mathcal A(s)=1-\frac{\sqrt{2}}{\sqrt{\mathcal K(t)}}\left(-u_2^0(t)-\frac{\log(2)}{\sqrt{2}} t u_1^0(t)
+\log(2) t u_3^0(t)+\sum_{k=1}^N x_{2,k}^0 t^k\right).$$
Consider the holomorphic function defined for $s\in D(0,T')$ by
\begin{eqnarray*}
h(s)&=&\mathcal A(s)-1 + \sqrt{2}\sum_{k=1}^N x_{2,k}^0 s^k\\
&=&\frac{\sqrt{2}}{\sqrt{\mathcal K(t)}}\left(u_2^0(t)+\frac{\log(2)}{\sqrt{2}} t u_1^0(t)
-\log(2) t u_3^0(t)+\sum_{k=1}^N x_{2,k}^0 \left(\sqrt{\mathcal K(t)}s^k-t^k\right)\right).\end{eqnarray*}
Observe that $h$ and $\mathcal A$ differ by a polynomial so
$h^{(k)}=\mathcal A^{(k)}$ for $k>N$.
For $s\in D(0,T')$, we have by Lemma \ref{lemma:diffeo},
$$\left|\sqrt{\mathcal K(t)}s^k-t^k\right|
=|t|^k\left|\mathcal K(t)^{(k+1)/2}-1\right|\leq T^k\left( (1+C_{\mathcal K})^{(k+1)/2}-1\right).$$
Hence the function $h$ is bounded by $\mathcal C_A$ in the disk $D(0,T')$, with
$$C_{\mathcal A}=\frac{\sqrt{2}}{\sqrt{1-C_{\mathcal K}}}\left(R_2+\frac{\log(2)}{\sqrt{2}}T R_1+\log(2)TR_3
+\sum_{k=1}^N |x_{2,k}^0| T^k \left( (1+C_{\mathcal K})^{(k+1)/2}-1\right)\right).
$$
By the Cauchy Estimate, we have for all $k\in\N$:
$$|h^{(k)}(0)|\leq C_{\mathcal A}\frac{k!}{(T')^k}.$$
Hence for $k>N$,
$$|\alpha_k|=\frac{1}{ k!}|\mathcal A^{(k)}(0)|=\frac{1}{ k!}|h^{(k)}(0)|\leq \frac{C_{\mathcal A}}{(T')^k}.$$
\end{proof}
\subsection{Area estimates for $g\geq 3$}
We use the series \eqref{eq:area-series} truncated to order $21$ to compute an approximate
value of $\text{Area}(\xi_{1,g})$ for $g\geq 3$. By Proposition \ref{prop:cauchy}, the error in the genus $g$ case is bounded by
$$8\pi\sum_{k=23\atop\text{$k$ odd}}^{\infty}|\alpha_k| s^k\leq
8\pi\sum_{k=23\atop\text{$k$ odd}}^{\infty}C_{\mathcal A}\left(\tfrac{s}{T'}\right)^k
=8\pi C_{\mathcal A}\frac{\left(\tfrac{s}{T'}\right)^{23}}{1-\left(\tfrac{s}{T'}\right)^2}
\quad\text{ with }\quad s=\frac{1}{2g+2}.$$
This gives the following results (where $T'$ and $C_{\mathcal A}$ are computed in the setup of
Section \ref{section:correction-quadratic} with $n=8$ and $N=7$)
$$\begin{array}{|c|c|c|}
\hline
\text{genus}&\text{approximate area}&\text{bound on error}\\
\hline
3& 22.82027709& 0.244537\\
4& 23.32191299&0.000512743\\
5&23.64134581&5.732114\; 10^{-6}\\
6&23.86347454&1.4302993 \;10^{-7}\\
7&24.02726927&6.096336 \;10^{-9}\\
8&24.15322275&3.847452\; 10^{-10}\\
9&24.25318196&3.2867174 \;10^{-11}\\
10&24.33449044&3.574938\; 10^{-12}\\
\hline
\end{array}$$
\begin{proposition}
\label{prop:monotonicity}
The area of $\xi_{1,g}$ is strictly increasing in $g$ for $g\geq 3$.
\end{proposition}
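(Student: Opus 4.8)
The plan is to combine the explicit area table for small genus with an asymptotic comparison for large genus. First I would use the table above (computed via the truncated series \eqref{eq:area-series} together with the rigorous error bounds from Proposition \ref{prop:cauchy}): the approximate areas together with their error bounds show that
\[
\operatorname{Area}(\xi_{1,3}) < \operatorname{Area}(\xi_{1,4}) < \cdots < \operatorname{Area}(\xi_{1,10}),
\]
since for consecutive genera the gap between the nominal values exceeds the sum of the two error bounds (already for $g=3,4$ the values $22.820\pm0.245$ and $23.322\pm0.0006$ are separated). This disposes of the range $3 \le g \le 10$, so it remains to prove $\operatorname{Area}(\xi_{1,g}) < \operatorname{Area}(\xi_{1,g+1})$ for all $g \ge 10$.

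For the tail $g \ge 10$, the idea is to use the validated series expansion. Write $s = \tfrac{1}{2g+2}$ and $\mathcal A(s) = \tfrac{1}{8\pi}\operatorname{Area}(\xi_{1,g})$. We have $\mathcal A(s) = 1 - \sum_{k\ge 1,\ k\text{ odd}} \alpha_k s^k$ with $\alpha_1 = \log(2) > 0$ and, by Proposition \ref{prop:cauchy}, $|\alpha_k| \le C_{\mathcal A}/(T')^k$ for $k > N$ (with $N=7$, and the smaller $\alpha_3 = \tfrac94\zeta(3)$, $\alpha_5, \alpha_7$ from Appendix \ref{appendix:numalpha}). Since $g \mapsto s = \tfrac{1}{2g+2}$ is strictly decreasing, it suffices to show $s \mapsto \mathcal A(s)$ is strictly decreasing on the interval $s \in (0, \tfrac{1}{22}]$, i.e. that the derivative satisfies
\[
\mathcal A'(s) = -\sum_{\substack{k\ge 1\\ k\text{ odd}}} k\,\alpha_k s^{k-1} < 0 \quad\text{for } 0 < s \le \tfrac{1}{22}.
\]
The dominant term is $-\alpha_1 = -\log(2)$; the remaining terms $-\sum_{k\ge 3} k\alpha_k s^{k-1}$ are bounded in absolute value by $\sum_{k=3}^{21} k|\alpha_k| s^{k-1} + \sum_{k\ge 23} k\,\tfrac{C_{\mathcal A}}{(T')^k} s^{k-1}$, using the known numerical values of $\alpha_3,\dots,\alpha_{21}$ for the first sum and Proposition \ref{prop:cauchy} for the geometric tail (which converges since $s/T' < 1$ for $s \le \tfrac{1}{22}$, as $T' > \tfrac{1}{2\cdot 2.66}$). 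A direct numerical check — which is dominated by the $k=3$ term, of size roughly $3\cdot\tfrac94\zeta(3)\cdot(\tfrac{1}{22})^2 \approx 0.017$, far below $\log 2 \approx 0.693$ — shows this is $< \log(2)$, giving $\mathcal A'(s) < 0$ on the whole interval.

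The main obstacle is making the tail estimate of Proposition \ref{prop:cauchy} quantitatively strong enough at $s = \tfrac{1}{22}$; here the relatively small convergence radius $T' $ (genus $\approx 2.65$, so $T'$ bounded below away from $0$) is exactly what is needed, and this is precisely why Theorem \ref{thm2} with a bound well below $3$ was essential rather than merely below, say, $10$ (with a weaker radius the geometric series in $s/T'$ would not even converge at $g=3$). The remaining work is bookkeeping: assemble the explicit numerical values of $\alpha_1,\dots,\alpha_{21}$, the constants $T', C_{\mathcal A}, C_{\mathcal K}$ from Section \ref{section:correction-quadratic}, and verify the single inequality $\sum_{k\ge 3,\ k\text{ odd}} k|\alpha_k|(\tfrac{1}{22})^{k-1} < \log(2)$ with interval arithmetic, together with the finitely many comparisons for $3 \le g \le 10$ from the table. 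Combining the two ranges yields strict monotonicity for all $g \ge 3$, which together with Theorem \ref{thm3}'s lower-genus input (the known values for $g = 0, 1, 2$ via Proposition \ref{prop:genus2}) completes the picture.
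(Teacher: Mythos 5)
Your proposal is correct and follows essentially the same strategy as the paper: the explicit table with rigorous error bounds settles the low-genus comparisons, and for large genus one shows $\mathcal A'(s)<0$ on a small interval where the $-\log(2)$ term dominates, with the higher coefficients controlled by the numerical values of $\alpha_k$ and the Cauchy-type tail bound of Proposition \ref{prop:cauchy}. The only difference is bookkeeping: the paper bounds $\mathcal A'(s)$ via a degree-$6$ Taylor expansion with Lagrange remainder (using only $\alpha_1,\alpha_3,\alpha_5,\alpha_7$ and a Cauchy estimate on $\mathcal A^{(8)}$ at $T''=\tfrac1{20}$), whereas you differentiate the full series and estimate the $k\geq 23$ tail geometrically — both are valid and quantitatively comparable.
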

\begin{proof}
The previous results ensure that $\text{Area}(\xi_{1,g})$ is increasing for 
$3\leq g\leq 9$.
For $g\geq 9$ we have $s\leq T''=\frac{1}{20}$.
By Taylor formula, for $s\in(0,T'')$, there exists $\sigma\in (0,s)$ such that
$$\mathcal A'(s)=\sum_{k=0}^6\mathcal A^{(k+1)}(0)\frac{s^k}{k!}
+\mathcal A^{(8)}(\sigma)\frac{s^7}{7!}
=-\sum_{k=1\atop \text{$k$ odd}}^5
k\alpha_k s^{k-1}+\mathcal A^{(8)}(\sigma)\frac{s^7}{7!}.$$
Using the Cauchy Estimate we have
$$|\mathcal A^{(8)}(\sigma)|\leq  C_{\mathcal A}\frac{8!}{(T'-T'')^8}.$$
Since $\alpha_1=\log(2)$, $\alpha_3>0$ and $\alpha_5>0$,
$$\mathcal A'(s)\leq -\log(2)+7|\alpha_7| (T'')^6+8\frac{ C_{\mathcal A} (T'')^7}{(T'-T'')^8}\leq -0.668205.$$
The last inequality is obtained using interval arithmetic in the setup of Section \ref{section:correction-quadratic} with $n=8$ and $N=7$.
Hence $\mathcal A$ is decreasing on $[0,T'']$ so $\mbox{Area}(\xi_{1,g})$ is an increasing function of $g$ for $g\geq 9$.
\end{proof}
\subsection{Area estimate in the genus 2 case}
\begin{proposition}
\label{prop:genus2}
We have
$$\text{Area}(\xi_{1,1})<\text{Area}(\xi_{1,2})<\text{Area}(\xi_{1,3}).$$
\end{proposition}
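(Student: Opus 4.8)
## Proof proposal for Proposition \ref{prop:genus2}

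The plan is to establish the two inequalities separately, by combining the well-known value $\text{Area}(\xi_{1,1})=2\pi^2$ of the Clifford torus with an explicit numerical lower bound for $\text{Area}(\xi_{1,2})$ and the rigorous estimate for $\text{Area}(\xi_{1,3})$ already obtained in Section \ref{section:area}. The right-hand inequality $\text{Area}(\xi_{1,2})<\text{Area}(\xi_{1,3})$ is the easy one: from the table above we have $\text{Area}(\xi_{1,3})\geq 22.82027709 - 0.244537 > 22.57$ with the proven error bound, so it suffices to produce an upper bound for $\text{Area}(\xi_{1,2})$ that is below $22.57$. The left-hand inequality $\text{Area}(\xi_{1,1})=2\pi^2\approx 19.7392 < \text{Area}(\xi_{1,2})$ requires instead a lower bound for $\text{Area}(\xi_{1,2})$ strictly above $2\pi^2$; since the minimizing property of Lawson surfaces in their genus is not available here, we get both bounds by directly estimating the area of the fundamental piece of $\xi_{1,2}$.

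First I would recall that $\xi_{1,2}$ is assembled from congruent copies of a fundamental Plateau solution spanning a geodesic quadrilateral in $\S^3$ with two right angles and two angles equal to $\tfrac{\pi}{g+1}=\tfrac{\pi}{3}$, as described in the proof of Point (4) of Proposition \ref{prop:building}; the total area is a fixed integer multiple (namely $4(g+1)=12$) of the area of this fundamental piece. For the upper bound I would exhibit an explicit \emph{comparison surface} with the same boundary quadrilateral — concretely a coarse triangulation (a union of finitely many geodesic triangles, or a piecewise-totally-geodesic disk) interpolating the four boundary edges — and use that the Plateau solution is area-minimizing among disks with that boundary, so its area is at most the area of the comparison surface, which is a finite sum of spherical-triangle areas computable in closed form via the spherical excess formula. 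Choosing the triangulation fine enough (but still with only a handful of triangles) makes this upper bound for the fundamental piece, multiplied by $12$, fall below $22.57$, which yields both $\text{Area}(\xi_{1,2})<\text{Area}(\xi_{1,3})$ and an a priori control on the size of $\xi_{1,2}$.

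For the lower bound $\text{Area}(\xi_{1,2})>2\pi^2$ I would instead use a monotonicity/isoperimetric-type or direct integral-geometric argument: one option is to use that the fundamental piece, being minimal and meeting its boundary quadrilateral with the prescribed angles, must fill a definite solid angle, giving a lower bound on its area via a calibration or a coarea estimate against the distance function to a fixed geodesic; another option, probably cleaner, is to note $\text{Area}(\xi_{1,2}) = \mathcal W(\xi_{1,2})$ and invoke that $\xi_{1,2}$ is not the Clifford torus, so by the strict form of the Willmore conjecture (Marques–Neves \cite{MN}), which states that among \emph{all} closed surfaces in $\S^3$ of positive genus the Clifford torus uniquely minimizes Willmore energy, we get $\mathcal W(\xi_{1,2})>\mathcal W(\text{Clifford})=2\pi^2$ with a strict inequality. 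The main obstacle is making the coarse-triangulation upper bound genuinely rigorous and simultaneously sharp enough: one must pin down the fundamental quadrilateral's vertices and edge geodesics in $\S^3$ precisely (these are determined by the $\frac{\pi}{3}$ and right-angle conditions), bound the spherical triangle areas with interval arithmetic, and verify the resulting total stays safely under the $\xi_{1,3}$ value $-$ this is the step where the constant $4(g+1)=12$ and the geometry of Lawson's construction have to be handled with care rather than appealed to abstractly.
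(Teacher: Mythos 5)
Your proposal follows essentially the same route as the paper: the strict inequality $\mathrm{Area}(\xi_{1,1})<\mathrm{Area}(\xi_{1,2})$ is obtained exactly as in the paper from $\mathrm{Area}=\mathcal W$ and the Marques--Neves resolution of the Willmore conjecture, and the inequality $\mathrm{Area}(\xi_{1,2})<\mathrm{Area}(\xi_{1,3})$ is obtained, as in the paper, by comparing the Plateau fundamental piece of $\xi_{1,2}$ with an explicit coarse geodesic triangulation spanning the same geodesic $4$-gon (areas via spherical excess/Gauss--Bonnet, $12$ congruent copies) and checking the resulting upper bound falls below the rigorous lower bound $22.57$ for $\mathrm{Area}(\xi_{1,3})$ from Section \ref{section:area}. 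The paper carries this out with explicit vertices and a small optimization, which is precisely the quantitative verification you flag as the remaining work.
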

\begin{proof}
Since $\xi_{1,1}$ is the Clifford torus, we have
$$\text{Area}(\xi_{1,1})=\mathcal W(\xi_{1,1})<\mathcal W(\xi_{1,2})=\text{Area}(\xi_{1,2})$$
by the solution of the Willmore conjecture \cite{MN}.
To prove the other inequality, we construct a simplicial approximation of $\xi_{1,2}$. 
The genus-$2$ Lawson surface $\xi_{1,2}$ is obtained using reflection and rotation of a fundamental piece which is the solution of the Plateau problem for a geodesic $4$-gon $\Gamma_2$, i.e., it has the least area among all surfaces with the same boundary. The aim is to find a triangulated surface with boundary $\Gamma_2$, i.e.,  each face is a geodesic triangle,  and show that its area is below the (coarse) lower bound $22.57<22.82027709-0.245$ obtained for the genus-$3$ surface $\xi_{1,3}.$

To compute the area we make use of the Gau\ss-Bonnet formula applied to a geodesic triangle $\Delta_{ABC}$  in $\S^3$ with vertices $A, B, C $ and angles $\alpha, \beta, \gamma$ which gives
$$\text{Area}(\Delta_{ABC}) = \int_{\Delta_{A,B,C}} K_{S^3} dA =  \alpha + \beta + \gamma - \pi.$$
The angle $\alpha$ at the vertex $A$ given by the two arc-length parametrized geodesics, $\gamma_{AB}$ and $\gamma_{AC}$, connecting $A$ to $B$ and $C$ respectively  is given by 
$$\alpha = \cos^{-1}\left (<\gamma_{AB}'(0), \gamma_{AC}'(0)>_{\R^4}\right).$$
Let $\S^3 \subset \mathbb C^2 \cong \mathbb R^4$ and consider the four vertices of the geodesic polygon $\Gamma_2$ given by
$$P_1 = (1,0), \quad P_2 = (\ii,0), \quad Q_1 = (0,1), \quad Q_2= (0, e^{\ii \pi/3}).$$
Let $\gamma_{AB}$ denote the edge connecting the vertices $A \in \{P_1,P_2, Q_1, Q_2\}$ and $B \in \{P_1,P_2, Q_1, Q_2\}.$ Then for $i,j \in \{1,2\}$ the four edges of the 4-gon $\Gamma$ are
$$\gamma_{P_i, Q_j} (s)= \sin(s) P_i + \cos(s)Q_j.$$
 \begin{figure}[h]
\centering 
 \includegraphics[height=0.3\textwidth]{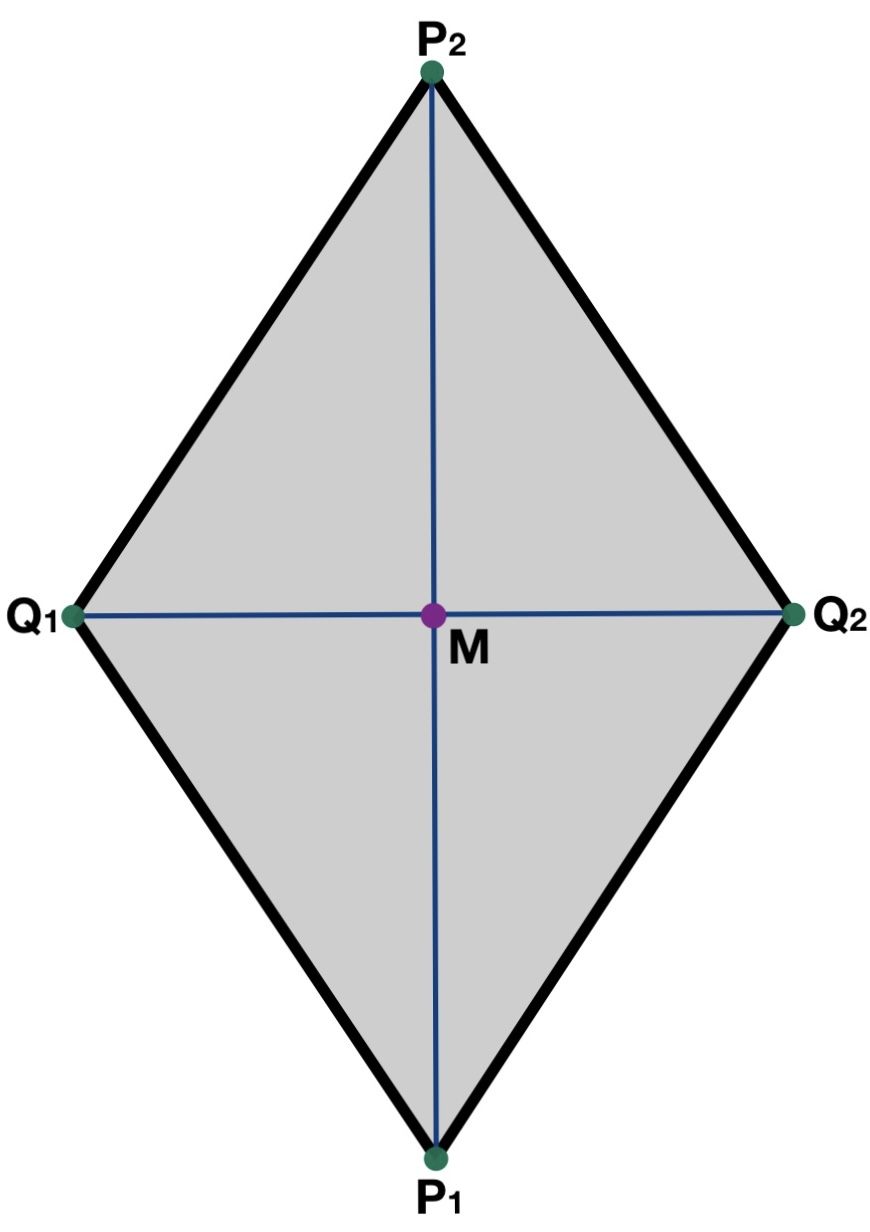}\hspace{1cm}
  \includegraphics[height=0.3\textwidth, angle=-0.9]{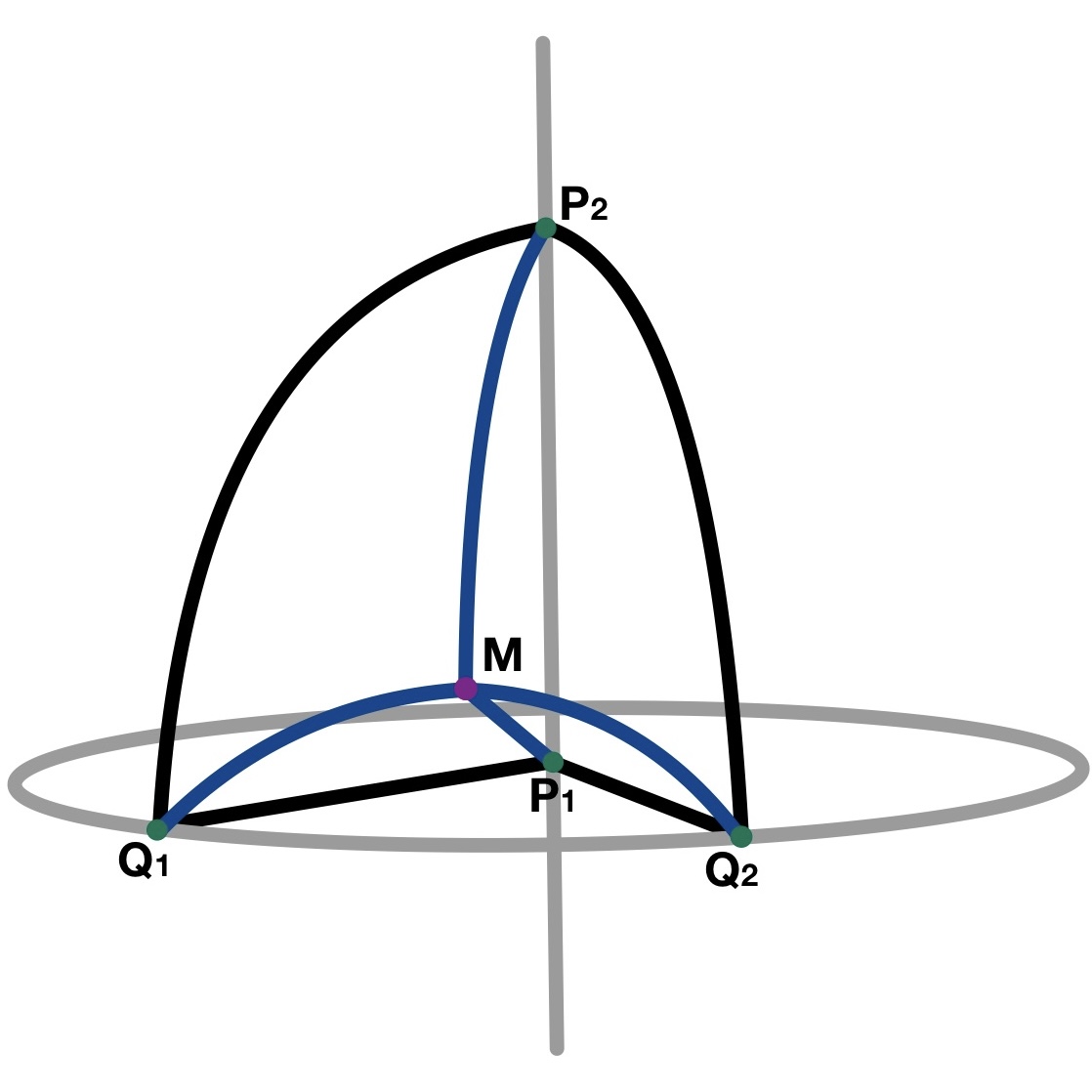}
  \caption{The left picture shows the domain of the triangulated surface. The labels of the vertices corresponds to their image in $S^3$. The right picture shows how the triangulated surface in $S^3$ stereographically projected to $\mathbb R^3$ look like.}
   \label{picture1st}
\end{figure}
For visualization purposes we consider the stereographic projection of $\S^3$ to $\mathbb R^3$ such that the $(x,y)-$plane and the unit 2-sphere are images of 
totally geodesic 2-spheres in $\S^3.$ The boundary curve has two reflectional symmetries, which the Plateau solution inherits as well, namely the angle bisection geodesic 2-sphere of the angles at the $P_i$, 
$$\sigma_P \colon \S^3 \longrightarrow \S^3, (z, w) \longmapsto (  z , e^{\ii\pi /3}  \bar w),$$ 
and the one of the angles at the $Q_i, $ 
$$\sigma_Q\colon \S^3 \longrightarrow \S^3, (z, w) \longmapsto (\ii \bar z , w).$$ 
Thus as a first refinement of the triangulation, we add a fifth vertex $M,$ which should be fixed under both $\sigma_P$ and $\sigma_Q$, as indicated in Figure \ref{picture1st}. In other words, we choose $M$ lying on the fix point set of both symmetries, which is a geodesic $\gamma$ parametrized by
$$M=\cos(s_0)(e^{\ii \pi /4}, 0) + \sin(s_0)(0, e^{\ii \pi /6}) \subset \mathbb C^2,$$
for some real parameter $s_0 \in [0, \pi/2)$
 \begin{figure}[h]
\centering 
 \includegraphics[height=0.35\textwidth]{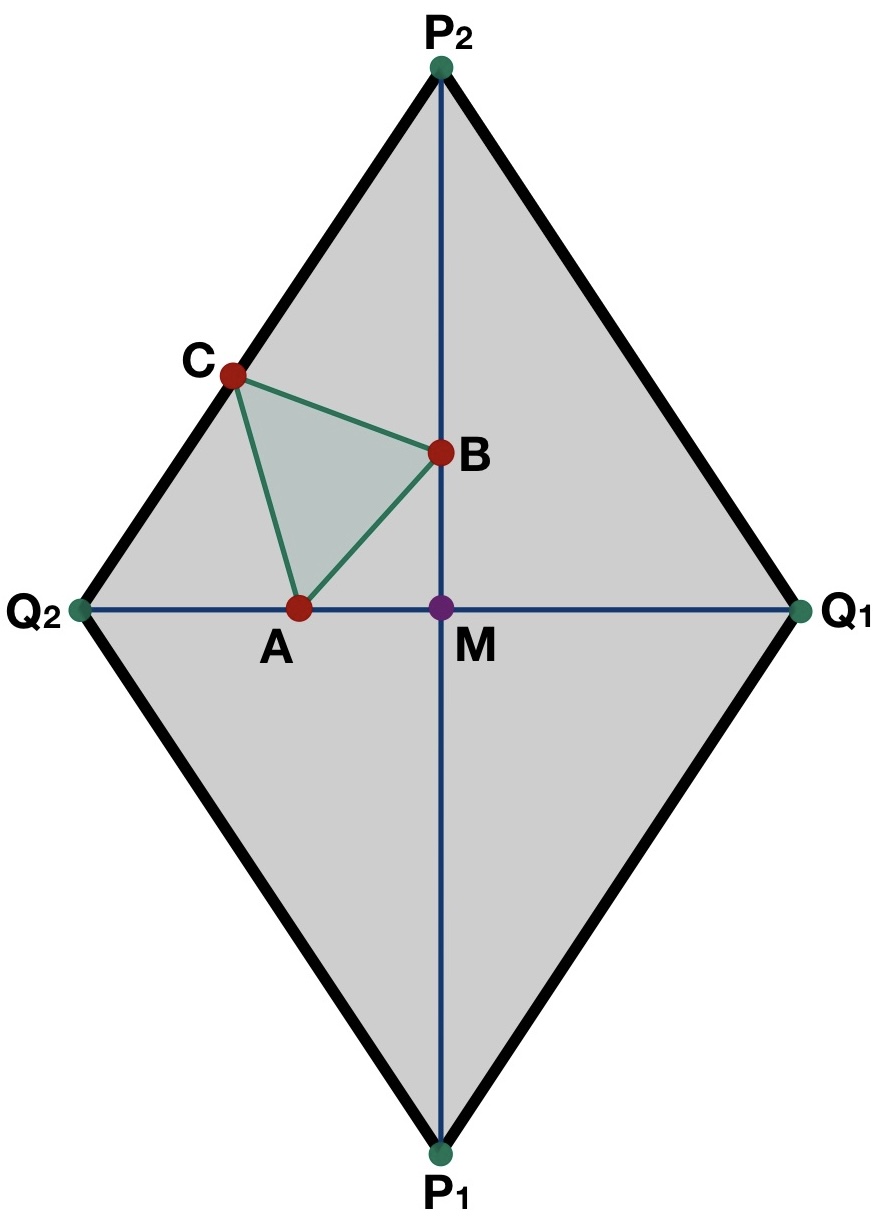} \hspace{0.2cm}
 \includegraphics[height=0.35\textwidth]{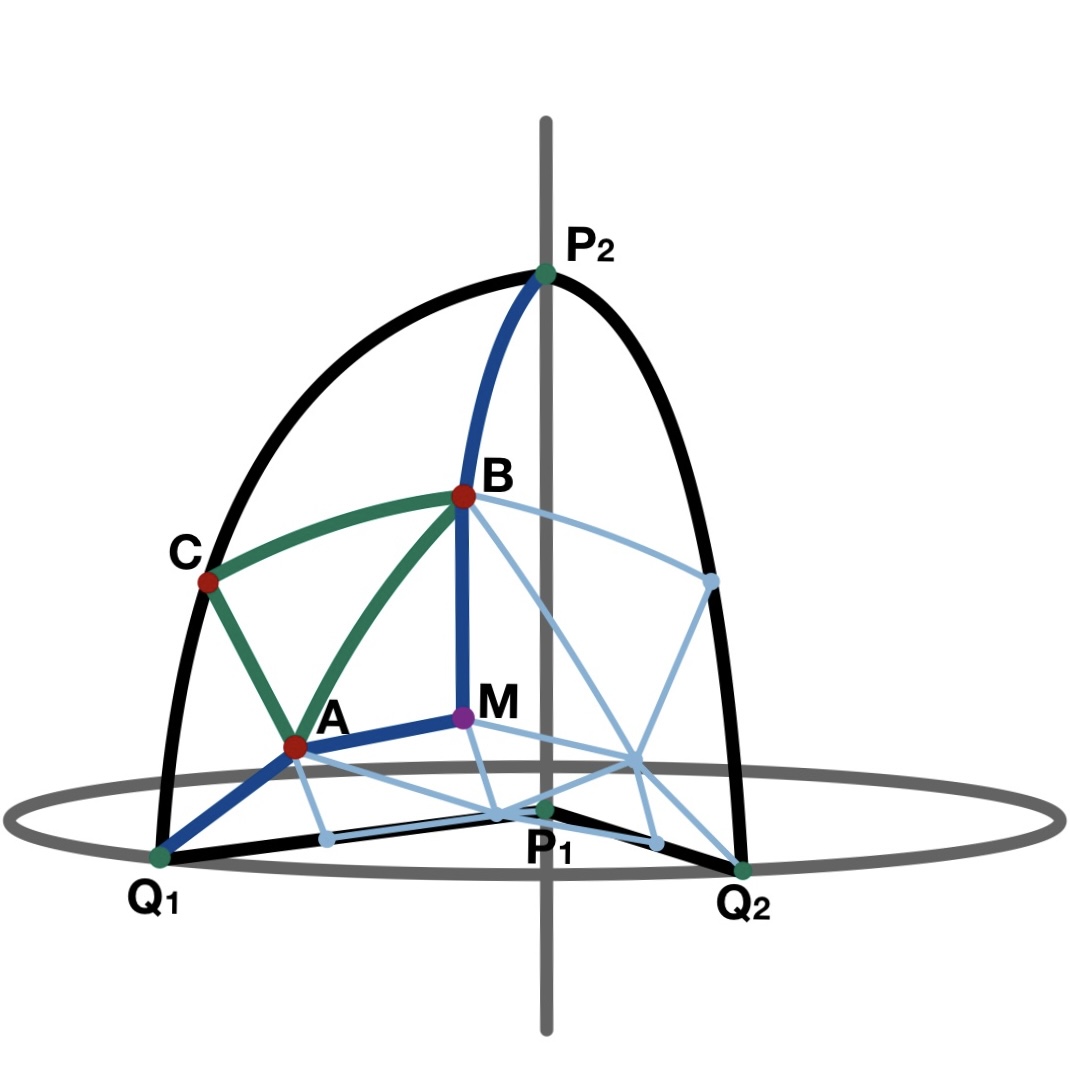}
 \caption{The black lines on the surface are the fixpoints of the two reflection symmetries of the plateau solution. The green line indicates the intersection line of the two geodesic spheres which are fixed under these reflexions. The image of fifth vertex is chosen on this green line such that the total area of the four (congruent) geodesic triangles are minimized.}\label{symmetry}
\end{figure}

To further refine, we add three points $A, B,C$ with 

$$A= \cos(s_1)(e^{\ii \pi/4}, 0) +  \sin(s_1)(0, e^{\ii t_1})$$

lying in the fix point set of $\sigma_Q$, 

$$B = \cos({s_2}) (0, e^{\ii \pi/6})+  \sin(s_2)(e^{\ii t_2}, 0) $$
lying in the fix point set of $\sigma_P$, and

\[C:= (0, \cos(s_3) , \sin(s_3), 0)\]
lying on the geodesic between $Q_1$ and $P_2 $ for some real parameters $s_1, s_2, s_3, t_1, t_2$ lying in the interval $[0, \pi/2).$  

The Lawson surface $\xi_{1,2}$ consists of $12$ congruent copies of the fundamental piece. Moreover, due to the choice of the vertices, every fundamental piece splits into four copies of congruent triangulations, see Figure \ref{symmetry}. Thus
 \begin{equation}
\begin{split}
\text{Area}  (\xi_{1, 2})  &\leq 48 \left[\text{Area} (\Delta_{Q_1AC}) + \text{Area} (\Delta_{AMB}) + \text{Area} (\Delta_{ABC}) + \text{Area} (\Delta_{BCP_2})\right] 
\end{split}
\end{equation}

Using the Mathematica \verb$Minimize$ function, we obtain the following values for the six parameters:

$$s_0 = 1.13641, s_1 = 1.27441, s_2 = 0.848594, s_3 = 1.06941, t_1 = 0.134219, t_2 = 1.4755.$$

This gives the following bound on the area
 \begin{equation}
\begin{split}
\text{Area}  (\xi_{1, 2})  &\leq 48 \left[\text{Area} (\Delta_{Q_1AC}) + \text{Area} (\Delta_{AMB}) + \text{Area} (\Delta_{ABC}) + \text{Area} (\Delta_{BCP_2})\right] \\
&< 22.45 < 22.57 \leq\text{Area}(\xi_{1,3}),
\end{split}
\end{equation}
concluding the proof of Proposition \ref{prop:genus2}.
\end{proof}
\appendix
\section{Euclidean division in the Banach algebra \texorpdfstring{$\mathcal W_{\rho}$}{\textbackslash{}cal W\_rho}}
\label{appendix:division}
In the Banach algebra $\mathcal W_{\rho}$, there is a euclidean division by polynomials with roots in the disk $\D_{\rho}$. This results has been used in Sections \ref{section:IFT} and \ref{section:quantitative-reformulation}.
\begin{proposition}
\label{Pro:decomposition}
Let $\mu_1,\cdots,\mu_n\in\D_{\rho}$. For any $u\in\mathcal W_{\rho}^{\geq 0}$, there exists a unique
pair $(q,r)\in\mathcal W_{\rho}^{\geq 0}\times \C[\lambda]$ such that
$$u=(\lambda-\mu_1)\cdots(\lambda-\mu_n) q + r\quad \text{ and }\quad \deg(r)<n.$$
Moreover, the quotient $q$ has norm bounded by
$$\|q\|_{\rho}\leq \frac{1}{(\rho-|\mu_1|)\cdots(\rho-|\mu_n|)}\|u\|_{\rho}.$$
\end{proposition}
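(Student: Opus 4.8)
The plan is to prove existence and uniqueness by reducing to the single-root case $n=1$ and then iterating, while tracking the norm bound along the way. For the single-root case, write $\mu = \mu_1$ with $|\mu| < \rho$, and seek $q \in \mathcal W_\rho^{\geq 0}$ and $r = r_0 \in \C$ with $u = (\lambda - \mu) q + r_0$. Comparing coefficients: if $u = \sum_{k \geq 0} u_k \lambda^k$ and $q = \sum_{k \geq 0} q_k \lambda^k$, then the relation $u = \lambda q - \mu q + r_0$ forces $u_0 = -\mu q_0 + r_0$ and $u_k = q_{k-1} - \mu q_k$ for $k \geq 1$. Solving the recursion from the top down is the natural move: since we want $q$ to be a convergent (not formal) power series, set $q_k = -\mu^{-1}(u_{k+1} - q_{k+1})$... but $\mu$ may be zero, so instead solve it as $q_k = \sum_{j > k} \mu^{j - k - 1} u_j$ — i.e. $q(\lambda) = \frac{u(\lambda) - u(\mu)}{\lambda - \mu}$, the divided difference, with $r_0 = u(\mu)$. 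One then checks $q \in \mathcal W_\rho^{\geq 0}$ directly from this series formula: $\|q\|_\rho = \sum_{k \geq 0} |q_k| \rho^k \leq \sum_{k \geq 0} \rho^k \sum_{j > k} |\mu|^{j-k-1} |u_j| = \sum_{j \geq 1} |u_j| \sum_{k=0}^{j-1} \rho^k |\mu|^{j-1-k}$. The inner sum is $\frac{\rho^j - |\mu|^j}{\rho - |\mu|} \leq \frac{\rho^j}{\rho - |\mu|}$, so $\|q\|_\rho \leq \frac{1}{\rho - |\mu|} \sum_{j \geq 1} |u_j| \rho^j \leq \frac{1}{\rho - |\mu|}\|u\|_\rho$, giving the claimed bound for $n=1$.

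For uniqueness in the single-root case: if $(\lambda - \mu) q + r_0 = 0$ with $q \in \mathcal W_\rho^{\geq 0}$ and $r_0 \in \C$, evaluate at $\lambda = \mu$ (legitimate since $|\mu| < \rho$ and elements of $\mathcal W_\rho^{\geq 0}$ extend holomorphically to $\D_\rho$, hence can be evaluated there) to get $r_0 = 0$, and then $(\lambda - \mu) q = 0$ in $\mathcal W_\rho^{\geq 0} \subset \mathcal W_\rho$; since $\mathcal W_\rho$ is a Banach algebra of functions holomorphic on the annulus $\A_\rho$ and $\lambda - \mu$ is not a zero divisor there (it vanishes only at the single point $\mu$), we conclude $q = 0$.

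The general case follows by induction on $n$. Given $u \in \mathcal W_\rho^{\geq 0}$, first divide by $(\lambda - \mu_1)$: $u = (\lambda - \mu_1) q^{(1)} + c_1$ with $q^{(1)} \in \mathcal W_\rho^{\geq 0}$, $c_1 \in \C$, and $\|q^{(1)}\|_\rho \leq \frac{1}{\rho - |\mu_1|}\|u\|_\rho$. Then apply the inductive hypothesis to $q^{(1)}$ with the roots $\mu_2, \ldots, \mu_n$: $q^{(1)} = (\lambda - \mu_2) \cdots (\lambda - \mu_n) q + \tilde r$ with $q \in \mathcal W_\rho^{\geq 0}$, $\deg \tilde r < n-1$, and $\|q\|_\rho \leq \frac{1}{(\rho - |\mu_2|)\cdots(\rho - |\mu_n|)}\|q^{(1)}\|_\rho$. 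Substituting back, $u = (\lambda - \mu_1)\cdots(\lambda - \mu_n) q + \big[(\lambda - \mu_1)\tilde r + c_1\big]$; the bracketed term $r := (\lambda - \mu_1)\tilde r + c_1$ is a polynomial of degree $\leq \max(\deg \tilde r + 1, 0) < n$, and the norm bound composes to give exactly $\|q\|_\rho \leq \prod_{i=1}^n (\rho - |\mu_i|)^{-1} \|u\|_\rho$. Uniqueness in general: if $(\lambda - \mu_1)\cdots(\lambda - \mu_n) q + r = 0$ with $\deg r < n$, then the polynomial $\prod(\lambda - \mu_i)$ divides $r$ in $\C[\lambda]$ as an identity in $\mathcal W_\rho$ — more carefully, rearrange as $\prod(\lambda - \mu_i) q = -r$; evaluating at each $\mu_i \in \D_\rho$ shows $r$ vanishes at all $\mu_i$, and if the $\mu_i$ are distinct this forces $r = 0$ by degree, whence $q = 0$ as before; if there are repeated roots one evaluates derivatives as well, or alternatively peels off the factors one at a time using the $n=1$ uniqueness already established, which avoids the multiplicity bookkeeping entirely.

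The main obstacle — really the only subtle point — is ensuring that "evaluation at $\mu_i$" and the non-zero-divisor property are legitimate in $\mathcal W_\rho$ rather than just formally. This rests on the facts, recalled in the excerpt, that $\mathcal W_\rho$ is a Banach algebra of functions absolutely convergent (hence holomorphic) on the open annulus $\A_\rho$ and that $\mathcal W_\rho^{\geq 0}$ consists of functions extending holomorphically to $\D_\rho \supset \{|\lambda| \leq |\mu_i|\}$; thus point evaluation at any $\mu_i$ with $|\mu_i| < \rho$ is a well-defined algebra homomorphism $\mathcal W_\rho^{\geq 0} \to \C$, and $\lambda - \mu_i$ — whose only zero in $\A_\rho$ is at $\mu_i$ with order one — is not a zero divisor. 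I would state this as a short preliminary observation before the induction. Everything else is the routine coefficient-chasing and geometric-series estimate sketched above.
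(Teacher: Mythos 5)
Your proposal is correct and follows essentially the same route as the paper: reduce to the case $n=1$, take $q(\lambda)=\frac{u(\lambda)-u(\mu)}{\lambda-\mu}$ and $r=u(\mu)$, bound $\|q\|_\rho$ by the same geometric-sum computation yielding $\frac{\rho^k-|\mu|^k}{\rho-|\mu|}$, and conclude by induction on $n$. The only difference is that you spell out the uniqueness argument (evaluation at $\mu_i$ and the non-zero-divisor property of $\lambda-\mu$), which the paper leaves implicit; this is a harmless and welcome addition.
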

\begin{proof}
It suffices to prove the case $n=1$ as the general case follows by induction.
Let $\mu=\mu_1\in\D_{\rho}$ and $u\in\mathcal W^{\geq 0}_{\rho}$.
Define $r=u(\mu)\in\C$ and
$$q(\lambda)=\frac{u(\lambda)-u(\mu)}{\lambda-\mu}.$$
We have
$$q(\lambda)=\sum_{k=0}^{\infty} u_k\frac{\lambda^k-\mu^k}{\lambda-\mu}=
\sum_{k=0}^{\infty}\sum_{j=0}^{k-1}u_k\lambda^j\mu^{k-1-j}$$
$$\|q\|_{\rho}\leq \sum_{k=0}^{\infty}\sum_{j=0}^{k-1}|u_k|\rho^j|\mu|^{k-1-j}
=\sum_{k=0}^{\infty}|u_k|\frac{\rho^k-|\mu|^k}{\rho-|\mu|}\leq
\frac{\|u\|_{\rho}}{\rho-|\mu|}.$$
\end{proof}
\section{Character Variety}\label{AppCV}
We are interested in the following  Betti moduli space for $\traceL\in(0,2)$
\[\mathcal M^\traceL_B:=\{(L_1,L_2,L_3)\in\mathrm{SL}(2,\C)^3\mid\;\tr(L_j)=0,\,-\tr(L_3L_2L_1)=\traceL\}/\mathrm{SL}(2,\C)\]
where $\mathrm{SL}(2,\C)$ acts by overall conjugation.
The space can be naturally identified with the moduli space of representations $\mathcal M^\traceL$ from the first fundamental group of the 4-punctured sphere into $\mathrm{SL}(2,\C).$
The local conjugacy classes of a representation $\rho \in \mathcal M^\traceL$ is determined by
\[\tr(\rho(\gamma_1))=\tr(\rho(\gamma_2))=\tr(\rho(\gamma_3))=0\quad\text{and}\quad  \tr(\rho(\gamma_4))=\traceL\]
via $\rho(\gamma_j)=L_j$ for $j=1,\dots,3.$ 
In fact, since $L_1^2=L_2^2=L_3^2=-\Id$ we have $(L_1L_2L_3)^{-1}=-L_3L_2L_1$, and $\tr(\rho(\gamma_4))=-\tr(L_3L_2L_1).$

Fixing the conjugation up to some diagonal freedom, we can assume  without loss of generality that
\begin{equation}\label{defL2}L_2=\left(\begin{smallmatrix} \ii&0\\0&-\ii\end{smallmatrix}\right).\end{equation}
We call an element $[\rho]\in\mathcal M^\traceL_B$ irreducible if it is induced by an irreducible representation (generated by $L_1,L_2,L_3$).
\begin{lemma}\label{lem:irr}
Every element of $\mathcal M^\traceL_B$ is irreducible. In particular, $L_1$ and $L_3$ cannot be simultaneously upper (or lower) triangular and neither of the two can be diagonal.
\end{lemma}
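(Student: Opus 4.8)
The plan is to show that reducibility is incompatible with the trace conditions. Suppose for contradiction that $[\rho]$ is reducible, so that $L_1, L_2, L_3$ have a common eigenvector. After conjugation we may assume $L_2 = \minimatrix{\ii & 0 \\ 0 & -\ii}$ as in \eqref{defL2}, and that the common eigenvector is the first basis vector, so $L_1$ and $L_3$ are simultaneously upper triangular. Since $\tr(L_1) = \tr(L_3) = 0$ and $\det(L_j) = 1$, each $L_j$ for $j=1,3$ is of the form $\minimatrix{a & b \\ 0 & -a}$ with $a^2 = -1$, i.e.\ $a = \pm\ii$. Write $L_1 = \minimatrix{\ii\epsilon_1 & b_1 \\ 0 & -\ii\epsilon_1}$ and $L_3 = \minimatrix{\ii\epsilon_3 & b_3 \\ 0 & -\ii\epsilon_3}$ with $\epsilon_1, \epsilon_3 \in \{\pm 1\}$.

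Next I would simply compute the product $L_3 L_2 L_1$ (an upper triangular matrix) and read off its trace. Its diagonal entries are the products of the diagonal entries of the three factors, namely $(\ii\epsilon_3)(\ii)(\ii\epsilon_1) = -\ii\epsilon_1\epsilon_3$ and $(-\ii\epsilon_3)(-\ii)(-\ii\epsilon_1) = \ii\epsilon_1\epsilon_3$. Hence $\tr(L_3 L_2 L_1) = 0$, so $\traceL = -\tr(L_3 L_2 L_1) = 0$. This contradicts the hypothesis $\traceL \in (0,2)$. The same computation with lower-triangular matrices (common eigenvector the second basis vector) gives the identical conclusion; and if one of $L_1, L_3$ were diagonal it is in particular both upper and lower triangular, so that case is subsumed. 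Therefore no element of $\mathcal M^\traceL_B$ is reducible, which is exactly the assertion of the lemma.

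There is essentially no obstacle here — the argument is a short direct calculation — the only point requiring a little care is the bookkeeping for the reducible case: one must note that a reducible $\SL(2,\C)$-triple has a common eigenvector (not merely a common invariant line through each separately), so all three matrices are simultaneously triangularizable, and that the possible degeneracy "one of them diagonal" is automatically covered. I would also remark that the conclusion "$L_1, L_3$ cannot be simultaneously triangular, nor diagonal" is just the contrapositive restated for later use, so no separate argument is needed for it.
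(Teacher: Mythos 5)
Your main computation is correct and is the same argument the paper uses: with $L_2$ normalized as in \eqref{defL2}, a reducible triple has a common eigenvector of $L_2$, hence $L_1,L_3$ are simultaneously upper (or lower) triangular, and then the diagonal entries of $L_3L_2L_1$ are $\mp\ii\epsilon_1\epsilon_3$ and $\pm\ii\epsilon_1\epsilon_3$, so $\traceL=-\tr(L_3L_2L_1)=0$, contradicting $\traceL\in(0,2)$. This disposes of irreducibility and of the claim that $L_1,L_3$ cannot be simultaneously triangular.

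However, your treatment of the clause ``neither of the two can be diagonal'' has a genuine gap. That clause is \emph{not} the contrapositive of irreducibility, and it is not subsumed by your triangular computation: if, say, $L_1$ is diagonal while $L_3$ is not triangular at all, the triple $(L_1,L_2,L_3)$ has no common eigenvector (so it is irreducible), and the pair $(L_1,L_3)$ is not simultaneously triangular, so nothing you have proved applies — yet the lemma still asserts this configuration is impossible, and this is exactly what is needed later (e.g.\ in the Type I normal form, where the off-diagonal entry of a triangular $L_1$ must be nonzero regardless of $L_3$). The case requires its own (one-line) computation, which is what the paper's ``direct computation'' silently includes: a traceless diagonal element of $\mathrm{SL}(2,\C)$ equals $\pm L_2$, so if $L_1=\pm L_2$ then $L_2L_1=\mp\Id$ and $L_4=-L_3L_2L_1=\pm L_3$ has trace $0$; similarly if $L_3=\pm L_2$ then $L_3L_2=\mp\Id$ and $L_4=\pm L_1$ has trace $0$. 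With that observation added, your proof is complete and coincides with the paper's.
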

\begin{proof}
A direct computation shows that, with the choice of $L_2$, if $L_1$ is upper (respectively lower) triangular and  $L_3$ is upper (respectively lower) triangular, or if either of the two matrices are diagonal, then the trace of $L_4:=-L_3L_2L_1$ must be $0$, which is excluded by assumption.
\end{proof}

\begin{definition}
The three traces $x,y,z\in\C^3$ defined by
\[x=\tr(L_1L_2),\quad y=\tr(L_2L_3),\quad z=\tr(L_2L_4)\]
are called trace-coordinates of the representation $\rho=\rho(L_1,L_2,L_3)$.
\end{definition}
Clearly, $(x,y,z)$ are invariant under conjugation, and define functions on $\mathcal M_B^\traceL.$
As a direct consequence of Lemma \ref{lem:irr}, we only have to consider the following two cases:
\begin{enumerate}
\item[   Type I:] $L_1$ is either upper or lower triangular, then it has diagonal entries $\pm \ii$ or $\mp\ii$ and off-diagonal entries being either 1 and 0 or 0 and 1;
\item[   Type II:] $L_1$ is not triangular. Then up to conjugation with a diagonal matrix, there exists a  $ x\in\C$ with $x^2\neq4$ such that $L_1$ is given by
\begin{equation}\label{defL1}L_1=\begin{pmatrix} -\tfrac{\ii}{2}  x&1\\ \tfrac{1}{4} \left( x^2-4\right)&\frac{\ii   x}{2}\end{pmatrix}.\end{equation}
\end{enumerate}

\begin{lemma}\label{lem:xpm2}
With the notations above we have $x=\tr(L_1L_2).$ In particular, $[\rho(L_1,L_2,L_3)]$ is of type I if and only if $x^2=4.$
More specifically we have

\begin{enumerate}
\item If $x=2$, $L_1$ is upper triangular if and only 
if  $z=-y-\ii\,\traceL$ and lower triangular if and only if $z=-y+\ii\,\traceL$. 
\item If  $x=-2$, $L_1$ is upper triangular if and only 
if  $z=y-\ii\,\traceL$ and lower triangular if and only if $z=y+\ii\,\traceL$. 
\end{enumerate}
Thus every solution to $x^2= 4$ satisfies  $\traceL^2+(y\pm z)^2=0 $ and is given by some representation $\rho(L_1,L_2,L_3)$ of type I which is unique up to conjugation.
\end{lemma}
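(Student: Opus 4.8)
\textbf{Proof plan for Lemma \ref{lem:xpm2}.}

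The plan is to treat the two types separately, using the explicit normal forms for $L_1$ fixed above. In Type II, $L_1$ is given by \eqref{defL1} with $x^2 \neq 4$; a direct computation of $L_1 L_2$ using \eqref{defL2} shows $\tr(L_1 L_2) = x$, so the trace coordinate named $x$ really is the parameter $x$ appearing in the normal form, and in particular $x^2 \neq 4$ for all Type II representations. Conversely, in Type I the matrix $L_1$ is triangular with diagonal entries $\pm\ii$ in some order, so $L_1 L_2$ is again triangular and $\tr(L_1 L_2) = (\pm\ii)(\ii) + (\mp\ii)(-\ii) = \pm 2$, giving $x^2 = 4$. This establishes the dichotomy: $[\rho]$ is of Type I exactly when $x^2 = 4$.

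Next I would prove the more refined statements (1) and (2). Fix $x = 2$, so $L_1$ is upper or lower triangular with diagonal $(\ii, -\ii)$, i.e. $L_1 = \minimatrix{\ii & a \\ 0 & -\ii}$ or $L_1 = \minimatrix{\ii & 0 \\ c & -\ii}$ for some $a$ or $c$. Using $L_2 = \minimatrix{\ii & 0 \\ 0 & -\ii}$ and an explicit parametrization $L_3 = \minimatrix{p & q \\ r & -p}$ with $p^2 + qr = -1$ (the general trace-zero $\SL(2,\C)$ matrix), I would compute $y = \tr(L_2 L_3) = \ii(p - (-p)) \cdot$ (appropriate entries) $= 2\ii p$, and $z = \tr(L_2 L_4)$ where $L_4 = -L_3 L_2 L_1$. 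Carrying out the matrix multiplication $L_3 L_2 L_1$ in the upper-triangular case and taking the trace against $L_2$ gives a linear relation between $z$, $y$ and $\traceL = -\tr(L_3 L_2 L_1) = \tr(L_4)$; the bookkeeping yields precisely $z = -y - \ii\,\traceL$. The lower-triangular case is the same computation with $a \leftrightarrow c$ (equivalently, conjugating by $\minimatrix{0&1\\-1&0}$, which swaps upper and lower triangular forms and sends $y \mapsto y$, $z \mapsto$ its reflection), producing $z = -y + \ii\,\traceL$. The case $x = -2$ is handled identically after noting that $L_1$ now has diagonal $(-\ii, \ii)$ (or is obtained from the $x=2$ case by multiplying $L_1$ by $-\Id$, which flips the sign of $y$'s partner appropriately), giving the stated relations $z = y \mp \ii\,\traceL$. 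In all four sub-cases, eliminating $z$ shows that the pair $(y,z)$ lies on the line $z = \mp y \mp \ii\,\traceL$, hence $\traceL^2 + (y \pm z)^2 = 0$ holds on the Type I locus.

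Finally, for the converse and uniqueness: given a solution with $\traceL^2 + (y \pm z)^2 = 0$, i.e. $x^2 = 4$ by the quadratic relation $x^2 + y^2 + z^2 + xyz - 4 + \traceL^2 = 0$ of Theorem \ref{characterLLLL} (substituting $x = \pm 2$ forces exactly $\traceL^2 + (y \pm z)^2 = 0$), one constructs the representation by choosing $L_1$ triangular of the appropriate shape, $L_2$ as in \eqref{defL2}, and solving for $L_3 = \minimatrix{p&q\\r&-p}$ from $p = \frac{y}{2\ii}$, the constraint $p^2 + qr = -1$, and one further linear equation coming from matching $\traceL$; the residual diagonal conjugation freedom (which preserves $L_2$) fixes the remaining scaling ambiguity between $q$ and $r$, making the representation unique up to conjugation. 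I expect the main obstacle to be purely organizational: keeping the signs straight across the four sub-cases (upper/lower $\times$ $x = \pm 2$) and making sure the normalization conventions for $L_2$, the direction of the product $L_1 L_2 L_3 L_4 = \Id$, and the sign in $\traceL = -\tr(L_3 L_2 L_1)$ are applied consistently — the individual computations are short $2\times 2$ trace calculations, but an error in any sign convention propagates into the wrong relation between $y$ and $z$.
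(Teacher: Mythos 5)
Your plan is the same as the paper's: the published proof is literally ``elementary computations in each of the individual cases,'' carried out on the normal forms \eqref{defL1}--\eqref{defL3}, and your case-by-case trace computations are exactly that. The outline is sound, but the sign slip you worry about at the end has already happened in your write-up. With $L_2=\minimatrix{\ii&0\\0&-\ii}$, a triangular $L_1$ with diagonal $(\ii,-\ii)$ gives $\tr(L_1L_2)=\ii\cdot\ii+(-\ii)(-\ii)=-2$, not $+2$ (your first-paragraph identity ``$(\pm\ii)(\ii)+(\mp\ii)(-\ii)=\pm2$'' has the signs crossed), so for $x=2$ the diagonal of $L_1$ must be $(-\ii,\ii)$ — consistent with specializing \eqref{defL1} at $x=2$, which gives $\minimatrix{-\ii&1\\0&\ii}$. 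As written, your detailed computation for ``$x=2$'' uses diagonal $(\ii,-\ii)$ and would therefore produce the relations belonging to $x=-2$, i.e.\ statements (1) and (2) would come out swapped. With the correct association the method does deliver the lemma: for $L_1=\minimatrix{-\ii&a\\0&\ii}$ and $L_3=\minimatrix{p&q\\r&-p}$ one finds $y=2\ii p$, $\traceL=-\tr(L_3L_2L_1)=-\ii ar$ and $z=-\tr(L_2L_3L_2L_1)=-2\ii p-ar=-y-\ii\,\traceL$, which is exactly (1); the other three sub-cases are the analogous one-line traces. Your converse/uniqueness sketch is fine, though the cleaner bookkeeping is the paper's: use the residual diagonal conjugation to normalize the off-diagonal entry of $L_1$ to $1$ (it is nonzero by Lemma \ref{lem:irr}), after which $p$ is fixed by $y$, $r$ by $\traceL\neq0$, and $q$ by $\det L_3=1$.
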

\begin{proof}
This follows by elementary computations in each of the individual cases.
\end{proof}

We aim to show that the trace coordinates $(x,y,z)$ of a representation
solve \begin{equation}\label{poly}
P^\traceL(x,y,z):=\traceL^2+\left(x^2+y^2+z^2\right)+x y z-4=0,
\end{equation}
and conversely, every solution 
 $(x,y,z)$ uniquely determines a representation up to conjugation.
 For $x=\pm2$, this statement is equivalent to Lemma \ref{lem:xpm2}.

For $L_1$ of type II we can assume without loss of generality
\begin{equation}\label{defL3} L_3:=
\begin{pmatrix}
 -\tfrac{\ii y}{2}  & a \\
b & \frac{\ii y}{2} \\
\end{pmatrix}\end{equation}
with $\det( L_3)-1=\frac{1}{4} \left(-4 a b+y^2-4\right)=0.$
Then $\tr(L_2 L_3)=y$
and 
\begin{equation*}
\begin{split} \traceL&=-\tr( L_3L_2L_1)=\frac{1}{4} \ii a x^2-\ii (a+b)\,\\
 z&=\tr(L_2L_4) = -\tr(L_2 L_3L_2L_1)=a-b-\tfrac{1}{4}a x^2-\tfrac{1}{2}xy.
\end{split}
\end{equation*}

We  directly compute
\begin{equation}\label{psep}P^{ \traceL}(x,y, z)=\frac{1}{4} \left(x^2-4\right) \left(4 a b-y^2+4\right) = 0.\end{equation}

Conversely, given $(x,y,z)$ satisfying $P^{\traceL} (x,y,z)  =0$ and $x^2 \neq 4,$ the triple $(L_1, L_2, L_3)$ given by \eqref{defL1}, \eqref{defL2} and \eqref{defL3} with $a$ and $b$ determined by $\det (L_3)=1$ and $\traceL=-\tr( L_3L_2L_1)=\frac{1}{4} \ii a x^2-\ii (a+b)$ is a representation with these trace coordinates.

Together with Lemma \ref{lem:xpm2}
we obtain the first part of the following theorem:

\begin{theorem} 
Let $\traceL>0$. Then, every representation  of the Betti moduli space $[\rho]\in\mathcal M^\traceL_B$ is irreducible.
Moreover, $\mathcal M^\traceL_B$  and the relative character variety
\[\mathcal X^\traceL:=\{(x,y,z)\in\C^3\mid P^\traceL(x,y,z)=0\}\]
are complex manifolds, which are biholomorphic to each other
via the trace coordinates.
\end{theorem}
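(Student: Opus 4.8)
The statement has two halves: (a) every $[\rho]\in\mathcal M_B^\traceL$ is irreducible, and (b) the trace-coordinate map $\mathcal M_B^\traceL\to\mathcal X^\traceL$ is a biholomorphism of complex manifolds. Part (a) is already done by Lemma \ref{lem:irr}. For part (b), the plan is to first check that the map is well defined and surjective, then injective, then smooth with smooth inverse, and finally that both spaces are smooth of the same dimension so that ``bijective holomorphic with holomorphic inverse'' upgrades to ``biholomorphism of complex manifolds''.

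First I would record that $(x,y,z)$ are conjugation-invariant polynomial functions of $(L_1,L_2,L_3)$, hence descend to holomorphic functions on $\mathcal M_B^\traceL$, and that they satisfy $P^\traceL(x,y,z)=0$: this was verified case by case above (Lemma \ref{lem:xpm2} for the type I locus $x^2=4$, and Equation \eqref{psep} together with $\det(L_3)=1$ for type II), so the map $\Phi\colon[\rho]\mapsto(x,y,z)$ does land in $\mathcal X^\traceL$. Surjectivity is the explicit reconstruction: given $(x,y,z)$ with $P^\traceL(x,y,z)=0$, if $x^2=4$ Lemma \ref{lem:xpm2} produces a type I representation with these coordinates, and if $x^2\neq4$ one takes $L_1,L_2,L_3$ as in \eqref{defL1}, \eqref{defL2}, \eqref{defL3}, solving $\det(L_3)=1$ and $\traceL=-\tr(L_3L_2L_1)$ for $a,b$ (a linear system in $a,b$ once one eliminates using $\det L_3=1$, nondegenerate precisely because $x^2\neq4$), which by the same computation has trace coordinates $(x,y,z)$.

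For injectivity I would argue that $(L_1,L_2,L_3)$ is determined up to conjugation by $(x,y,z)$: normalize $L_2$ as in \eqref{defL2} using the residual diagonal freedom; then by Lemma \ref{lem:xpm2} the value $x=\tr(L_1L_2)$ detects whether we are in type I or type II, and in each case the normal forms \eqref{defL1} and \eqref{defL3} (with the remaining diagonal conjugation used to put $L_1$ in the stated form) show $L_1$ is fixed and $L_3$ is then pinned down by $y=\tr(L_2L_3)$, $\det L_3=1$, and $z=\tr(L_2L_4)$ — three equations cutting out the two off-diagonal entries $a,b$, consistently by the relation $P^\traceL=0$. Combined with surjectivity this gives that $\Phi$ is a bijection, and since the reconstruction formulas are rational in $(x,y,z)$ on each of the (finitely many) strata and glue, $\Phi^{-1}$ is holomorphic; $\Phi$ is holomorphic by construction, so $\Phi$ is a biholomorphism of the underlying analytic spaces. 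Finally, $\mathcal X^\traceL=\{P^\traceL=0\}$ is smooth because $\nabla P^\traceL=(2x+yz,\,2y+xz,\,2z+xy)$ cannot vanish on the zero set when $\traceL\in(0,2)$ (one checks that a common zero of $P^\traceL$ and its gradient would force $\traceL^2=4-\tfrac34(\text{something}\ge0)$-type contradictions, or more cleanly that singular points of the cubic surface occur only at $\traceL\in\{0,2\}$), so it is a complex manifold; transporting the structure through $\Phi$ makes $\mathcal M_B^\traceL$ one too, and since this is the standard smooth structure on the character variety at irreducible points (all of $\mathcal M_B^\traceL$ by Lemma \ref{lem:irr}), the two manifold structures agree.

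\textbf{Main obstacle.} The routine-but-fiddly part is the injectivity/uniqueness bookkeeping: one must be careful that after fixing $L_2$ there is still a residual $\pm$ and diagonal conjugation freedom, and check this freedom is exactly enough to reach the normal forms \eqref{defL1}, \eqref{defL3} without over- or under-counting, i.e.\ that no two distinct orbits share trace coordinates and that the type I boundary $x^2=4$ glues correctly to the type II interior. The genuinely substantive point, rather than formal, is the smoothness of $\mathcal X^\traceL$ for $\traceL\in(0,2)$ — verifying $P^\traceL$ has no singular point on its zero locus in this trace range — which is where the hypothesis $\traceL\in(0,2)$ (equivalently $\traceL\neq 0,2$ together with positivity) is actually used; I would handle it by the direct computation that any common zero of $P^\traceL$ and $\nabla P^\traceL$ forces $\traceL^2\in\{0,4\}$.
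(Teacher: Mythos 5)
Your proposal is correct and follows essentially the same route as the paper: the set-level bijection via the type I/type II normal forms (Lemma \ref{lem:xpm2} and the reconstruction with $a_\pm,b_\pm$), smoothness of $\mathcal X^\traceL$ by the implicit function theorem (your gradient computation, forcing $\traceL^2\in\{0,4\}$ at any singular point, is exactly the check the paper leaves implicit), and a manifold structure on $\mathcal M^\traceL_B$ from the normal-form charts glued across the locus $x^2=4$ by a diagonal conjugation, so that the trace coordinates give the biholomorphism. The only differences are presentational: you spell out the non-vanishing of $\nabla P^\traceL$ and phrase the last step as transporting the structure through the trace map, while the paper builds the chart on $\mathcal M^\traceL_B$ directly and observes both manifolds carry the same coordinates.
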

\begin{proof}
It remains to show that the complex analytic spaces  $\mathcal M^\traceL_B$ and $\mathcal X^\traceL$ are smooth. Biholomorphicity then follows from both spaces being complex manifolds using the same coordinates. \\

By the implicit function theorem,
the character variety $\mathcal X^\traceL$ (for $\traceL\in(0,2)$) is smooth. 
The space $\mathcal M^\traceL_B$ has 
on the open subset $x^2\neq 4$
a natural smooth structure using the normal forms of $L_1$ and $L_3$ given in \eqref{defL1} and \eqref{defL3}.  For $x \rightarrow \pm2,$ the coordinate for $L_1$ extends smoothly to the upper triangular case. To glue in the $L_1$ being lower triangular case, we need to change the coordinates using a conjugation by a diagonal matrix, which normalizes the lower left entry to 1.  
\end{proof}
\subsection{Unitary representations} 
If $[(L_1, L_2, L_3)]\in\mathcal M^\traceL_B$ is unitary up to conjugation then the trace coordinates automatically satisfy $(x,y,z)\in[-2,2].$ But to obtain unitary representations this condition can be slightly relaxed.
\begin{theorem}\label{unitarizer}
Let $[(L_1,L_2,L_3)] \in\mathcal M^\traceL_B$. Then there exists an unitarizer $U$ such that $U^{-1} L_j U \in \SU(2) $ if and only if
\begin{equation}\label{unitarycond}(x,y,z)\in U^\traceL:=[-2,2]^2\times \R\,\cap\, \mathcal X^\traceL=(-2,2)^2\times \R\,\cap\, \mathcal X^\traceL.\end{equation}
The unitarizer $U$ depends smoothly on the trace coordinate $x$ and we have $U =\Id$ at $x = 0$.
Moreover, the set $U^\traceL$ of unitary points constitutes a  connected component of $\mathbb R^3\cap \mathcal X^\traceL.$
\end{theorem}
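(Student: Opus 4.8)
The plan is to establish Theorem \ref{unitarizer} in three parts: the necessity of the trace condition, its sufficiency (construction of the unitarizer with the claimed smoothness and normalization), and the connectedness of the set of unitary points.

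\textbf{Necessity.} If $U^{-1}L_jU\in\SU(2)$ for all $j$, then since $\tr$ is conjugation-invariant, the trace coordinates $x=\tr(L_1L_2)$, $y=\tr(L_2L_3)$, $z=\tr(L_2L_4)$ equal the traces of products of $\SU(2)$ matrices, hence lie in $[-2,2]\subset\R$. Combined with the constraint $P^\traceL(x,y,z)=0$, this gives $(x,y,z)\in U^\traceL$. The equality $[-2,2]^2\times\R\cap\mathcal X^\traceL=(-2,2)^2\times\R\cap\mathcal X^\traceL$ follows by noting that if, say, $x=\pm 2$, then by Lemma \ref{lem:xpm2} the representation is of type I, which forces $\traceL^2+(y\pm z)^2=0$; since $\traceL\in(0,2)$ is real and nonzero this is impossible for real $y,z$. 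So the boundary cases $x=\pm2$ (and by symmetry $y=\pm2$) cannot occur on $\mathcal X^\traceL$ with real coordinates.

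\textbf{Sufficiency and smoothness.} Assume $(x,y,z)\in U^\traceL$, so in particular $x^2\neq 4$ and the type II normal forms \eqref{defL1}, \eqref{defL2}, \eqref{defL3} apply. The strategy is to build a Hermitian form $h$ preserved by all $L_j$, then take $U$ to be a square root (via Cholesky/Iwasawa) of $h$. Since $L_2=\minimatrix{\ii&0\\0&-\ii}$ is already unitary for the standard form, $h$ must be diagonal; write $h=\mathrm{diag}(1,\mu)$ with $\mu>0$ to be determined. The condition $L_1^*hL_1=h$ then becomes a single scalar equation in $\mu$ using the explicit entries of \eqref{defL1}; one solves $\mu = \mu(x)$ explicitly as a positive real quantity, which is where the real-and-bounded hypotheses on $x$ enter to guarantee positivity, and one checks that the same $\mu$ makes $L_3^*hL_3=h$ by using $\det L_3=1$ together with $y,z\in\R$ (and then $L_4=-L_3L_2L_1$ is automatically $h$-unitary). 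Setting $U=\mathrm{diag}(1,\sqrt{\mu})$ (or its inverse, depending on conventions) gives $U^{-1}L_jU\in\SU(2)$. Smoothness of $U$ in $x$ is immediate from the explicit formula for $\mu(x)$, which is a rational-algebraic expression with no poles or zeros on the relevant region, and at $x=0$ one computes directly that $\mu=1$, so $U=\Id$; this matches the normalization claimed, and also matches $L_1|_{x=0}=m_1$ being already in $\SU(2)$.

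\textbf{Connectedness.} Finally I would show $U^\traceL$ is a connected component of $\R^3\cap\mathcal X^\traceL$. It is open in $\R^3\cap\mathcal X^\traceL$ since it is the intersection with the open box $(-2,2)^2\times\R$; it is closed in $\R^3\cap\mathcal X^\traceL$ because, as shown above, no real point of $\mathcal X^\traceL$ has $x=\pm2$ or $y=\pm2$, so the relative boundary of the box contributes nothing. Hence $U^\traceL$ is a union of connected components. To see it is a \emph{single} component, parametrize: on $\mathcal X^\traceL$ the equation $P^\traceL=0$ solved for $z$ gives $z=\tfrac{1}{2}(-xy\pm\sqrt{(x^2-4)(y^2-4)-4\traceL^2+16-\cdots})$; more cleanly, for fixed $(x,y)\in(-2,2)^2$ the quadratic in $z$ has discriminant $(4-x^2)(4-y^2)-\traceL^2\cdot(\text{const})$, and one shows this discriminant is strictly positive exactly on a connected sub-region of $(-2,2)^2$, over which the two branches $z_\pm$ are connected to each other along the discriminant-zero locus, exhibiting $U^\traceL$ as connected. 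The main obstacle I anticipate is precisely this last combinatorial-geometric step: getting the sign analysis of the discriminant and the branch structure right so that one genuinely sees connectivity rather than two components, and making sure the $\traceL\in(0,2)$ hypothesis is used correctly to prevent the region from pinching off. The necessity and the explicit construction of $U$ are essentially bookkeeping with the $2\times 2$ normal forms.
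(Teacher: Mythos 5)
Your proposal is correct and follows essentially the same route as the paper: exclude $x=\pm2$, $y=\pm 2$ via $P^\traceL(\pm2,y,z)=\traceL^2+(y\pm z)^2\neq 0$ for real data, unitarize the type II normal forms by an explicit positive diagonal matrix (your $h=\mathrm{diag}(1,\mu)$ with $\mu=4/(4-x^2)$ is exactly $(UU^*)^{-1}$ for the paper's $U$, and $\mu(0)=1$ gives $U=\Id$), and prove connectedness by fibering over $(x,y)\in(-2,2)^2$, where the paper joins the two $z$-branches by the explicit circle $r\mapsto(\sin(r)\sqrt{4-\traceL^2},0,\cos(r)\sqrt{4-\traceL^2})$ instead of your discriminant/branch-locus argument. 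One small point to make explicit: verifying $L_3^*hL_3=h$ needs more than $\det L_3=1$ and $y,z\in\R$ — you must also use that $\traceL\in(0,2)$ is real and nonzero to exclude the degenerate case where $a,b$ are both real (there $L_3$ preserves an indefinite form instead); the paper sidesteps this by solving for $a_\pm,b_\pm$ explicitly in terms of $x,y,z,\traceL$ and reading off unitarity.
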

\begin{proof}
Since
$P^\traceL(\pm2,y,z)=\traceL^2+(y\pm z)^2,$ we have for $x=\pm2$ and $y\in\R$
that $z\notin\R.$ This gives that unitary representations only occur in type II.
Analogously, for $y=2$ and $x\in\R$ we have $z\notin\R.$
Hence, for $(x,y,z)$ satisfying \eqref{unitarycond}, we have
\[x^2<4 \quad\text{and}\quad y^2<4.\]

Therefore, conjugating the normal forms \eqref{defL1}, \eqref{defL2},  and \eqref{defL3} by the real positive matrix (referred to as unitarizer)
\[U:=\left(\begin{smallmatrix} 
 \frac{\sqrt{2}}{\sqrt[4]{4-x^2}} & 0 \\
 0 & \frac{\sqrt[4]{4-x^2}}{\sqrt{2}} \\
\end{smallmatrix}\right)\]
we have
\[U^{-1}L_1U \; \text{ and }\; U^{-1}L_2U\in\mathrm{SU}(2),\quad
\text{and}
\quad U^{-1}L_3U=
\begin{pmatrix}
 -\tfrac{\ii y}{2}  & \frac{1}{2} a \sqrt{4-x^2} \\
 \frac{2 b}{\sqrt{4-x^2}} & \frac{\ii y}{2} \\
\end{pmatrix}.\]
Since $a$ and $b$ are determined by
\[\det (L_3) = \frac{y^2}{4}-ab = 1\quad\text{ and }\quad z=a-b-\tfrac{1}{4}a x^2-\tfrac{1}{2}xy\]
we obtain that there exists two pairs $(a_\pm , b_\pm)$ of solutions given by
\[a_{\pm}=\frac{\pm2 \ii\traceL+x y+2 z}{4-x^2}\quad\text{and}\quad b_{\pm}=\frac{\pm 2 \ii\traceL -x y-2 z}{4}\]
which gives that 
$U^{-1}L_3U$ is unitary as well.

Finally, $U^\traceL$ is connected, as every element $(x,y,z)\in U^\traceL$ can be connected within $U^\traceL$ to either of the points $(0,0,\pm \sqrt{4-\traceL^2}),$ by sending $(x,y) \rightarrow 0$
and these two points can be connected within $U^\traceL$ via the curve $\gamma(r)=(\sin(r)\sqrt{4-\traceL^2}, 0, \cos(r)\sqrt{4-\traceL^2})$. It constitutes a whole connected component of real representations, because the boundary points $x\rightarrow \pm2$ and $y\rightarrow \pm2$ do not give rise to real representations.
\end{proof}

\section[Numerical evaluation of multiple polylogarithms]{Numerical evaluation of multiple polylogarithms}
\label{appendix:nummpl}
In the process of deriving the results in this paper, one needs to numerically evaluate many alternating multiple zeta values (more generally: cyclotomic multiple zeta values, or general multiple polylogarithms).  

Multiple polylogarithms are implemented in many computer algebra systems and calculators, for example \texttt{Maple}, \texttt{gp/pari}, \texttt{GiNaC}, to name a few.  The implementation used in \texttt{GiNaC} is described in \cite{numMPLWeinzierl}, more references can be found in \cite[\oldS7.48 \texttt{inifcns\_nstdsums.cpp} File Reference]{ginac}.  \texttt{GiNaC} does not guarantee accuracy, but computed digits can be checked by increasing precision.  (Implementation bugs were fixed at various points.)  The implementation in \texttt{gp/pari}	is documented in \cite[{\bf polylogmult(s, {z}, {t = 0})}]{gppari}, where accuracy is guaranteed to a certain number of bits when the algorithm converges.  (A warning is given there: the \texttt{gp/pari} algorithm for multiple polylogarithms might not converge even at moderate roots of unity, and raises an error.)  Finally, a \texttt{Maple} implementation is described in \cite{numMplFrellesvig}; it is not clear if this comprises the built-in routines \cite{maple}.  

Since these different implementations can be cross-checked, one can be essentially certain that the results are accurate.  Nevertheless, we would like to be self-contained, and give proven accuracy.  An approach to evaluating alternating MZV's is described in \cite{BBBL} using the H\"older convolution formula to express the result via \emph{geometrically convergent} multiple polylogarithms.
  This is essentially the path composition of iterated integrals; an extension of this approach to evaluate arbitrary cyclotomic multiple zeta values was described in a talk by Hirose \cite{hiroseCompMZV}.  We recall the details of this setup and give the necessary bounds to establish a disc containing the resulting values. \medskip

\paragraph{\bf Truncated MPL's} Introduce the \emph{truncated} multiple polylogarithm:
\[
\Li_{N;a_1,\ldots,a_d}(x_1,\ldots,x_d) \coloneqq \sum_{0 < n_1 < \cdots < n_d \leq N} \frac{x_1^{n_1} \cdots x_d^{n_d}}{n_1^{a_1} \cdots n_d^{a_d}} \,,
\]
where the indices \( a_1,\ldots,a_d \geq 1 \in \mathbb{Z} \).
Write \( \delta_j = \prod_{i=j}^d x_i \).  Note that
\[
\Li_{N;a_1,\ldots,a_d}(x_1,\ldots,x_d) = \sum_{0 < n_1 < \cdots < n_d \leq N} \frac{(\delta_1)^{n_1} (\delta_2)^{n_2-n_1} \cdots (\delta_{d-1})^{n_{d-1} - n_{d-2}} (\delta_d)^{n_d - n_{d-1}}}{n_1^{a_1} \cdots n_d^{a_d}} \,.
\]
In the region \( | \delta_j | \leq 1 \), \( j = 1, \ldots d \), and \( (a_d,x_d) \neq (1,1) \), the series is convergent as \( N \to \infty \), and the limit defines the multiple polylogarithm,
\[
\Li_{a_1,\ldots,a_d}(x_1,\ldots,x_d) = \lim_{N\to\infty} \Li_{N;a_1,\ldots,a_d}(x_1,\ldots,x_d) \,.
\]
For details see \cite[\oldS2.3, and Corollary 2.3.10]{zhaoBook}.

\paragraph{\bf Geometrically convergent MPL's} Fix \( 0 < \alpha < 1 \), and suppose \( |\delta_j| \leq \alpha \), for \( j = 1, \ldots, d \).  Then 
\[
\Li_{N;a_1,\ldots,a_d}(x_1,\ldots,x_d)
\]
converges absolutely, and is bounded by some multiple of the geometric series \( \sum_{n_d=1}^{\infty} \beta^{n_d} \), for any \( \alpha < \beta < 1 \).

\begin{proof}
	By the trivial estimate \( n_i \geq 1 \), and \( |\delta_i| \leq \alpha \), we have
	\begin{align*}
	\big| \Li_{N;a_1,\ldots,a_d}(x_1,\ldots,x_d) \big| \leq \sum_{0 < n_1 < \cdots < n_d \leq N} \underbrace{\alpha^{n_1} \, \alpha^{n_2-n_1}  \,\cdots  \,\alpha^{n_{d-1} - n_{d-2}}  \,\alpha^{n_d - n_{d-1}}}_{= \alpha^{n_d}}.
	\end{align*}
	Explicitly extracting the sum over \( n_d \), we can write
	\[
	= \sum_{n_d=1}^N \bigg( \sum_{0<n_1<\cdots<n_{d-1} < n_d} 1  \bigg) \alpha^{n_d} 
	< \sum_{n_d=1}^N n_d^{d-1} \alpha^{n_d} \,.
	\]
	This converges by comparison with the geometric series \( \sum_{n_d=1}^N \beta^{n_d} \).
\end{proof}

\paragraph{\bf Estimate of MPL tails}	Fix \( 0 < \alpha < 1 \), and suppose \( |\delta_j| \leq \alpha \), for \( j = 1, \ldots, d \).  Then following bound holds
\[
\Big| \Li_{a_1,\ldots,a_d}(x_1,\ldots,x_d) - \Li_{N;a_1,\ldots,a_d}(x_1,\ldots,x_d) \Big| \leq 
\alpha^N \cdot \sum_{i=1}^d  \frac{N^{i-1}}{N^{a_i + \cdots + a_d}} \cdot \Big( \frac{\alpha}{1-\alpha} \Big)^{d - (i-1)}
\]

\begin{proof}
	We have
	\begin{align*}
	& \Big| \Li_{a_1,\ldots,a_d}(x_1,\ldots,x_d) - \Li_{N;a_1,\ldots,a_d}(x_1,\ldots,x_d) \Big|
	\\
	& \leq \sum_{\substack{0 < n_1 < \cdots < n_d \\ n_d > N}} \frac{|\delta_1|^{n_1} |\delta_2|^{n_2-n_1} \cdots |\delta_d|^{n_d - n_{d-1}}}{n_1^{a_1} \cdots n_d^{a_d}}  \\
	&	 =  \sum_{i=1}^d  \sum_{\substack{0 < n_1 < \cdots < n_{i-1} \leq N \\ N < n_i < \cdots < n_d}} \frac{|\delta_1|^{n_1} |\delta_2|^{n_2-n_1} \cdots |\delta_d|^{n_d - n_{d-1}}}{n_1^{a_1} \cdots n_d^{a_d}} 
	\end{align*}
	
	For \( i = 1 \), using the trivial bounds \( n_i \geq N \) in the denominator, and \( |\delta_i|^{n_{i+1} - n_i} \leq \alpha^{n_{i+1} - n_i} \) in the numerator, since the exponents \( n_{i+1} - n_i \geq 1 \), we can write
	\begin{align*}
	& \sum_{\substack{N < n_1 < \cdots < n_d}} \frac{|\delta_1|^{n_1} |\delta_2|^{n_2-n_1} \cdots |\delta_d|^{n_d - n_{d-1}}}{n_1^{a_1} \cdots n_d^{a_d}} 
	\leq \frac{1}{N^{a_1 + \cdots + a_d}}\!\!\! \sum_{N < n_1 < \cdots < n_d} \!\!\!  \alpha^{n_1} \, \alpha^{n_2 - n_1} \, \cdots \, \alpha^{n_d - n_{d-1}}
	\end{align*}
	By the substitution \( n_i = N + \ell_1 + \cdots + \ell_i \), \( \ell_i \geq 1 \), this is simply
	\[
	= \frac{1}{N^{a_1 + \cdots + a_d}} \sum_{\ell_1,\ldots,\ell_d = 1}^{\infty} \alpha^{N+\ell_1} \alpha^{\ell_2} \cdots \alpha^{\ell_d}
	= \frac{\alpha^N}{N^{a_1 + \cdots + a_d}}  \cdot \Big(\frac{\alpha}{1 - \alpha}\Big)^{d}\,.
	\]
	
	For \( 2 \leq i \leq d \), write \( |\delta_i|^{n_i - n_{i-1}} = |\delta_i|^{N-n_{i-1}} \cdot |\delta_i|^{n_i - N} \), and break the numerator at this point, giving
	\begin{align*}
	& \sum_{\substack{0 < n_1 < \cdots < n_{i-1} \leq N \\ N < n_i < \cdots < n_d}} \frac{|\delta_1|^{n_1} |\delta_2|^{n_2-n_1} \cdots |\delta_d|^{n_d - n_{d-1}}}{n_1^{a_1} \cdots n_d^{a_d}}  \\
	& = 
	\begin{aligned}[t] 
	\sum_{\substack{0 < n_1 < \cdots < n_{i-1} \leq N}} & \frac{|\delta_1|^{n_1} |\delta_2|^{n_2-n_1} \cdots |\delta_{i-1}|^{n_{i-1} - n_{i-2}} |\delta_i|^{N - n_{i-1}}}{n_1^{a_1} \cdots n_{i-1}^{a_{i-1}}} \\[-1ex]
	& \cdot \sum_{\substack{N < n_i < \cdots < n_{d}}} \frac{|\delta_i|^{n_i - N} |\delta_{i+1}|^{n_{i+1} - n_i} \cdots |\delta_d|^{n_d - n_{d-1}}}{n_i^{a_i} \cdots n_d^{a_d}}\,.  \end{aligned} 
	\end{align*}
	The same argument as above (for the \( N < n_1 < \cdots < n_d \) tail when \( i = 1 \)), gives the following bound for the second factor
	\[
	\sum_{\substack{N < n_i < \cdots < n_{d}}} \frac{|\delta_i|^{n_i - N} |\delta_{i+1}|^{n_{i+1} - n_i} \cdots |\delta_d|^{n_d - n_{d-1}}}{n_i^{a_i} \cdots n_d^{a_d}}
	< \frac{1}{N^{a_i + \cdots + a_d}} \cdot \Big( \frac{\alpha}{1-\alpha} \Big)^{d - (i-1)} \,.
	\]
	On the other hand, apply the trivial bound \( n_1,\ldots,n_i \geq 1 \) and \( |\delta_i| \leq \alpha \) (since the exponents are positive), to the first factor, giving
	\begin{align*}
	& \sum_{\substack{0 < n_1 < \cdots < n_{i-1} \leq N}}  \frac{|\delta_1|^{n_1} |\delta_2|^{n_2-n_1} \cdots |\delta_{i-1}|^{n_{i-1} - n_{i-2}} |\delta_i|^{N - n_{i-1}}}{n_1^{a_1} \cdots n_{i-1}^{a_{i-1}}} 
	\\
	& \leq \sum_{\substack{0 < n_1 < \cdots < n_{i-1} \leq N}} \alpha^{N} \quad \leq \quad N^{i-1} \alpha^N\,.
	\end{align*}
	Hence we obtain
	\[
	\sum_{\substack{0 < n_1 < \cdots < n_{i-1} \leq N \\ N < n_i < \cdots < n_d}} \frac{|\delta_1|^{n_1} |\delta_2|^{n_2-n_1} \cdots |\delta_d|^{n_d - n_{d-1}}}{n_1^{a_1} \cdots n_d^{a_d}} \leq  \frac{N^{i-1} \alpha^N }{N^{a_i + \cdots + a_d}} \cdot \Big( \frac{\alpha}{1-\alpha} \Big)^{d - (i-1)}\,.
	\]
	Overall, we then have
	\[
	\Big| \Li_{a_1,\ldots,a_d}(x_1,\ldots,x_d) - \Li_{N;a_1,\ldots,a_d}(x_1,\ldots,x_d) \Big| \leq \alpha^N \cdot \sum_{i=1}^d  \frac{N^{i-1}}{N^{a_i + \cdots + a_d}} \cdot \Big( \frac{\alpha}{1-\alpha} \Big)^{d - (i-1)} \,.
	\]
	This majorisation goes to 0 as \( N \to \infty \), as the exponential \( \alpha^N \), \( 0 < \alpha < 1 \), dominates any power of \( N \).
\end{proof}

\paragraph{\bf Efficient evaluation of truncated MPL's} 

In order to evaluate truncated multiple polylogarithms efficiently, we proceed as follows.  (This is a well-known approach.)  Write
\begin{align*}
\Li_{N;a_1,\ldots,a_d}(x_1,\ldots,x_d) &= \sum_{n_d = 1}^N \Li_{n_d; a_1,\ldots, a_{d-1}}(x_1,\ldots,x_{d-1}) \cdot \frac{x_d^{n_d}}{n_d^{a_d}}  \\
& = \Li_{N-1;a_1,\ldots,a_d}(x_1,\ldots,x_d) + \Li_{N; a_1,\ldots, a_{d-1}}(x_1,\ldots,x_{d-1}) \cdot \frac{x_d^{N}}{N^{a_d}} \,,
\end{align*}
Then initialize the vector \( (v_r)_{r=0}^d \) as follows, where by convention \( \Li_{0;\emptyset}(\emptyset) = 1 \) is the only sensible value to assign.
\begin{align*}
\tag*{\texttt{Initialise:}} \\
(v_r)_{r=0}^d = \big( \Li_{0;\emptyset}(\emptyset), \Li_{0;a_1}(x_1), \ldots, \Li_{0;a_1,\ldots,a_d}(x_1,\ldots,x_d) \big) = ( 1, 0, \ldots, 0) \,.
\end{align*}
For \( i = 1, \ldots, N \) iterate as follows, computing the new values \( (v_r')_{r=0}^d \) and replacing the old vector \( (v_r)_{r=0}^d \) with the new values.  After iteration \( i \), \((v_r)_{r=0}^i \) contains \( (\Li_{i,a_1,\ldots,a_r}(x_1\ldots,x_r))_{r=0}^d \).
\begin{align*} 
\tag*{\texttt{Iterate \( 1 \leq i \leq N \):}} \\
(v_r)_{r=0}^d \leftarrow (v_r')_{r=0}^d = \big( 1, v_1 + v_0 \cdot \frac{x_1^i}{i^{n_1}},  v_2 + v_1 \cdot \frac{x_2^i}{i^{n_2}}, \ldots,  v_d + v_{d-1} \cdot \frac{x_d^i}{i^{n_d}},  \Big) \,.
\end{align*}
Obtain the final result as follows.
\[
\tag*{\texttt{Return:}}
v_d = \Li_{N;a_1,\ldots,a_d}(x_1,\ldots,x_d) \,.
\]

This computes the value of \( \Li_{N;a_1,\ldots,a_d}(x_1,\ldots,x_d) \) in \( O(Nd) \) steps, rather than the na\"ive \( O(N^d) \) obtained from iterating over all indices \( 0 < n_1 < \cdots < n_d \leq N \) directly.

\paragraph{\bf Reduction to geometrically convergent MPL's}

The final task is to reduce any multiple polylogarithm \( \Li_{a_1,\ldots,a_d}(x_1,\ldots,x_d) \) to an expression in such geometrically convergent MPL's whose values can be computed efficiently, and whose tails can be bounded explicitly.  By disc arithmetic, one can carry the explicit error bounds forward to obtain a value for \( \Li_{a_1,\ldots,a_d}(x_1,\ldots,x_d) \) with proven error bounds.

Recall that any multiple polylogarithm can be written as an iterated integral (c.f. Equation \eqref{eqn:mzvtoint}, for the special case where \( x_i = \pm 1 \); more generally \cite[Theorem 2.2]{goncharovMPL}), as follows.
\[
\Li_{a_1,\ldots,a_d}(x_1,\ldots,x_d) = (-1)^d \int_0^1 \eta_{\delta_1^{-1}} \eta_0^{a_1-1}  \eta_{\delta_2^{-1}} \eta_0^{a_2-1}  \cdots \eta_{\delta_d^{-1}} \eta_0^{a_d-1} \,,
\]
where
\[
\eta_z(t) = \frac{\mathrm{d}t}{t - z} \,, \quad \delta_j = \prod\nolimits_{i=j}^d x_i \text{ (as before)}\,.
\]
(Note that \( |\delta_j| \leq 1 \) is part of the condition implying the MPL is convergent.)  The viewpoint to take now is that we can compute \( \alpha \) (the parameter telling us the rate of geometric convergence of the given MPL) as
\begin{equation}\label{eqn:mpl:geom}
\alpha = \frac{1}{\min_j |\delta_j^{-1} | } = \frac{|1-0|}{\min_j |\delta_j^{-1}-0 |} \,. 
\end{equation}
Since the iterated integral
\[
\int_{y_0}^{y_{n+1}} \eta_{y_1} \cdots \eta_{y_n} 
\]
is invariant under affine transformations \( y_i \mapsto \lambda y_i + \mu \), it can always be rescaled to be an integral from \( y_0 = 0 \) to \( y_n = 1 \), via \( y_i \mapsto \frac{y_i - y_0}{y_n - y_0} \).  Consider the circle centered at the lower bound of integration \( p = y_0 \), with circumference containing the upper bound of integration \( q = y_{n+1} \).  This has radius \( r = |q-p| \).  The number of radii from \( q \) to the nearest pole \( y_i \) ($\neq p$) gives \( \alpha^{-1} \); this quantity is affine invariant and reduces to \eqref{eqn:mpl:geom} when \( y_0 = 0, y_{n+1} = 1 \). 
\begin{center}
	\begin{tikzpicture}
	
	\coordinate (P) at (0,0);
	\node[below left] at (P) {$p$};
	
	\coordinate (Q) at ({1*cos(23)},{1*sin(23)});
	\coordinate (Qb) at ({0.95*cos(23)},{0.95*sin(23)});
	\node[above right] at (Q) {$q$};
	
	\draw (P) circle (1);
	
	\draw[dotted, blue] (P) circle (1.8);
	\coordinate (D) at ({1.8*cos(104)},{1.8*sin(104)});
	\coordinate (Db) at ({1.75*cos(104)},{1.75*sin(104)});

	\coordinate (F) at ({1.75*cos(-87)},{1.75*sin(-87)});
	\coordinate (G) at ({1*cos(-84)},{1*sin(-84)});
	\node[label={[align=center]below:{\small convergence}}] at (G){};
	
	\node[label={[align=left]below:{\small convergence \\[-0.5ex] at rate $\alpha$}}] at (F){};
	
	\draw[->, >=latex] (P) -- (Qb);
	\path (P) -- (Q) coordinate[pos=0.5] (R);
	\node[above] at (R) {$r$};

	\draw[->, >=latex, blue, dashed] (P) -- (Db);
	\path (P) -- (D) coordinate[pos=0.8] (E);
	\node[right] at (E) {$r\alpha^{-1}$};
	
	\fill[blue] (P) circle (2pt);
	\fill (Q) circle (2pt);
	
	\fill[blue] (D) circle (2pt);
	\node[above] at (D) {$y_i$ \rlap{(nearest)}};
	
	\end{tikzpicture}
\end{center}

The goal is therefore to re-express the integral \( \int_{p}^q \) (as a sum of products of other integrals) in such a way as to obtain geometric convergence (with \( \alpha \) as small as feasible) everywhere.  We have some relations for iterated integrals to assist us, firstly:
\begin{align}
\tag{Path composition} 
\int_{p}^q \alpha_1 \cdots \alpha_n &= \sum_{i=0}^n \int_{p}^r \alpha_1 \cdots \alpha_i \cdot \int_{r}^q \alpha_{i+1} \cdots \alpha_d  \,.
\end{align}

If \( q \) is not a singularity of any of the differential forms, then the nearest singularity \( \delta_j^{-1} \) to \( q \) is some distance \( \eps > 0\).  The region \( \{ x \colon |x-p| \geq 1 \} \cap \{ x \colon |x-q| \geq \eps \} \) is some positive distance  \( d \) away from the radius \( p \to q \) (as both are compact).  By decomposing the path \( p \to q \) at equally spaced points \( s_1,\ldots,s_k \) (say), we can replace the circle of radius \( |q-p| \) centered at \( p \) with circles of arbitrarily small radius \( \frac{1}{k} \) centered at \( p, s_1, \ldots, s_k \) along the path \( p \to q \).  For each of these integrals, the convergence rate is then \( < \frac{d}{k} \), which can be made arbitrarily small.

\begin{center}
	\begin{tikzpicture}[scale=2]
	
	\coordinate (zero) at (0,0);
	\node[below left] at (zero) {$p$};
	
	\coordinate (pone) at (1,0);
	\node[below right] at (pone) {$q$};

	\coordinate (r1) at (0.2,0);
	\coordinate (r2) at (0.4,0);
	\coordinate (r3) at (0.6,0);
	\coordinate (rk) at (0.8,0);
	

	\draw[densely dotted, red] (zero) circle (0.2);
	\draw[densely dotted, red] (pone) circle (0.2);
	\draw[densely dotted, red] (r1) circle (0.2);
	\draw[densely dotted, red] (rk) circle (0.2);
	\draw[densely dotted, red] (0.4,0) circle (0.2);
	\draw[densely dotted, red] (0.6,0) circle (0.2);
	
	\path[name path=circleA] (0,0) circle (1);
	\path[name path=circleB] (1,0) circle (0.3);
	
	\path[name intersections={of=circleA and circleB, by={A,B}}];
	
	\draw (A) to[anticlockwise arc centered at=zero] (B) to[anticlockwise arc centered at=pone] (A);
	\draw[densely dotted, gray] (A) to[clockwise arc centered at=zero] (B) to[clockwise arc centered at=pone] (A);
	
	\fill[blue] (0.95,0.3)  node[above right, black] {\scriptsize $\delta_j^{-1}$}   circle (1pt);
	
	\draw[->, >=latex] (zero) to[out=15, in=165] (pone);
	
	\draw[->, >=latex, red] (zero) to[] (r1);
	\draw[->, >=latex, red] (r1) to[] (r2);
	\draw[dotted, red] (r2) to[] (r3);
	\draw[->, >=latex, red] (r3) to[] (rk);
	\draw[->, >=latex, red] (rk) to[] (pone);
	
	\fill[blue] (zero) circle (1pt);
	\fill[black] (pone) circle (1pt);
	
	\fill[black] (r1) circle (0.5pt);
	\fill[black] (r2) circle (0.5pt);
	\fill[black] (r3) circle (0.5pt);
	\fill[black] (rk) circle (0.5pt);
	
	\end{tikzpicture}
\end{center}

However, if \( q \) is a singularity of some of the differential forms, consider still the nearest singularity \( \delta_j^{-1} \) to \( q \), which is a distance \( \eps > 0 \) away.  (If no singularity exists, take \( \eps = \frac{|q-p|}{2} \).)  Take point \( s \) on the radius \( p\to q \), distance \( \eps \) from \( q \), and apply path decomposition.  The integral \( \int_p^s \) is handled as before, to obtain arbitrarily fast geometric convergence.  To handle \( \int_s^q \), apply path reversal:
\begin{align}
\tag{Path reversal} 
\int_{s}^q \alpha_1 \cdots \alpha_n &= (-1)^n \int_{q}^s \alpha_n \cdots \alpha_1 \,.
\end{align}

\begin{center}
	\begin{tikzpicture}[scale=2]
	
	\coordinate (zero) at (0,0);
	\node[below left] at (zero) {$p$};
	
	\coordinate (pone) at (1,0);
	\node[below right] at (pone) {$q$};
	
	\coordinate (rk) at (0.7,0);
	\node[below right] at (rk) {$s$};
	

	\draw[densely dotted, red] (zero) circle (0.7);
	
	\path[name path=circleA] (0,0) circle (1);
	\path[name path=circleB] (1,0) circle (0.3);
	
	\path[name intersections={of=circleA and circleB, by={A,B}}];
	
	\draw (A) to[anticlockwise arc centered at=zero] (B) to[anticlockwise arc centered at=pone] (A);
	\draw[densely dotted, gray] (A) to[clockwise arc centered at=zero] (B) to[clockwise arc centered at=pone] (A);

	\draw[densely dotted, red] (pone) circle (0.3);
	
	\fill[blue] (0.95,0.3)  node[above right, black] {\scriptsize $\delta_j^{-1}$}   circle (1pt);
	
	\draw[->, >=latex] (zero) to[out=15, in=165] (pone);
	
	\draw[->, >=latex, red] (zero) to[] (rk);
	\draw[->, >=latex, red] (pone) to[] (rk);
	
	\fill[blue] (zero) circle (1pt);
	\fill[blue] (pone) circle (1pt);
	\fill[black] (0.7,0) circle (1pt);

	\end{tikzpicture}
\end{center}

Then one can apply path decomposition to \( \int_q^s \) to obtain arbitrarily fast geometric convergence.

\paragraph{\bf Case: Alternating multiple zeta values:} 

Alternating multiple zeta values are multiple polylogarithms at \( x_i = \pm 1 \).  The differential forms therefore have singularities are \( 0, \pm 1 \).  To speed up convergence, express the path \( 0 \to 1 \) as the concatenation of \( 0 \to \frac{1}{2} \) with the inverse of \( 1 \to \frac{1}{2} \).

\begin{center}
	\begin{tikzpicture}[scale=2]
	
	\coordinate (zero) at (0,0);
	\coordinate (zerou) at (0.03,0.03);
	\node[below left] at (zero) {$0$};
	
	\coordinate (pone) at (1,0);
	\coordinate (poneb) at (0.95,0);
	\coordinate (poneu) at (0.97,0.03);
	\node[below right] at (pone) {$1$};

	\coordinate (half) at (0.5,0);
	\coordinate (halfb) at (0.45,0);
	\coordinate (halfc) at (0.55,0);
	\node[below, xshift=2mm] at (half) {\small $\tfrac{1}{2}$};
	
	\coordinate (mone) at (-1,0);
	\node[below left] at (mone) {$-1$};
	
	\draw[] (zero) circle (1);

	\draw[densely dotted, red] (zero) circle (0.5);
	\draw[densely dotted, red] (pone) circle (0.5);
	
	\draw[->, >=latex] (zerou) to[in=165, out=15] (poneu);
	
	\draw[->, >=latex, red] (zero) -- (halfb);
	\draw[->, >=latex, red] (pone) -- (halfc);
	
	\fill[blue] (zero) circle (1pt);
	\fill[blue] (pone) circle (1pt);
	\fill[blue] (mone) circle (1pt);
	
	\fill (half) circle (1pt);
	
	\end{tikzpicture}
\end{center}

The integrals \( \int_0^{1/2} \) have geometric convergence rate \( \frac{1}{2} \), as both (other) singularities \( \pm 1 \) are distance 1 from 0.  The integrals \( \int_{1}^{1/2} \) have geometric convergence rate \( \frac{1}{2} \) as the nearest (other) singularity to 1 is 0, with distance 1.  

We therefore express any alternating MZV as a sum of multiple polylogarithms which converge geometrically, at rate \( \alpha = \frac{1}{2} \).

\paragraph{\bf Case: \( \Omega \)-values:} The original definition of \( \Omega_{i_1,\ldots,i_n} \) expresses the result as a sum of \( 4^n \) integrals \( \int_0^1 \) over differential forms with singularities at \( p_k = \exp(2 \pi \mathrm{i} (2k-1) / 8)\), \( k = 1, \ldots, 4 \).

To speed up convergence, decompose the path \( 0 \to 1 \) as the concatenation of \( 0 \to \frac{1}{3} \), with \( \frac{1}{3} \to \frac{2}{3} \) then the inverse of \( 1 \to \frac{2}{3} \).  The singularity at \( p_1 = \exp(2 \pi \mathrm{i}/8) \) is the nearest one to each start points of integral \( 0, 1/3 \) or \( 1 \); it has distance \( 1, \frac{1}{3} \sqrt{10-3\sqrt{2}} = 0.799817\ldots, \sqrt{2-\sqrt{2}} = 0.765367\ldots\) respectively.  The slowest rate of geometric convergence we obtain is then \( \alpha = \frac{1}{3} / \sqrt{2-\sqrt{2}} = 0.435521\ldots \).

\begin{center}
	\begin{tikzpicture}[scale=2]
	
	\coordinate (zero) at (0,0);
	\coordinate (zerou) at (0.03,0.03);
	\node[below left] at (zero) {$0$};
	
	\coordinate (pone) at (1,0);
	\coordinate (poneb) at (0.95,0);
	\coordinate (poneu) at (0.97,0.03);
	\node[below right] at (pone) {$1$};

	\coordinate (third) at (0.33,0);
	\coordinate (thirdb) at (0.30,0);
	\coordinate (thirdc) at (0.36,0);
	\node[below, xshift=2mm] at (third) {\small $_{\tfrac{1}{3}}$};

	\coordinate (twothird) at (0.66,0);
	\coordinate (twothirdb) at (0.63,0);
	\coordinate (twothirdc) at (0.69,0);
	\node[below, xshift=2mm] at (twothird) {\small $_{\tfrac{2}{3}}$};

	\draw[] (zero) circle (1);

	\draw[densely dotted, red] (zero) circle (0.33);
	\draw[densely dotted, red] (third) circle (0.33);
	\draw[densely dotted, red] (pone) circle (0.33);
	
	\draw[->, >=latex] (zerou) to[in=165, out=15] (poneu);
	
	\draw[->, >=latex, red] (zero) -- (thirdb);
	\draw[->, >=latex, red] (thirdc) -- (twothirdb);
	\draw[->, >=latex, red] (pone) -- (twothirdc);
	
	\fill (pone) circle (1pt);
	\fill (zero) circle (1pt);
	
	\coordinate (p1) at ({1*cos(45)},{1*sin(45)});
	\coordinate (p2) at ({1*cos(135)},{1*sin(135)});
	\coordinate (p3) at ({1*cos(-135)},{1*sin(-135)});
	\coordinate (p4) at ({1*cos(-45)},{1*sin(-45)});
	\node[below left] at (p1) {$p_1$};
	\node[below right] at (p2) {$p_2$};
	\node[above right] at (p3) {$p_3$};
	\node[above left] at (p4) {$p_4$};
	
	\fill[blue] (p1) circle (1pt);
	\fill[blue] (p2) circle (1pt);
	\fill[blue] (p3) circle (1pt);
	\fill[blue] (p4) circle (1pt);
	
	\fill (third) circle (1pt);
	\fill (twothird) circle (1pt);
	
	\end{tikzpicture}
\end{center}

We therefore express any \( \Omega \)-value as a sum of multiple polylogarithms which converge geometrically, at rate \( \alpha = \frac{44}{100} \).

\paragraph{\bf Case: \( \Omega \)-values (refined):}  Recall the alternative formula for \( \Omega_{i_1,\ldots,i_n} \) in \eqref{eqn:omega:3n} expresses the result as a sum of \( 3^n \) integrals \( \int_{-\mathrm{i}}^{-1+\sqrt{2}} \) over differential forms with singularities at \( 0, \pm 1 \).

In this case, to speed up convergence, we deform the straight-line path \( -\mathrm{i} \to -1+\sqrt{2} \) (via homotopy invariance) to pass through \( r_1 = \frac{3}{10} - \frac{9}{16} \mathrm{i} \) and \( r_2 = \frac{1}{2} -  \frac{5}{16} \mathrm{i} \) (chosen rational for convenience), and express it as the concatenation of \( -\mathrm{i} \to r_1  \), with the reverse of \( r_2 \to r_1 \), and then \( r_2 \to -1 + \sqrt{2} \).

The nearest singularity to \( -\mathrm{i} \) is at 0, so \( \int_{-\mathrm{i}}^{r_1} \) has \( \alpha = \frac{|- \mathrm{i} - r_1|}{|-\mathrm{i} - 0|} = 0.542984\ldots \).  For \( r_2 \) nearest singularity is at 0 or 1, so \( \int_{r_2}^{r_1} \) has  \( \alpha = \frac{|r_2 - r_1|}{|r_2 - 0|} = 0.542984\ldots \) and \( \int_{r_2}^{-1+\sqrt{2}} \)  has \( \alpha = \frac{|r_2 - (-1 + \sqrt{2})|}{|p_2 - 0|} = 0.549606\ldots \). 

\begin{center}
	\begin{tikzpicture}[scale=2]
	
	\coordinate (start) at (0,-1);
	\coordinate (zerou) at (0.03,0.03);
	\node[below left] at (start) {$-\mathrm{i}$};
	
	\coordinate (end) at (0.4142,0);
	\coordinate (endb) at (0.4139,-0.03);
	\node[above] at (end) {\scriptsize $-1{+}\sqrt{2}$};

	\coordinate (r1) at (0.3,-0.56);
	\coordinate (r2) at (0.5,-0.3125);
	\node[below, xshift=1mm, yshift=0mm] at (r1) {\scriptsize $r_1$};
	\node[below, xshift=1mm] at (r2) {\scriptsize $r_2$};
	
	\draw[->, >=latex] (start) to[in=-135,out=90] (endb);
	
	\draw[->, >=latex, red] (start) -- (r1);
	\draw[->, >=latex, red] (r2) -- (r1);
	\draw[->, >=latex, red] (r2) -- (endb);
	
	\draw[] (-2,0) -- (2,0);
	
	\draw[densely dotted, red] (start) circle (0.53);
	\draw[densely dotted, red] (r2) circle (0.320);
	\draw[densely dotted, red] (r2) circle (0.324);
	
	\fill (start) circle (1pt);
	\fill (end) circle (1pt);
	
	\coordinate (p1) at (-1,0);
	\coordinate (p2) at (0,0);
	\coordinate (p3) at (1,0);
	\node[below left] at (p1) {$-1$};
	\node[below right] at (p2) {$0$};
	\node[above right] at (p3) {$1$};
	
	\fill[blue] (p1) circle (1pt);
	\fill[blue] (p2) circle (1pt);
	\fill[blue] (p3) circle (1pt);
	
	\fill (r1) circle (1pt);
	\fill (r2) circle (1pt);
	
	\end{tikzpicture}
\end{center}

We therefore express any \( \Omega \)-value as a sum of multiple polylogarithms which converge geometrically, at rate \( \alpha = \frac{55}{100} \).  Although this is slower than previously, we only have to expand \( \Omega_{i_1,\ldots,i_n} \)  as \( 3^n \) integrals initially, which is a significant saving.

\section{Higher order Taylor expansions}
\subsection{Third order derivatives in the CMC case}
\label{appendix-order3}
Using Mathematica, we can compute the third order derivatives of the parameters for arbitrary angle $\varphi$.
This gives the following formulas for the coefficient $\mathcal W_3$ in the expansion of the Willmore energy \eqref{eq:Willmore-series} and for the coefficient $H_3$ of $\frac{1}{(2g+2)^3}$ in the expansion of the mean curvature of $f_{g,\varphi}$:

\begin{align*}
&\mathcal W_3=
-\frac{\ii}{\pi^3}\sin^4(\varphi)\left(2\cos(2\varphi)+1\right)\Omega_{2,1}(1)^3
+\frac{\ii}{2\pi ^3} \sin^2(\varphi )\left(3 \cos (2 \varphi )+3 \cos (4 \varphi
   )+4\right)\Omega_{2,1}(1)^2\Omega_{3,1}(\ii)
\\&
-\frac{\ii}{\pi^3}\cos^4(\varphi)\left(2\cos(2\varphi)-1\right)\Omega_{3,1}(\ii)^3
-\frac{\ii}{2\pi ^3} \cos ^2(\varphi ) \left(-3 \cos (2 \varphi )+3 \cos (4 \varphi
   )+4\right)\Omega_{2,1}(1)\Omega_{3,1}(\ii)^2
\\&
-\frac{1}{4\pi^2}\sin^2(2\varphi)\Omega_{2,1}(1)
\left(
3\Omega_{3,3,3}(1)
-2\Omega_{2,1,1}(\ii)
+2\Omega_{3,3,2}(\ii)
\right)
\\&
-\frac{1}{4\pi^2}\sin^2(2\varphi)\Omega_{3,1}(\ii)
\left(
3\Omega_{2,2,2}(\ii)
-2\Omega_{3,1,1}(1)
+2\Omega_{2,2,3}(1)
\right)
\\&
+\frac{1}{\pi^2}\sin^2(\varphi)(\cos(2\varphi)-2)\Omega_{2,1}(1)\Omega_{3,1,1}(1)
-\frac{1}{\pi^2}\sin^4(\varphi)\Omega_{2,1}(1)\Omega_{2,2,3}(1)
\\&
-\frac{1}{\pi^2}\cos^2(\varphi)(\cos(2\varphi)+2)\Omega_{3,1}(\ii)\Omega_{2,1,1}(\ii)
-\frac{1}{\pi^2}\cos^4(\varphi)\Omega_{3,1}(\ii)\Omega_{3,3,2}(\ii)
\\&
+\frac{\ii}{4\pi} \sin^2(2\varphi)
\left(
\Omega_{2,1,3,3}(1)
-\Omega_{3,1,2,3}(1)
+\Omega_{3,3,2,1}(1)
+\Omega_{2,1,3,2}(\ii)
-\Omega_{2,2,3,1}(\ii)
-\Omega_{3,1,2,2}(\ii)
\right)
\\&
+\frac{\ii}{\pi}\sin^4(\varphi)\Omega_{2,2,2,1}(1)
+\frac{3\ii}{\pi} \sin^2(\varphi)\Omega_{2,1,1,1}(1)
-\frac{\ii}{\pi} \cos^4(\varphi)\Omega_{3,3,3,1}(\ii)
-\frac{3\ii}{\pi}\cos^2(\varphi)\Omega_{3,1,1,1}(\ii)
\end{align*}
\begin{align*}
&H_3=
-\frac{2\ii}{\pi^3}{\sin^2(\varphi)\sin(4\varphi)\Omega_{2,1}(1)^3}
+\frac{8\ii}{\pi^3}\sin(\varphi)(3\cos(2\varphi)-2)\cos^3(\varphi)\Omega_{2,1}(1)\Omega_{3,1}(\ii)^2
\\&
+\frac{2\ii}{\pi^3}\cos^2(\varphi)\sin(4\varphi)\Omega_{3,1}(\ii)^3
-\frac{8\ii}{\pi^3}\sin^3(\varphi)(2+3\cos(2\varphi))\cos(\varphi)\Omega_{2,1}(1)^2\Omega_{3,1}(\ii)
\\&
+\frac{4}{\pi^2}\sin(\varphi)\cos^3(\varphi)
\left(
2\Omega_{3,1}(\ii)\Omega_{3,1,1}(1)
-2\Omega_{3,1}(\ii)\Omega_{2,2,3}(1)
+\Omega_{3,1}(\ii)\Omega_{3,3,2}(\ii)
-3\Omega_{2,1}(1)\Omega_{3,3,3}(1)
\right)
\\&
+\frac{4}{\pi^2}\sin^3(\varphi)\cos(\varphi)
\left(
2\Omega_{2,1}(1)\Omega_{3,3,2}(\ii)
-2\Omega_{2,1}(1)\Omega_{2,1,1}(\ii)
-\Omega_{2,1}(1)\Omega_{2,2,3}(1)
+3\Omega_{3,1}(\ii)\Omega_{2,2,2}(\ii)
\right)
\\&
+\frac{1}{\pi^2}\sin(4\varphi)
\left(\Omega_{3,1}(\ii)\Omega_{2,1,1}(\ii)
+\Omega_{2,1}(1)\Omega_{3,1,1}(1)
\right)
+\frac{2\ii}{\pi}\sin(2\varphi)\left(
\Omega_{2,1,1,1}(1)
+\Omega_{3,1,1,1}(\ii)
\right)
\\&
+\frac{4i}{\pi}\sin(\varphi)\cos^3(\varphi)
\left(
\Omega_{2,1,3,3}(1)
-\Omega_{3,1,2,3}(1)
+\Omega_{3,3,2,1}(1)
+\Omega_{3,3,3,1}(\ii)
\right)
\\&
+\frac{4\ii}{\pi}\sin^3(\varphi)\cos(\varphi)
\left(
-\Omega_{2,1,3,2}(\ii)
+\Omega_{2,2,2,1}(1)
+\Omega_{2,2,3,1}(\ii)
+\Omega_{3,1,2,2}(\ii)
\right)
\end{align*}

\subsection{Computation of \texorpdfstring{$\alpha_k$}{alpha\_k} for \texorpdfstring{$k>3$}{k>3}}
\label{appendix:numalpha}
When specializing to the Lawson surfaces with $\varphi = \tfrac{\pi}{4}$ we have been able to compute numerically the coefficient $\alpha_k$ in the area expansion \eqref{eq:area-series} up to $\alpha_{21}$:
\begin{align*}
\alpha_1&\simeq
0.693147180559945309417232121458176568075500134360255254120680\\
\alpha_3 & \simeq
2.704628032109087142149410863400762479221219157766122484032610\\
\alpha_5 & \simeq
3.699626994497618439893380135471044617736329548309105157162310\\
\alpha_7 & \simeq
-53.1688000602634657601186493744463143722221041377109549606883\\
\alpha_9 & \simeq
-459.565676371488633633252895256096561995526272030689845199417\\
\alpha_{11} & \simeq
-260.931729774858246058852756835445016841900749580577223718493\\
\alpha_{13}&  \simeq   
26311.75666632241667824049728000376568318761694887921531627959\\
\alpha_{15} & \simeq
219897.7526067197482348266274038050133501624360107896585815548\\
\alpha_{17} & \simeq   
-204390.987496916879876223326569020676825058179523091704555104\\
\alpha_{19} & \simeq   
-19346782.5372543220622302604976526258798242712500787552866514\\
\alpha_{21} & \simeq   
-148960589.720279268862574700035701683223669243796252922710520\\
\end{align*}
Using the alternating MZV Data Mine \cite{DM} we can also give analytic formulas (in terms of a set of algebra generators) for the coefficients $\alpha_k$ up to \( \alpha_{11} \), in particular:

{\allowdisplaybreaks
	\begin{align*}
	&\alpha_5 \begin{aligned}[t]
	& = -8 \zeta(1,1,\overline{3})+\tfrac{121}{16}\zeta (5)+\tfrac{2 \pi ^2}{3} \zeta (3) -21 \zeta (3) \log ^2(2)  \\
	& = 16\Li_5\!\big(\tfrac{1}{2}\big)+16 \Li_4\!\big(\tfrac{1}{2}\big) \log (2) - \tfrac{11}{16} \zeta (5) - 14 \zeta (3) \log ^2(2)-\tfrac{8}{3} \zeta(2) \log ^3(2)+\tfrac{8 }{15} \log ^5(2) 
	\end{aligned} \\[1ex]
	&\alpha_7 \begin{aligned}[t] 
	& = \begin{aligned}[t]
	& -256 \zeta (1,1,1,1,\overline{3})
	+\tfrac{1392}{17} \zeta (1,1,\overline{5})
	+\tfrac{720}{17} \zeta (1,3,\overline{3})
	+128 \log ^2(2) \zeta (1,1,\overline{3})
	\\ &
	+28 \zeta (3) \zeta (1,\overline{3}) 
	{} +\tfrac{296921}{1088}\zeta (7)
	-\tfrac{418 \pi ^2}{51} \zeta (5)
	-\tfrac{473 \pi ^4}{765} \zeta (3)
	-\tfrac{109}{2} \zeta (5) \log ^2(2)
	\\ &
	+\tfrac{280}{3} \zeta (3) \log ^4(2)
	-\tfrac{32 \pi ^2}{3} \zeta (3) \log ^2(2)
	-112 \zeta (3)^2 \log (2) \end{aligned} 
	 \end{aligned}
	\end{align*}
}

\end{document}